\newcommand{\MM}{\mathcal{M}}
\renewcommand{\le}{\leqslant}
\renewcommand{\leq}{\leqslant}
\renewcommand{\geq}{\geqslant}
\renewcommand{\setminus}{\smallsetminus}
\renewcommand{\gamma}{\upgamma}
\renewcommand{\pi}{\uppi}
\newcommand{\NN}{\mathcal{N}}
\newcommand{\e}{\varepsilon}
\newcommand{\R}{\mathbb R}
\newtheorem{theorem}{Theorem}
\newtheorem{lemma}[theorem]{Lemma}
\newtheorem{proposition}[theorem]{Proposition}
\newtheorem{definition}[theorem]{Definition}
\theoremstyle{remark}
\newtheorem{remark}[theorem]{Remark}
\newcounter{quest}
\newtheorem{question}[quest]{Question}
\renewcommand{\tau}{\uptau}
\renewcommand{\xi}{\upxi}
\renewcommand{\rho}{\uprho}
\renewcommand{\subset}{\subseteq}
\newcommand{\C}{\mathbb C}
\newcommand{\E}{\mathbb{ E}}
\newcommand{\N}{\mathbb N}
\newcommand{\eqdef}{\stackrel{\mathrm{def}}{=}}
\renewcommand{\theta}{\uptheta}
\renewcommand{\lambda}{\uplambda}
\renewcommand{\gamma}{\upgamma}
\renewcommand{\beta}{\upbeta}
\renewcommand{\alpha}{\upalpha}
\renewcommand{\kappa}{\upkappa}
\renewcommand{\psi}{\uppsi}
\renewcommand{\rho}{\uprho}
\renewcommand{\delta}{\updelta}
\renewcommand{\pi}{\uppi}
\renewcommand{\omega}{\upomega}
\renewcommand{\sigma}{\upsigma}
\renewcommand{\eta}{\upeta}
\renewcommand{\kappa}{\upkappa}
\renewcommand{\mu}{\upmu}
\renewcommand{\nu}{\upnu}
\renewcommand{\pi}{\uppi}
\renewcommand{\zeta}{\upzeta}
\newcommand{\mb}{\mathbb}
\newcommand*\diff{\mathop{}\!\mathrm{d}}
\newcommand{\ms}{\mathscr}
\newcommand{\msf}{\mathsf}
\begin{document}

\title{Talagrand's influence inequality revisited}
\author{Dario Cordero-Erausquin and Alexandros Eskenazis}
\address{Institut de Math\'ematiques de Jussieu, Sorbonne Universit\'e, Paris, 75252, France}
\thanks{{\it E-mail addresses:} \href{mailto:dario.cordero@imj-prg.fr}{\nolinkurl{dario.cordero@imj-prg.fr}}, $\{$\href{alexandros.eskenazis@imj-prg.fr}{\nolinkurl{alexandros.eskenazis@imj-prg.fr}}, \href{ae466@cam.ac.uk}{\nolinkurl{ae466@cam.ac.uk}}$\}$.
}
\thanks{A.~E.~was supported by a postdoctoral fellowship of the Fondation Sciences Math\'ematiques de Paris.}

\subjclass[2020]{Primary: 42C10; Secondary: 30L15, 46B07, 60G46.}
\keywords{Hamming cube, Talagrand's inequality, Rademacher type, martingale type, Itô calculus, Riesz transforms, Littlewood--Paley--Stein theory, hypercontractivity, CAT(0) space, bi-Lipschitz embedding.}

\vspace{-0.25in}

\begin{abstract} 
Let $\ms{C}_n=\{-1,1\}^n$ be the discrete hypercube equipped with the uniform probability measure $\sigma_n$. Talagrand's influence inequality (1994), also known as the $L_1-L_2$ inequality, asserts that there exists $C\in(0,\infty)$ such that for every $n\in\N$, every function $f:\ms{C}_n\to\C$ satisfies
$$\mathrm{Var}_{\sigma_n}(f) \leq C \sum_{i=1}^n \frac{\|\partial_if\|_{L_2(\sigma_n)}^2}{1+\log\big(\|\partial_if\|_{L_2(\sigma_n)}/\|\partial_i f\|_{L_1(\sigma_n)}\big)}.$$
In this work, we undertake a systematic investigation of this and related inequalities via harmonic analytic and stochastic techniques and derive applications to metric embeddings. We prove that Talagrand's inequality extends, up to an additional doubly logarithmic factor, to Banach space-valued functions under the necessary assumption that the target space has Rademacher type 2 and that this doubly logarithmic term can be omitted if the target space admits an equivalent 2-uniformly smooth norm. These are the first vector-valued extensions of Talagrand's influence inequality. Moreover, our proof implies vector-valued versions of a general family of $L_1-L_p$ inequalities, each refining the dimension independent $L_p$-Poincar\'e inequality on $(\ms{C}_n,\sigma_n)$. We also obtain a joint strengthening of results of Bakry--Meyer (1982) and Naor--Schechtman (2002) on the action of negative powers of the hypercube Laplacian on functions $f:\ms{C}_n\to E$, whose target space $(E,\|\cdot\|_E)$ has nontrivial Rademacher type via a new vector-valued version of Meyer's multiplier theorem (1984). Inspired by Talagrand's influence inequality, we introduce a new metric invariant called Talagrand type and estimate it for Banach spaces with prescribed Rademacher or martingale type, Gromov hyperbolic groups and simply connected Riemannian manifolds of pinched negative curvature. Finally, we prove that Talagrand type is an obstruction to the bi-Lipschitz embeddability of nonlinear quotients of the hypercube $\ms{C}_n$ equipped with the Hamming metric, thus deriving new nonembeddability results for these finite metrics. Our proofs make use of Banach space-valued It\^o calculus, Riesz transform inequalities, Littlewood--Paley--Stein theory and hypercontractivity.
\end{abstract}
\maketitle

\vspace{-0.4in}

\setcounter{tocdepth}{1}
\tableofcontents

%
%

\section{Introduction}

Let $\ms{C}_n=\{-1,1\}^n$ be the discrete hypercube equipped with the uniform probability measure $\sigma_n$. If $(E,\|\cdot\|_E)$ is a complex Banach space, we will denote the vector-valued $L_p(\sigma_n)$-norm of a function $f:\ms{C}_n\to E$ by
\begin{equation}
\forall \ p\in[1,\infty), \qquad\|f\|_{L_p(\sigma_n;E)} \eqdef \Big( \int_{\ms{C}_n} \|f(\e)\|_E^p\diff\sigma_n(\e) \Big)^{1/p}
\end{equation}
and $\|f\|_{L_\infty(\sigma_n;E)} \eqdef \max_{\e\in\ms{C}_n} \|f(\e)\|_E$. When $E=\C$, we will abbreviate $\|f\|_{L_p(\sigma_n;\C)}$ simply as $\|f\|_{L_p(\sigma_n)}$. We will also denote by $\mb{E}_{\sigma_n} f$ the expectation of $f$ with respect to $\sigma_n$. The $i$-th partial derivative of a function $f:\ms{C}_n\to E$ is given by
\begin{equation} \label{eq:partialderivative}
\forall \ \e\in\ms{C}_n, \qquad\partial_if(\e) = \frac{f(\e)-f(\e_1,\ldots,\e_{i-1},-\e_i,\e_{i+1},\ldots,\e_n)}{2}.
\end{equation}
The discrete Poincar\'e inequality asserts that every function $f:\ms{C}_n\to\C$ satisfies 
\begin{equation} \label{eq:poincare}
\big\| f-\mb{E}_{\sigma_n}f\big\|_{L_2(\sigma_n)}^2 \leq \sum_{i=1}^n \|\partial_if\|_{L_2(\sigma_n)}^2.
\end{equation}
Extensions and refinements of~\eqref{eq:poincare} have been a central object of study in the probability and analysis literature for decades. A natural problem, first raised by Enflo~\cite{Enf78}, is to understand for which target spaces $E$, every function $f:\ms{C}_n\to E$ satisfies~\eqref{eq:poincare} up to a universal multiplicative factor depending only on the geometry of $E$ but not $n$ or the choice of $f$. Recall that a Banach space $(E,\|\cdot\|_E)$ has Rademacher type $s$ with constant $T\in(0,\infty)$ if for every $n\in\N$ and $x_1,\ldots,x_n\in E$,
\begin{equation}
\int_{\ms{C}_n} \Big\|\sum_{i=1}^n \e_i x_i\Big\|_E^s \diff\sigma_n(\e) \leq T^s \sum_{i=1}^n \|x_i\|_E^s.
\end{equation}
It is evident that if a Banach space $E$ is such that every function $f:\ms{C}_n\to E$ satisfies
\begin{equation} \label{eq:vectorpoincare}
\big\| f-\mb{E}_{\sigma_n}f\big\|^2_{L_2(\sigma_n;E)} \leq C^2 \sum_{i=1}^n \|\partial_if\|_{L_2(\sigma_n;E)}^2,
\end{equation}
then $E$ has Rademacher type 2 with constant $C$, since this condition coincides with~\eqref{eq:vectorpoincare} for functions of the form $f(\e)=\sum_{i=1}^n\e_i x_i$, where $x_1,\ldots,x_n\in E$. The reverse implication, i.e.~the fact that Rademacher type 2 implies the vector-valued Poincar\'e inequality~\eqref{eq:vectorpoincare}, was proven in the recent breakthrough~\cite{IVV20} of Ivanisvili, van Handel and Volberg.

In a different direction, an important refinement of the scalar-valued discrete Poincar\'e inequality~\eqref{eq:poincare} was obtained by Talagrand in the celebrated work~\cite{Tal94}. Talagrand's influence inequality, also known as the $L_1-L_2$ inequality, asserts that there exists a universal constant $C\in(0,\infty)$ such that for every $n\in\N$, every function $f:\ms{C}_n\to\C$ satisfies
\begin{equation} \label{eq:talagrand}
\big\| f-\mb{E}_{\sigma_n}f\big\|_{L_2(\sigma_n)}^2 \leq C \sum_{i=1}^n \frac{\|\partial_if\|_{L_2(\sigma_n)}^2}{1+\log\big(\|\partial_if\|_{L_2(\sigma_n)}/\|\partial_i f\|_{L_1(\sigma_n)}\big)}.
\end{equation}
Observe that~\eqref{eq:talagrand} is a strengthening of the discrete Poincar\'e inequality~\eqref{eq:poincare} up to the value of the universal constant $C$, which becomes substantial for functions satisfying $\|\partial_if\|_{L_2(\sigma_n)} >\!\!\!>\|\partial_if\|_{L_1(\sigma_n)}$. Since its conception, Talagrand's inequality has played a major role in Boolean analysis~\cite{KKL88, FK96, Ros06, FS07, O'Do14}, percolation~\cite{Rus82, BKS03, BR08, Cha14, GS15} and geometric functional analysis~\cite{PVZ17, PV18, Tik18, PVT19}. In particular, applying~\eqref{eq:talagrand} to a Boolean function $f:\ms{C}_n\to\{0,1\}$, one readily recovers the celebrated theorem of Kahn, Kalai and Linial \cite{KKL88}, quantifying the fact that in any (essentially) unbiased voting scheme, there exists a voter with disproportionately large influence over the outcome of the vote. We refer to the above references and~\cite{CL12, Led19} for further bibliographical information on Talagrand's inequality. 

The main purpose of the present paper is to investigate vector-valued versions of Talagrand's inequality~\eqref{eq:talagrand} and other refinements and extensions of~\eqref{eq:poincare}. These new vector-valued inequalities motivate the definition of a new bi-Lipschitz invariant for metric spaces called {\em Talagrand type} (Definition \ref{def:talagrand}), which captures new KKL-type phenomena in embedding theory (see Theorem \ref{thm:embed} and the ensuing discussion). We shall now present a summary of these results, which rely on a range of stochastic and harmonic analytic tools such as Banach space-valued It\^o calculus, Riesz transforms and Littlewood--Paley--Stein theory, along with standard uses of hypercontractivity.

\medskip
\noindent{\bf Asymptotic notation.}  In what follows we use the convention that for $a,b\in[0,\infty]$ the notation $a\gtrsim b$ (respectively $a\lesssim b$) means that there exists a universal constant $c\in(0,\infty)$ such that $a\geq cb$ (respectively $a\leq cb$). Moreover, $a\asymp b$ stands for $(a\lesssim b)\wedge(a\gtrsim b)$. The notations $\lesssim_\xi, \gtrsim_\chi$ and $\asymp_\psi$ mean that the implicit constant $c$ depends on $\xi, \chi$ and $\psi$ respectively.


\subsection{Vector-valued influence inequalities} In view of Enflo's problem~\cite{Enf78} and its recent solution in~\cite{IVV20}, it would be most natural to try and understand for which Banach spaces $(E,\|\cdot\|_E)$ there exists a constant $C=C(E)\in(0,\infty)$ such that for every $n\in\N$, every function $f:\ms{C}_n\to E$ satisfies
\begin{equation} \label{eq:vectortalagrand}
\big\| f-\mb{E}_{\sigma_n}f\big\|_{L_2(\sigma_n;E)}^2 \leq C \sum_{i=1}^n \frac{\|\partial_if\|_{L_2(\sigma_n;E)}^2}{1+\log\big(\|\partial_if\|_{L_2(\sigma_n;E)}/\|\partial_i f\|_{L_1(\sigma_n;E)}\big)}.
\end{equation}
Evidently, as~\eqref{eq:vectortalagrand} is a strengthening of~\eqref{eq:vectorpoincare}, if a space $(E,\|\cdot\|_E)$ satisfies~\eqref{eq:vectortalagrand} then $E$ has Rademacher type 2. Conversely, we shall prove the following theorem.

\begin{theorem} [Vector-valued influence inequality for spaces with Rademacher type 2]  \label{thm:useivv}
Let $(E,\|\cdot\|_E)$ be a Banach space with Rademacher type 2. Then, there exists $C=C(E)\in(0,\infty)$ such that for every $\e\in(0,1)$ and $n\in\N$, every function $f:\ms{C}_n\to E$ satisfies
\begin{equation} \label{eq:thmuseivv}
\big\| f-\mb{E}_{\sigma_n}f\big\|_{L_2(\sigma_n;E)}^2 \leq \frac{C}{\e} \sum_{i=1}^n \frac{\|\partial_if\|_{L_2(\sigma_n;E)}^2}{1+\log^{1-\e}\big(\|\partial_if\|_{L_2(\sigma_n;E)}/\|\partial_i f\|_{L_1(\sigma_n;E)}\big)}.
\end{equation}
In particular, if $\sigma(f) \eqdef \max_{i\in\{1,\ldots,n\}}\log\log\big(e+\|\partial_if\|_{L_2(\sigma_n;E)}/\|\partial_i f\|_{L_1(\sigma_n;E)}\big)$, then
\begin{equation} \label{eq:thmuseivvloglog}
\big\| f-\mb{E}_{\sigma_n}f\big\|_{L_2(\sigma_n;E)}^2 \leq C\sigma(f) \sum_{i=1}^n \frac{\|\partial_if\|_{L_2(\sigma_n;E)}^2}{1+\log\big(\|\partial_if\|_{L_2(\sigma_n;E)}/\|\partial_i f\|_{L_1(\sigma_n;E)}\big)}.
\end{equation}
\end{theorem}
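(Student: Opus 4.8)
The plan is to interpolate between the vector-valued Poincaré inequality \eqref{eq:vectorpoincare}, which is available since $E$ has Rademacher type 2 by the Ivanisvili–van Handel–Volberg theorem \cite{IVV20}, and a cruder bound that exploits the gap between the $L_2$ and $L_1$ norms of the partials. The standard scheme for proving $L_1$–$L_2$ inequalities, going back to Talagrand, is to split each coordinate into a "high" and "low" part using the heat semigroup $e^{t\Delta}$ (or its vector-valued analogue) and argue separately on the two parts using hypercontractivity. Concretely, write $f - \E_{\sigma_n} f = \sum_{i=1}^n \Delta_i f$ in a suitable sense and, for a truncation parameter $t>0$ to be optimized per coordinate, estimate the contribution of $\partial_i f$ by combining (a) the $L_2$-decay of $e^{t\Delta}\partial_i f$ with (b) the hypercontractive smallness of $\partial_i f - e^{t\Delta}\partial_i f$ when $\|\partial_i f\|_{L_1}$ is much smaller than $\|\partial_i f\|_{L_2}$. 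The passage to the vector-valued setting replaces the scalar Parseval identity by the type-2 estimate \eqref{eq:vectorpoincare} applied to the first-level part and by vector-valued hypercontractivity (which holds for any Banach space) for the deficit.

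The key steps, in order, are: (1) Fix $\e \in (0,1)$ and, for each coordinate $i$, set $R_i \eqdef \|\partial_i f\|_{L_2(\sigma_n;E)} / \|\partial_i f\|_{L_1(\sigma_n;E)} \ge 1$ and choose $t_i \asymp \log^{1-\e}(e + R_i)$ — the tuning that produces the exponent $1-\e$ rather than the full logarithm. (2) Decompose $f - \E_{\sigma_n} f$, project onto the semigroup-smoothed piece and the complementary piece; for the smoothed piece apply \eqref{eq:vectorpoincare} together with the operator-norm decay of the semigroup on each Fourier level, which gains a factor roughly $e^{-t_i}$ (or a polynomial-in-$t_i$ factor after summing the geometric-type series over levels) — this is where the denominator $1 + \log^{1-\e}R_i$ appears. (3) For the complementary (high-frequency or un-smoothed) piece, use vector-valued hypercontractivity to bound its $L_2(\sigma_n;E)$ norm by something like $\|\partial_i f\|_{L_1(\sigma_n;E)}^{\theta(t_i)} \|\partial_i f\|_{L_2(\sigma_n;E)}^{1-\theta(t_i)}$ with $\theta(t_i)$ bounded below in terms of $t_i$, and check that with the above choice of $t_i$ this term is dominated by the right-hand side of \eqref{eq:thmuseivv} (here the $1/\e$ loss enters, from summing $\int_0^\infty$-type contributions or from the suboptimal exponent). (4) Add the two contributions over $i = 1,\dots,n$ and collect constants; this yields \eqref{eq:thmuseivv}. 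Finally, \eqref{eq:thmuseivvloglog} follows by optimizing the free parameter: set $\e \asymp 1/\sigma(f)$, so that $\log^{1-\e} x = \log x \cdot \log^{-\e} x \asymp \log x$ uniformly over $x \le e^{e^{\sigma(f)}}$, i.e. over all the ratios $R_i$ occurring, while the prefactor $C/\e$ becomes $C\sigma(f)$.

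The main obstacle I expect is engineering the semigroup truncation so that the denominator comes out as $1 + \log^{1-\e}R_i$ uniformly in the coordinate, while keeping the complementary term under control: one needs the gain on the smoothed part and the loss on the hypercontractive part to balance at $t_i \asymp \log^{1-\e}R_i$, and making this quantitative for Banach-space-valued functions — where one cannot diagonalize the Laplacian and must instead work level-by-level with the type-2 inequality and track how the constants depend on the Fourier degree — is the delicate part. A secondary technical point is justifying the decomposition and the exchange of sums/integrals for $E$-valued functions, and ensuring the hypercontractive bound used is the vector-valued (norm) version, valid for arbitrary $E$, rather than the scalar one. In fact, this analysis will be carried out (as the abstract indicates) through Littlewood–Paley–Stein theory and Riesz transform inequalities adapted to targets of nontrivial type, and Theorem \ref{thm:useivv} will be obtained as the $p=2$ case of the more general family of $L_1$–$L_p$ inequalities proved there.
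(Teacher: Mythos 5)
Your high-level scheme — semigroup decomposition, hypercontractivity, a Cauchy–Schwarz/interpolation step that incurs a $1/\e$ loss, and finally tuning $\e\asymp1/\sigma(f)$ to pass from \eqref{eq:thmuseivv} to \eqref{eq:thmuseivvloglog} — points in the right direction, and the final optimization step is exactly what the paper does. However, there are two genuine problems with the plan as stated.

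First, the per-coordinate truncation $t_i\asymp\log^{1-\e}(e+R_i)$, together with ``applying \eqref{eq:vectorpoincare} on the smoothed piece with operator-norm decay on each Fourier level,'' does not mesh in the vector-valued setting: the Fourier-level decomposition is global, not per coordinate, and without a Parseval-type identity there is no way to assign a separate smoothing time $t_i$ to the contribution of $\partial_if$ before invoking type~2. The paper circumvents this via Proposition~\ref{prop:extendivv}, a pointwise Ivanisvili--van Handel--Volberg identity expressing $\tfrac{\partial}{\partial t}P_tf$ as a single conditional expectation of $\sum_i\delta_i(t)\partial_if(\e\xi(t))$. Type~2 (for centered variables) is then applied under a single $t$-integral to decouple the coordinates, producing $\sum_i\|\partial_iP_tf\|_{L_2}^2$; Cauchy--Schwarz in $t$ with the split $(e^{2t}-1)^{-1}=(e^{2t}-1)^{-\e}(e^{2t}-1)^{-(1-\e)}$ gives the $1/\e$ factor (Lemma~\ref{lem:usetypeforivv}); and only afterwards is hypercontractivity together with an Orlicz-norm lemma (Lemma~\ref{lem:orlicz-withpower}, then Lemma~\ref{lem:orlicz-upper}) used to convert the $t$-integral into the coordinate-wise denominator $1+\log^{1-\e}R_i$. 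Your plan puts the per-coordinate optimization first, which would require a coordinate decoupling you have not supplied.

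Second, your final claim that Theorem~\ref{thm:useivv} is obtained as the $p=2$ case of the $L_1$--$L_p$ family via Littlewood--Paley--Stein theory and Riesz transforms is incorrect. Those tools are used elsewhere in the paper (the second proof of Theorem~\ref{thm:useeldan}, and Theorems~\ref{thm:naorschechtman} and \ref{thm:l1lpscalar}); Theorem~\ref{thm:useivv} has its own self-contained proof in Section~\ref{sec:3}. Moreover, the $L_1$--$L_p$ inequality of Theorem~\ref{thm:lptalagrand} controls $\|f-\mathbb{E}f\|_{L_p}$ by a \emph{single} Orlicz norm of the full gradient $\nabla f$ with exponent $\alpha_p\le\tfrac12$, which is a structurally different (and for this purpose weaker) estimate than \eqref{eq:thmuseivv}, whose right-hand side is a \emph{sum} over coordinates with exponent $1-\e$.
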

The proof of Theorem~\ref{thm:useivv} builds upon a novel idea exploited in~\cite{IVV20}, which in turn is reminiscent of a trick due to Maurey~\cite{Pis86}.
It remains unclear whether one can deduce from this idea a vector-valued extension of Talagrand's inequality~\eqref{eq:talagrand} for spaces of Rademacher type 2 and whether the doubly logarithmic error term $\sigma(f)$ on the right hand side of \eqref{eq:thmuseivvloglog} is needed.
Let us mention that, even in the scalar-valued case,  the argument of Maurey or the one of Ivanisvili, van Handel and Volberg are slightly different than standard semigroup approaches to functional inequalities, in particular to the semigroup proof of~\eqref{eq:talagrand}
 from~\cite{CL12}. 
 On the other hand,   
 we will see that a slightly stronger condition on the Banach space allows for different approaches, relying on more intricate connections between the space and the semigroup, which will lead to the desired optimal vector-valued $L_1-L_2$ inequality.
 Recall first that a Banach space $(E,\|\cdot\|_E)$ has martingale type s with constant $M\in(0,\infty)$ if for every $n\in\N$, every probability space $(\Omega,\ms{F},\mu)$ and every filtration $\{\ms{F}_i\}_{i=0}^n$ of sub-$\sigma$-algebras of $\ms{F}$, \mbox{every $E$-valued martingale $\{\ms{M}_i:\Omega\to E\}_{i=0}^n$ adapted to $\{\ms{F}_i\}_{i=0}^n$ satisfies}
\begin{equation} \label{eq:mtype}
\big\|\ms{M}_n-\ms{M}_0\big\|_{L_s(\mu;E)}^s \leq M^s \sum_{i=1}^n \big\| \ms{M}_i-\ms{M}_{i-1}\big\|^s_{L_s(\mu;E)}.
\end{equation}
Martingale type, which is a strengthening of Rademacher type, was introduced by Pisier in~\cite{Pis75}, who proved the fundamental fact that for every $s\in(1,2]$, a Banach space $E$ has martingale type $s$ if and only if $E$ admits an equivalent $s$-uniformly smooth norm (see~\cite{Pis75, Pis16} for further information on these important notions).

\begin{theorem}  [Vector-valued influence inequality for spaces with martingale type 2]  \label{thm:useeldan}
Let $(E,\|\cdot\|_E)$ be a Banach space with martingale type 2. Then, there exists $C=C(E)\in(0,\infty)$ such that for every $n\in\N$, every function $f:\ms{C}_n\to E$ satisfies
\begin{equation} \label{eq:thmuseeldan}
\big\| f-\mb{E}_{\sigma_n}f\big\|_{L_2(\sigma_n;E)}^2 \leq C \sum_{i=1}^n \frac{\|\partial_if\|_{L_2(\sigma_n;E)}^2}{1+\log\big(\|\partial_if\|_{L_2(\sigma_n;E)}/\|\partial_i f\|_{L_1(\sigma_n;E)}\big)}.
\end{equation}
\end{theorem}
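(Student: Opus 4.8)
The plan is to run a semigroup argument adapted to the vector-valued setting, exploiting the fact that martingale type $2$ gives access to Littlewood--Paley--Stein inequalities for $E$-valued functions under the heat semigroup $\{e^{-t\mathcal{L}}\}_{t\ge0}$ on $\ms{C}_n$ (where $\mathcal{L}=\sum_i \partial_i$ is the Laplacian), which is exactly the ingredient missing in the Rademacher type $2$ case treated by the Maurey/Ivanisvili--van Handel--Volberg trick. Concretely, I would first reduce to mean-zero $f$ and use the spectral decomposition $f=\sum_{k\ge1} f_k$ into levels, writing $\|f\|_{L_2(\sigma_n;E)}^2$ (up to constants, using type $2$ and the Poincar\'e-type comparison from the reverse direction of \cite{IVV20}) in terms of an $E$-valued square function of the derivative semigroup, say $\int_0^\infty \big\|\tfrac{\diff}{\diff t} e^{-t\mathcal{L}} f\big\|^2\,t\diff t$ or a discretized martingale analogue. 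Martingale type $2$ is what lets us pass from the pointwise $E$-norms to the $L_2(\sigma_n;E)$-norm with dimension-free constants here.

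The second and main step is to insert the logarithmic gain coordinate by coordinate. For each $i$, one compares $\|\partial_i f\|_{L_2(\sigma_n;E)}$ with $\|\partial_i f\|_{L_1(\sigma_n;E)}$ via hypercontractivity of the hypercube semigroup: $\partial_i f$ has a one-sided Fourier support shift, so $e^{-t\mathcal{L}}\partial_i f$ decays at a rate controlled by the ratio $R_i:=\|\partial_i f\|_{L_2(\sigma_n;E)}/\|\partial_i f\|_{L_1(\sigma_n;E)}$; integrating the square function against the weight $t\diff t$ and cutting the integral at the scale $t\asymp \log R_i$ produces the denominator $1+\log R_i$. Here the vector-valued hypercontractivity is not available for general $E$, but for the relevant estimate one only needs the scalar hypercontractive inequality applied to $\|\partial_i f(\cdot)\|_E$ together with the contractivity of the semigroup on $L_p(\sigma_n;E)$ for every $p$, or alternatively the vector-valued Meyer multiplier theorem and the Bakry--Meyer/Naor--Schechtman type bounds announced in the abstract; I would route the argument through whichever of these is proved earlier in the paper so that the Riesz transform bounds $\|R_i g\|_{L_2(\sigma_n;E)}\lesssim_E \|\mathcal{L}^{1/2} g\|_{L_2(\sigma_n;E)}$ are available with dimension-free constants coming from martingale type $2$.

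The third step is to assemble: write $\|f-\E_{\sigma_n}f\|_{L_2(\sigma_n;E)}^2 \lesssim_E \sum_i \|R_i(\text{something})\|^2$, bound each term by $\|\partial_i f\|_{L_2(\sigma_n;E)}^2/(1+\log R_i)$ using the scale-cutting from step two, and sum. The one subtlety, which I expect to be the genuine obstacle, is that the naive square-function identity loses the logarithmic factor unless the semigroup is applied carefully to the \emph{level-$k$} pieces with $k$-dependent weights — i.e.\ one must interleave the Littlewood--Paley decomposition with the hypercontractive smoothing so that low-frequency mass (which carries no log gain) is controlled directly by Poincar\'e while only the genuinely high-frequency mass is charged against $1+\log R_i$; making this split uniform in $n$ and compatible with the $E$-valued martingale-type estimate is where the real work lies, and it is precisely the point where martingale type $2$ (equivalently, an equivalent $2$-uniformly smooth norm, by Pisier \cite{Pis75}) is essential and Rademacher type $2$ appears insufficient. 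Once the split is in place the remaining estimates are routine manipulations with the heat kernel on $\ms{C}_n$ and standard uses of hypercontractivity.
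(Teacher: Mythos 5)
Your overall strategy — pass to an $E$-valued square function via a Littlewood--Paley--Stein inequality for which martingale type $2$ is the correct hypothesis, then inject the logarithmic gain coordinatewise through hypercontractivity — is the same as the second of the two proofs the paper gives (Section~\ref{subsec:lps}, using Xu's theorem). But there are two gaps in the proposal, one substantive and one diagnostic.

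The substantive gap is the passage from the square function $\int_0^\infty \|t\Delta P_t f\|_{L_2(\sigma_n;E)}^2\,\tfrac{\diff t}{t}$ that Xu's inequality \eqref{eq:xu} produces to the \emph{coordinatewise} sum $\sum_i\int_0^\infty e^{-2u}\|\partial_i P_u f\|_{L_2(\sigma_n;E)}^2\,\diff u$ that Talagrand's inequality requires. This is not free: you need the identity of Ivanisvili--van Handel--Volberg, Proposition~\ref{prop:extendivv}, which writes $\Delta P_{2t}f$ as an expectation of a sum $\sum_i\delta_i(t)\,\partial_i P_t f(\e\xi(t))$ over a suitably biased random vector, together with Rademacher type $2$ (for centered, not just symmetric, random variables) to collapse that expectation into $\sum_i\|\partial_i P_t f\|^2$ with the crucial time weight $(e^{2t}-1)^{-1}$. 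Your proposal never explains how the norm of the Laplacian applied to $f$ turns into a sum over $i$ of norms of the $\partial_i f$; without that, one only gets a bound in terms of the gradient norm $\|\nabla f\|$, which does not strengthen the Poincar\'e inequality coordinatewise as Talagrand's inequality does.

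The diagnostic gap is that you identify the ``genuine obstacle'' as needing to interleave a Littlewood--Paley level decomposition with $k$-dependent hypercontractive weights, separating low-frequency from high-frequency mass. The paper's proof does nothing of the sort: once you reach $\sum_i\int_0^\infty e^{-2u}\|\partial_i P_u f\|^2_{L_2(\sigma_n;E)}\,\diff u$, you apply hypercontractivity in the simple form $\|\partial_i P_u f\|_{L_2(\sigma_n;E)}\leq\|\partial_i f\|_{L_{1+e^{-2u}}(\sigma_n;E)}$ (which extends to $E$-valued functions automatically, since $P_u$ is an averaging operator), and then the entire logarithmic gain is supplied by the elementary Orlicz-calculus Lemmas~\ref{lem:orlicz-withpower} and~\ref{lem:orlicz-upper}, which require no frequency splitting at all. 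Your remark about routing through vector-valued Riesz transforms is also a dead end: dimension-free bounds $\|\Delta^{1/2}f\|_{L_p(\sigma_n;E)}\lesssim\|\nabla f\|_{L_p(\sigma_n;E)}$ are not known under martingale type $2$ (see Section~\ref{sec:8}, where this is explicitly recorded as an open question), and the paper uses Lust-Piquard's Riesz transform inequality only in the scalar case, for Theorem~\ref{thm:l1lpscalar}. Finally, you might note that the paper also gives a second, entirely probabilistic proof of the same theorem via the Eldan--Gross jump process on the cube, which replaces Xu's theorem by a direct use of the martingale type $2$ inequality along the process and may be worth studying as an alternative.
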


Theorem~\ref{thm:useeldan} establishes the optimal vector-valued influence inequality for spaces of martingale type 2. We will present two proofs of Theorem~\ref{thm:useeldan}. The first one uses a clever stochastic process on the cube which was recently constructed by Eldan and Gross~\cite{EG19}, while the second relies on Xu's vector-valued Littlewood--Paley--Stein inequalities for superreflexive targets (see~\cite{Xu20}). There exist examples of exotic Banach spaces~\cite{Jam78, PX87} which have Rademacher type 2 yet fail to have martingale type 2, thus Theorem~\ref{thm:useeldan} does not exhaust the list of potential target spaces satisfying~\eqref{eq:vectortalagrand}. Nevertheless, a combination of classical results of Maurey~\cite{Mau74}, Pisier~\cite{Pis75} and Figiel~\cite{Fig76} imply that every Banach lattice of Rademacher type 2 has martingale type 2. 

The influence inequalities of Theorems \ref{thm:useivv} and \ref{thm:useeldan} have analogues for spaces of Rademacher and martingale type $s$ which will be presented in Section \ref{sec:9.1} for the sake of simplicity of exposition.


\subsection{$L_1-L_p$ inequalities} For a function $f:\ms{C}_n\to\C$ denote by
\begin{equation} \label{eq:defnormgrad}
\forall \ p\in[1,\infty), \qquad\big\|\nabla f\big\|_{L_p(\sigma_n)} \eqdef \Big\|\Big( \sum_{i=1}^n\big(\partial_i f\big)^2\Big)^{1/2}\Big\|_{L_p(\sigma_n)}
\end{equation}
the $L_p$ norm of the gradient of $f$. It has already been pointed out that Talagrand's influence inequality~\eqref{eq:talagrand} is a refinement of the discrete Poincar\'e inequality~\eqref{eq:poincare}. It is therefore worth investigating whether similar strengthenings of the $L_p$ discrete Poincar\'e inequality
\begin{equation} \label{eq:lppoincare}
\big\|f-\mb{E}_{\sigma_n}f\big\|_{L_p(\sigma_n)} \leq C_p \big\|\nabla f\big\|_{L_p(\sigma_n)}
\end{equation}
hold true for other values of $p$. The fact that for every $p\in[1,\infty)$ there exists a constant $C_p\in(0,\infty)$ such that~\eqref{eq:lppoincare} holds true for every $n\in\N$ and $f:\ms{C}_n\to\C$ was established by Talagrand in~\cite{Tal93}.

In the vector-valued setting which is of interest here, the most common substitute of~\eqref{eq:defnormgrad} for the norm of the gradient of a function $f:\ms{C}_n\to E$, where $(E,\|\cdot\|_E)$ is a Banach space, is
\begin{eqnarray*}
\forall \ p\in[1,\infty), \quad \big\|\nabla f\big\|_{L_p(\sigma_n;E)} & \eqdef & \Big(\int_{\ms{C}_n} \Big\|\sum_{i=1}^n \delta_i \partial_i f \Big\|_{L_p(\sigma_n;E)}^p \diff\sigma_n(\delta)\Big)^{1/p}\\
& =&  \Big(\int_{\ms{C}_n\times\ms{C}_n}  \Big\|\sum_{i=1}^n \delta_i \partial_i f(\varepsilon) \Big\|_{E}^p \diff\sigma_{2n}(\varepsilon,\delta) \Big)^{1/p}
\end{eqnarray*}
Observe that when $E=\C$, for every $p\in[1,\infty)$, we have $\big\|\nabla f\big\|_{L_p(\sigma_n;\C)} \asymp_p \big\|\nabla f\big\|_{L_p(\sigma_n)}$ by Khintchine's inequality~\cite{Khi23}. With this definition, the vector-valued extension
\begin{equation} \label{eq:pisierineq}
\big\|f-\mb{E}_{\sigma_n}f\big\|_{L_p(\sigma_n;E)} \leq C_p(n) \big\|\nabla f\big\|_{L_p(\sigma_n;E)}
\end{equation}
of~\eqref{eq:lppoincare} is called Pisier's inequality, since Pisier established in~\cite{Pis86} the validity of~\eqref{eq:pisierineq} for every Banach space $E$ and $p\in[1,\infty)$ with $C_p(n)=2e\log n$. Understanding for which Banach spaces $E$ and $p\in[1,\infty)$ the constant $C_p(n)$ in Pisier's inequality could be replaced by a constant $C_p(E)$, independent of the dimension $n$, was a long-standing open problem settled in the recent work~\cite{IVV20}. We will recall below, in~\eqref{eq:cotype-q}, the definition of Rademacher cotype; let us simply say here that a Banach space $E$ has finite cotype  if $E$ does not isomorphically contain the family $\{\ell_\infty^n\}_{n=1}^\infty$ with uniformly bounded distortion (see~\cite{MP76, Pis16}). In~\cite{IVV20}, Ivanisvili, van Handel and Volberg proved that a Banach space $E$ with finite cotype satisfies~\eqref{eq:pisierineq} with $C_p(n)$ replaced by a universal constant $C_p(E)$, thus complementing a result of Talagrand~\cite{Tal93} who proved that if a space does not have finite cotype, then $C_p(n) \asymp_p \log n$.

\begin{theorem} [Vector-valued $L_1-L_p$ inequality for spaces of finite cotype] \label{thm:lptalagrand}
Let $(E,\|\cdot\|_E)$ be a Banach space with finite Rademacher cotype and $p\in(1,\infty)$. Then, there exists $C_p=C_p(E)\in(0,\infty)$ and $\alpha_p=\alpha_p(E)\in\big(0,\tfrac{1}{2}\big]$ such that for every $n\in\N$, every function $f:\ms{C}_n\to E$ satisfies
\begin{equation} \label{eq:thmlptalagrand}
\big\| f-\mb{E}_{\sigma_n}f\big\|_{L_p(\sigma_n;E)} \leq C_p \frac{\|\nabla f\|_{L_p(\sigma_n;E)}}{1+\log^{\alpha_p}\big(\|\nabla f\|_{L_p(\sigma_n;E)}/\|\nabla f\|_{L_1(\sigma_n;E)}\big)}.
\end{equation}
\end{theorem}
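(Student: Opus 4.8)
The plan is to reduce the vector-valued $L_1$--$L_p$ inequality to a smoothing estimate for the heat semigroup $\{e^{-t\Delta}\}_{t\geq 0}$ on the hypercube, combined with a decomposition of $f$ into dyadic frequency blocks. First I would recall that by Talagrand's theorem from \cite{Tal93}, the space $E$ satisfies the dimension-free $L_p$-Poincaré inequality \eqref{eq:lppoincare} with a constant $C_p(E)$; since $E$ has finite cotype, this is exactly what \cite{IVV20} provides. The point is to interpolate between this and a much stronger bound that holds whenever $\|\nabla f\|_{L_p}$ is only slightly larger than $\|\nabla f\|_{L_1}$. Concretely, write $f-\mathbb{E}_{\sigma_n}f = \sum_{j\geq 0}\Delta_j f$, where $\Delta_j f$ is the Littlewood--Paley block supported on levels $k$ with $2^{j}\leq k<2^{j+1}$ (or rather $\Delta_j f = (e^{-2^{-j}\Delta}-e^{-2^{-j+1}\Delta})f$ with the appropriate low-frequency correction), and estimate each block separately. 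The key ingredients will be: (i) vector-valued hypercontractivity/semigroup smoothing, giving that the high-frequency part decays, and (ii) an $L_p$ Littlewood--Paley--Stein square function inequality for $E$-valued functions, valid since $E$ has finite cotype and hence (being also of some nontrivial type in the relevant direction, or by passing to a finite-cotype reflexive renorming) the Poisson or heat semigroup satisfies Xu-type vertical estimates \cite{Xu19}.

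The second step is the truncation argument that produces the logarithmic gain. Set $\rho \eqdef \|\nabla f\|_{L_p(\sigma_n;E)}/\|\nabla f\|_{L_1(\sigma_n;E)} \geq 1$ and fix a threshold $T\asymp\log\rho$. I would split $f-\mathbb{E}f$ into the low-frequency part $P_{\leq T}f$ (levels up to $\sim T$) and the high-frequency part $P_{>T}f$. For the high-frequency part, apply the vector-valued $L_p$-Poincaré inequality \eqref{eq:pisierineq} (with the dimension-free constant) to $P_{>T}f$ together with the fact that the gradient of $P_{>T}f$ has $L_p$ norm controlled by $\|\nabla f\|_{L_p}$; crucially, semigroup smoothing lets one bound $\|\nabla P_{>T}f\|_{L_p(\sigma_n;E)}$ in terms of $\|\nabla f\|_{L_1(\sigma_n;E)}$ up to a polynomial factor in $T$, exploiting that each application of $e^{-\Delta}$ improves $L_1\to L_p$ via hypercontractivity on the finitely many surviving low blocks — wait, this needs care; the correct mechanism is that $\|\nabla P_{\leq T}f\|_{L_p}\lesssim e^{CT}\|\nabla f\|_{L_1}$ by hypercontractivity applied blockwise, so one instead treats $P_{\leq T}f$ via $L_1$ data and $P_{>T}f$ via decay. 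For the low-frequency part $P_{\leq T}f$, use the $L_p$-Poincaré inequality and then bound $\|\nabla P_{\leq T}f\|_{L_p(\sigma_n;E)}$: by hypercontractivity the block at level $k\leq T$ contributes at most $e^{Ck}\|\nabla f\|_{L_1}$-type terms, summing to $\lesssim e^{CT}\|\nabla f\|_{L_1} = \rho^{C}\|\nabla f\|_{L_1}\asymp \rho^{C-1}\|\nabla f\|_{L_p}$; choosing $T$ a small multiple of $\log\rho$ makes this $\leq \tfrac12\rho^{-\delta}\|\nabla f\|_{L_p}$ for some $\delta>0$, which is the sought gain. Combining the two parts, with the high-frequency part handled by the square-function inequality of \cite{Xu19} — $\|P_{>T}f\|_{L_p(\sigma_n;E)}\lesssim \|(\sum_{j>T}\|\Delta_j f\|^2)^{1/2}\|_{L_p}$ and the tail decays geometrically once we are above frequency $T$, since the gradient square function already converges — yields \eqref{eq:thmlptalagrand} with $\alpha_p = \alpha_p(E)$ determined by the interplay between the hypercontractivity exponent and the Littlewood--Paley constant.

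The main obstacle I anticipate is obtaining a clean \emph{vector-valued} Littlewood--Paley--Stein inequality with an $L_p$ gradient (rather than a semigroup-derivative) square function on the right-hand side that is simultaneously compatible with the hypercontractive smoothing of the low-frequency blocks. In the scalar case one freely converts between $\|\nabla f\|_{L_p}$ and discrete-time martingale square functions via Khintchine's inequality and the commutation of $\partial_i$ with the semigroup, but for a general finite-cotype Banach space $E$ one only has one-sided square-function estimates, and the direction needed here (reconstructing $\|f-\mathbb{E}f\|_{L_p}$ from vertical square functions of semigroup derivatives) is precisely the harder inequality supplied by Xu \cite{Xu19} under superreflexivity; extending it to merely finite cotype, or circumventing it via the \cite{IVV20} machinery directly at the level of the truncated function, is the technical crux. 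A secondary nuisance is bookkeeping the correction terms near frequency $0$ so that $\mathbb{E}_{\sigma_n}f$ is subtracted consistently across all blocks, but this is routine. I expect the exponent $\alpha_p$ to be non-explicit because it inherits the non-explicit constants from \cite{IVV20} and \cite{Xu19}; making it explicit (or showing $\alpha_p=\tfrac12$) would require the finer martingale-type approach of Theorem \ref{thm:useeldan} rather than the finite-cotype argument sketched here.
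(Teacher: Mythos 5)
Your proposal departs from the paper's proof in a fundamental way: the paper performs no frequency truncation and invokes no Littlewood--Paley square-function reconstruction. Instead it writes $\|f-\mathbb{E}_{\sigma_n}f\|_{L_p(\sigma_n;E)} \leq 2\int_0^\infty \|\Delta P_{2t}f\|_{L_p(\sigma_n;E)}\,\diff t$, bounds the integrand for each fixed $t$ using Proposition~\ref{prop:extendivv} together with a cotype-dependent comparison between biased and unbiased Rademacher averages (inequality~\eqref{eq:rademachercomparison}, taken from the proof of Proposition~4.2 in \cite{IVV20} via \cite[Proposition 3.2]{Pis86}), then applies hypercontractivity with Kahane's inequality and finally Lemma~\ref{lem:log-nopower} to convert the resulting $t$-integral into the $L_1$--$L_p$ factor. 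The cotype hypothesis enters the paper's argument only through~\eqref{eq:rademachercomparison}, which controls the cost of replacing $\delta_i(t)$ by unbiased signs $\delta_i$ and directly yields $\alpha_p(E)=1/\max\{p,q\}$; this ingredient --- the actual mechanism by which cotype is used --- does not appear anywhere in your sketch.

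The gap in your proposal sits exactly where you flagged it. Reconstructing $\|P_{>T}f\|_{L_p(\sigma_n;E)}$ from a vertical square function of semigroup derivatives or from Littlewood--Paley blocks is the ``hard'' direction of Littlewood--Paley--Stein theory, and as you acknowledge, Xu's theorem provides it under martingale type/superreflexivity, not under finite cotype. Finite cotype is strictly weaker: e.g.\ $L_1(\mu)$ has cotype $2$ but is not even reflexive, so Theorem~\ref{thm:lptalagrand} must apply to $L_1$-valued functions while Xu's reconstruction estimate is unavailable there. The paper avoids this obstruction entirely because it never attempts a square-function reconstruction --- the integral representation of $f-\mathbb{E}_{\sigma_n}f$ already gives a \emph{linear} upper bound on $\|\Delta P_{2t}f\|_{L_p}$ in terms of the gradient, at the price of the suboptimal exponent $\alpha_p$, and the only probabilistic input needed is the one-sided comparison~\eqref{eq:rademachercomparison}. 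Your hinted fallback of ``circumventing via the IVV machinery directly at the level of the truncated function'' is on the right track conceptually, but the paper shows one can dispense with the truncation altogether; the truncation-plus-square-function route naturally wants martingale type (and then one gets the stronger Theorem~\ref{thm:useeldan} instead), whereas the point of Theorem~\ref{thm:lptalagrand} is to obtain a weaker logarithmic gain from the weaker hypothesis of finite cotype.
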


The proof of Theorem~\ref{thm:lptalagrand} builds upon the technique of~\cite{IVV20}. A stronger inequality for functions on the Gauss space will be presented in Theorem~\ref{thm:usemp}. This approach seems insufficient to yield the optimal $\alpha_p=\tfrac{1}{2}$ exponent for $E=\C$ and all $p>1$, yet we derive the following result using Lust-Piquard's Riesz transform inequalities~\cite{LP98, BELP08}. 

\begin{theorem} [Scalar-valued $L_1-L_p$ inequality] \label{thm:l1lpscalar}
For every $p\in(1,\infty)$, there exists $C_p\in(0,\infty)$ such that for every $n\in\N$, every function $f:\ms{C}_n\to\C$ satisfies
\begin{equation} \label{eq:thml1lpscalar}
\big\| f-\mb{E}_{\sigma_n}f\big\|_{L_p(\sigma_n)} \leq C_p \frac{\|\nabla f\|_{L_p(\sigma_n)}}{1+\sqrt{\log\big(\|\nabla f\|_{L_p(\sigma_n)}/\|\nabla f\|_{L_1(\sigma_n)}\big)}}.
\end{equation}
\end{theorem}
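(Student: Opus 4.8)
\emph{Proof proposal.} The proof I propose combines the subordination formula for the heat semigroup, Lust-Piquard's Riesz transform inequality and hypercontractivity. We may assume $f$ is non-constant. Let $\Delta=\sum_{i=1}^n\partial_i$ denote the hypercube Laplacian (so that $\Delta w_S=|S|\,w_S$) and let $\{P_t\}_{t\ge0}=\{e^{-t\Delta}\}_{t\ge0}$ be the associated semigroup. Since $f-\mb{E}_{\sigma_n}f$ is mean-zero and the spectrum of $\Delta$ on mean-zero functions lies in $[1,\infty)$, the subordination identity $\Delta^{-1/2}=\tfrac{1}{\sqrt\pi}\int_0^\infty t^{-1/2}P_t\diff t$ holds there and yields
\begin{equation*}
f-\mb{E}_{\sigma_n}f=\frac{1}{\sqrt\pi}\int_0^\infty t^{-1/2}\,\Delta^{1/2}\big(P_tf-\mb{E}_{\sigma_n}f\big)\diff t.
\end{equation*}
Taking $L_p(\sigma_n)$-norms, applying the triangle inequality for the integral and then invoking Lust-Piquard's inequality \cite{LP98,BELP08} in the form $\|\Delta^{1/2}u\|_{L_p(\sigma_n)}\lesssim_p\|\nabla u\|_{L_p(\sigma_n)}$ (valid for mean-zero $u$), it suffices to prove
\begin{equation} \label{eq:proposal-reduction}
\int_0^\infty t^{-1/2}\,\big\|\nabla P_tf\big\|_{L_p(\sigma_n)}\diff t\;\lesssim_p\;\frac{\|\nabla f\|_{L_p(\sigma_n)}}{1+\sqrt{\log\big(\|\nabla f\|_{L_p(\sigma_n)}/\|\nabla f\|_{L_1(\sigma_n)}\big)}}.
\end{equation}

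Write $A=\|\nabla f\|_{L_p(\sigma_n)}$, $B=\|\nabla f\|_{L_1(\sigma_n)}$ and $R=\log(A/B)\ge0$. We may assume $R$ exceeds a suitable constant depending only on $p$, since otherwise the right-hand side of \eqref{eq:proposal-reduction} is $\asymp_p A$ and the estimate reduces to the dimension-free $L_p$-Poincar\'e inequality of Talagrand \cite{Tal93}. The key is two complementary bounds for $\|\nabla P_tf\|_{L_p(\sigma_n)}$, regarded as the $L_p(\sigma_n;\ell_2^n)$-norm of $(\partial_1P_tf,\dots,\partial_nP_tf)=(P_t\partial_1f,\dots,P_t\partial_nf)$. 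First, hypercontractivity---which holds for functions valued in the Hilbert space $\ell_2^n$ with the same constants as in the scalar case---gives $\|\nabla P_tf\|_{L_p(\sigma_n)}\le\|\nabla f\|_{L_{q(t)}(\sigma_n)}$ for $q(t)=1+(p-1)e^{-2t}\in(1,p]$, so by log-convexity of $r\mapsto\log\|\nabla f\|_{L_r(\sigma_n)}$,
\begin{equation*}
\big\|\nabla P_tf\big\|_{L_p(\sigma_n)}\;\le\;B^{1-\theta(t)}A^{\theta(t)}\;=\;A\,e^{-R(1-\theta(t))},\qquad 1-\theta(t)=\frac{1-e^{-2t}}{1+(p-1)e^{-2t}}\;\ge\;\frac{1-e^{-2}}{p}\min\{t,1\}.
\end{equation*}
Secondly, since $(\partial_1f,\dots,\partial_nf)$ is mean-zero, combining the spectral gap of $\Delta$ with one more application of hypercontractivity (and a duality argument to handle $1<p<2$) yields $\|\nabla P_tf\|_{L_p(\sigma_n)}\lesssim_p e^{-t}A$ for $t\gtrsim_p 1$.

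To prove \eqref{eq:proposal-reduction} I would split the integral at $T=\tfrac12\log R$, which for $R$ large lies in the range where the second bound is available. On $[0,T]$ the first bound gives, with $\kappa_p=(1-e^{-2})/p$,
\begin{equation*}
\int_0^{T}t^{-1/2}\big\|\nabla P_tf\big\|_{L_p(\sigma_n)}\diff t\;\lesssim_p\;A\!\int_0^{1}\! t^{-1/2}e^{-\kappa_p Rt}\diff t+A\,e^{-\kappa_p R}\!\!\int_1^{T}\!\! t^{-1/2}\diff t\;\lesssim_p\;\frac{A}{\sqrt R}+A\,e^{-\kappa_p R}\log R\;\lesssim_p\;\frac{A}{\sqrt R},
\end{equation*}
while on $[T,\infty)$ the second bound gives $\int_T^\infty t^{-1/2}\|\nabla P_tf\|_{L_p(\sigma_n)}\diff t\lesssim_p A\,T^{-1/2}e^{-T}\lesssim_p A/\sqrt R$. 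Adding the two contributions proves \eqref{eq:proposal-reduction}, and hence Theorem~\ref{thm:l1lpscalar}.

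The crux of the argument---and the reason this route yields the optimal exponent $\tfrac12$ rather than a smaller power of the logarithm---is the interaction between the weight $t^{-1/2}$ supplied by the subordination formula and the Gaussian-type factor $e^{-\kappa_p Rt}$ supplied by hypercontractivity, whose ``convolution'' has size exactly $R^{-1/2}$. Lust-Piquard's Riesz transform inequality intervenes precisely in order to replace $\|f-\mb{E}_{\sigma_n}f\|_{L_p(\sigma_n)}$ by an average over $t$ of $\|\nabla P_tf\|_{L_p(\sigma_n)}$, which is the quantity that interpolates correctly between $\|\nabla f\|_{L_p(\sigma_n)}$ and $\|\nabla f\|_{L_1(\sigma_n)}$. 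I expect the point requiring the most care to be the use of $\ell_2^n$-valued hypercontractivity when $1<p<2$---for $p\ge2$ it may be circumvented using scalar hypercontractivity together with the triangle inequality in $L_{p/2}(\sigma_n)$ and the reverse Minkowski inequality---together with the bookkeeping of the $p$-dependent constants and of the bounded range of $R$.
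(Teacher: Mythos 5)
Your proof is correct, and while it rests on the same core ingredients as the paper's argument --- Lust-Piquard's Riesz transform inequality \eqref{eq:lust-piquard}, subordination of $\Delta^{-1/2}$, hypercontractivity, and exponential decay of the semigroup on mean-zero functions --- the two arguments are organized in genuinely different ways. The paper applies Lust-Piquard once, to $u=\Delta^{-1/2}f$, obtaining $\|f-\mb{E}_{\sigma_n}f\|_{L_p}\lesssim_p\|\nabla\Delta^{-1/2}f\|_{L_p}$, then uses Khintchine to recast the latter as the $L_p(\sigma_n;L_p(\sigma_n))$-norm of $\Delta^{-1/2}F$ for a suitable $L_p$-valued function $F$, and finally invokes Theorem~\ref{thm:naorschechtman} as a black box (whose proof uses subordination, Pisier's $K$-convexity theorem, hypercontractivity, and Lemma~\ref{lem:log-nopower}). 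You instead insert the subordination identity first and apply Lust-Piquard inside the integral at every time $t$, reducing to \eqref{eq:proposal-reduction}, and then estimate $\|\nabla P_tf\|_{L_p}$ hands-on via $\ell_2^n$-valued hypercontractivity, Riesz--Thorin log-convexity, and the spectral gap (the last two steps being exactly the content of Lemma~\ref{lem:log-nopower}, which you reprove by the explicit split at $T=\tfrac12\log R$). What the paper's route buys is brevity and modularity, reusing a vector-valued statement already established for general targets of nontrivial type; what your route buys is elementarity and self-containment: since the target $\ell_2^n$ is a Hilbert space, you need neither the Khintchine reformulation nor $K$-convexity, only scalar hypercontractivity (for Hilbert targets) plus the $L_2$ spectral gap and a duality step for $1<p<2$. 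Both of these ancillary facts are standard, but you are right that they deserve to be written out; the argument as sketched is sound, and your bookkeeping --- in particular the bound $1-\theta(t)\geq\kappa_p\min\{t,1\}$ and the split of the integral at $T=\tfrac{1}{2}\log R$ --- yields the sharp exponent $\tfrac12$ exactly as in the paper.
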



\subsection{Negative powers of the Laplacian} Let $(\Omega,\mu)$ be a finite measure space, $(E,\|\cdot\|_E)$ be a Banach space and $p\in[1,\infty]$. If $T:L_p(\mu)\to L_p(\mu)$ is a bounded linear operator, then, by abuse of notation, we will also denote by $T$ its natural $E$-valued extension $T\equiv T\otimes \msf{Id}_E:L_p(\mu;E)\to L_p(\mu;E)$. 

The discrete derivatives~\eqref{eq:partialderivative} on the Hamming cube $\ms{C}_n$ satisfy $\partial_i^2=\partial_i$ for every $i\in\{1,\ldots,n\}$ and thus the hypercube Laplacian is defined as $\Delta\eqdef\sum_{i=1}^n\partial_i$. Note that for $g,h$ on $\ms{C}_n$ with values in $\C$ and $E$ respectively,  we have
\begin{equation}\label{eq:ipp}
\forall \ i\in\{1,\ldots,n\}, \quad \mb{E}_{\sigma_n} [g \, \partial_i h] = \mb{E}_{\sigma_n}[ (\partial_i g) h] =  \mb{E}_{\sigma_n}[ (\partial_i g) (\partial_i h)]
\end{equation}
The formula is also true if $g$ has values in the dual $E^\ast$ and the product is the duality bracket.  The operator $\Delta$ is the (positive) infinitesimal generator of the discrete heat semigroup $\{P_t\}_{t\geq0}$ on $\ms{C}_n$, that is, $P_t=e^{-t\Delta}$ (see, e.g.,~\cite{O'Do14}). Let us mention that functional calculus involving $\Delta$ can be easily expressed using the Walsh basis. This is the case for all Fourier multipliers appearing below which are defined by formula~\eqref{eq:defoperator}. 

All available proofs of Talagrand's inequality~\eqref{eq:talagrand} make crucial use of the hypercontractivity of $\{P_t\}_{t\geq0}$ (first proven by Bonami in~\cite{Bon70}) along with some version of ``orthogonality''~\cite{Tal94} or semigroup identites~\cite{BKS03, CL12} specific to the scalar case. In particular, Talagrand~\cite{Tal94} used Parseval's identity for the Walsh basis to express the variance of a function $f:\ms{C}_n\to\C$ as
\begin{equation} \label{eq:tala-identity}
\mathrm{Var}_{\sigma_n}(f) = \sum_{i=1}^n \big\| \Delta^{-1/2}\partial_i f\big\|_{L_2(\sigma_n)}^2,
\end{equation}
and thus reduced the problem to obtaining effective estimates for $\|\Delta^{-1/2}h\| _{L_2(\sigma_n)}$.
One tool which allows us to circumvent algebraic representations such as \eqref{eq:tala-identity} (see the proof of Theorem~\ref{thm:l1lpscalar} below) are one-sided Riesz transform inequalities, which can combined with certain new vector-valued estimates on negative powers of the generator of the semigroup $\{P_t\}_{t\geq0}$.

Let $\alpha\geq0$. We say that a Banach space $E$ has nontrivial Rademacher type if $E$ has Rademacher type $s$ for some $s\in(1,2]$. It has been proven by Naor and Schechtman~\cite{NS02} that if a Banach space $(E,\|\cdot\|_E)$ has nontrivial Rademacher type, then for every $p\in(1,\infty)$ and $\alpha\in(0,\infty)$, there exists $K_p(\alpha)=K_p(\alpha,E)\in(0,\infty)$ such that for every $n\in\N$ and $f:\ms{C}_n\to E$, we have
\begin{equation} \label{eq:ns02}
\|\Delta^{-\alpha}f\|_{L_p(\sigma_n;E)} \leq K_p(\alpha) \|f\|_{L_p(\sigma_n;E)}.
\end{equation}
Conversely, if~\eqref{eq:ns02} holds true for some $p$ and $\alpha$, then $E$ has nontrivial Rademacher type. The proof of Theorem~\ref{thm:l1lpscalar} relies on the following strengthening of Naor and Schechtman's inequality~\eqref{eq:ns02}.

\begin{theorem} \label{thm:naorschechtman}
Let $(E,\|\cdot\|_E)$ be a Banach space of nontrivial Rademacher type. Then, for every $p\in(1,\infty)$ and $\alpha\in(0,\infty)$, there exists $K_p(\alpha)=K_p(\alpha,E)\in(0,\infty)$ such that for every $n\in\N$ and $f:\ms{C}_n\to E$, we have
\begin{equation} \label{eq:thmnaorschechtman}
\|\Delta^{-\alpha}f\|_{L_p(\sigma_n;E)} \leq K_p(\alpha) \frac{\|f\|_{L_p(\sigma_n;E)}}{1+\log^\alpha\big(\|f\|_{L_p(\sigma_n;E)}/\|f\|_{L_1(\sigma_n;E)}\big)}.
\end{equation}
\end{theorem}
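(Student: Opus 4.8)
The plan is to interpolate between the Naor--Schechtman inequality \eqref{eq:ns02} in $L_p(\sigma_n;E)$ and an elementary $L_\infty$ bound via the hypercontractive smoothing of the heat semigroup. Concretely, write $\Delta^{-\alpha}$ using the spectral representation
\begin{equation}\label{eq:gammarep}
\Delta^{-\alpha} = \frac{1}{\Gamma(\alpha)}\int_0^\infty t^{\alpha-1} P_t \ud t
\end{equation}
on the orthogonal complement of the constants (all functions below are tacitly replaced by $f-\mb{E}_{\sigma_n}f$, so that $\Delta$ is invertible on the relevant subspace and the integral converges). Split the integral at a threshold $\tau>0$ to be optimised. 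For the short-time part $\int_0^\tau$, bound $\|P_t\|_{L_p(\sigma_n;E)\to L_p(\sigma_n;E)}\le 1$ by contractivity of the heat semigroup on vector-valued $L_p$, giving a contribution $\lesssim \tau^\alpha\|f\|_{L_p(\sigma_n;E)}$ (up to $\Gamma$-factors). For the long-time part $\int_\tau^\infty$, one exploits the exponential decay of $P_t$ on nonconstant functions together with a \emph{hypercontractive} gain that converts the $L_1$ smallness of $f$ into an $L_p$ bound. Precisely, for $t$ large one has $P_t = P_{t/2}P_{t/2}$, and $P_{t/2}$ maps $L_1(\sigma_n;E)$ into $L_{q(t)}(\sigma_n;E)$ — a vector-valued hypercontractive estimate which, since the heat semigroup acts diagonally coordinate-by-coordinate, follows from scalar hypercontractivity \cite{Bon70} at the cost of no dependence on $E$ — while $P_{t/2}$ also contracts spectrally. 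The upshot is a bound of the form $\int_\tau^\infty t^{\alpha-1} e^{-ct}\ud t \cdot \|f\|_{L_1(\sigma_n;E)}\lesssim_\alpha e^{-c\tau/2}\|f\|_{L_1(\sigma_n;E)}$ for $\tau\ge 1$, say. Optimising the choice of $\tau\asymp \log\big(\|f\|_{L_p(\sigma_n;E)}/\|f\|_{L_1(\sigma_n;E)}\big)$ balances $\tau^\alpha\|f\|_{L_p}$ against $e^{-c\tau}\|f\|_{L_1}$ and yields a bound $\lesssim_\alpha \big(1+\log(\|f\|_{L_p}/\|f\|_{L_1})\big)^\alpha\cdot$ (something that decays), which is not yet the claimed estimate; this is the crude version and must be refined.

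The refinement — and this is the conceptual heart — is to combine the above two-regime estimate \emph{multiplicatively} with the unconditional bound \eqref{eq:ns02}. Having established $\|\Delta^{-\alpha}f\|_{L_p(\sigma_n;E)}\lesssim_\alpha \tau^\alpha \|f\|_{L_p(\sigma_n;E)} + e^{-c\tau}\|f\|_{L_1(\sigma_n;E)}$ for all $\tau\ge 1$, set $R\eqdef \|f\|_{L_p(\sigma_n;E)}/\|f\|_{L_1(\sigma_n;E)}\ge 1$. If $R\le e$, the logarithmic denominator in \eqref{eq:thmnaorschechtman} is $\asymp 1$ and the conclusion is just \eqref{eq:ns02}, so assume $R>e$ and choose $\tau = \tfrac{2}{c}\log R$, which makes $e^{-c\tau}\|f\|_{L_1} = R^{-2}\|f\|_{L_1} \le R^{-1}\|f\|_{L_p}\le (\log R)^{-\alpha}\|f\|_{L_p}$ once $R$ is large; then $\tau^\alpha\|f\|_{L_p}\asymp_\alpha (\log R)^\alpha\|f\|_{L_p}$, which is the \emph{wrong} direction. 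So the naive split is too lossy and one must instead run the argument with the $L_p$--$L_p$ bound on the short-time part replaced by the stronger \eqref{eq:ns02} applied to $P_\tau f$ (whose $L_p$ norm is controlled) — i.e.\ write $\Delta^{-\alpha}f = \Delta^{-\alpha}P_\tau f + \Delta^{-\alpha}(I-P_\tau)f$, estimate the first summand by \eqref{eq:ns02} together with the hypercontractive/spectral smallness of $P_\tau f$ (which is $\lesssim e^{-c\tau}\|f\|_{L_1}$-small when $f$ has small $L_1$ norm relative to $L_p$ — here the point is that $P_\tau$, being a \emph{fixed} hypercontractive operator for $\tau\asymp 1$, already gives $\|P_\tau f\|_{L_p(\sigma_n;E)}\lesssim \|f\|_{L_1(\sigma_n;E)}$ with no dimensional dependence), and estimate the second summand $\Delta^{-\alpha}(I-P_\tau) = \psi_\tau(\Delta)$, where $\psi_\tau(\lambda) = \lambda^{-\alpha}(1-e^{-\tau\lambda})$, as a Fourier multiplier whose $L_p(\sigma_n;E)\to L_p(\sigma_n;E)$ norm is $O(\tau^\alpha)$ uniformly in $n$ — this last fact is where nontrivial Rademacher type re-enters, via the Naor--Schechtman machinery (or Meyer's multiplier theorem, cf.\ the vector-valued version the authors announce) applied to the rescaled symbol.

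Assembling: one obtains $\|\Delta^{-\alpha}f\|_{L_p(\sigma_n;E)}\lesssim_{p,\alpha,E} \|f\|_{L_1(\sigma_n;E)} + \tau^{-\alpha}\big\|\Delta^{-\alpha}(I-P_\tau)f\big\|$-type terms — and here the bookkeeping has to be arranged so that the \emph{final} bound reads $\|\Delta^{-\alpha}f\|_{L_p}\lesssim (\log R)^{-\alpha}\|f\|_{L_p}$, which amounts to showing that the genuinely ``low-frequency'' part of $f$ (the part on which $\Delta^{-\alpha}$ is large) is exactly the part killed by hypercontractivity. The clean way to see this, and the route I would ultimately take, is: decompose $f = \sum_{k\ge 1} f_k$ into its Walsh levels (projection onto degree-$k$ Walsh functions), note $\Delta^{-\alpha}f = \sum_k k^{-\alpha} f_k$, use the vector-valued Littlewood--Paley / Naor--Schechtman bound to control $\big\|\sum_{k\ge m}k^{-\alpha}f_k\big\|_{L_p(\sigma_n;E)}\lesssim m^{-\alpha}\|f\|_{L_p(\sigma_n;E)}$ (square-function estimate up to the type constant), and control the remaining \emph{finitely many} low levels $\sum_{k< m} k^{-\alpha}f_k$ by hypercontractivity: $\|f_k\|_{L_p(\sigma_n;E)}\lesssim e^{ck}\|f\|_{L_1(\sigma_n;E)}$ for a scalar constant $c = c(p)$ (Bonami--Gross), so $\big\|\sum_{k<m}k^{-\alpha}f_k\big\|_{L_p(\sigma_n;E)}\lesssim_\alpha e^{cm}\|f\|_{L_1(\sigma_n;E)}$. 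Choosing $m\asymp_p \log R$ balances the two: the tail gives $\lesssim (\log R)^{-\alpha}\|f\|_{L_p}$ and the head gives $\lesssim e^{c\cdot\frac{1}{2c}\log R}\|f\|_{L_1}=R^{1/2}\|f\|_{L_1}=R^{-1/2}\|f\|_{L_p}\le (\log R)^{-\alpha}\|f\|_{L_p}$ for $R$ large, with the small-$R$ case handled by \eqref{eq:ns02} directly. The main obstacle I anticipate is the vector-valued square-function / multiplier bound $\big\|\sum_{k\ge m}k^{-\alpha}f_k\big\|_{L_p(\sigma_n;E)}\lesssim m^{-\alpha}\|f\|_{L_p(\sigma_n;E)}$ with constants independent of $n$ and $m$: establishing the \emph{rescaled} Bernstein-type decay requires more than a black-box application of \eqref{eq:ns02}, and is precisely the content of the vector-valued Meyer multiplier theorem the authors develop; modulo that input, the hypercontractive estimate for the low levels is entirely classical and scalar (applied coordinatewise to $\|f(\cdot)\|_E$ is not quite right — one applies it to the scalar-valued function after freezing, or more cleanly uses that $P_t$ commutes with taking norms in the sense needed), so the hypercontractivity step is routine.
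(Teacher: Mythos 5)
Your final ``clean'' route hinges on two bounds, and the second one is false on the hypercube with $n$-independent constants. You claim, for the level projection $f_k=\msf{Rad}_kf$, that $\|f_k\|_{L_p(\sigma_n;E)}\lesssim e^{ck}\|f\|_{L_1(\sigma_n;E)}$ by Bonami--Gross. But there is \emph{no} hypercontractive estimate that reaches $L_1$ as the source: Bonami's inequality only gives $\|P_t\|_{L_q\to L_p}\leq 1$ for $q=1+(p-1)e^{-2t}>1$, and as $t\to\infty$ the exponent degenerates to $q\to 1^+$ without ever attaining $q=1$. Already for $k=1$ your claim fails badly. By duality $\|\msf{Rad}_1\|_{L_1(\sigma_n)\to L_p(\sigma_n)}=\|\msf{Rad}_1\|_{L_{p'}(\sigma_n)\to L_\infty(\sigma_n)}$, and taking $g(\e)=\sum_{i=1}^n\e_i$ gives $\|\msf{Rad}_1 g\|_{L_\infty}=n$ while $\|g\|_{L_{p'}}\asymp_p\sqrt{n}$, so $\|\msf{Rad}_1\|_{L_1\to L_p}\gtrsim_p\sqrt{n}$. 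The same objection kills your intermediate claim that $\|P_\tau f\|_{L_p(\sigma_n;E)}\lesssim\|f\|_{L_1(\sigma_n;E)}$ for fixed $\tau$: taking $f$ to be the normalized Dirac mass at a vertex, one computes $\|P_\tau f\|_{L_p(\sigma_n)}=\big(\tfrac{(1+e^{-\tau})^p+(1-e^{-\tau})^p}{2}\big)^{n/p}$, which grows exponentially in $n$ while $\|f\|_{L_1(\sigma_n)}=1$. So neither route to putting $\|f\|_{L_1}$ on the right-hand side is available; this is the decisive step you have not supplied.

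The paper's proof sidesteps this entirely, and the subtlety is worth absorbing. It never produces a bound with $\|f\|_{L_1}$ on the right via hypercontractivity. Instead it writes $\Delta^{-\alpha}=\frac{1}{\Gamma(\alpha)}\int_0^\infty P_t\,\frac{\diff t}{t^{1-\alpha}}$, splits $P_t=P_{t/2}P_{t/2}$, uses the $K$-convexity spectral-gap decay $\|P_{t/2}g\|_{L_p}\leq K_pe^{-\eta_p t}\|g\|_{L_p}$ for mean-zero $g$ (this is the sole point where nontrivial type enters), and then bounds the remaining $\|P_{t/2}f\|_{L_p}\leq\|f\|_{L_{1+(p-1)e^{-t}}}$ by hypercontractivity, staying strictly above $L_1$ for every finite $t$. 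The $L_1$ norm only appears at the very end, via Lemma~\ref{lem:log-nopower}: one interpolates $\|f\|_{L_\nu}\leq\|f\|_{L_1}^{\theta(\nu)}\|f\|_{L_p}^{1-\theta(\nu)}$ by H\"older and then integrates over $t$ (equivalently over $\nu\in(1,p)$), and it is this weighted integral of intermediate norms that produces the factor $\big(1+\log(\|f\|_{L_p}/\|f\|_{L_1})\big)^{-\alpha}$. In other words, the log gain comes from \emph{averaging} the hypercontractive $L_\nu$ bounds, not from any single one reaching $L_1$. Your Walsh-level decomposition could be repaired along the same lines --- replacing $\|f\|_{L_1}$ by $\|f\|_{L_\nu}$ for $\nu>1$ on the low levels and then interpolating --- but at that point it is essentially the paper's argument in discrete clothing.

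A secondary point: even granting the head estimate, your tail bound $\big\|\sum_{k\geq m}k^{-\alpha}f_k\big\|_{L_p}\lesssim m^{-\alpha}\|f\|_{L_p}$ is stronger than what the available vector-valued tail-space machinery gives. The Mendel--Naor bound quoted in the paper (see \eqref{eq:laplace-tail-space}) yields $m^{-\alpha/A_p}$ with $A_p\geq 1$ depending on $E$, not $m^{-\alpha}$. The paper's semigroup route avoids this loss altogether, since it uses the fixed spectral-gap decay \eqref{eq:decayKconv} rather than a degree-dependent one.
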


We note in passing that when $E=\C$, $\alpha=\tfrac{1}{2}$ and $p=2$, Theorem~\ref{thm:naorschechtman} had been proven in~\cite[Proposition~2.3]{Tal94}. However, Talagrand's argument heavily uses orthogonality via Parseval's identity for the Walsh basis and is unlikely to work in the vector-valued setting which is of interest here.


\subsection{Vector-valued multipliers and inequalities involving Orlicz norms} In Talagrand's original work~\cite{Tal94}, he observed that~\eqref{eq:talagrand} admits a strengthening in terms of Orlicz norms (see~\cite{RR91}). Recall that if $\psi:[0,\infty)\to[0,\infty)$ is a Young function, i.e. a convex function satisfying
\begin{equation}
\lim_{x\to0}\frac{\psi(x)}{x}=0 \quad \mbox{and} \quad \lim_{x\to\infty}\frac{\psi(x)}{x}=\infty,
\end{equation}
and $(E,\|\cdot\|_E)$ is a Banach space, then the $\psi$-Orlicz norm of a function $f:\ms{C}_n\to E$ is given by
\begin{equation}
\|f\|_{L_\psi(\sigma_n;E)} \eqdef \inf\Big\{t\geq0: \ \int_{\ms{C}_n} \psi\big(\|f(\e)\|_E / t\big)\diff\sigma_n(\e) \leq 1\Big\}.
\end{equation}
It is evident that for $\psi(t)=t^p$, we have $\|\cdot\|_{L_\psi(\sigma_n;E)}=\|\cdot\|_{L_p(\sigma_n;E)}$. More generally, for $p\in(1,\infty)$ and $r\in\R$ we will denote by $\|\cdot\|_{L_p(\log L)^r(\sigma_n;E)}$ the Orlicz norm correspoding to a Young function $\psi_{p,r}$ with $\psi_{p,r}(x) = x^p \log^r(e+x)$ for $x$ large enough (to ensure convexity of $\psi_{p,r}$ when $r<0$).

In~\cite[Theorem~1.6]{Tal94}, Talagrand showed that~\eqref{eq:talagrand} can be strengthened as follows. There exists a universal constant $C\in(0,\infty)$ such that for every $n\in\N$, every function $f:\ms{C}_n\to\C$ satisfies
\begin{equation} \label{eq:talagrandorlicz}
\big\| f-\mb{E}_{\sigma_n}f\big\|_{L_2(\sigma_n)}^2 \leq C \sum_{i=1}^n \|\partial_if\|_{L_2(\log L)^{-1}(\sigma_n)}^2.
\end{equation}
It is in fact true (see~\cite[Lemma~2.5]{Tal94} or Lemma~\ref{lem:orlicz-upper} below) that~\eqref{eq:talagrandorlicz} formally implies~\eqref{eq:talagrand}. In this direction we can prove the following strengthening of Theorem~\ref{thm:useivv}.

\begin{theorem} \label{thm:useivvorlicz}
Let $(E,\|\cdot\|_E)$ be a Banach space with Rademacher type 2. Then, there exists $C=C(E)\in(0,\infty)$ such that for every $\e\in(0,1)$ and $n\in\N$, every function $f:\ms{C}_n\to E$ satisfies
\begin{equation} \label{eq:thmuseivvorlicz}
\big\| f-\mb{E}_{\sigma_n}f\big\|_{L_2(\sigma_n;E)}^2 \leq \frac{C}{\e} \sum_{i=1}^n \|\partial_if\|_{L_2(\log L)^{-1+\e}(\sigma_n;E)}^2.
\end{equation}
\end{theorem}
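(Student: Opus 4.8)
The plan is to reduce Theorem~\ref{thm:useivvorlicz} to Theorem~\ref{thm:useivv} by a dyadic-decomposition argument that converts the $L_2$--$L_1$ logarithmic correction in~\eqref{eq:thmuseivv} into the Orlicz-norm correction in~\eqref{eq:thmuseivvorlicz}, exactly as in the scalar case treated by Talagrand (cf.~\cite[Lemma~2.5]{Tal94} and Lemma~\ref{lem:orlicz-upper}). First I would record the elementary comparison that for a single $E$-valued function $g$ on $\ms{C}_n$, one has
$$
\|g\|_{L_2(\log L)^{-1+\e}(\sigma_n;E)}^2 \asymp \frac{\|g\|_{L_2(\sigma_n;E)}^2}{1+\log^{1-\e}\big(\|g\|_{L_2(\sigma_n;E)}/\|g\|_{L_1(\sigma_n;E)}\big)},
$$
up to a multiplicative constant depending only on $\e$ (and in fact tending to a harmless constant as $\e\to0$, which is why the $C/\e$ prefactor survives the passage). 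This is a purely one-variable statement about Orlicz norms of scalar random variables applied to $\|g\|_E$: the lower bound follows by testing the definition of the Orlicz norm against the level $t=\|g\|_{L_2}$ and using convexity of $\psi_{2,-1+\e}$, while the upper bound follows by splitting the integral $\int \psi_{2,-1+\e}(\|g\|_E/t)\diff\sigma_n$ over the region where $\|g\|_E$ is comparable to its $L_2$-average and its tail, controlled via Chebyshev and the definition of the $L_1$-norm; this is the standard argument and I would cite Lemma~\ref{lem:orlicz-upper} rather than reproduce it.

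Granting this comparison, the theorem is immediate: apply it with $g=\partial_i f$ for each $i$, so that
$$
\|\partial_if\|_{L_2(\log L)^{-1+\e}(\sigma_n;E)}^2 \gtrsim_\e \frac{\|\partial_if\|_{L_2(\sigma_n;E)}^2}{1+\log^{1-\e}\big(\|\partial_if\|_{L_2(\sigma_n;E)}/\|\partial_i f\|_{L_1(\sigma_n;E)}\big)},
$$
sum over $i\in\{1,\ldots,n\}$, and invoke~\eqref{eq:thmuseivv} of Theorem~\ref{thm:useivv} to bound the left-hand side of the resulting inequality by a constant multiple of $\big\|f-\mb{E}_{\sigma_n}f\big\|_{L_2(\sigma_n;E)}^2$. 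One must be slightly careful that the implicit constant in the Orlicz comparison does not blow up as $\e\to0$ faster than $1/\e$; inspecting the proof of the upper bound for the $L_p(\log L)^r$ norm shows the dependence on $\e$ enters only through the convexification threshold of $\psi_{2,-1+\e}$ and a geometric series with ratio $2^{-\e}$, whose sum is $\asymp 1/\e$, which is exactly the stated prefactor. Thus the $\tfrac{C}{\e}$ in~\eqref{eq:thmuseivvorlicz} matches the $\tfrac{C}{\e}$ in~\eqref{eq:thmuseivv}.

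The only genuinely delicate point — and the one I would treat with the most care — is that the scalar Orlicz-to-ratio comparison is being applied to the $E$-\emph{valued} function $\partial_i f$, i.e.\ to the real random variable $\|\partial_i f(\cdot)\|_E$; since the definitions of $L_2(\log L)^r(\sigma_n;E)$ and of $\|\partial_i f\|_{L_q(\sigma_n;E)}$ all go through $\|\partial_i f(\cdot)\|_E$, no vector-valued subtlety actually arises and the reduction is honestly scalar. In other words, Theorem~\ref{thm:useivvorlicz} carries no new analytic content beyond Theorem~\ref{thm:useivv} together with the standard Orlicz-norm estimates; the work is entirely in bookkeeping the $\e$-dependence. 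I would therefore present the proof as a short deduction: state the Orlicz comparison lemma (or quote Lemma~\ref{lem:orlicz-upper}), apply it coordinatewise, sum, and close with Theorem~\ref{thm:useivv}.
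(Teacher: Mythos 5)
There is a fatal direction error in your proposal. You claim the two-sided comparison
\[
\|g\|_{L_2(\log L)^{-1+\e}(\sigma_n;E)}^2 \asymp \frac{\|g\|_{L_2(\sigma_n;E)}^2}{1+\log^{1-\e}\big(\|g\|_{L_2(\sigma_n;E)}/\|g\|_{L_1(\sigma_n;E)}\big)},
\]
and you need the lower bound ``Orlicz $\gtrsim$ ratio'' to push Theorem~\ref{thm:useivv} through to Theorem~\ref{thm:useivvorlicz}. But only the upper bound holds, and that is precisely what Lemma~\ref{lem:orlicz-upper} asserts: $\|h\|_{L_r(\log L)^{-s}} \lesssim \|h\|_{L_r}/(1+\log^{s/r}(\|h\|_{L_r}/\|h\|_{L_1}))$. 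The reverse inequality is false. The Orlicz norm is a genuinely finer invariant than the pair $(\|g\|_1,\|g\|_2)$: for a function with a high peak of height $M=e^K$ on a set of measure $\delta=A^2e^{-2K}$ superimposed on a $\Theta(1)$-height plateau of measure $\Theta(1)$, one has $\|g\|_2/\|g\|_1\asymp A$ so the ratio side is $\asymp A^2/\log^{1-\e}A$, whereas the Orlicz norm squared is $\asymp A^2/K^{1-\e}$, which is arbitrarily smaller once $K\gg\log A$. So the comparison you rely on breaks, and the deduction cannot go in the direction you propose: since the Orlicz right-hand side is $\lesssim$ the ratio right-hand side, Theorem~\ref{thm:useivvorlicz} is \emph{strictly stronger} than Theorem~\ref{thm:useivv}; it implies it, it is not implied by it. The paper makes exactly this point in the sentence ``in view of Lemma~\ref{lem:orlicz-upper}, Theorem~\ref{thm:useivvorlicz} readily implies~\eqref{eq:thmuseivv}.''

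The actual proof has to produce the Orlicz bound directly, not route through the $L_1$--$L_2$ ratio. The paper does this via Lemma~\ref{lem:usetypeforivv} (the Rademacher type~2 hypothesis combined with the Ivanisvili--van Handel--Volberg derivative identity and Cauchy--Schwarz yields $\|f-\mb{E}f\|_2^2 \lesssim \frac{1}{\e}\sum_i\int_0^\infty e^{-\e t}\|\partial_iP_tf\|_2^2\,t^{-\e}\diff t$), then hypercontractivity to replace $\|\partial_iP_tf\|_{L_2}$ by $\|\partial_if\|_{L_{1+e^{-2t}}}$, and finally Lemma~\ref{lem:orlicz-withpower}, which is the genuinely new calculus input: it bounds $\int_0^\infty e^{-\eta t}\|h\|_{L_{1+(r-1)e^{-\gamma t}}}^r\,t^{-\e}\diff t$ by $\|h\|^r_{L_r(\log L)^{-1+\e}}$. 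That lemma is where the Orlicz norm enters, and it cannot be bypassed by appealing to the two numbers $\|\partial_if\|_1, \|\partial_if\|_2$ alone. Theorem~\ref{thm:useivv} is then a corollary of Theorem~\ref{thm:useivvorlicz}, exactly the reverse of your plan.
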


Furthermore, the proofs of Theorem~\ref{thm:useeldan} in fact yield the following improvement of~\eqref{eq:thmuseeldan}, which extends~\eqref{eq:talagrandorlicz} to spaces of martingale type 2.

\begin{theorem} \label{thm:useeldanorlicz}
Let $(E,\|\cdot\|_E)$ be a Banach space with martingale type 2. Then, there exists $C=C(E)\in(0,\infty)$ such that for every $n\in\N$, every function $f:\ms{C}_n\to E$ satisfies
\begin{equation} \label{eq:thmuseeldanorlicz}
\big\| f-\mb{E}_{\sigma_n}f\big\|_{L_2(\sigma_n;E)}^2 \leq C \sum_{i=1}^n \|\partial_if\|_{L_2(\log L)^{-1}(\sigma_n;E)}^2.
\end{equation}
\end{theorem}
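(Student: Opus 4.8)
The plan is to upgrade the two proofs of Theorem~\ref{thm:useeldan} so that they deliver the Orlicz-norm statement directly, rather than passing through the quantity $1+\log(\|\partial_i f\|_{L_2}/\|\partial_i f\|_{L_1})$. The point is that both arguments (the Eldan--Gross stochastic process and the Littlewood--Paley--Stein route via Xu's inequalities) control $\mathrm{Var}_{\sigma_n}(f)$ by a sum of the form $\sum_i \Phi(\partial_i f)$ where $\Phi$ is a quadratic-type functional built out of hypercontractive bounds on $\|P_t \partial_i f\|_{L_2(\sigma_n;E)}$, and such quadratic functionals are naturally comparable to $\|\partial_i f\|^2_{L_2(\log L)^{-1}(\sigma_n;E)}$. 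Concretely, I would first record the analytic lemma (which is essentially \cite[Lemma 2.5]{Tal94}, see also Lemma~\ref{lem:orlicz-upper}) that for any $g:\ms{C}_n\to E$,
\begin{equation}
\|g\|^2_{L_2(\log L)^{-1}(\sigma_n;E)} \asymp \int_1^\infty \frac{\|P_{t}g\|^2_{L_2(\sigma_n;E)}}{t^{?}}\,\text{(appropriate weight)}\,\diff t + \text{(tail term)},
\end{equation}
so that the $L_2(\log L)^{-1}$ norm is exactly the scale-invariant square function that the semigroup proof produces; the logarithmic gain comes from hypercontractivity giving $\|P_t g\|_{L_2} \lesssim \|g\|_{L_1}^{\theta(t)}\|g\|_{L_2}^{1-\theta(t)}$ with $\theta(t) \to 1$.

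Second, for the Eldan--Gross proof: their process yields a bound of the shape $\mathrm{Var}_{\sigma_n}(f) \lesssim_E \sum_i \int_0^\infty \|P_t \partial_i f\|^2_{L_2(\sigma_n;E)}\,\diff\mu(t)$ for a suitable finite measure $\mu$ arising from the construction (this is exactly where martingale type 2 enters, controlling the $E$-valued quadratic variation). I would check that the measure $\mu$ produced is, up to universal constants, the same weight appearing in the Orlicz identity above, so the right-hand side is $\asymp_E \sum_i \|\partial_i f\|^2_{L_2(\log L)^{-1}(\sigma_n;E)}$; this is really just bookkeeping, because the original proof of Theorem~\ref{thm:useeldan} already discards this integral in favour of the cruder $1/(1+\log(\cdots))$ bound via the one-variable inequality of Lemma~\ref{lem:orlicz-upper}, and we simply stop one step earlier. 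For the Littlewood--Paley--Stein proof, the analogous statement is that Xu's vector-valued square-function inequality for superreflexive (here, martingale type 2) targets bounds $\|f - \mb{E}_{\sigma_n} f\|_{L_2(\sigma_n;E)}$ by the $G$-function $\big(\int_0^\infty t\,\|\tfrac{\partial}{\partial t}P_t f\|^2\,\diff t\big)^{1/2}$ in $L_2$, which after writing $\tfrac{\partial}{\partial t}P_t f = -\Delta P_t f = -\sum_i \partial_i P_t f$ and using orthogonality of the blocks together with hypercontractivity again reduces to $\sum_i \|\partial_i f\|^2_{L_2(\log L)^{-1}(\sigma_n;E)}$.

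The main obstacle I anticipate is the vector-valued interchange of the square function with the sum over coordinates. In the scalar case one freely uses Parseval/orthogonality of the Walsh levels $\partial_i P_t f$, but in a general martingale-type-2 space there is no inner product; one only has the one-sided estimate $\|\sum_i a_i\|^2 \lesssim_E \sum_i \|a_i\|^2$ for martingale differences (or Rademacher averages), and one must arrange the $P_t \partial_i f$ to play the role of such differences — this is precisely what the Eldan--Gross process is designed to do, and what Xu's theorem encapsulates on the analytic side, so in both cases the hard analytic input is quoted rather than reproven. The remaining care is that $L_2(\log L)^{-1}$ is not a norm coming from a convex Young function at small arguments, so one must use the convexified $\psi_{2,-1}$ as in the paper's convention and verify the equivalence of the Orlicz norm with the square-function expression only up to universal constants and additive lower-order terms; these are handled by Lemma~\ref{lem:orlicz-upper} and its (routine) converse, which I would state as a lemma and defer its elementary proof. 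Finally, I would note that \eqref{eq:thmuseeldanorlicz} formally implies \eqref{eq:thmuseeldan} by the same Lemma~\ref{lem:orlicz-upper}, so no separate argument is needed for the consistency of the two theorems.
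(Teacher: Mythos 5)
Your overall strategy matches the paper's: in both the Eldan--Gross route and the Littlewood--Paley--Stein route, the chain is (a) a stochastic/analytic inequality bounding $\mathrm{Var}_{\sigma_n}(f)$ by $\sum_i\int_0^\infty \|P_t\partial_i f\|^2_{L_2(\sigma_n;E)}\diff\mu(t)$ for a fixed exponentially decaying measure $\mu$, (b) hypercontractivity to convert $\|P_t\partial_i f\|_{L_2}$ to $\|\partial_i f\|_{L_{1+e^{-2t}}}$, and (c) a calculus lemma converting the integral into the Orlicz norm. Step (c) is precisely Lemma~\ref{lem:orlicz-withpower} (with $\e=0$, $r=2$), not Lemma~\ref{lem:orlicz-upper}; the latter is the converse direction (Orlicz norm $\lesssim L_1$--$L_2$ ratio) used to deduce Theorem~\ref{thm:useeldan} from Theorem~\ref{thm:useeldanorlicz}, so you have the two lemmas' roles reversed when you claim the $L_2$ proof ``already discards this integral in favour of the cruder bound via Lemma~\ref{lem:orlicz-upper}.'' You would not need an equivalence $\asymp$ between the Orlicz norm and the square-function integral; the one-sided Lemma~\ref{lem:orlicz-withpower} suffices, which is what the paper proves.

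There is, however, a genuine gap in the Littlewood--Paley--Stein branch of your proposal. You write that after invoking Xu's inequality you pass from $\Delta P_t f=\sum_i\partial_i P_t f$ to $\sum_i\|\partial_i P_tf\|^2$ ``using orthogonality of the blocks,'' and later suggest that Xu's theorem already encapsulates the needed one-sided square-function estimate over $i$. It does not: Xu's theorem only provides the square function in the time variable $t$, i.e.\ the bound $\|f-\mb{E}f\|_{L_2}^2\lesssim\int_0^\infty\|t\partial_tP_t f\|^2_{L_2}\frac{\diff t}{t}$, and says nothing about decomposing $\Delta P_tf$ into the individual $\partial_iP_tf$. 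In a Banach space target there is of course no Parseval identity, and $\partial_1 P_tf,\ldots,\partial_n P_tf$ are not martingale differences in any usable sense, so one cannot simply apply martingale type. The ingredient that actually bridges this is Proposition~\ref{prop:extendivv} (the Ivanisvili--van Handel--Volberg formula~\eqref{eq:crucial-ivv}) applied to $P_tf$, which expresses $\Delta P_{2t}f(x)$ as a conditional expectation of $\tfrac{1}{\sqrt{e^{2t}-1}}\sum_i\delta_i(t)\partial_iP_tf(x\xi(t))$; since the $\delta_i(t)$ are independent mean-zero random signs, the Rademacher type $2$ condition for centered random variables then yields $\|\Delta P_{2t}f\|^2_{L_2(\sigma_n;E)}\lesssim(e^{2t}-1)^{-1}\sum_i\|\partial_iP_tf\|^2_{L_2(\sigma_n;E)}$. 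Without this identity the LPS argument does not close. Your Eldan--Gross description, by contrast, is sound: there the decomposition into coordinates comes from the conditional independence of the increments $X_{t_k}(i)-X_{t_{k-1}}(i)$ of the process, and the martingale type $2$ condition over the discretization in $k$ plus Rademacher type $2$ across $i$ give exactly~\eqref{eq:eldan-almost-done2}. So the proposal is salvageable through that route, but the LPS route as you have sketched it is missing a nontrivial piece.
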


We now turn to Orlicz space strengthenings of Theorem~\ref{thm:naorschechtman}. The scalar-valued analogue of this problem had first been studied by Feissner~\cite{Fei75} and was later completely settled by Bakry and Meyer~\cite{BM82}, who showed the following. For every $p\in(1,\infty)$ and $\alpha\in(0,\infty)$ there exists $K_p(\alpha)\in(0,\infty)$ such that for every $n\in\N$ and $f:\ms{C}_n\to\C$,
\begin{equation} \label{eq:bakrymeyer}
\|\Delta^{-\alpha}f\|_{L_p(\sigma_n)} \leq K_p(\alpha) \|f\|_{L_p(\log L)^{-p\alpha}(\sigma_n)}.
\end{equation}
In~\cite{BM82}, inequality~\eqref{eq:bakrymeyer} is stated and proven for the generator of the Ornstein--Uhlenbeck semigroup on Gauss space, yet straightforward modifications of the proof show that~\eqref{eq:bakrymeyer} holds for the generator of a general hypercontractive semigroup. While proving~\eqref{eq:bakrymeyer} with the Orlicz norm on the right hand side replaced by $L_p(\log L)^{-r}(\sigma_n)$ for $r<p\alpha$ is fairly simple (see~\cite[Th\'eor\`eme 5]{BM82}), obtaining the result with the optimal Orlicz space $L_p(\log L)^{-p\alpha}(\sigma_n)$ is more delicate. In~\cite[Th\'eor\`eme 6]{BM82} this is achieved via a complex interpolation scheme relying on Littlewood--Paley--Stein theory~\cite{Ste70} (in the form of bounds for the imaginary Riesz potentials $\Delta^{it}$, where $t\in\R$). Even though such tools are generally not available for functions with values in a general Banach space of nontrivial type (see, e.g.,~\cite{G-D91, Xu98, Hyt07}), we prove the following theorem.

\begin{theorem} [Vector-valued Bakry--Meyer inequality] \label{thm:vectorbakrymeyer}
Let $(E,\|\cdot\|_E)$ be a Banach space of nontrivial Rademacher type. Then, for every $p\in(1,\infty)$ and $\alpha\in(0,\infty)$, there exists $K_p(\alpha)=K_p(\alpha,E)\in(0,\infty)$ such that for every $n\in\N$ and $f:\ms{C}_n\to E$, we have
\begin{equation} \label{eq:vectorbakrymeyer}
\|\Delta^{-\alpha}f\|_{L_p(\sigma_n;E)} \leq K_p(\alpha) \|f\|_{L_p(\log L)^{-p\alpha}(\sigma_n;E)}.
\end{equation}
\end{theorem}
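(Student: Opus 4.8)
The plan is to deduce the vector-valued Bakry--Meyer inequality \eqref{eq:vectorbakrymeyer} from the already-established $L_p\to L_p$ boundedness \eqref{eq:thmnaorschechtman} of $\Delta^{-\alpha}$ (Theorem~\ref{thm:naorschechtman}) together with a self-improvement scheme that upgrades a bound with $L_p$ data on the right to one with the sharp Orlicz data $L_p(\log L)^{-p\alpha}$. The main point is that \eqref{eq:thmnaorschechtman} already contains a logarithmic gain governed by the ratio $\|f\|_{L_p}/\|f\|_{L_1}$, and that such a ratio-dependent gain is, by elementary Orlicz-norm manipulations (of the type recorded in Lemma~\ref{lem:orlicz-upper} and its companions around \eqref{eq:talagrandorlicz}), essentially equivalent to an $L_p(\log L)^{-p\alpha}$ estimate once one is allowed to sum such bounds over a dyadic decomposition of $f$ according to the size of $\|f\|_E$. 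Concretely, I would first record the routine Orlicz lemma: for $r>0$ one has $\|f\|_{L_p(\log L)^{-r}(\sigma_n;E)} \asymp_{p,r} \inf\big\{ \big(\sum_j 2^{-pj}\log^{-r}(e+2^j)\, \mu_j^{} \big)^{1/p}\big\}$-type expressions where $\mu_j$ measures the mass of $f$ at scale $2^j$; equivalently, that controlling $\|f\|_{L_p}/\|f\|_{L_1}$-weighted $L_p$ norms on each dyadic piece and summing recovers the $L_p(\log L)^{-p\alpha}$ norm up to constants depending only on $p,\alpha$.

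The second step is the decomposition itself. Fix $f:\ms{C}_n\to E$ and, after normalizing, write $f=\sum_{j\in\Z} f_j$ where $f_j = f\cdot \mathbf 1_{\{2^{j-1}< \|f\|_E\le 2^j\}}$ (truncation in the range of $f$, not in frequency). Each $f_j$ satisfies $\|f_j\|_{L_\infty(\sigma_n;E)}\le 2^j$ and $\|f_j\|_{L_1(\sigma_n;E)} = 2^j \sigma_n(\supp f_j)\cdot\Theta(1)$, so the ratio $\|f_j\|_{L_p(\sigma_n;E)}/\|f_j\|_{L_1(\sigma_n;E)}$ is comparable to $\sigma_n(\supp f_j)^{1/p-1}$, i.e. its logarithm is comparable to $\big(1-\tfrac1p\big)\log(1/\sigma_n(\supp f_j))$. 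Applying Theorem~\ref{thm:naorschechtman} to each $f_j$ gives
\begin{equation*}
\|\Delta^{-\alpha}f_j\|_{L_p(\sigma_n;E)} \lesssim_{p,\alpha,E} \frac{\|f_j\|_{L_p(\sigma_n;E)}}{1+\log^\alpha\!\big(1/\sigma_n(\supp f_j)\big)},
\end{equation*}
and then $\|\Delta^{-\alpha}f\|_{L_p(\sigma_n;E)}\le \sum_j \|\Delta^{-\alpha}f_j\|_{L_p(\sigma_n;E)}$ by the triangle inequality. The remaining task is purely real-variable: show that
\begin{equation*}
\sum_{j\in\Z} \frac{2^j\, \sigma_n(\supp f_j)^{1/p}}{1+\log^\alpha\!\big(1/\sigma_n(\supp f_j)\big)} \lesssim_{p,\alpha} \|f\|_{L_p(\log L)^{-p\alpha}(\sigma_n;E)},
\end{equation*}
which is exactly the dual/primal form of the Orlicz estimate from the first step, proved by splitting the sum according to whether $\sigma_n(\supp f_j)$ is large or small and using that $\sum_j 2^{pj}\sigma_n(\supp f_j)\lesssim \|f\|_{L_p}^p$ together with convexity of $\psi_{p,-p\alpha}$; the $\ell_1$-to-$\ell_p$ loss in the triangle inequality is absorbed because the weights $(1+\log^\alpha(1/t))^{-1}$ decay and the $2^j$ factors are geometric on the relevant range.

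The step I expect to be the genuine obstacle is making the dyadic summation quantitatively tight enough to land on the \emph{optimal} exponent $-p\alpha$ in the Orlicz norm rather than some $-r$ with $r<p\alpha$. Naively summing $\ell_1$ over the pieces loses a factor because the logarithmic gains $1+\log^\alpha(1/\sigma_n(\supp f_j))$ are not summable by themselves; one has to exploit that for a \emph{single} function $f$ the support measures $\sigma_n(\supp f_j)$ cannot all be simultaneously tiny while carrying comparable $L_p$ mass, i.e. a pigeonhole/layer-cake argument controlling how the mass $2^{pj}\sigma_n(\supp f_j)$ is distributed across scales. Concretely I would group the indices $j$ into blocks where $\log(1/\sigma_n(\supp f_j))$ is of a fixed dyadic order $2^k$, bound the contribution of each block by Hölder (turning the $\ell_1$ sum over a block of controlled length into an $\ell_p$ sum times a power of the block length), and then sum over $k$; the block lengths interact with the $\log^{-\alpha}$ weights in precisely the way that produces $\log^{-p\alpha}$ after raising to the $p$-th power. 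An alternative, if this bookkeeping proves too lossy, is to run the argument directly at the level of the distribution function of $\|f\|_E$ and invoke a vector-valued Marcinkiewicz-type interpolation between the endpoint bounds $\Delta^{-\alpha}:L_1(\log L)^{-p\alpha}\to L_1$ (itself obtained from \eqref{eq:thmnaorschechtman} at a fixed small exponent) and $\Delta^{-\alpha}:L_q\to L_q$ for $q$ slightly above $p$, which sidesteps the explicit dyadic sum at the cost of a more structural interpolation input; but the self-contained dyadic route above is the one I would attempt first since it only uses Theorem~\ref{thm:naorschechtman} as a black box and elementary Orlicz estimates.
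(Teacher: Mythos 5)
Your proposed route has a genuine gap that you yourself half-suspected: the triangle inequality $\|\Delta^{-\alpha}f\|_{L_p}\le\sum_j\|\Delta^{-\alpha}f_j\|_{L_p}$ applied to the level-set pieces of $f$ loses a \emph{polynomial} factor in the number of active scales, while the logarithmic gains supplied by Theorem~\ref{thm:naorschechtman} are only \emph{polylogarithmic}, so the loss cannot be recovered. To see that the real-variable inequality you reduce to is simply false, take $p\in(1,\infty)$, $\alpha\in(1/p,1)$, and $f=\sum_{j=1}^K 2^j\mathbf 1_{A_j}$ on $\ms{C}_n$ with pairwise disjoint sets $A_j$ of measure $\sigma_n(A_j)=2^{-pj}$ (possible once $n\ge pK$). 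Each piece has $\|f_j\|_{L_p(\sigma_n;E)}\asymp 1$, $\sigma_n(\supp f_j)=2^{-pj}$, and $\log(\|f_j\|_{L_p}/\|f_j\|_{L_1})\asymp j$, so your key display becomes $\sum_{j=1}^K j^{-\alpha}\lesssim\|f\|_{L_p(\log L)^{-p\alpha}(\sigma_n;E)}$. The left side is $\asymp K^{1-\alpha}\to\infty$, while since $p\alpha>1$ the series $\sum_j j^{-p\alpha}$ converges and $\|f\|_{L_p(\log L)^{-p\alpha}}\asymp_{p,\alpha}1$. In fact the decomposition is actively counterproductive here: applying Theorem~\ref{thm:naorschechtman} \emph{directly} to $f$ (no decomposition) already gives the strictly better bound $K^{1/p}/\log^\alpha K$, because $\|f\|_{L_1}\asymp 1$ and $\|f\|_{L_p}=K^{1/p}$. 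Your proposed block-grouping/H\"older repair does not help: the block where $\log(1/\sigma_n(\supp f_j))\asymp 2^k$ has $\asymp 2^k$ indices, and H\"older gives the block contribution $(2^k)^{1-1/p}\cdot(2^k\cdot 2^{-p\alpha k})^{1/p}=2^{(1-\alpha)k}$, exactly the same as the $\ell_1$ sum, so summing over $k$ still yields $K^{1-\alpha}$. Your alternative via Marcinkiewicz interpolation also breaks at the $L_1$ endpoint, since Theorem~\ref{thm:naorschechtman} is only available for $p>1$ (hypercontractivity into $L_1$ fails). Note that this is not just a vector-valued obstruction: even in the scalar case the paper points out that Bakry and Meyer needed a complex interpolation argument via the imaginary powers $\Delta^{it}$ to reach the optimal exponent $-p\alpha$, precisely because a direct self-improvement from the $L_1$--$L_p$ ratio bound is too lossy.

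The paper's actual proof does not attempt any self-improvement from Theorem~\ref{thm:naorschechtman}; it instead reduces the vector-valued Orlicz estimate to the \emph{scalar} Bakry--Meyer inequality \eqref{eq:bakrymeyer} taken as a black box. The key structural fact is Lemma~\ref{lem:Delta+1}: the operator $(\Delta+1)^{-\alpha}=\frac{1}{\Gamma(\alpha)}\int_0^\infty e^{-t}P_t\,t^{\alpha-1}\diff t$ is a positive averaging kernel, so it satisfies the pointwise domination $\|(\Delta+1)^{-\alpha}f(\e)\|_E\le[(\Delta+1)^{-\alpha}\|f\|_E](\e)$, which lets one pass to the scalar function $\|f\|_E$. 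The remaining step is to show $\Delta^{-\alpha}$ and $(\Delta+1)^{-\alpha}$ are comparable on $L_p(\sigma_n;E)$ uniformly in $n$, which is exactly what the vector-valued Meyer multiplier theorem (Theorem~\ref{thm:multiplier}, applied to the holomorphic functions $h(z)=(1+z)^{\pm\alpha}$) provides; this is where nontrivial Rademacher type, via the Mendel--Naor tail-space bounds and Pisier's $K$-convexity theorem, enters. So the two ingredients you should look for, rather than a dyadic decomposition, are (i) a positive-kernel operator that dominates $\Delta^{-\alpha}$ and reduces to the scalar case pointwise, and (ii) a multiplier theorem to swap that operator back for $\Delta^{-\alpha}$.
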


It will be shown in Lemma~\ref{lem:orlicz-upper} below, that Theorem~\ref{thm:vectorbakrymeyer} is indeed a strengthening of Theorem~\ref{thm:naorschechtman}. In view of the result of~\cite{NS02}, it is evident that the assumption that the target space $E$ has nontrivial type is both necessary and sufficient in Theorem~\ref{thm:vectorbakrymeyer}. While the ingredients used in the proof of~\cite[Th\'eor\`eme 6]{BM82} cannot be applied in the vector-valued setting of Theorem~\ref{thm:vectorbakrymeyer},~\eqref{eq:vectorbakrymeyer} will be proven as a consequence of the scalar inequality~\eqref{eq:bakrymeyer} using the \mbox{following vector-valued multiplier theorem.}

\begin{theorem} \label{thm:multiplier}
Let $(E,\|\cdot\|_E)$ be a Banach space of nontrivial Rademacher type and consider a holomorphic function $h:\mb{D}_r\to\C$ where $\mb{D}_r = \{z\in\C: \ |z|<r\}$, where $r\in(0,\infty)$. Then, for every $\alpha\in(0,\infty)$ and $p\in(1,\infty)$, there exists a constant $C_h(\alpha,p)=C_h(\alpha,p,E)\in(0,\infty)$ such that for every $n\in\N$, every function $f:\ms{C}_n\to E$ satisfies
\begin{equation} \label{eq:thmmultiplier}
\big\| h\big( \Delta^{-\alpha}\big) f\big\|_{L_p(\sigma_n;E)} \leq C_h(\alpha,p) \|f\|_{L_p(\sigma_n;E)}.
\end{equation}
\end{theorem}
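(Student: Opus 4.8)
The plan is to expand $h$ in its Taylor series at the origin and to reduce the boundedness of $h(\Delta^{-\alpha})$ to a \emph{quantitative} strengthening of the Naor--Schechtman bound \eqref{eq:ns02}, in which the spectral gap seen by $\Delta^{-\beta}$ is artificially enlarged by first restricting to high Walsh degrees. Write $h(z)=\sum_{m\ge0}a_mz^m$, so that for every $\rho\in(0,r)$ there is $M_\rho\in(0,\infty)$ with $|a_m|\le M_\rho\rho^{-m}$. For $g:\ms{C}_n\to E$ and $d\in\N$, let $\Pi_{\ge d}g\eqdef\sum_{|S|\ge d}\hat g(S)W_S$ be the projection onto Walsh functions of degree at least $d$ and $\Pi_k$ the projection onto degree exactly $k$. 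Since $E$ has nontrivial Rademacher type it is $K$-convex \cite{Pis82, Pis16}, hence each $\Pi_k$ is bounded on $L_p(\sigma_n;E)$ with norm at most $c_E^{\,k}$ for some $c_E=c_E(p)\in[1,\infty)$ independent of $n$, and consequently $\|\Pi_{\ge m}\|_{L_p(\sigma_n;E)\to L_p(\sigma_n;E)}\le C_E^{\,m}$ for a suitable $C_E=C_E(p)$, again uniformly in $n$. We may assume $r>1$, so that $h$ is defined on the whole spectrum $\{0\}\cup\{k^{-\alpha}\}_{k\ge1}\subset[0,1]$ of $\Delta^{-\alpha}$ (the general case reduces to this after peeling off the finitely many low Walsh levels, on which the above projections are uniformly bounded). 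Fixing an integer $d=d(r,\alpha)$ to be chosen below, split $f=f_\flat+f_\sharp$ with $f_\flat\eqdef\E_{\sigma_n}f+\sum_{k=1}^{d-1}\Pi_kf$ and $f_\sharp\eqdef\Pi_{\ge d}f$. The low-degree part satisfies $h(\Delta^{-\alpha})f_\flat=h(0)\E_{\sigma_n}f+\sum_{k=1}^{d-1}h(k^{-\alpha})\Pi_kf$, so $\|h(\Delta^{-\alpha})f_\flat\|_{L_p(\sigma_n;E)}\le\big(\max_{0\le k\le d-1}|h(k^{-\alpha})|\big)\big(1+\sum_{k=1}^{d-1}C_E^{\,k}\big)\|f\|_{L_p(\sigma_n;E)}$, a bound depending only on $h,d,E,p$. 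It therefore remains to estimate $h(\Delta^{-\alpha})f_\sharp=\sum_{m\ge0}a_m\Delta^{-m\alpha}\Pi_{\ge d}f$.

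The heart of the argument is the \emph{shifted} heat-semigroup bound
$$\big\|P_t\Pi_{\ge d}\big\|_{L_p(\sigma_n;E)\to L_p(\sigma_n;E)}\le C_d\,e^{-dt},\qquad t\ge0,\ n\in\N,$$
with $C_d=C_d(E,p)<\infty$: note the rate $d$, in contrast with the fixed, type-dependent rate supplied by the bare Naor--Schechtman semigroup decay, which is too weak when $r$ is close to $1$. To prove it, observe that the Fourier multiplier $e^{dt}P_t\Pi_{\ge d}$ has symbol equal to $e^{-(k-d)t}$ on degree $k\ge d$ and $0$ on lower degrees; writing $\Pi_k=\Pi_{\ge k}-\Pi_{\ge k+1}$ and summing by parts (a finite sum, since $\Pi_{\ge k}=0$ for $k>n$) yields the operator identity
$$e^{dt}P_t\Pi_{\ge d}=\Pi_{\ge d}-(1-e^{-t})\sum_{j\ge1}e^{-(j-1)t}\,\Pi_{\ge d+j}.$$
Hence $\|e^{dt}P_t\Pi_{\ge d}\|\le\|\Pi_{\ge d}\|+(1-e^{-t})\sum_{j\ge1}e^{-(j-1)t}\|\Pi_{\ge d+j}\|\le C_E^{\,d}+(1-e^{-t})C_E^{\,d+1}\sum_{j\ge1}(C_Ee^{-t})^{j-1}$. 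For $t\ge 2\log C_E$ this is at most $3C_E^{\,d+1}$, while for $t<2\log C_E$ one uses the crude bound $\|P_t\Pi_{\ge d}\|\le\|\Pi_{\ge d}\|\le C_E^{\,d}\le C_E^{\,3d}e^{-dt}$; altogether $\|P_t\Pi_{\ge d}\|_{L_p(\sigma_n;E)\to L_p(\sigma_n;E)}\le 3C_E^{\,3d}e^{-dt}=:C_de^{-dt}$ for all $t\ge0$ and $n\in\N$.

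Given this, the subordination formula $\Delta^{-\beta}\Pi_{\ge d}=\tfrac1{\Gamma(\beta)}\int_0^\infty t^{\beta-1}P_t\Pi_{\ge d}\ud t$ (legitimate since $\Pi_{\ge d}$ annihilates constants, the integral converging absolutely in operator norm by the previous display) gives $\|\Delta^{-\beta}\Pi_{\ge d}\|_{L_p(\sigma_n;E)\to L_p(\sigma_n;E)}\le\tfrac{C_d}{\Gamma(\beta)}\int_0^\infty t^{\beta-1}e^{-dt}\ud t=C_d\,d^{-\beta}$ for every $\beta>0$, uniformly in $n$. Now fix $d=\lceil r^{-1/\alpha}\rceil+1$, so that $d^{-\alpha}<r$, and pick $\rho\in(d^{-\alpha},r)$. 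Then
$$\big\|h(\Delta^{-\alpha})f_\sharp\big\|_{L_p(\sigma_n;E)}\le\sum_{m\ge0}|a_m|\,\big\|\Delta^{-m\alpha}\Pi_{\ge d}\big\|_{L_p(\sigma_n;E)\to L_p(\sigma_n;E)}\|f\|_{L_p(\sigma_n;E)}\le C_d\Big(\sum_{m\ge0}|a_m|(d^{-\alpha})^m\Big)\|f\|_{L_p(\sigma_n;E)},$$
and $\sum_{m\ge0}|a_m|(d^{-\alpha})^m\le M_\rho\sum_{m\ge0}(d^{-\alpha}/\rho)^m<\infty$. Combining with the bound for $h(\Delta^{-\alpha})f_\flat$ from the first paragraph yields \eqref{eq:thmmultiplier} with a constant $C_h(\alpha)=C_h(\alpha,E)$ (also depending on the ambient exponent $p$), uniformly in $n$.

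The only genuinely nontrivial external input is the uniform-in-$n$ boundedness (with at most exponential growth in the degree) of the Walsh projections $\Pi_k$, which holds precisely because nontrivial Rademacher type forces $K$-convexity; this is exactly where the hypothesis is indispensable, mirroring its role in \cite{NS02}. The one step that requires an idea rather than bookkeeping is the identity for $e^{dt}P_t\Pi_{\ge d}$: projecting onto degrees $\ge d$ \emph{before} applying the semigroup upgrades the generic (type-dependent) spectral gap to the gap $d$, and it is this upgrade that makes the Taylor series of $h$ converge against $\|\Delta^{-m\alpha}\Pi_{\ge d}\|\lesssim_d(d^{-\alpha})^m$ once $d$ is taken large compared with $r^{-1/\alpha}$. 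Without this enlargement the radius of convergence $r$ of $h$ could be smaller than the reciprocal of the available spectral gap and the series would diverge, which is why the naive approach via \eqref{eq:ns02} alone does not suffice.
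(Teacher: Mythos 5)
Your proof is correct, and while it follows the same overall decomposition strategy as the paper (split $f$ into a low-degree part handled directly by $K$-convexity and a high-degree tail part handled via decay of $\Delta^{-\beta}$), you obtain the crucial tail-space decay estimate by a genuinely different and more elementary route. The paper invokes the Mendel--Naor theorem, which asserts that for $E$ of nontrivial type the heat semigroup on the degree-$\geq d$ tail space decays like $C_p e^{-c_p d \min\{t,t^{A_p}\}}$ with constants $C_p, c_p, A_p$ \emph{independent of $d$}; from this one gets $\|\Delta^{-\alpha}\Pi_{\geq d}\| \leq K_p(\alpha) d^{-\alpha/A_p} \to 0$, chooses $d$ so this is $\leq r/2$, and tensorizes. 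You instead derive the estimate $\|P_t\Pi_{\geq d}\| \leq C_d e^{-dt}$ directly from Pisier's $K$-convexity theorem via the Abel-summation identity $e^{dt}P_t\Pi_{\geq d}=\Pi_{\geq d}-(1-e^{-t})\sum_{j\geq1}e^{-(j-1)t}\Pi_{\geq d+j}$, accepting a prefactor $C_d = 3C_E^{3d}$ that grows exponentially in $d$. This weaker decay is nevertheless sufficient for two reasons which you correctly exploit: first, $d$ is fixed once and for all in terms of $r$ and $\alpha$, so the exponential growth of $C_d$ only contributes a constant; second, and more subtly, you apply the subordination formula separately to each power $\Delta^{-m\alpha}\Pi_{\geq d}$, obtaining $\|\Delta^{-m\alpha}\Pi_{\geq d}\| \leq C_d d^{-m\alpha}$ with $C_d$ appearing as a \emph{single} prefactor rather than $C_d^m$ (which is what would have resulted from the paper's tensorization step and would have ruined the geometric-series estimate). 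The upshot is that your argument achieves the same conclusion without the Mendel--Naor input, trading the uniform-in-$d$ constant of that theorem for the more elementary observation that projecting onto high Walsh degrees before applying $P_t$ upgrades the spectral gap to $d$, with the resulting $d$-dependent constant being harmless in context.
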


When $E=\C$, Theorem~\ref{thm:multiplier} is a classical result of Meyer~\cite[Th\`eor\'eme 3]{Mey84}. The vector-valued extension presented here crucially relies on the bounds on the action of negative powers of $\Delta$ on vector-valued tail spaces obtained by Mendel and Naor in~\cite{MN14}.

\subsection{Talagrand metric spaces} 

The vector-valued discrete Poincar\'e inequality~\eqref{eq:vectorpoincare} is intimately connected to a metric version of Rademacher type, called Enflo type (see~\cite{Enf78, NS02}). In view of this connection, we introduce the following metric invariant, inspired by Talagrand's inequality~\eqref{eq:talagrandorlicz}.

\begin{definition} [Talagrand type]  \label{def:talagrand}
Let $\psi:[0,\infty)\to[0,\infty)$ be a Young function and $p\in(0,\infty)$. We say that a metric space $(\MM,d_\MM)$ has Talagrand type $(p,\psi)$ with constant $\tau\in(0,\infty)$ if for every $n\in\N$, every function $f:\ms{C}_n\to\MM$ satisfies
\begin{equation} \label{eq:deftalagrand}
\int_{\ms{C}_n\times\ms{C}_n} d_\MM\big(f(\e),f(\delta)\big)^p\diff\sigma_{2n}(\e,\delta) \leq \tau^p \sum_{i=1}^n \|\mathfrak{d}_if\|^p_{L_\psi(\sigma_n)},
\end{equation}
where $\mathfrak{d}_if:\ms{C}_n\to\R_+$ is given by
\begin{equation} \label{eq:metricderivative}
\forall \ \e\in\ms{C}_n, \qquad\mathfrak{d}_if(\e) = \tfrac{1}{2} d_\MM\big(f(\e),f(\e_1,\ldots,\e_{i-1},-\e_i,\e_{i+1},\ldots,\e_n)\big).
\end{equation}
\end{definition}

It is clear that if $(E,\|\cdot\|_E)$ is a Banach space then $\|\partial_if(\e)\|_E$ coincides with $\mathfrak{d}_if(\e)$. It can be easily seen that if a Banach space $E$ has\mbox{ the property that for every $n\in\N$, every $f:\ms{C}_n\to E$ satisfies}
\begin{equation} \label{eq:orlicztalagrandtype}
\big\|f-\mb{E}_{\sigma_n}f\big\|_{L_p(\sigma_n;E)}^p \leq \tau_\ast^p \sum_{i=1}^n \|\partial_if\|^p_{L_\psi(\sigma_n;E)}
\end{equation}
for some $\tau_\ast\in(0,\infty)$, then $E$ also has Talagrand type $(p,\psi)$. Indeed, applying \eqref{eq:orlicztalagrandtype} to the function $F:\ms{C}_n\times\ms{C}_n\to E$ given by $F(\e,\delta) = f(\e)-f(\delta)$ which has $\mb{E}_{\sigma_{2n}}F =0$, we get
\begin{equation*}
\begin{split}
\int_{\ms{C}_n\times\ms{C}_n} \! \|f(\e)-f(\delta)\|_E^p&\diff\sigma_{2n}(\e,\delta)  = \big\|F-\mb{E}_{\sigma_{2n}}F\big\|_{L_p(\sigma_{2n};E)}^p \\ & \leq \tau_\ast^p \sum_{i=1}^n \big(  \|\partial_{\e_i}F\|^p_{L_\psi(\sigma_{2n};E)} +  \|\partial_{\delta_i}F\|^p_{L_\psi(\sigma_{2n};E)} \big) = 2\tau_*^p \sum_{i=1}^n \|\partial_if\|^p_{L_\psi(\sigma_n;E)},
\end{split}
\end{equation*}
and thus $E$ has Talagrand type $(p,\psi)$ with constant $\tau\leq 2^{1/p}\tau_\ast$. Hence, Theorems~\ref{thm:useivvorlicz} and~\ref{thm:useeldanorlicz} can both be translated as implications of Talagrand type from Rademacher and martingale type respectively (see also the discussion in Section~\ref{sec:8}). It is worth investigating whether natural examples of nonlinear metric spaces (e.g.~Alexandrov spaces of nonpositive or nonnegative curvature, transportation cost spaces and others) have Talagrand type. In this direction, we prove the following Talagrand-type inequality for functions with values in Gromov hyperbolic groups. For $p\in[1,\infty)$ and $\delta\in[0,1]$, let $\psi_{p,\delta}:[0,\infty)\to\R$ be a Young function with $\psi_{p,\delta}(x) = t^p \log^{-\delta}(e+x)$ for $x$ large enough. 

\begin{theorem} \label{thm:gromov}
There exists $\tau\in(0,\infty)$ such that every $\e\in(0,1)$ the following holds. Every Gromov hyperbolic group $\msf{G}$ equipped with the shortest path metric on the Cayley graph with respect to a finite generating set $\msf{S}\subseteq\msf{G}$ has Talagrand type $(2,\psi_{2,1-\e})$ with constant $\tau/\sqrt{\e}$.
\end{theorem}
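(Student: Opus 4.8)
The plan is to reduce the statement to the vector-valued Orlicz influence inequality of Theorem~\ref{thm:useeldanorlicz} applied to a carefully chosen linear target space associated to a Gromov hyperbolic group $\msf{G}$. The starting point is the classical fact, essentially due to work on the coarse geometry of hyperbolic groups (via the results of Bonk--Schramm on embeddings into products of trees, or the $\ell_p$-compression estimates of Bourgain / Buyalo--Dranishnikov--Schroeder), that a Gromov hyperbolic group $\msf{G}$ with word metric $d_\msf{G}$ admits a bi-Lipschitz-type coarse embedding into a Hilbert space after a logarithmic distortion; more precisely there is a map $\Phi:\msf{G}\to\ell_2$ and a constant $A\in(0,\infty)$ such that $d_\msf{G}(x,y)\lesssim \|\Phi(x)-\Phi(y)\|_{\ell_2}^2 \lesssim d_\msf{G}(x,y)\big(1+\log(1+d_\msf{G}(x,y))\big)$, or an analogous statement with a snowflaked Hilbertian target. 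The key geometric input is that such a space is metrically a subset of a space of martingale type $2$ (indeed $\ell_2$ itself), so we may transfer the linear inequality.

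The steps I would carry out, in order, are: (i) fix a finite generating set $\msf{S}$ and invoke the embedding $\Phi:\msf{G}\to\ell_2$ above; (ii) given $f:\ms{C}_n\to\msf{G}$, consider $F=\Phi\circ f:\ms{C}_n\to\ell_2$ and apply Theorem~\ref{thm:useeldanorlicz} (with $E=\ell_2$, martingale type $2$) to get $\|F-\mb{E}_{\sigma_n}F\|_{L_2(\sigma_n;\ell_2)}^2\lesssim\sum_i\|\partial_iF\|_{L_2(\log L)^{-1}(\sigma_n;\ell_2)}^2$, and symmetrize (as in the displayed computation after Definition~\ref{def:talagrand}) to pass to $\int\int\|F(\e)-F(\delta)\|_{\ell_2}^2$; (iii) use the lower bound in the embedding, $d_\msf{G}(x,y)\lesssim\|\Phi(x)-\Phi(y)\|_{\ell_2}^2$, on the left-hand side to control $\int\int d_\msf{G}(f(\e),f(\delta))\diff\sigma_{2n}$ — note the subtle point that Talagrand type in Definition~\ref{def:talagrand} is stated with exponent $p=2$, so one must actually work with the snowflake: the natural statement that drops out is Talagrand type $(1,\cdot)$ for $\msf{G}$, and obtaining the $p=2$ version requires the embedding to be genuinely bi-Lipschitz into a snowflake of $\ell_2$, i.e. $d_\msf{G}(x,y)^{1/2}\asymp\|\Phi(x)-\Phi(y)\|_{\ell_2}$ up to logarithmic factors on one side; (iv) on the right-hand side, estimate $\mathfrak{d}_iF(\e)=\|\partial_i F(\e)\|_{\ell_2}$ in terms of $\mathfrak{d}_if(\e)=\frac12 d_\msf{G}(f(\e),f(\e^{\oplus i}))$ via the upper bound in the embedding, which introduces exactly the loss $\|\partial_iF\|_{\ell_2}^2\lesssim\mathfrak{d}_if\cdot(1+\log(1+\mathfrak{d}_if))$; (v) absorb this extra logarithmic factor into the Orlicz norm: the point is that $L_2(\log L)^{-1}$ applied to a quantity that is itself $(\mathfrak{d}_if)\log\mathfrak{d}_if$-sized behaves like $L_2(\log L)^{-1+\e}$ applied to $\mathfrak{d}_if$ after trading the extra $\log$ against an $\e$-power and paying a factor $1/\e$, which is precisely the mechanism producing the constant $\tau/\sqrt{\e}$ (one $\e$-loss halved by the square root coming from the $p=2$ exponent) and the Young function $\psi_{2,1-\e}$.

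The main obstacle I expect is step (iii)–(iv): matching the precise logarithmic exponents so that the loss in the embedding of $\msf{G}$ into (a snowflake of) $\ell_2$ combines with the $(\log L)^{-1}$ gain from Theorem~\ref{thm:useeldanorlicz} to yield exactly a $(\log L)^{-(1-\e)}$ Orlicz norm, rather than something weaker. This is where the $\sqrt{\e}$ in the constant is genuinely forced, and getting the bookkeeping right — in particular verifying the elementary Orlicz-norm inequality $\big\|\,g\,(1+\log(e+g))\,\big\|_{L_2(\log L)^{-1}(\sigma_n)}\lesssim \tfrac1{\sqrt\e}\|g\|_{L_2(\log L)^{-(1-\e)}(\sigma_n)}$ for nonnegative $g$ (cf. Lemma~\ref{lem:orlicz-upper}) — is the technical heart of the argument. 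A secondary subtlety is ensuring the hyperbolic-group embedding one cites has the two-sided control needed for the $p=2$ formulation of Talagrand type; if only a one-sided bound is available, one states and proves the natural $(1,\psi_{1,1-\e})$ variant and then uses that Talagrand type $(1,\psi)$ self-improves to $(2,\psi)$-type statements for geodesic spaces via the standard doubling/midpoint argument, or simply records the $p=1$ version, which already suffices for the embedding applications in the subsequent section.
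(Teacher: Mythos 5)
The paper's proof goes by a different and much shorter route: it invokes Ostrovskii's theorem that any Gromov hyperbolic group admits a genuinely bi-Lipschitz embedding into \emph{every} nonsuperreflexive Banach space, applies this to the James space $(\mathbb{J},\|\cdot\|_{\mathbb{J}})$ (which has Rademacher type $2$ but is not superreflexive), invokes Theorem~\ref{thm:useivvorlicz} to conclude that $\mathbb{J}$ has Talagrand type $(2,\psi_{2,1-\e})$ with constant $C/\sqrt{\e}$, and then transfers this to $\msf{G}$ by the elementary observation that Talagrand type is a bi-Lipschitz invariant. The $\e$-loss and the $\sqrt{\e}$ in the constant come entirely from the fact that $\mathbb{J}$ has only Rademacher type $2$, not martingale type $2$, so the weaker Theorem~\ref{thm:useivvorlicz} must be used rather than Theorem~\ref{thm:useeldanorlicz}.

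Your proposal has two genuine gaps. First, the geometric input you invoke is not available in the form you need. Gromov hyperbolic groups do not admit a bi-Lipschitz or near-bi-Lipschitz (log-distorted) embedding of their snowflake $\sqrt{d_\msf{G}}$ into $\ell_2$: the Bonk--Schramm and Buyalo--Dranishnikov--Schroeder results concern \emph{quasi-isometric} or \emph{coarse} embeddings, and the Hilbert compression exponent of, say, a free group or surface group is exactly $1/2$ with an unavoidable lower-order correction that does not reduce to a single multiplicative logarithmic loss valid at all scales. Quasi-isometries are useless here because the Talagrand type inequality \eqref{eq:deftalagrand} on $\ms{C}_n$ requires control down to unit scale. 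What Ostrovskii's theorem gives, and what the paper actually uses, is a genuine bi-Lipschitz embedding -- but into a nonsuperreflexive target such as $\mathbb{J}$, not into Hilbert space, and certainly not into a snowflake of Hilbert space.

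Second, even granting your embedding, your own computation in steps (iii)--(iv) delivers a Talagrand type $(1,\cdot)$ statement for $\msf{G}$, not the required $(2,\psi_{2,1-\e})$: the lower bound $d_\msf{G}(x,y)\lesssim\|\Phi(x)-\Phi(y)\|_{\ell_2}^2$ means the left-hand side of \eqref{eq:deftalagrand} has $d_\msf{G}$ to the first power, and similarly $\|\partial_iF\|^2_{\ell_2}\asymp\mathfrak{d}_if$ converts the right-hand side to $L_1$-type Orlicz norms of $\mathfrak{d}_if$. Your proposed fix -- that Talagrand type $(1,\psi)$ ``self-improves to $(2,\psi)$-type statements for geodesic spaces via the standard doubling/midpoint argument'' -- is not a theorem; Enflo's midpoint iteration argument does not upgrade the exponent $p$ in this way, and indeed Talagrand type $(1,t\mapsto t)$ is trivially satisfied by \emph{every} metric space while Talagrand type $(2,\psi_{2,\delta})$ implies Enflo type $2$, which is a severe restriction. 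This is precisely the distinction exploited in the paper's Theorem~\ref{thm:L1}: there the snowflake-into-$\ell_2$ trick gives $L_1(\mu)$ Talagrand type $(1,\psi_{1,1})$, and no upgrade to $p=2$ is claimed.

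A smaller point: you reach for Theorem~\ref{thm:useeldanorlicz} (martingale type $2$), but any Banach space into which a Gromov hyperbolic group embeds bi-Lipschitzly must fail to be superreflexive (it contains bi-Lipschitz copies of binary trees), hence cannot have martingale type $>1$. This is why the $\sqrt{\e}$ and the exponent $1-\e$ in the statement are unavoidable with this method: one is forced to use the Rademacher type version, Theorem~\ref{thm:useivvorlicz}.
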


The proof of Theorem~\ref{thm:gromov} relies on a result of Ostrovskii~\cite{Ost14}, according to which the Cayley graph of every Gromov hyperbolic groups admits a bi-Lipschitz embeddng in an arbitrary nonsuperreflexive Banach space, combined with a classical construction of James~\cite{Jam78}.

We say that a Riemannian manifold has pinched negative curvature if its sectional curvature takes values in the interval $[-R,-r]$ for some $r,R\in(0,\infty)$ with $r<R$. After the proof of Theorem~\ref{thm:gromov} in Section~\ref{sec:gromov}, we also prove the following result.

\begin{theorem} \label{thm:pinched}
Let $n\in\N$ and $(\msf{M},g)$ be an $n$-dimensional complete, simply connected Riemannian manifold with pinched negative curvature. Then, for every $\e\in(0,1)$, $(\msf{M},d_\msf{M})$ has Talagrand type $(2,\psi_{2,1-\e})$ with constant $\tau/\sqrt{\e}$ where $\tau$ depends only on $n$ and the parameters $r,R$.
\end{theorem}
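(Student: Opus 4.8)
The plan is to reduce the statement to Theorem~\ref{thm:gromov}, or rather to the Banach space-valued Talagrand type inequality of Theorem~\ref{thm:useivvorlicz}, by embedding the manifold bi-Lipschitzly into a Banach space of Rademacher type $2$. By a classical theorem of Heintze and Im~Hof (or equivalently by comparison with the model spaces of constant curvature $-r$ and $-R$), an $n$-dimensional complete simply connected manifold $(\msf{M},g)$ with sectional curvature in $[-R,-r]$ is a CAT$(-r)$ space, hence Gromov $\delta$-hyperbolic with $\delta$ depending only on $r$; moreover, since the curvature is bounded below by $-R$, the exponential map is a quasi-isometry onto its image with constants depending only on $n$ and $R$, so $(\msf{M},d_\msf{M})$ has at most exponential volume growth with parameters controlled by $n,r,R$. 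The key input is then a bi-Lipschitz embedding result for such spaces: Buyalo--Dranishnikov--Schroeder (or the embeddings of visual hyperbolic spaces into products of trees, followed by embedding trees into a suitable Banach space) show that a Gromov hyperbolic space with bounded geometry admits a bi-Lipschitz embedding into a finite product of $\R$-trees, and hence into a Banach space with Rademacher type $2$ (indeed into an $L_p$-space for any fixed $p\in(1,2)$, since $\R$-trees embed isometrically into $L_1$ and, being $\delta$-hyperbolic with bounded geometry, also bi-Lipschitzly into $\ell_p$). The distortion and the type-$2$ constant of the target will depend only on $n$, $r$ and $R$, exactly as required.

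The steps, in order, would be: (i) invoke curvature comparison to record that $(\msf{M},d_\msf{M})$ is $\delta_0$-hyperbolic with bounded geometry, $\delta_0 = \delta_0(r)$ and the geometry bounds depending on $n,r,R$; (ii) quote the relevant bi-Lipschitz embedding $\iota:\msf{M}\to X$, where $X$ is a Banach space with Rademacher type $2$ and type-$2$ constant $T = T(n,r,R)$, and distortion $D = D(n,r,R)$; (iii) apply Theorem~\ref{thm:useivvorlicz} to the composition $\iota\circ f$ for an arbitrary $f:\ms{C}_n\to\msf{M}$ and fixed $\e\in(0,1)$, which gives
\begin{equation*}
\big\|\iota\circ f - \mb{E}_{\sigma_n}(\iota\circ f)\big\|_{L_2(\sigma_n;X)}^2 \leq \frac{C(X)}{\e}\sum_{i=1}^n \big\|\partial_i(\iota\circ f)\big\|_{L_2(\log L)^{-1+\e}(\sigma_n;X)}^2;
\end{equation*}
(iv) bound the left side below, using $\|\iota\circ f - \mb{E}_{\sigma_n}(\iota\circ f)\|_{L_2(\sigma_n;X)}^2 \asymp \int_{\ms{C}_n\times\ms{C}_n}\|\iota(f(\e))-\iota(f(\delta))\|_X^2\diff\sigma_{2n}(\e,\delta) \geq D^{-2}\int_{\ms{C}_n\times\ms{C}_n} d_\msf{M}(f(\e),f(\delta))^2\diff\sigma_{2n}$, and bound the right side above via $\|\partial_i(\iota\circ f)(\e)\|_X = \tfrac12\|\iota(f(\e))-\iota(f(\sigma_i\e))\|_X \leq D\,\mathfrak{d}_if(\e)$, which passes to the Orlicz norm by monotonicity; (v) collect constants to conclude that $(\msf{M},d_\msf{M})$ has Talagrand type $(2,\psi_{2,1-\e})$ with constant $\tau/\sqrt\e$ where $\tau = \tau(n,r,R)$, matching the form $\psi_{2,1-\e}(x) = x^2\log^{-(1-\e)}(e+x)$ from Theorem~\ref{thm:useivvorlicz} and Definition~\ref{def:talagrand}.

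I expect the main obstacle to be step~(ii): locating the precise bi-Lipschitz embedding statement with quantitative control of both the distortion \emph{and} the Rademacher type~$2$ constant of the target \emph{solely} in terms of $n$, $r$, $R$. The cleanest route is probably to factor through $\R$-trees: a Gromov hyperbolic space of bounded geometry quasi-isometrically embeds into a finite product of trees (Buyalo--Schroeder), the number of factors and the quasi-isometry constants depending only on $\delta_0$ and the doubling/packing constant of the boundary, both of which are functions of $n,r,R$; and a finite product of $\R$-trees embeds bi-Lipschitzly into $L_p$ for any $p\in(1,2)$ with controlled distortion, while $L_p$ has Rademacher (indeed martingale) type $2$... wait, $L_p$ for $p<2$ has type $p$, not $2$; so one should instead use that $\R$-trees embed bi-Lipschitzly into a Hilbert space after a snowflake-free argument is \emph{not} available, and the correct statement is that a product of finitely many trees embeds bi-Lipschitzly into $\ell_2$ only up to distortion depending on the depth, which is unbounded. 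The resolution, and the genuinely delicate point, is that one does not need a Hilbert target: any Banach space of Rademacher type $2$ suffices, and one can take $X = \ell_2(\{$trees$\})$ with each tree mapped into $\ell_2$ by the standard construction whose distortion is $O(\sqrt{\log(\mathrm{diam}))}$ — but here, crucially, Talagrand type is being asked for \emph{finite} cubes $\ms{C}_n$ with constants independent of $n$, so the diameters of the relevant portions of $\msf{M}$ are unbounded and this logarithmic loss is fatal. Thus the actual proof must, as in Theorem~\ref{thm:gromov}, embed into a \emph{non-Hilbertian} type~$2$ space — e.g. via James-type space constructions adapted to hyperbolic groups — and the work of step~(ii) is to push Ostrovskii's embedding (used for Theorem~\ref{thm:gromov}) or an analogue through for manifolds, which is where the hypotheses of pinched (both-sided) curvature and the dimension $n$ enter to guarantee bounded geometry and hence a uniform such embedding.
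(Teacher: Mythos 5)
Your overall strategy is precisely the paper's: embed $(\msf{M},d_\msf{M})$ bi-Lipschitzly into a Banach space of Rademacher type $2$ whose relevant constants depend only on $n$, $r$, $R$, apply Theorem~\ref{thm:useivvorlicz} to the composition, and transfer the resulting Talagrand type back to $\msf{M}$ exactly as in your steps (iii)--(v). You also correctly diagnose the two dead ends: $L_p$ for $p<2$ has Rademacher type $p$ only, and embedding trees into $\ell_2$ incurs a $\sqrt{\log(\mathrm{depth})}$ loss that is fatal since the relevant subsets of $\msf{M}$ have unbounded diameter. Your final conclusion that one must target a non-Hilbertian space of Rademacher type $2$, with bounded geometry coming from the pinched curvature and the dimension, is the correct resolution.

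Where you stop short is nailing down step (ii), and the chain you gesture at --- ``push Ostrovskii's embedding through for manifolds'' --- is not quite what the paper does (Ostrovskii's theorem in~\cite{Ost14} concerns Gromov hyperbolic \emph{groups}, and a pinched-curvature manifold is not a group). The paper's resolution goes through products of trees, close to the Buyalo--Schroeder route you first mention. Specifically: Theorem~\ref{thm:npss} (a variant of Naor--Peres--Schramm--Sheffield~\cite{NPSS06}, with the input from Buyalo--Schroeder replaced by Dranishnikov--Schroeder~\cite{DS05} so that one lands in \emph{binary} $\R$-trees rather than $\R$-trees of infinite degree) gives a bi-Lipschitz embedding of $\msf{M}$, with distortion $D(n,r,R)$, into a product of $N(n,r,R)$ binary $\R$-trees of infinite depth. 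The key observation is then that Talagrand type is a local invariant --- it only tests \emph{finite} subsets via $f:\ms{C}_n\to\msf{M}$ --- and every finite subset of $\msf{M}$ lands in a product of $N$ binary $\R$-trees of some \emph{finite} depth $d$. Proposition~\ref{prop:bourgain} (a quantitative refinement of Bourgain's tree embedding) embeds each binary $\R$-tree of finite depth into \emph{any} nonsuperreflexive Banach space with distortion at most $4$, uniformly in $d$. Composing and taking the target to be James's space $\mb{J}$ --- which is of Rademacher type $2$ yet not superreflexive --- yields embeddings of all finite subsets of $\msf{M}$ into $\mb{J}$ with distortion $K(N,D)$. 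Theorem~\ref{thm:useivvorlicz} then gives Talagrand type $(2,\psi_{2,1-\e})$ with constant $C/\sqrt{\e}$ for $\mb{J}$, and the transfer argument finishes the proof with $\tau=CK(N,D)$. So your diagnosis was right; the missing piece is that the tree-to-Banach-space step is Proposition~\ref{prop:bourgain} applied uniformly to finite-depth trees, not an adaptation of the group-theoretic Ostrovskii embedding.
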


Theorems~\ref{thm:gromov} and~\ref{thm:pinched} describe two classes of nonpositively curved spaces which satisfy a Talagrand-type inequality that strengthens Enflo type 2. It remains an intriguing open problem to understand whether every CAT(0) space has this property (see also Section~\ref{sec:8}).

\subsection{Embeddings of nonlinear quotients of the cube and Talagrand type}
Let $(\MM,d_\MM)$ and $(\NN,d_\NN)$ be metric spaces. A function $f:\MM\to\NN$ has bi-Lipschitz distortion at most $D\geq1$ if there exists $s\in(0,\infty)$ such that
\begin{equation}
\forall \ x,y\in\MM, \quad sd_\MM(x,y) \leq d_\NN\big(f(x),f(y)\big) \leq sD d_\MM(x,y).
\end{equation}
We will denote by $c_\NN(\MM)$ the infimal bi-Lipschitz distortion of a function $f:\MM\to\NN$. When $\NN = L_p(\R)$, we will abbreviate $c_{L_p(\R)}(\MM)$ as $c_p(\MM)$. Consider the hypercube $\ms{C}_n$ endowed with the Hamming metric $\rho(\e,\delta) = \|\e-\delta\|_1$. The geometric significance of Enflo type stems (partially) from the fact (see \cite{NS02}) that if a metric space $\MM$ has Enflo type $p$ with constant  $T\in(0,\infty)$, then
\begin{equation} \label{type-no-embed}
c_\MM(\ms{C}_n) \geq T^{-1} n^{1-\frac{1}{p}}.
\end{equation}
In this section, we will establish a more delicate bi-Lipschitz nonembeddability property which is a consequence of the Talagrand type inequality \eqref{eq:deftalagrand}.

Let $\ms{R}\subseteq \ms{C}_n\times\ms{C}_n$ be an arbitrary equivalence relation and denote by $\ms{C}_n/\ms{R}$ the set of all equivalence classes of $\ms{R}$ equipped with the quotient metric, which is given by
\begin{equation}
\forall \ [\e],[\delta]\in\ms{C}_n/\ms{R}, \quad \rho_{\ms{C}_n/\ms{R}} \big( [\e], [\delta] \big) \eqdef \min\big\{ \rho(\eta_1,\zeta_1)+\cdots+\rho(\eta_{k},\zeta_k)\big\};
\end{equation}
here the minimum is taken over all $k\geq1$ and $\eta_1,\ldots,\eta_k,\zeta_1,\ldots,\zeta_k\in\ms{C}_n$ with $\eta_1\in[\e]$, $\zeta_k\in[\delta]$ and $[\zeta_j] = [\eta_{j+1}]$ for every $j\in\{1,\ldots,k-1\}$. We shall now present an implication of Talagrand type on embeddings of nonlinear quotients\footnote{The term ``nonlinear'' here is meant to emphasize the distinction between quotients of the hypercube with respect to an arbitrary equivalence relation and quotients by linear codes (see \cite{MS77} and Remark \ref{rem:khot-naor} below). Recall that if we identify $\ms{C}_n$ with $\mb{F}_2^n$, where $\mb{F}_2$ is the field with two elements, a linear code is an $\mb{F}_2$-subspace $C\subseteq\ms{C}_n$ and the corresponding quotient is the $\mb{F}_2$-vector space $\mb{F}_2^n/C$ endowed with the quotient metric.} of the cube which strengthens the corresponding bounds that one can deduce from Enflo type. We will denote by $\partial_i \ms{R}$ the boundary of $\ms{R}$ in \mbox{the direction $i$, that is}
\begin{equation} \label{eq:boundaries}
\forall \ i\in\{1,\ldots,n\}, \quad \partial_i \ms{R} \eqdef \big\{ \e\in\ms{C}_n: \ \big(\e, (\e_1,\ldots,\e_{i-1},-\e_i,\e_{i+1},\ldots,\e_n)\big) \notin \ms{R}\big\}
\end{equation}
and by $\msf{a}_p(\ms{R})$ the quantity
\begin{equation}
\msf{a}_p(\ms{R}) \eqdef \left( \int_{\ms{C}_n\times\ms{C}_n} \rho_{\ms{C}_n/\ms{R}} \big( [\e], [\delta]\big)^p \diff\sigma_{2n}(\e,\delta)\right)^{1/p}.
\end{equation}

\begin{theorem} \label{thm:embed}
Fix $p\in(0,\infty)$ and a Young function $\psi:[0,\infty)\to[0,\infty)$. If a metric space $(\MM,d_\MM)$ has Talagrand type $(p,\psi)$ with constant $\tau\in(0,\infty)$ then, for every $n\in\N$ and every equivalence relation $\ms{R}\subseteq\ms{C}_n\times\ms{C}_n$, we have
\begin{equation} \label{eq:thm-embed}
c_\MM\big(\ms{C}_n/\ms{R}\big) \geq \frac{2 \tau^{-1}\msf{a}_p(\ms{R})}{\big(\sum_{i=1}^n \psi^{-1} \big(\sigma_n(\partial_i\ms{R})^{-1} \big)^{-p} \big)^{1/p}}.
\end{equation}
\end{theorem}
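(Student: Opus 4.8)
The plan is to unwind the definition of Talagrand type by applying the defining inequality \eqref{eq:deftalagrand} to a suitably chosen function from $\ms{C}_n$ into $\MM$, namely the composition of the quotient map $\ms{C}_n\to\ms{C}_n/\ms{R}$ with an almost-optimal bi-Lipschitz embedding $g\colon\ms{C}_n/\ms{R}\to\MM$. Concretely, fix $D>c_\MM(\ms{C}_n/\ms{R})$ and an embedding $g\colon\ms{C}_n/\ms{R}\to\MM$ together with a scaling factor $s\in(0,\infty)$ such that $s\,\rho_{\ms{C}_n/\ms{R}}([\e],[\delta])\le d_\MM(g([\e]),g([\delta]))\le sD\,\rho_{\ms{C}_n/\ms{R}}([\e],[\delta])$ for all $[\e],[\delta]$. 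Define $f\colon\ms{C}_n\to\MM$ by $f(\e)=g([\e])$. First I would lower bound the left-hand side of \eqref{eq:deftalagrand}: since $d_\MM(f(\e),f(\delta))\ge s\,\rho_{\ms{C}_n/\ms{R}}([\e],[\delta])$, we get
\begin{equation*}
\int_{\ms{C}_n\times\ms{C}_n} d_\MM\big(f(\e),f(\delta)\big)^p\diff\sigma_{2n}(\e,\delta)\ \ge\ s^p\,\msf{a}_p(\ms{R})^p.
\end{equation*}

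Next I would upper bound the right-hand side. The key observation is that the metric derivative $\mathfrak{d}_if(\e)$ vanishes whenever $\e\notin\partial_i\ms{R}$: indeed, if $\big(\e,(\e_1,\dots,-\e_i,\dots,\e_n)\big)\in\ms{R}$ then $[\e]=[(\e_1,\dots,-\e_i,\dots,\e_n)]$, so $f$ takes the same value at these two points and $\mathfrak{d}_if(\e)=0$. When $\e\in\partial_i\ms{R}$, the upper Lipschitz bound gives $\mathfrak{d}_if(\e)=\tfrac12 d_\MM(g([\e]),g([(\e_1,\dots,-\e_i,\dots,\e_n)]))\le \tfrac{sD}{2}\rho_{\ms{C}_n/\ms{R}}([\e],[(\e_1,\dots,-\e_i,\dots,\e_n)])\le \tfrac{sD}{2}$, since flipping one coordinate changes the Hamming metric by at most $1$ and hence the quotient metric by at most $1$. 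Therefore $\mathfrak{d}_if\le \tfrac{sD}{2}\cdot\1_{\partial_i\ms{R}}$ pointwise, and by monotonicity of the Orlicz norm, $\|\mathfrak{d}_if\|_{L_\psi(\sigma_n)}\le \tfrac{sD}{2}\,\|\1_{\partial_i\ms{R}}\|_{L_\psi(\sigma_n)}$. A direct computation from the definition of the Orlicz norm shows that for an indicator of a set $A$ of measure $\sigma_n(A)>0$ one has $\|\1_A\|_{L_\psi(\sigma_n)}=1/\psi^{-1}\big(\sigma_n(A)^{-1}\big)$ (this is the value of $t$ for which $\psi(1/t)\sigma_n(A)=1$). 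Hence $\|\mathfrak{d}_if\|_{L_\psi(\sigma_n)}^p\le \big(\tfrac{sD}{2}\big)^p\psi^{-1}\big(\sigma_n(\partial_i\ms{R})^{-1}\big)^{-p}$, and summing over $i$ (discarding the terms with $\sigma_n(\partial_i\ms{R})=0$, for which $\mathfrak{d}_if\equiv0$) gives
\begin{equation*}
\sum_{i=1}^n\|\mathfrak{d}_if\|_{L_\psi(\sigma_n)}^p\ \le\ \Big(\tfrac{sD}{2}\Big)^p\sum_{i=1}^n\psi^{-1}\big(\sigma_n(\partial_i\ms{R})^{-1}\big)^{-p}.
\end{equation*}

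Finally, I would combine the two bounds via \eqref{eq:deftalagrand}: $s^p\,\msf{a}_p(\ms{R})^p\le \tau^p\big(\tfrac{sD}{2}\big)^p\sum_{i=1}^n\psi^{-1}\big(\sigma_n(\partial_i\ms{R})^{-1}\big)^{-p}$. Cancelling $s^p$ and taking $p$-th roots yields $\msf{a}_p(\ms{R})\le \tfrac{\tau D}{2}\big(\sum_{i=1}^n\psi^{-1}(\sigma_n(\partial_i\ms{R})^{-1})^{-p}\big)^{1/p}$, i.e.\ $D\ge 2\tau^{-1}\msf{a}_p(\ms{R})\big(\sum_i\psi^{-1}(\sigma_n(\partial_i\ms{R})^{-1})^{-p}\big)^{-1/p}$; letting $D\downarrow c_\MM(\ms{C}_n/\ms{R})$ gives \eqref{eq:thm-embed}. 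The argument is essentially a bookkeeping exercise once the right test function is identified; the only genuinely delicate point is the elementary but easy-to-get-wrong computation of the Orlicz norm of an indicator function and the verification that $\mathfrak{d}_if$ is supported on $\partial_i\ms{R}$ and bounded by a constant there. One should also note the degenerate case where some $\partial_i\ms{R}=\emptyset$: then the corresponding summand $\psi^{-1}(\infty)^{-1}$ should be interpreted as $0$ (consistently, $\lim_{x\to\infty}\psi^{-1}(x)=\infty$), which matches the vanishing of $\mathfrak{d}_if$, so the inequality is unaffected.
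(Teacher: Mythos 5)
Your proof is correct and follows essentially the same route as the paper's: lift an almost-optimal embedding to a test function $F(\e)=g([\e])$ on $\ms{C}_n$, bound the metric derivatives by $\tfrac{sD}{2}\,\mathbf{1}_{\partial_i\ms{R}}$, compute the Orlicz norm of the indicator as $1/\psi^{-1}(\sigma_n(\partial_i\ms{R})^{-1})$, and compare. Your remarks on the degenerate case $\partial_i\ms{R}=\emptyset$ and on taking $D\downarrow c_\MM(\ms{C}_n/\ms{R})$ are slightly more explicit than the paper's exposition but do not constitute a different argument.
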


It is worth noting that in the setting of Theorem \ref{thm:embed}, if $\MM$ has Talagrand type $(p,t\mapsto t^p)$ with constant $\tau$ (a property which is very closely related to Enflo type $p$, see Remark \ref{rem:enflo-def}), then
\begin{equation} \label{eq:weak-embed}
c_\MM\big(\ms{C}_n/\ms{R}\big) \geq \frac{2 \tau^{-1}\msf{a}_p(\ms{R})}{\left(\sum_{i=1}^n \sigma_n(\partial_i\ms{R})  \right)^{1/p}}.
\end{equation}
This estimate, which generalizes \eqref{type-no-embed}, is substantially weaker than \eqref{eq:thm-embed} when $\psi(t)<\!\!\!<t^p$ for large values of $t$. In particular, this is the case for Banach spaces of Rademacher or martingale type $p$ (see Theorems \ref{thm:gen-tal1} and \ref{thm:gen-tal2}). It is also worth mentioning that, in view of Theorem \ref{thm:L1} below, Theorem \ref{thm:embed} provides nontrivial distortion lower bounds even for bi-Lipschitz embeddings into $L_1(\mu)$ spaces.

Theorem \ref{thm:embed} is reminiscent of the celebrated theorem of Kahn, Kalai and Linial \cite{KKL88}, which asserts that there exists a constant $c\in(0,\infty)$ such that for every Boolean function $f:\ms{C}_n\to\{0,1\}$,
\begin{equation} \label{eq:kkl}
\max_{i\in\{1,\ldots,n\}} \|\partial_i f\|_{L_2(\sigma_n)}^2 \geq \frac{c\log n}{n} \mathrm{Var}_{\sigma_n}f = \frac{c\log n}{n} p(1-p),
\end{equation}
where $p=\mb{E}_{\sigma_n}f$. Viewing $f$ as a voting scheme, \eqref{eq:kkl} asserts that if all influences $\|\partial_i f\|_{L_2(\sigma_n)}^2$ are small, then $f$ is necessarily an unfair system in the sense that its expectation is very close to either 0 or 1. Inequality \eqref{eq:thm-embed} puts forth a similar phenomenon in embedding theory:~if all {\it geometric influences} $\sigma_n(\partial_i\ms{R})$ of the partition are small, then the quotient $\ms{C}_n/\ms{R}$ is incompatible with the geometry of the target space $\MM$. Moreover, the quantitative improvement \eqref{eq:thm-embed} of  \eqref{eq:weak-embed} is in direct analogy with the improvement that the KKL inequality \eqref{eq:kkl} offers to the weaker estimate 
$$\max_{i\in\{1,\ldots,n\}} \|\partial_i f\|_{L_2(\sigma_n)}^2 \geq \tfrac{1}{n}\mathrm{Var}_{\sigma_n}f,$$
which follows readily from the Poincar\'e inequality \eqref{eq:poincare} for any function $f:\ms{C}_n\to\C$.

\subsection*{Organization of the paper} In Section~\ref{sec:2}, we will present some elementary inequalities and properties of Orlicz norms which we shall use in the sequel. Section~\ref{sec:3} contains the proof of Theorems~\ref{thm:useivv} and~\ref{thm:useivvorlicz} and Section~\ref{sec:4} contains two proofs of Theorems~\ref{thm:useeldan} and~\ref{thm:useeldanorlicz}, one using stochastic calculus and one Fourier analytic. In Section~\ref{sec:5}, we prove Theorems~\ref{thm:lptalagrand} and~\ref{thm:l1lpscalar} and their analogue in Gauss space, Theorem~\ref{thm:usemp}, by a combination of semigroup methods and Riesz transforms. Section~\ref{sec:6} contains the proof of Theorem~\ref{thm:multiplier} and the derivation of Theorems~\ref{thm:naorschechtman} and~\ref{thm:vectorbakrymeyer}. In Section~\ref{sec:gromov} we present the proof of Theorems~\ref{thm:gromov} and~\ref{thm:pinched} and in Section~\ref{sec:embed} we present the proof of the nonembeddability result of Theorem \ref{thm:embed}. Finally, Section~\ref{sec:8} contains some \mbox{concluding remarks and open problems.}

\bigskip

\noindent{\bf Acknowledgements.} We are grateful to Florent Baudier, Michel Ledoux, Assaf Naor, Sang Woo Ryoo and Ramon van Handel for helpful discussions and feedback. We would also like to thank the anonymous referee for communicating the content of Remark \ref{rem:referee} to us.


\section{Some preliminary calculus lemmas} \label{sec:2}

In this section, we present a few elementary facts  related to Orlicz norms which we shall repeatedly use in the sequel. While these results are central for our proofs, they are mostly technical and therefore can be skipped on first reading. We gather them here in order to avoid digressions in the main part of the text.

\begin{lemma} \label{lem:orlicz-withpower}
Let $(E,\|\cdot\|_E)$ be a Banach space and $(\Omega,\mu)$ a probability space. For every $r\in(1,\infty)$, $\gamma,\eta\in(0,\infty)$ and $\e\in[0,1)$, there exists $A=A(r,\gamma,\eta,\e)\in(0,\infty)$ such that every $h:\Omega\to E$ satisfies,
\begin{equation}
\int_0^\infty e^{-\eta t} \big\|h\big\|_{L_{1+(r-1)e^{-\gamma t}}(\mu;E)}^r \frac{\diff t}{t^\e} \leq A \|h\|^r_{L_r(\log L)^{-1+\e}(\mu;E)}.
\end{equation}
\end{lemma}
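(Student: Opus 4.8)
The plan is to reduce the integral to a computation on the distribution function of $\|h\|_E$. First, I would recall the standard fact that for a probability space, $\|h\|_{L_q(\mu;E)}$ is the $L_q$-norm of the scalar function $g\eqdef\|h\|_E$, so everything reduces to the scalar inequality
\[
\int_0^\infty e^{-\eta t}\,\|g\|_{L_{q(t)}(\mu)}^r\,\frac{\diff t}{t^\e}\leq A\,\|g\|_{L_r(\log L)^{-1+\e}(\mu)}^r,\qquad q(t)\eqdef 1+(r-1)e^{-\gamma t}.
\]
By homogeneity in $g$, I may normalize $\|g\|_{L_r(\log L)^{-1+\e}(\mu)}=1$, i.e. $\int_\Omega g^r\log^{-1+\e}(e+g)\diff\mu\le 1$ (using that the Young function $\psi_{r,-1+\e}$ agrees with $x^r\log^{-1+\e}(e+x)$ for large $x$ and is comparable to it throughout, up to constants depending on $r,\e$). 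The goal is then to bound $\int_0^\infty e^{-\eta t}\|g\|_{L_{q(t)}(\mu)}^r\,t^{-\e}\diff t$ by a constant.

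The key estimate is a pointwise-in-$t$ bound on $\|g\|_{L_{q(t)}(\mu)}^r$. Since $q(t)\in(1,r]$ and $\mu$ is a probability measure, Jensen/Hölder gives $\|g\|_{L_{q(t)}(\mu)}\le\|g\|_{L_r(\mu)}$, which is already finite under our normalization, but this crude bound loses the decay needed near $t=0$ to beat the $t^{-\e}$ singularity and, more importantly, it does not exploit the $\log$-refinement. Instead I would split $\Omega$ at a threshold depending on $t$. Write $q=q(t)$ and note $r-q=(r-1)(1-e^{-\gamma t})\asymp_{r,\gamma}\min(t,1)$. On the set $\{g\le e\}$ we trivially have $\int g^q\diff\mu\le e^r$. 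On $\{g>e\}$, use the elementary inequality $g^{q}=g^r\cdot g^{-(r-q)}$ together with the fact that $g^{-(r-q)}\leq \log^{-1+\e}(e+g)\cdot \log^{1-\e}(e+g)\cdot g^{-(r-q)}$ — more precisely, I would bound $g^{q}\log^{0}(\cdots)$ against $g^r\log^{-1+\e}(e+g)$ times the extra factor $\log^{1-\e}(e+g)\, g^{-(r-q)}$, and observe that since $r-q\asymp\min(t,1)$ the function $u\mapsto u^{-(r-q)}\log^{1-\e}(e+u)$ is bounded on $[e,\infty)$ by $C(r,\gamma,\e)\cdot (r-q)^{-(1-\e)}\asymp C\,\min(t,1)^{-(1-\e)}$ (its maximum over $u\ge e$ occurs at $\log u\asymp (r-q)^{-1}$). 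This yields
\[
\|g\|_{L_{q(t)}(\mu)}^{q(t)}\le C\Big(1+\min(t,1)^{-(1-\e)}\int_{\{g>e\}}g^r\log^{-1+\e}(e+g)\diff\mu\Big)\le C\big(1+\min(t,1)^{-(1-\e)}\big),
\]
using the normalization. Raising to the power $r/q(t)\le r$ (and using $q(t)\ge 1$, so the exponent is at most $r$ and at least $1$), we get $\|g\|_{L_{q(t)}(\mu)}^r\le C\big(1+\min(t,1)^{-(1-\e)r/q(t)}\big)$; since $r/q(t)\to r$ as $t\to 0$ this is $\lesssim C\,t^{-(1-\e)r}$ for small $t$ — which is too large. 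The fix is to be more careful: the exponent $r/q(t)=r/(1+(r-1)e^{-\gamma t})$ satisfies $r/q(t)=1+O(t)$ near $t=0$, so $(1-\e)r/q(t)=(1-\e)(1+O(t))$, and the worst case near $t=0$ is the exponent $(1-\e)$ itself up to a multiplicative $1+O(t)$ correction which contributes only a bounded factor after integrating $e^{-\eta t}$. Thus $\|g\|_{L_{q(t)}(\mu)}^r\lesssim_{r,\gamma,\eta,\e} \min(t,1)^{-(1-\e)(1+Ct)}$ for a suitable $C$, and for $t\ge 1$ we just have $\|g\|_{L_{q(t)}(\mu)}^r\le\|g\|_{L_r(\mu)}^r\le C'$ by the crude Hölder bound combined with $\log^{-1+\e}(e+g)\ge$ a positive constant on any fixed level set — more carefully, $\|g\|_{L_r(\mu)}^r=\int g^r\diff\mu$ is controlled since on $\{g\le K\}$ it is $\le K^r$ and on $\{g>K\}$, $\int g^r\diff\mu\le \log^{1-\e}(e+K)\int g^r\log^{-1+\e}(e+g)\diff\mu\le\log^{1-\e}(e+K)$, and then one optimizes, but since here we only need \emph{finiteness} with a constant, any fixed large $K$ works and gives $\|g\|_{L_r(\mu)}^r\le C(\e)$.

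Finally I would integrate: $\int_0^1 e^{-\eta t} t^{-\e}\cdot t^{-(1-\e)(1+Ct)}\diff t = \int_0^1 e^{-\eta t}t^{-\e-(1-\e)(1+Ct)}\diff t$. At $t\to 0$ the exponent of $t$ tends to $-\e-(1-\e)=-1$, so the integrand behaves like $t^{-1}\cdot t^{-(1-\e)Ct}$; the factor $t^{-(1-\e)Ct}=e^{-(1-\e)Ct\log t}\to 1$ is bounded near $0$, so the integral \emph{diverges logarithmically} — this is the genuine obstacle, and it signals that my pointwise bound is slightly too lossy and must be sharpened by keeping the $\min(t,1)^{-(1-\e)}$ factor \emph{inside} the $L^{q(t)}$ norm before raising to the $r/q(t)$ power, i.e. applying the refined split to $\int g^{q(t)}\log^{0}$ but weighting by $\log^{-1+\e}$ \emph{and} tracking that the `bad' contribution is not the whole normalized quantity $1$ but only the tail $\int_{\{g>M(t)\}}$ with $M(t)\asymp e^{c/\min(t,1)}$, whose $L_r(\log L)^{-1+\e}$-mass is automatically small — indeed one shows $\int_{\{g>M\}}g^r\log^{-1+\e}(e+g)\diff\mu$ times $\log^{1-\e}(e+M)\asymp \min(t,1)^{-(1-\e)}$ stays \emph{bounded} (not $\min(t,1)^{-(1-\e)}$), by choosing the threshold at the exact crossover $\log M(t)\asymp (r-q(t))^{-1}$. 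With this correction the pointwise bound improves to $\|g\|_{L_{q(t)}(\mu)}^r\lesssim 1$ for all $t$, uniformly, which is more than enough: $\int_0^\infty e^{-\eta t}t^{-\e}\diff t<\infty$ since $\e<1$, and the proof concludes with $A=A(r,\gamma,\eta,\e)$ the product of this Gamma-type integral and the constants from the pointwise estimate. So the \textbf{main obstacle} is extracting from the single normalization $\int g^r\log^{-1+\e}(e+g)\diff\mu\le 1$ a \emph{$t$-dependent smallness} of the high-$g$ tail that exactly cancels the blow-up of $u\mapsto u^{-(r-q(t))}$; the logarithmic weight $\log^{-1+\e}$ is calibrated precisely so that this cancellation holds, and identifying the correct threshold $\log M(t)\asymp(r-q(t))^{-1}\asymp\min(t,1)^{-1}$ is the crux.
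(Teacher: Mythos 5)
There is a genuine gap in the final step. Your strategy is to establish the \emph{pointwise} (in $t$, uniform in $g$) bound $\|g\|_{L_{q(t)}(\mu)}^r\lesssim 1$ under the normalization $\int_\Omega g^r\log^{-(1-\e)}(e+g)\,\diff\mu\le 1$, and then simply integrate $e^{-\eta t}t^{-\e}$. That uniform bound is false, and the $t$-dependent threshold does not rescue it. Take $g=M\,\mathbf{1}_A$ with $\mu(A)=M^{-r}\log^{1-\e}(M)$, so that $\|g\|_{L_r(\log L)^{-1+\e}(\mu)}\asymp 1$. Then for $q<r$,
\begin{equation*}
\|g\|_{L_q(\mu)}^r=\big(M^{q-r}\log^{1-\e}(M)\big)^{r/q}=M^{-(r-q)r/q}\,\log^{(1-\e)r/q}(M),
\end{equation*}
and choosing $M$ so that $\log M=(r-q)^{-1}$ gives $\|g\|_{L_q(\mu)}^r\asymp_{r,\e}(r-q)^{-(1-\e)r/q}\to\infty$ as $q\to r^-$. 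Since $r-q(t)\asymp\min(t,1)$, this says that for each small $t$ there is an admissible $g$ with $\|g\|_{L_{q(t)}(\mu)}^r\gtrsim t^{-(1-\e)}$. In particular, the total Orlicz mass of such $g$ is concentrated \emph{exactly} at the crossover level $\log M\asymp (r-q(t))^{-1}$, so the tail quantity you need to be small, $\log^{1-\e}(e+M)\int_{\{g>M\}}g^r\log^{-(1-\e)}(e+g)\,\diff\mu$, is actually of order $(r-q(t))^{-(1-\e)}$ rather than $O(1)$. Your own first computation already found the inevitable $t^{-(1-\e)}$ blow-up; the second pass does not remove it.

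What this shows is that the lemma is genuinely at the borderline: $t^{-\e}\cdot t^{-(1-\e)}=t^{-1}$ is not integrable at $0$, so no bound that is pointwise in $t$ (uniform over admissible $g$) can close the argument. The paper's proof sidesteps this entirely. After changing variables to $\nu=q(t)$ and reducing to $\int_1^r\|h\|_{L_\nu}^r\,(r-\nu)^{-\e}\,\diff\nu$, it decomposes $h$ into dyadic level sets $h_k$, rewrites the normalization as $\sum_k(k+1)^{-(1-\e)}\int h_k^r\,\diff\mu\le 1$, bounds $\int h_k^\nu\,\diff\mu$ by $2^{r-\nu}2^{-(r-\nu)k}\int h_k^r\,\diff\mu$, and then applies Jensen's inequality for the convex map $s\mapsto s^{r/\nu}$ with these very weights to bring the power inside. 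After interchanging the $\nu$-integral and the $k$-sum, the whole expression is dominated by $\max_k\int_1^r(k+1)^{(1-\e)r/\nu}2^{-(r-\nu)kr/\nu}(r-\nu)^{-\e}\,\diff\nu$, which is shown to be bounded by an explicit (if somewhat fiddly) computation. The $\nu$-integral and the $k$-sum each absorb part of the blow-up, and neither alone suffices; this joint cancellation is precisely what a pointwise-in-$t$ bound cannot see. To repair your argument you would need to keep the $t$-integral coupled to the tail decomposition of $g$ (for instance via the level-set sum and Jensen, or an equivalent interchange), rather than trying to establish a $t$-uniform pointwise estimate that the extremizers above rule out.
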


\begin{proof}
Since both sides only depend on the norm of $h$, we can assume that $E=\C$ and $h\geq0$. Moreover, without loss of generality $\eta\leq1=\gamma$. Suppose, by homogeneity, that the right hand side satisfies $\|h\|_{L_r(\log L)^{-1+\e}(\mu)}\leq1$, which implies that
\begin{equation*}
\int_{\Omega} \frac{h^r}{\log^{1-\e}(e+h)} \diff\mu \leq 1.
\end{equation*}
For $k\geq1$, let $h_k = h \cdot {\bf 1}_{\{2^{k-1}<h\leq 2^k\}}$ and $h_0 = h\cdot {\bf 1}_{\{h\leq1\}}$, so that
\begin{equation} \label{eq:boundedorlicz1}
\sum_{k=0}^\infty \frac{1}{(k+1)^{1-\e}} \int_\Omega h_k^r \diff\mu \leq 1.
\end{equation}
Moreover, observe that
\begin{equation*}
\int_0^\infty e^{-\eta t} \big\|h\big\|_{L_{1+(r-1)e^{-t}}(\mu)}^r \frac{\diff t}{t^\e} \leq \int_0^\infty e^{-\eta t} \big\|h\big\|_{L_{1+(r-1)e^{-\eta t}}(\mu)}^r \frac{\diff t}{t^\e} \asymp_{r,\eta} \int_1^r \|h\|_{L_\nu(\mu)}^r \frac{\diff \nu}{(r-\nu)^\e},
\end{equation*}
where the inequality follows from the monotonicity of $L_s(\mu)$ norms and the equivalence by the change of variables $\nu=1+(r-1)e^{-\eta t}$. 

The right hand side then satisfies
\begin{equation*}
\begin{split}
\int_1^r & \|h\|_{L_\nu(\mu)}^r \frac{\diff \nu}{(r-\nu)^\e} = \int_1^r \Big( \sum_{k=0}^\infty \int_\Omega h_k^\nu \diff\mu\Big)^{r/\nu} \frac{\diff\nu}{(r-\nu)^\e} 
\\ & \leq 2^r \int_1^r \Big(\sum_{k=0}^\infty 2^{-(r-\nu)k} \int_\Omega h_k^r\diff\mu\Big)^{r/\nu} \frac{\diff\nu}{(r-\nu)^\e}
\\ & \stackrel{\eqref{eq:boundedorlicz1}}{\leq} 2^r \int_1^r \sum_{k=0}^\infty (k+1)^{\frac{r(1-\e)}{\nu}} 2^{-\frac{(r-\nu)kr}{\nu}} \frac{1}{(k+1)^{1-\e}} \int_\Omega h_k^r\diff\mu \frac{\diff\nu}{(r-\nu)^\e}
\\ & = 2^r \sum_{k=0}^\infty \left( \int_1^r (k+1)^{\frac{r(1-\e)}{\nu}} 2^{-\frac{(r-\nu)kr}{\nu}} \frac{\diff\nu}{(r-\nu)^\e} \right)\frac{1}{(k+1)^{1-\e}} \int_\Omega h_k^r\diff\mu
\\ & \stackrel{\eqref{eq:boundedorlicz1}}{\leq} 2^r \max_{k\geq0}\left\{ \int_1^r (k+1)^{\frac{r(1-\e)}{\nu}} 2^{-\frac{(r-\nu)kr}{\nu}} \frac{\diff\nu}{(r-\nu)^\e}\right\}
\end{split},
\end{equation*}
where the second inequality follows from Jensen's inequality for the convex function $t\mapsto t^{r/\nu}$ with weights~\eqref{eq:boundedorlicz1}. Now, by multiplying $k$ by $r$, one can easily see that
\begin{equation*}
\begin{split}
\max_{k\geq0}\left\{ \int_1^r (k+1)^{\frac{r(1-\e)}{\nu}} 2^{-\frac{(r-\nu)kr}{\nu}} \frac{\diff\nu}{(r-\nu)^\e}\right\} &  \asymp_{r,\e} \max_{k\geq0}\left\{ \int_1^r k^{\frac{r(1-\e)}{\nu}} e^{-(r-\nu)k} \frac{\diff\nu}{(r-\nu)^\e}\right\}
\\ & \asymp_{r,\e} \max_{k\geq0}\left\{ \int_{1/r}^1 k^{\frac{1-\e}{u}} e^{-(1-u)k} \frac{\diff u}{(1-u)^\e}\right\},
\end{split}
\end{equation*}
where the second equivalence follows by the change of variables $u=\nu/r$ and a further change of variables in $k$. For $k\geq0$ and $\e\in(0,1)$, write
\begin{equation*}
 \int_{1/r}^1 k^{\frac{1-\e}{u}} e^{-(1-u)k} \frac{\diff u}{(1-u)^\e} = \underbrace{\int_{1/r}^{1-1/k} k^{\frac{1-\e}{u}} e^{-(1-u)k} \frac{\diff u}{(1-u)^\e}}_{I_k(\e)} + \underbrace{\int_{1-1/k}^1 k^{\frac{1-\e}{u}} e^{-(1-u)k} \frac{\diff u}{(1-u)^\e}}_{J_k(\e)}
\end{equation*}
and notice that
\begin{equation*}
J_k(\e) \leq k^{\frac{1-\e}{1-1/k}} \int_{1-1/k}^1 \frac{\diff u}{(1-u)^\e} = \frac{1}{1-\e} k^{\frac{(1-\e)k}{k-1}-(1-\e)} \asymp_\e 1.
\end{equation*}
Moreover, if $u\leq 1-\tfrac{1}{k}$, then
\begin{equation*}
k^{-\frac{\e}{u}} \frac{1}{(1-u)^\e} \leq k^{\e(1-\frac{1}{u})} <1,
\end{equation*}
which implies that
\begin{equation*}
I_k(\e) \leq \int_{1/r}^{1-1/k} k^{\frac{1}{u}} e^{-(1-u)k} \diff u  \leq \int_{1/r}^1 k^{\frac{1}{u}} e^{-(1-u)k} \diff u \eqdef R_k.
\end{equation*}
Finally, to bound $R_k$, we integrate by parts
\begin{equation*}
\begin{split}
R_k= \int_{1/r}^1 k^{\frac{1}{u}} \big(\tfrac{e^{-(1-u)k}}{k}\big)'\diff u  & = 1 - k^{r-1} e^{-(1-1/r)k} + \frac{\log k}{k} \int_{1/r}^1 k^{\frac{1}{u}} e^{-(1-u)k} \frac{\diff u}{u^2}
\\ & \leq 1- k^{r-1} e^{-(1-1/r)k} + \frac{r^2\log k}{k} R_k,
\end{split}
\end{equation*}
which, after rearranging, readily implies that $R_k\lesssim_r 1$ and the proof is complete.
\end{proof}

Using Hölder's inequality, we can easily deduce the following variant of Lemma~\ref{lem:orlicz-withpower} which we will need to prove Theorems~\ref{thm:lptalagrandorlicz} and~\ref{thm:lpmporlicz} below.

\begin{lemma} \label{lem:orlicz-nopower}
Let $(E,\|\cdot\|_E)$ be a Banach space and $(\Omega,\mu)$ a probability space. For every $r\in(1,\infty)$, $\gamma, \eta\in(0,\infty)$ and $\e\in[0,1)$, there exists $B=B(r,\gamma,\eta,\e)\in(0,\infty)$ such that for every $\theta\in(0,1)$, every $h:\Omega\to E$ satisfies
\begin{equation}
\int_0^\infty e^{-\eta t} \big\|h\big\|_{L_{1+(r-1)e^{-\gamma t}}(\mu;E)} \frac{\diff t}{t^\e} \leq \frac{B}{\theta^\frac{r-1}{r}} \|h\|_{L_r(\log L)^{-r(1-\e)+\theta}(\mu;E)}.
\end{equation}
\end{lemma}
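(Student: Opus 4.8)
The plan is to reduce Lemma~\ref{lem:orlicz-nopower} to Lemma~\ref{lem:orlicz-withpower} by trading off the gain of one power of $L_\nu$-type norm against a factor in $\theta$. As in the proof of Lemma~\ref{lem:orlicz-withpower}, everything depends only on $\|h(\cdot)\|_E$, so we may assume $E=\C$ and $h\geq0$, and again reduce to the case $\eta\leq1=\gamma$. After the change of variables $\nu=1+(r-1)e^{-\eta t}$ (using monotonicity of $L_s(\mu)$ norms to replace $e^{-t}$ by $e^{-\eta t}$ in the exponent), the left hand side is $\asymp_{r,\eta}\int_1^r\|h\|_{L_\nu(\mu)}\,\frac{\diff\nu}{(r-\nu)^\e}$, so it suffices to bound this one-power integral.

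First I would apply Hölder's inequality on the measure space $\big([1,r],\frac{\diff\nu}{(r-\nu)^\e}\big)$ (which has finite total mass since $\e<1$) with exponents $r$ and $\tfrac{r}{r-1}$, obtaining
\begin{equation*}
\int_1^r \|h\|_{L_\nu(\mu)}\,\frac{\diff\nu}{(r-\nu)^\e} \leq \left(\int_1^r \|h\|_{L_\nu(\mu)}^r\,\frac{\diff\nu}{(r-\nu)^\e}\right)^{1/r}\left(\int_1^r \frac{\diff\nu}{(r-\nu)^\e}\right)^{(r-1)/r} \asymp_{r,\e} \left(\int_1^r \|h\|_{L_\nu(\mu)}^r\,\frac{\diff\nu}{(r-\nu)^\e}\right)^{1/r}.
\end{equation*}
Running the change of variables backwards, the right hand integral is $\asymp_{r,\eta}\int_0^\infty e^{-\eta t}\|h\|_{L_{1+(r-1)e^{-\eta t}}(\mu)}^r\,\frac{\diff t}{t^\e}$, which Lemma~\ref{lem:orlicz-withpower} bounds by $A\|h\|_{L_r(\log L)^{-1+\e}(\mu)}^r$. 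That alone would give the cleaner estimate $\int_0^\infty e^{-\eta t}\|h\|_{L_{1+(r-1)e^{-\gamma t}}(\mu)}\,\frac{\diff t}{t^\e}\lesssim_{r,\gamma,\eta,\e}\|h\|_{L_r(\log L)^{(-1+\e)/r}(\mu)}$, but the target Orlicz space in the statement is $L_r(\log L)^{-r(1-\e)+\theta}$, which for small $\theta$ is much larger than $L_r(\log L)^{(-1+\e)/r}$; the price for the larger space is the blow-up factor $\theta^{-(r-1)/r}$.

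To recover the $\theta$-dependent form I would use Hölder again but split the integrand as $\|h\|_{L_\nu}=\|h\|_{L_\nu}^{1/r}\cdot\|h\|_{L_\nu}^{(r-1)/r}$ and, on the second factor, replace $\|h\|_{L_\nu(\mu)}$ by $\|h\|_{L_r(\mu)}$ using monotonicity of norms. This yields
\begin{equation*}
\int_1^r \|h\|_{L_\nu(\mu)}\,\frac{\diff\nu}{(r-\nu)^\e} \leq \|h\|_{L_r(\mu)}^{(r-1)/r}\int_1^r \|h\|_{L_\nu(\mu)}^{1/r}\,\frac{\diff\nu}{(r-\nu)^\e},
\end{equation*}
after which Hölder with exponents $r$ and $\tfrac{r}{r-1}$ on the remaining integral and Lemma~\ref{lem:orlicz-withpower} give a bound of the shape $\|h\|_{L_r(\mu)}^{(r-1)/r}\|h\|_{L_r(\log L)^{-1+\e}(\mu)}^{1/r}$. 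The last step is then a purely Orlicz-norm interpolation: since $\|h\|_{L_r(\mu)}\lesssim\theta^{-1/r}\|h\|_{L_r(\log L)^{\theta'}(\mu)}$ for any $\theta'>0$ (split $h=h\mathbf{1}_{\{h\le M\}}+h\mathbf{1}_{\{h>M\}}$, or compare the Young functions directly, with the $\theta^{-1/r}$ coming from $\sup_{x\ge e}\log^{-\theta'/r}x\cdot$const after optimizing), one combines this with $\|h\|_{L_r(\log L)^{-1+\e}(\mu)}\le\|h\|_{L_r(\log L)^{-r(1-\e)+\theta}(\mu)}$ (valid since $-1+\e\le -r(1-\e)+\theta$ is \emph{not} automatic — one instead interpolates the two factors via a weighted-Young-function comparison) to land on $\theta^{-(r-1)/r}\|h\|_{L_r(\log L)^{-r(1-\e)+\theta}(\mu)}$. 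The main obstacle I anticipate is precisely this bookkeeping of Orlicz exponents: one must check that the geometric mean with weights $\tfrac1r,\tfrac{r-1}{r}$ of the Young functions $x^r\log^{-(1-\e)}(e+x)$ and $x^r\log^{\theta}(e+x)$ is dominated, up to the constant $\theta^{-(r-1)/r}$, by $x^r\log^{-r(1-\e)+\theta}(e+x)$ for large $x$ — i.e. that $\tfrac1r\cdot(-(1-\e))+\tfrac{r-1}{r}\cdot\theta \ge -r(1-\e)+\theta$ up to absorbing constants, which does hold since $-\tfrac{1-\e}{r}\ge -r(1-\e)$ for $r>1$ — together with tracking how the $\theta^{-1/r}$ from the $L_r$-vs-$L_r(\log L)^\theta$ comparison propagates through the two applications of Hölder into the final $\theta^{-(r-1)/r}$. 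None of this is deep, but it is the only place where care is required.
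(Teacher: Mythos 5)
Your first step (reduction to $\int_1^r\|h\|_{L_\nu(\mu)}\frac{\diff\nu}{(r-\nu)^\e}$) matches the paper exactly. But from there the paper and your proposal diverge, and the place where you flag an "obstacle" in the final paragraph is in fact a genuine gap that cannot be repaired by the bookkeeping you describe.

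The paper's device is to \emph{redistribute the singular weight inside H\"older}. Rather than applying H\"older with the weight $(r-\nu)^{-\e}$ left intact on both factors (as in your first attempt) or splitting $\|h\|_{L_\nu}$ itself (as in your second), the paper writes $(r-\nu)^{-\e} = (r-\nu)^{-a}(r-\nu)^{-b}$ with $\tfrac{a}{1}+\tfrac{b}{1}=\e$ chosen so that
\begin{equation*}
\int_1^r \|h\|_{L_\nu(\mu)}\frac{\diff\nu}{(r-\nu)^\e} \leq \Big(\int_1^r \|h\|_{L_\nu(\mu)}^r\frac{\diff\nu}{(r-\nu)^{1-r(1-\e)+\theta}}\Big)^{1/r}\Big(\int_1^r\frac{\diff\nu}{(r-\nu)^{1-\theta/(r-1)}}\Big)^{(r-1)/r}.
\end{equation*}
The second factor is $\asymp_r\theta^{-(r-1)/r}$, and the first factor is exactly the hypothesis of Lemma~\ref{lem:orlicz-withpower} with its parameter equal to $1-r(1-\e)+\theta$, which directly produces $\|h\|_{L_r(\log L)^{-r(1-\e)+\theta}(\mu)}$. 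So the $\theta$-blowup and the target Orlicz exponent appear simultaneously from a single well-balanced H\"older application. Your route instead arrives at the factorized quantity $\|h\|_{L_r(\mu)}^{(r-1)/r}\|h\|_{L_r(\log L)^{-1+\e}(\mu)}^{1/r}$ (modulo a small slip: to reach this you need H\"older with exponents $r^2$ and its conjugate, not $r$ and $r/(r-1)$, which would be circular) and then attempts to compare this geometric mean with the target norm at the level of Young functions.

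That final comparison is where the argument breaks. The inequality you verify, namely $\tfrac{1}{r}\cdot(-(1-\e))+\tfrac{r-1}{r}\cdot\theta\ge -r(1-\e)+\theta$, says that the geometric-mean Young function is \emph{larger} (for large $x$) than $\psi_{r,-r(1-\e)+\theta}$, hence its Orlicz norm is \emph{larger}, which is the opposite of the bound you need. And indeed the claimed estimate $\|h\|_{L_r(\mu)}^{(r-1)/r}\|h\|_{L_r(\log L)^{-1+\e}(\mu)}^{1/r}\lesssim\theta^{-(r-1)/r}\|h\|_{L_r(\log L)^{-r(1-\e)+\theta}(\mu)}$ is simply false. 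Take $h$ with $\mu\{h>x\}\asymp x^{-r}\log^{-\beta}(x)$ for $x$ large, with $1-r(1-\e)+\theta<\beta<1$ (a nonempty range for small $\theta$): then $\|h\|_{L_r(\mu)}=\infty$ while $\|h\|_{L_r(\log L)^{-r(1-\e)+\theta}(\mu)}<\infty$, and the ratio stays infinite no matter what $\theta$-dependent constant is inserted. More generally, as $\beta\downarrow1$ the factor $\|h\|_{L_r(\mu)}$ diverges like $(\beta-1)^{-1/r}$ while the target norm remains bounded, so no interpolation of the form you describe can hold. (There is also a minor typo in your parenthetical "cleaner estimate": the direct H\"older bound gives $\|h\|_{L_r(\log L)^{-1+\e}(\mu)}$, not $\|h\|_{L_r(\log L)^{(-1+\e)/r}(\mu)}$.)

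In short, the missing idea is that the parameter $\theta$ must be injected at the H\"older step by unbalancing the power of $(r-\nu)$, not recovered afterwards by an Orlicz-space comparison of a single fixed function.
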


\begin{proof}
Without loss of generality, we will again assume that $E=\C$, $h\geq0$ and $\eta\leq1=\gamma$. As in the proof of Lemma~\ref{lem:orlicz-withpower}, a change of variables shows that
\begin{equation} \label{eq:orlicz-nopower1}
\int_0^\infty e^{-\eta t} \big\|h\big\|_{L_{1+(r-1)e^{-t}}(\mu)} \frac{\diff t}{t^\e} \leq \int_0^\infty e^{-\eta t} \big\|h\big\|_{L_{1+(r-1)e^{-\eta t}}(\mu)} \frac{\diff t}{t^\e} \asymp_{r,\eta} \int_1^r \|h\|_{L_\nu(\mu)} \frac{\diff \nu}{(r-\nu)^\e}.
\end{equation}
Fix $\theta\in(0,1)$. By Hölder's inequality, we have
\begin{equation*}
 \int_1^r \|h\|_{L_\nu(\mu)} \frac{\diff \nu}{(r-\nu)^\e} \leq \Big( \int_1^r \|h\|_{L_\nu(\mu)}^r \frac{\diff\nu}{(r-\nu)^{1-r(1-\e)+\theta}} \Big)^{1/r} \Big( \int_1^r \frac{\diff \nu}{(r-\nu)^{1-\frac{\theta}{r-1}}} \Big)^{(r-1)/r}
\end{equation*}
and since $\int_1^r \frac{\diff \nu}{(r-\nu)^{1-\frac{\theta}{r-1}}} \asymp_r  \tfrac{1}{\theta}$, we deduce from Lemma~\ref{lem:orlicz-withpower} that
\begin{equation*}
 \int_1^r \|h\|_{L_\nu(\mu)} \frac{\diff \nu}{(r-\nu)^\e} \leq \frac{A}{\theta^\frac{r-1}{r}} \big\|h\big\|_{L_r(\log L)^{-r(1-\e)+\theta}(\mu)}
\end{equation*}
for some $A=A(r,\e)$. Then, the proof is complete by~\eqref{eq:orlicz-nopower1}.
\end{proof}

The following lemma will be used to prove Theorems~\ref{thm:lptalagrand} and~\ref{thm:naorschechtman}.

\begin{lemma} \label{lem:log-nopower}
Let $(E,\|\cdot\|_E)$ be a Banach space and $(\Omega,\mu)$ a probability space. For every $r\in[1,\infty)$, $\gamma, \eta\in(0,\infty)$ and $\e\in[0,1)$, there exists $C=C(r,\gamma, \eta,\e)\in(0,\infty)$ such that every $h:\Omega\to E$ satisfies,
\begin{equation} \label{eq:log-nopower0}
\int_0^\infty e^{-\eta t} \big\|h\big\|_{L_{1+(r-1)e^{-\gamma t}}(\mu;E)} \frac{\diff t}{t^\e} \leq C\ \frac{\|h\|_{L_r(\mu;E)}}{1+\log^{1-\e}\big(\|h\|_{L_r(\mu;E)}/\|h\|_{L_1(\mu;E)}\big)}.
\end{equation}
\end{lemma}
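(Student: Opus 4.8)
The plan is to reduce the statement to Lemma~\ref{lem:orlicz-withpower} by controlling the dependence of the Orlicz norm $\|h\|_{L_r(\log L)^{-1+\e}(\mu;E)}$ on the ratio $\|h\|_{L_r(\mu;E)}/\|h\|_{L_1(\mu;E)}$. Since both sides of \eqref{eq:log-nopower0} depend only on $\|h(\cdot)\|_E$, I would immediately assume $E=\C$ and $h\geq0$, and by homogeneity normalize so that $\|h\|_{L_r(\mu)}=1$; write $L\eqdef \log\big(1/\|h\|_{L_1(\mu)}\big)\geq0$, so that the target right-hand side is $\asymp_{r,\eta,\e} 1/(1+L^{-1+\e})\asymp \min\{1, L^{1-\e}\}$. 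By Lemma~\ref{lem:orlicz-withpower} the left-hand side is $\leq A\|h\|_{L_r(\log L)^{-1+\e}(\mu)}$, so it suffices to prove the Orlicz-norm bound
\begin{equation*}
\|h\|_{L_r(\log L)^{-1+\e}(\mu)} \lesssim_{r,\e} \min\big\{1,\ L^{1-\e}\big\}
\end{equation*}
whenever $\|h\|_{L_r(\mu)}=1$ and $\|h\|_{L_1(\mu)}=e^{-L}$.

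For the first half of the minimum, note $\|h\|_{L_r(\log L)^{-1+\e}(\mu)}\lesssim_{r,\e}\|h\|_{L_r(\mu)}=1$ trivially, since $\psi_{r,-1+\e}(x)\asymp x^r/\log^{1-\e}(e+x)\lesssim x^r$ for $x$ large and one checks the defining integral $\int\psi_{r,-1+\e}(h)\diff\mu$ is bounded. The substance is the bound by $L^{1-\e}$, which is only an improvement when $L$ is small, i.e.\ when $h$ is far from being supported on a small set. Here I would run the standard dyadic decomposition exactly as in Lemma~\ref{lem:orlicz-withpower}: set $h_k = h\cdot\1_{\{2^{k-1}<h\le 2^k\}}$, $a_k\eqdef \int_\Omega h_k^r\diff\mu$, so $\sum_k a_k\le 2^r$ (essentially $=1$ after normalization), and also, from $\|h\|_{L_1(\mu)}=e^{-L}$, a lower bound of the form $\sum_k 2^{-(r-1)k}a_k \gtrsim_r e^{-L}$ since $\int h_k \gtrsim 2^{-(r-1)k}\int h_k^r$ on the annulus $\{2^{k-1}<h\le 2^k\}$ for $k\geq 1$ (and the $k=0$ piece contributes $\leq 1$, handled separately, possibly absorbing it when $L$ is large by the trivial bound). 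The Orlicz norm is then $\asymp$ the infimal $t>0$ with $\sum_k \psi_{r,-1+\e}(2^k/t)\,\mu(\{2^{k-1}<h\le 2^k\})\le 1$, i.e.\ roughly $\sum_k (k-\log_2 t)^{-(1-\e)}_+\,t^{-r}a_k\le 1$ (replacing $2^k/t$ by its size and using $\mu(\{h\sim 2^k\})\le 2^{-(k-1)r}a_k$), so I want to show this holds for some $t\lesssim_{r,\e} L^{1-\e}$.

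The key quantitative point is a convexity/entropy estimate: among nonnegative $(a_k)_{k\geq 1}$ with $\sum_k a_k\leq 2^r$ and $\sum_k 2^{-(r-1)k}a_k\ge c_r e^{-L}$, the mass must be concentrated on indices $k\lesssim_r L$ (more precisely, the fraction of $\ell_1$-mass of $(a_k)$ sitting at indices $k\geq\Lambda L$ decays, for a suitable absolute $\Lambda$), because putting mass at large $k$ costs exponentially in the $\sum 2^{-(r-1)k}a_k$ constraint. Feeding $k\lesssim L$ into $\sum_k k^{-(1-\e)} 2^{kr} t^{-r} \mu(\{h\sim 2^k\}) \le \sum_k k^{-(1-\e)} t^{-r} a_k$ and optimizing the threshold between ``small $k$'' (where $k^{-(1-\e)}$ is large but $a_k$-mass is controlled) and ``large $k$ up to $\sim L$'' (where $k^{-(1-\e)}\lesssim L^{-(1-\e)}$), one gets the sum bounded by $\lesssim_{r,\e} t^{-r} L^{-(1-\e)}\cdot(\text{bounded})$, hence the constraint is met once $t^r\gtrsim L^{-(1-\e)}$ up to lower-order logarithmic corrections, giving $\|h\|_{L_r(\log L)^{-1+\e}(\mu)}\lesssim_{r,\e} L^{(1-\e)/r}$ — which is even better than $L^{1-\e}$ for $r>1$, and for $r=1$ the Orlicz space degenerates and the claim must instead be read as the elementary inequality $\int h\log^{-1+\e}(e+1/h^{-1})\ldots$; in fact for $r=1$ one argues directly by Jensen since $t\mapsto t\log^{1-\e}(e+1/t)$-type bounds are available. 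The main obstacle I anticipate is bookkeeping the passage between the Orlicz norm and the dyadic sum together with the two different regimes of $L$ (small vs.\ large), making sure the trivial bound $\lesssim 1$ is invoked exactly when $L\gtrsim 1$ so that one never needs the delicate estimate outside its useful range; the concentration estimate itself is a soft convexity argument of the type already appearing in the proof of Lemma~\ref{lem:orlicz-withpower}.
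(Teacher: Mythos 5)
Your plan cannot succeed, for two reasons, one of which is a genuine structural obstruction rather than a fixable bookkeeping error. First, the claimed reduction "By Lemma~\ref{lem:orlicz-withpower} the left-hand side is $\leq A\|h\|_{L_r(\log L)^{-1+\e}(\mu)}$" misapplies that lemma: Lemma~\ref{lem:orlicz-withpower} bounds $\int_0^\infty e^{-\eta t}\|h\|_{L_{\nu(t)}}^{\,r}\,t^{-\e}\diff t$, not $\int_0^\infty e^{-\eta t}\|h\|_{L_{\nu(t)}}\,t^{-\e}\diff t$ (and the variant without the $r$-th power is Lemma~\ref{lem:orlicz-nopower}, which lands you in $L_r(\log L)^{-r(1-\e)+\theta}$, not $L_r(\log L)^{-1+\e}$). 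Second, and more fundamentally, the intermediate target you want to hit is out of reach: the sharp Orlicz-versus-ratio bound is Lemma~\ref{lem:orlicz-upper}, which gives $\|h\|_{L_r(\log L)^{-(1-\e)}(\mu)}\lesssim_{r,\e}\|h\|_{L_r(\mu)}/(1+\log^{(1-\e)/r}(\|h\|_{L_r}/\|h\|_{L_1}))$, with exponent $(1-\e)/r$ rather than $1-\e$, and this is saturated by scaled indicators $h = p^{-1/r}\1_A$ with $\mu(A)=p\to 0$. So any route that passes through the fixed Orlicz norm $\|h\|_{L_r(\log L)^{-1+\e}}$ loses a full factor of $r$ in the exponent when $r>1$, and no amount of refinement of the dyadic concentration argument can recover it --- your own heuristic gives $L^{\pm(1-\e)/r}$, already displaying the $1/r$ that shouldn't be there. (Separately: the paper's displayed denominator $1+\log^{-1+\e}(\cdot)$ is a sign typo for $1+\log^{1-\e}(\cdot)$, as one checks on a constant $h$ --- the left-hand side is then $\asymp\|h\|_{L_r}$, so the ratio must tend to $1$, not $0$, as $\|h\|_{L_r}/\|h\|_{L_1}\to 1$ --- and as is confirmed by every downstream use, e.g.\ Theorem~\ref{thm:naorschechtman} with $\alpha=1-\e$. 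Your normalization ``target $\asymp\min\{1,L^{1-\e}\}$'' inherits that sign error, so the Orlicz bound you ask for is in fact false already for $h\equiv 1$, where $L=0$ but $\|h\|_{L_r(\log L)^{-1+\e}}$ is a positive constant.)

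The paper's actual proof is shorter and bypasses Orlicz norms entirely. After the change of variables $\nu=1+(r-1)e^{-\eta t}$, the left-hand side is $\asymp\int_1^r\|h\|_{L_\nu(\mu)}\,(r-\nu)^{-\e}\diff\nu$. One then uses log-convexity of $L_p$ norms (Hölder/interpolation), $\|h\|_{L_\nu}\leq\|h\|_{L_r}^{1-\theta(\nu)}\|h\|_{L_1}^{\theta(\nu)}$ with $\frac{1-\theta}{r}+\theta=\frac{1}{\nu}$, so the integral is bounded by $\|h\|_{L_r}\int_0^1 b^\theta\,\theta^{-\e}\diff\theta$ with $b=\|h\|_{L_1}/\|h\|_{L_r}\in(0,1]$. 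The substitution $u=\theta\log(1/b)$ gives $\int_0^1 b^\theta\,\theta^{-\e}\diff\theta = \log(1/b)^{\e-1}\int_0^{\log(1/b)}e^{-u}u^{-\e}\diff u\lesssim_\e\log(1/b)^{\e-1}$, which, combined with the trivial bound $\lesssim_\e 1$ when $b$ is close to $1$, is exactly $\|h\|_{L_r}/(1+\log^{1-\e}(1/b))$. This is essentially an $L_1$-in-$\theta$ estimate on interpolation exponents rather than an Orlicz estimate, which is how it escapes the $1/r$ loss inherent to your approach.
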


\begin{proof}
Without loss of generality, we will again assume that $E=\C$, $h\geq0$ and $\eta\leq1=\gamma$. As in the proof of Lemma~\ref{lem:orlicz-withpower}, a change of variables shows that
\begin{equation} \label{eq:log-nopower1}
\int_0^\infty e^{-\eta t} \big\|h\big\|_{L_{1+(r-1)e^{-t}}(\mu)} \frac{\diff t}{t^\e} \leq \int_0^\infty e^{-\eta t} \big\|h\big\|_{L_{1+(r-1)e^{-\eta t}}(\mu)} \frac{\diff t}{t^\e} \asymp_{r,\eta} \int_1^r \|h\|_{L_\nu(\mu)} \frac{\diff \nu}{(r-\nu)^\e}.
\end{equation}
By Hölder's inequality, if $\theta(\nu) = \tfrac{r-\nu}{\nu(r-1)}$ is such that $\tfrac{1-\theta}{r}+\tfrac{\theta}{1}=\tfrac{1}{\nu}$, then
\begin{equation} \label{eq:log-nopower2}
\int_1^r \|h\|_{L_\nu(\mu)} \frac{\diff \nu}{(r-\nu)^\e} \leq \|h\|_{L_r(\mu)} \int_1^r b^{\theta(\nu)} \frac{\diff\nu}{(r-\nu)^\e} \asymp_{r,\e} \|h\|_{L_r(\mu)} \int_0^1 b^\theta \frac{\diff \theta}{\theta^\e},
\end{equation}
where $b=\|h\|_{L_1(\mu)}/\|h\|_{L_r(\mu)} \in(0,1]$. Finally, if $b<1$, notice that
\begin{equation*}
 \int_0^1 b^\theta \frac{\diff \theta}{\theta^\e} = \int_0^1 e^{-\theta\log(1/b)} \frac{\diff \theta}{\theta^\e} = \frac{1}{\log^{1-\e}(1/b)} \int_0^{\log(1/b)} e^{-u}\frac{\diff u}{u^\e} \lesssim_\e \frac{1}{\log^{1-\e}(1/b)}
\end{equation*}
and the conclusion follows from~\eqref{eq:log-nopower1} and~\eqref{eq:log-nopower2}.
\end{proof}

The following lemma shows that the Orlicz norm statements of Theorems~\ref{thm:useivvorlicz} and~\ref{thm:useeldanorlicz} indeed strengthen Theorems~\ref{thm:useivv} and~\ref{thm:useeldan} respectively. In the special case $r=2$ and $s=1$, this has been proven by Talagrand in~\cite[Lemma~2.5]{Tal94} and the general case treated here is similar.

\begin{lemma} \label{lem:orlicz-upper}
Let $(E,\|\cdot\|_E)$ be a Banach space and $(\Omega,\mu)$ a probability space. For every $r\in(1,\infty)$ and $s\in(0,\infty)$, there exists $D=D(r,s)\in(0,\infty)$ such that every function $h:\Omega\to E$ satisfies
\begin{equation}
\big\|h\big\|_{L_r(\log L)^{-s}(\mu;E)} \leq D\ \frac{\|h\|_{L_r(\mu;E)}}{1+\log^{s/r}\big(\|h\|_{L_r(\mu;E)}/\|h\|_{L_1(\mu;E)}\big)}.
\end{equation}
\end{lemma}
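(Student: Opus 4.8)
The plan is to reduce everything to a scalar estimate. Since the Orlicz norm $\|h\|_{L_r(\log L)^{-s}(\mu;E)}$ depends on $h$ only through the scalar function $\|h(\cdot)\|_E$, and the same is true for all the norms on the right-hand side, I would immediately assume $E=\C$ and $h\geq 0$. By homogeneity it then suffices to prove the inequality under the normalization $\|h\|_{L_r(\mu)}=1$; writing $b\eqdef \|h\|_{L_1(\mu)}\in(0,1]$, the goal becomes
\begin{equation*}
\|h\|_{L_r(\log L)^{-s}(\mu)} \lesssim_{r,s} \frac{1}{1+\log^{s/r}(1/b)}.
\end{equation*}
When $b\asymp 1$ this is trivial since $\|h\|_{L_r(\log L)^{-s}(\mu)}\leq \|h\|_{L_r(\mu)}=1$ always (the Young function $\psi_{r,-s}(x)=x^r\log^{-s}(e+x)$ satisfies $\psi_{r,-s}(x)\leq x^r$ for $x$ large, and after adjusting constants on the bounded part one gets a clean comparison $\|\cdot\|_{L_r(\log L)^{-s}}\lesssim_{r,s}\|\cdot\|_{L_r}$). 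So the real content is the regime $b$ small, i.e. $\log(1/b)$ large, where we must gain the factor $\log^{-s/r}(1/b)$.

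The key step is a truncation/interpolation argument exploiting that $h$ has small $L_1$ mass relative to its $L_r$ mass. Fix a threshold $\lambda\geq e$ to be chosen (it will be a power of $1/b$, or rather $e^{c\log(1/b)}$ for a suitable constant). Split $h = h\1_{\{h\leq\lambda\}} + h\1_{\{h>\lambda\}}$. On the low part, $\log(e+h)\leq \log(e+\lambda)\asymp\log\lambda$, so
\begin{equation*}
\int \frac{(h\1_{\{h\leq\lambda\}})^r}{\log^s(e+h)}\diff\mu \geq \frac{1}{(\log(e+\lambda))^s}\int (h\1_{\{h\leq\lambda\}})^r\diff\mu;
\end{equation*}
wait --- that inequality goes the wrong way for an upper bound on the Orlicz norm. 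Instead I would use the opposite bookkeeping: the Orlicz norm $\|h\|_{L_r(\log L)^{-s}(\mu)}$ is $\leq t$ precisely when $\int \psi_{r,-s}(h/t)\diff\mu\leq 1$, so I must produce a small $t$ for which this integral is at most $1$. Choosing $t = A/(1+\log^{s/r}(1/b))$, I split the integral $\int \psi_{r,-s}(h/t)\diff\mu$ over $\{h\leq \sqrt b\,\|h\|_\infty\text{-type threshold}\}$ versus its complement. On the region where $h/t$ is large, $\psi_{r,-s}(h/t)\approx (h/t)^r\log^{-s}(h/t)\leq (h/t)^r\log^{-s}(1/b^{c})\asymp (h/t)^r\log^{-s}(1/b)$, and since $\int h^r\diff\mu=1$ this piece contributes $\asymp t^{-r}\log^{-s}(1/b)\asymp A^{-r}$, which is $\leq \tfrac12$ for $A$ large. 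On the complementary low region, $\psi_{r,-s}(h/t)\lesssim_{r,s} (h/t)^r/\log^s(e+h/t)$ but here I instead bound $\psi_{r,-s}(h/t)\lesssim (h/t)^{r}$ crudely and control $\int_{\{h\leq \text{threshold}\}} h^r\diff\mu$ by Hölder/Chebyshev against the $L_1$ norm: $\int_{\{h\le \tau\}} h^r\,d\mu \le \tau^{r-1}\int h\,d\mu=\tau^{r-1}b$, choosing $\tau = b^{-1/(r-1)}e^{-c\log(1/b)}$ or similar so that $\tau^{r-1}b\cdot t^{-r}\leq\tfrac12$.

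Rather than optimizing these exponents by hand, the cleaner route --- and the one I would actually write --- is to derive the lemma from the sharp comparison between $\|\cdot\|_{L_r(\log L)^{-s}}$ and a weighted integral of $L_\nu$ norms, exactly of the type already proved in Lemma~\ref{lem:orlicz-withpower} and Lemma~\ref{lem:log-nopower} (for $r=1$-flavored right-hand sides). Concretely, standard Orlicz-norm manipulations give $\|h\|_{L_r(\log L)^{-s}(\mu)}^r \asymp_{r,s} \int_0^\infty e^{-s u}\,e^{u\cdot 0}\cdots$ --- more precisely one has the identity/equivalence $\|h\|_{L_r(\log L)^{-s}(\mu)} \asymp_{r,s} \inf\{t: \int_r^{r}\cdots\}$; in practice the fastest rigorous path is: prove directly that for $0<b\le e^{-1}$ and $\|h\|_{L_r(\mu)}=1$ one has $\|h\|_{L_r(\log L)^{-s}(\mu)}^r \lesssim_{r,s} \sup_{1\le \nu\le r}\|h\|_{L_\nu(\mu)}^r \cdot (\text{something})$, then invoke the interpolation bound $\|h\|_{L_\nu(\mu)}\le \|h\|_{L_1(\mu)}^{\theta(\nu)}\|h\|_{L_r(\mu)}^{1-\theta(\nu)} = b^{\theta(\nu)}$ with $\theta(\nu)=\frac{r-\nu}{\nu(r-1)}$, and optimize. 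The main obstacle is precisely this last bookkeeping: getting the exponent $s/r$ on the logarithm rather than some lossy $s/r - \delta$, which forces one to keep track of the Orlicz dyadic decomposition $h=\sum_k h\1_{\{2^{k-1}<h\le 2^k\}}$ as in the proof of Lemma~\ref{lem:orlicz-withpower} and sum a series of the form $\sum_k (k+1)^{-s}\cdot(\text{tail mass})$ against the constraint $\int h^r\diff\mu=1$, $\int h\,\diff\mu=b$ --- a calculation that is routine but where a careless split loses the sharp power. Since the excerpt explicitly says the case $r=2,s=1$ is Talagrand's \cite[Lemma~2.5]{Tal94} and ``the general case treated here is similar,'' I would simply transcribe Talagrand's dyadic argument with $2$ replaced by $r$ and $1$ by $s$, checking the two changes of variable and the geometric-series summation carry through with constants depending only on $(r,s)$.
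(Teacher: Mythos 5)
Your first sketch — the truncation argument — is sound and, once you fix the threshold at $\tau = b^{-c}$ for some $0 < c < 1/(r-1)$ and run the two estimates, the exponent $s/r$ does come out without loss. It is essentially the paper's argument read in the other direction. The paper normalizes so that $\int_\Omega \psi_{r,-s}(h)\,\diff\mu = 1$ and then shows $\|h\|_{L_r(\mu)}^r \gtrsim_{r,s} 1 + \log^s(\|h\|_{L_r}/\|h\|_{L_1})$ by splitting at a threshold $a$ chosen as a power of $\|h\|_{L_r}/\|h\|_{L_1}$ (they take $a = (e\|h\|_{L_r}/\|h\|_{L_1})^{1/r}$), distinguishing whether $\{h \geq a\}$ or $\{h < a\}$ carries at least half the Orlicz mass. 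Case~1 uses monotonicity of $\log$ to extract $\log^s(e+a)$; Case~2 bounds $h^r/\log^s(e+h) \leq a^{r-1} h$ on $\{h < a\}$, exactly your Chebyshev step, to pick up the $L_1$ norm. You normalize $\|h\|_{L_r}=1$ and aim to show $\int \psi_{r,-s}(h/t)\,\diff\mu \leq 1$ for $t \asymp \log^{-s/r}(1/b)$ via the same $\{h > \tau\}$/$\{h \leq \tau\}$ split; the mechanism is identical (on the high set, $\log$ is large so the Orlicz integrand is small relative to $(h/t)^r$; on the low set, dominate by $\tau^{r-1}h/t^r$ and use $\int h\,\diff\mu = b$), just with the roles of the normalization swapped. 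Both variants give the sharp exponent and both are elementary two-case estimates.

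Two smaller corrections. Your middle paragraph (the "cleaner route" via Lemma~\ref{lem:orlicz-withpower}-type identities) is a dead end: that lemma bounds a weighted integral of $L_\nu$ norms \emph{from above} by the Orlicz norm, which is the wrong direction for the present lemma, where the Orlicz norm sits on the left. And your final suggestion to "transcribe Talagrand's dyadic argument" conflates the proof of Lemma~\ref{lem:orlicz-withpower} (which does run a dyadic decomposition $h = \sum_k h\mathbf{1}_{\{2^{k-1}<h\leq 2^k\}}$) with the proof of the present Lemma~\ref{lem:orlicz-upper}, which in the paper requires no dyadic decomposition at all — just the single threshold $a$ and the two cases sketched above. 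So the argument is considerably simpler than you anticipate; the power $s/r$ falls out of the choice $a = (e\|h\|_{L_r}/\|h\|_{L_1})^{1/r}$, i.e. $b \eqdef \log(e\|h\|_{L_r}/\|h\|_{L_1}) = r\log a$, plugged into the bound from Case~2 together with the elementary inequality $x \gtrsim_s 1 + \log^s x$.
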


\begin{proof}
Without loss of generality, we will again assume that $E=\C$ and $h\geq0$. We will prove that
\begin{equation*}
\int_\Omega \frac{h^r}{\log^s(e+h)}\diff \mu \geq1 \ \ \ \Longrightarrow \quad\|h\|_{L_r(\mu)}^r \geq \frac{1}{D^r} \big(1+\log^{s}\big(\|h\|_{L_r(\mu;E)}/\|h\|_{L_1(\mu;E)}\big)\big).
\end{equation*}
Let $a\in(0,\infty)$. We will distinguish two cases.

\smallskip

\noindent {\it Case 1.} Suppose that
\begin{equation*}
\int_{\{h\geq a\}} \frac{h^r}{\log^s(e+h)}\diff\mu \geq\frac{1}{2}.
\end{equation*}
Then,
\begin{equation} \label{eq:case1}
\int_\Omega h^r\diff\mu \geq \log^s(e+a) \int_{\{h\geq a\}} \frac{h^r}{\log^s(e+h)}\diff\mu \geq\frac{1}{2}\log^s(e+a).
\end{equation}

\noindent {\it Case 2.} Suppose that
\begin{equation*}
\int_{\{h\geq a\}} \frac{h^r}{\log^s(e+h)}\diff\mu <\frac{1}{2},
\end{equation*}
so that
\begin{equation*}
\int_{\{h< a\}} \frac{h^r}{\log^s(e+h)}\diff\mu \geq\frac{1}{2}.
\end{equation*}
Notice that on $\{h<a\}$, we have $h^r/\log^s(e+h) \leq a^{r-1} h$, which implies that $\|h\|_{L_1(\mu)} \geq 1/2a^{r-1}$. Hence, setting $b=\log\big(e\|h\|_{L_r(\mu)}/\|h\|_{L_1(\mu)}\big)$, we get
\begin{equation} \label{eq:case2}
b\leq \log(2ea^{r-1} \|h\|_{L_r(\mu)}\big) = (r-1)\log a +\log\big(2e\|h\|_{L_r(\mu)}\big).
\end{equation}

Now choose $a=\big(e\|h\|_{L_r(\mu)}/\|h\|_{L_1(\mu)}\big)^{1/r}$ so that $b=r\log a$. In Case 1,~\eqref{eq:case1} then implies that
\begin{equation*}
\|h\|_{L_r(\mu)}^r \geq \frac{1}{2}\log^s\big(e+\big(e\|h\|_{L_r(\mu)}/\|h\|_{L_1(\mu)}\big)^{1/r}\big) \asymp_{r,s}\big(1+\log^{s}\big(\|h\|_{L_r(\mu;E)}/\|h\|_{L_1(\mu;E)}\big)\big).
\end{equation*}
On the other hand since $b=r\log a$, in Case 2,~\eqref{eq:case2} gives
\begin{equation*}
\|h\|_{L_r(\mu)}^r \geq \frac{1}{(2e)^r} \frac{e\|h\|_{L_r(\mu)}}{\|h\|_{L_1(\mu)}} \gtrsim_{r,s} \big(1+\log^{s}\big(\|h\|_{L_r(\mu;E)}/\|h\|_{L_1(\mu;E)}\big)\big),
\end{equation*}
since $x \gtrsim_s1+ \log^sx$ for every $s,x\in(0,\infty)$. This completes the proof of the lemma.
\end{proof}
\section{Influence inequalities under Rademacher type} \label{sec:3}

In this section we shall present the proofs of Theorems~\ref{thm:useivv} and~\ref{thm:useivvorlicz} which rely on the novel approach introduced in the recent work~\cite{IVV20} of Ivanisvili, van Handel and Volberg. For $t\in(0,\infty)$, let $\xi(t)=\big(\xi_1(t),\ldots,\xi_n(t)\big)$ be a random vector on $\ms{C}_n$ whose coordinates are independent and identically distributed with distribution given by
\begin{equation}
\mb{P}\{\xi_i(t)=1\} = \frac{1+e^{-t}}{2} \quad \mbox{and} \quad\mb{P}\{\xi_i(t)=-1\}=\frac{1-e^{-t}}{2},
\end{equation}
for $i\in\{1,\ldots,n\}$. Moreover, consider the normalized vector $\delta(t)=(\delta_1(t),\ldots,\delta_n(t))$ with
\begin{equation}
\delta_i(t) \eqdef \frac{\xi_i(t)-\mb{E}\xi_i(t)}{\sqrt{\mathrm{Var}\xi_i(t)}} = \frac{\xi_i(t)-e^{-t}}{\sqrt{1-e^{-2t}}}.
\end{equation}
In the following statements, we will denote by $\e$ a random vector independent of $\xi(t)$, uniformly distributed on $\ms{C}_n$. We will need the following (straightforward) refinement of~\cite[Theorem~1.4]{IVV20}.

\begin{proposition} \label{prop:extendivv}
For every Banach space $(E,\|\cdot\|_E)$, $p\in[1,\infty)$, $n\in\N$ and $f:\ms{C}_n\to E$, we have
\begin{equation} \label{eq:propextendivv}
\forall \ t\geq0, \qquad \Big\|\frac{\partial}{\partial t} P_{t} f\Big\|_{L_p(\sigma_n;E)} \leq \frac{1}{\sqrt{e^{2t}-1}} \Big( \mb{E} \Big\| \sum_{i=1}^n \delta_i(t) \partial_i f(\e)\Big\|_{E}^p\Big)^{1/p},
\end{equation}
where the expectation on the right hand side is with respect to $\e$ and $\delta(t)$.
\end{proposition}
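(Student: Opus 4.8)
The plan is to compute $\frac{\partial}{\partial t}P_tf$ explicitly and recognize the right-hand side quantity as an expectation over the randomized vector $\xi(t)$. Recall that on the Walsh basis $\{W_S\}_{S\subseteq\{1,\ldots,n\}}$, we have $P_tf = \sum_S e^{-t|S|}\hat f(S)W_S$, so $\frac{\partial}{\partial t}P_tf = -\sum_S |S|e^{-t|S|}\hat f(S)W_S = -\Delta P_tf$. The key observation from \cite{IVV20} is a probabilistic representation of $P_tf$: if $\xi(t)$ has the coordinatewise distribution described above and $\e$ is uniform on $\ms{C}_n$ independent of $\xi(t)$, then $P_tf(\e) = \mb{E}_{\xi(t)}f(\e_1\xi_1(t),\ldots,\e_n\xi_n(t))$, since each coordinate is flipped independently with probability $\tfrac{1-e^{-t}}{2}$, which is exactly the noise operator.

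First I would differentiate this representation in $t$. Writing $g(x) = f(\e_1 x_1,\ldots,\e_n x_n)$ and noting that $P_tf(\e) = \mb{E}\, g(\xi(t))$ where the law of $\xi(t)$ is a product of two-point measures with parameter $e^{-t}$, one differentiates under the expectation. Since $\frac{\diff}{\diff t}\mb{P}\{\xi_i(t)=1\} = -\tfrac12 e^{-t}$ and $\frac{\diff}{\diff t}\mb{P}\{\xi_i(t)=-1\}=\tfrac12 e^{-t}$, a direct computation gives $\frac{\partial}{\partial t}P_tf(\e) = -e^{-t}\,\mb{E}\sum_{i=1}^n \xi_i'\!\text{-}\mathrm{type\ terms}$; more precisely one gets that the derivative equals $-\tfrac{e^{-t}}{\text{(normalization)}}$ times $\mb{E}\sum_i (\text{centered }\xi_i(t))\,\partial_i f$ evaluated appropriately. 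The clean way to see the factor is: the score function of the two-point family $\{(1+e^{-t})/2,(1-e^{-t})/2\}$ with respect to $t$, when pushed onto the natural variable, produces exactly $\frac{-e^{-t}}{1-e^{-2t}}\cdot(\xi_i(t)-e^{-t}) = \frac{-1}{\sqrt{e^{2t}-1}}\,\delta_i(t)\cdot\frac{1}{\sqrt{e^{2t}-1}}\cdot\sqrt{e^{2t}-1}$... — the point is that after carrying out the differentiation one arrives at the identity
\begin{equation*}
\frac{\partial}{\partial t}P_tf(\e) = -\frac{1}{\sqrt{e^{2t}-1}}\,\mb{E}_{\delta(t)}\Big[\sum_{i=1}^n \delta_i(t)\,\partial_i f\big(\e_1\xi_1(t),\ldots\big)\Big]
\end{equation*}
after also using that $\partial_if$ of the noised function has the same law as $\partial_if(\e)$ by symmetry. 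I would then take $L_p(\sigma_n;E)$ norms, pull the $t$-dependent scalar out, and apply Jensen's inequality (or the triangle inequality in $L_p$) to move the inner expectation over $\delta(t)$ outside the norm, yielding
\begin{equation*}
\Big\|\tfrac{\partial}{\partial t}P_tf\Big\|_{L_p(\sigma_n;E)} \le \frac{1}{\sqrt{e^{2t}-1}}\Big(\mb{E}\Big\|\sum_{i=1}^n \delta_i(t)\partial_i f(\e)\Big\|_E^p\Big)^{1/p}.
\end{equation*}

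The main obstacle is getting the constant $\frac{1}{\sqrt{e^{2t}-1}}$ exactly right and correctly identifying that the centered, normalized coordinates $\delta_i(t)$ are precisely what emerges from differentiating the biased product measure — this requires care in bookkeeping the chain rule and the fact that $\partial_i$ commutes with the averaging over the other coordinates. A subtlety worth spelling out: one must verify that, under the joint law of $(\e,\xi(t))$, the vector $(\e_1\xi_1(t),\ldots,\e_n\xi_n(t))$ is uniform on $\ms{C}_n$, so that after differentiation the argument of $\partial_if$ can be replaced in distribution by a uniform $\e$, independent of $\delta(t)$; this is what makes the right-hand side expectation be exactly the one in the statement rather than a more complicated coupled quantity. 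Once the identity is established, the inequality is immediate from Jensen. Since \cite[Theorem 1.4]{IVV20} is the case without the explicit $t$-derivative refinement, and the proof there already contains this computation, I expect the argument to be short; the only genuinely new content is tracking the derivative $\frac{\partial}{\partial t}P_t$ rather than a finite-difference or integrated version.
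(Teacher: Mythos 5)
Your proposal follows essentially the same route as the paper: the probabilistic representation $P_tf(x)=\mb{E}_{\xi(t)}f(x\xi(t))$, differentiation of the biased product measure to obtain the pointwise identity $\tfrac{\partial}{\partial t}P_tf(x)=-\tfrac{1}{\sqrt{e^{2t}-1}}\mb{E}_{\xi(t)}\sum_i\delta_i(t)\partial_if(x\xi(t))$, then Jensen and the distributional equality of $(\e\xi(t),\xi(t))$ with $(\e,\xi(t))$. The only thing you gesture at rather than carry out is the summation-by-parts step (the identity $\tfrac{\partial}{\partial t}\omega_t=-\tfrac{e^{-t}}{1-e^{-2t}}\sum_i\partial_{\xi_i}[(\xi_i-e^{-t})\omega_t]$) that converts the score-function factor into a discrete derivative $\partial_i f$, and the clause about replacing the noised argument in distribution belongs after Jensen, not in the pointwise identity --- but you do flag that subtlety correctly at the end.
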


Let us mention here that we will apply the previous proposition to $P_tf$ instead of $f$, and use the semigroup property  $P_{2t}f= P_t(P_tf)$. This is more easily done after reformulating~\eqref{eq:propextendivv} with $\Delta P_t$ in place of $\tfrac{\partial}{\partial t} P_{t}$.  So, keeping the notation of Proposition \ref{prop:extendivv}, we have that 
\begin{equation}\label{eq:propwithPt}
\forall \ t\geq0, \qquad \|\Delta P_{2t} f\|_{L_p(\sigma_n;E)} \leq \frac{1}{\sqrt{e^{2t}-1}} \Big( \mb{E} \Big\| \sum_{i=1}^n \delta_i(t) \partial_i P_t f(\e)\Big\|_{E}^p\Big)^{1/p}.
\end{equation}
\begin{proof}[Proof of Proposition~\ref{prop:extendivv}]
The crucial observation of Ivanisvili, van Handel and Volberg is that one can write, for $x\in \ms{C}_n$,
\begin{equation}\label{eq:crucial-ivv}
\frac{\partial}{\partial t} P_{t} f (x) =  -\frac{1}{\sqrt{e^{2t}-1}} \mb{E}_{\xi(t)}\left[  \sum_{i=1}^n \delta_i(t) \partial_i f\big(x \xi(t)\big)\right]
\end{equation}
where $x\xi(t)$ denotes the point $(x_1\xi_1(t),\ldots,x_n\xi_n(t))$. This formula can be proved by writing 
$$\displaystyle P_tf(x) = \E f(x\xi(t)) = \sum_{\xi\in \ms{C}_n } \omega_t(\xi) f(x \xi),$$ 
where, for $\xi\in \ms{C}_n $,
$ \omega_t(\xi) = 2^{-n} \prod_{i=1}^{n} \big(1+ e^{-t} \xi_i\big)$; then we note that, with some abuse of notation (denoting  $\partial_{\xi_i} $ for the discrete derivative $\partial_i$ for functions of the variable $\xi\in \ms{C}_n$),
$$\frac{\partial}{\partial t}\omega_t (\xi) =  -\frac{e^{-t}}{1-e^{-2t}}\sum_{i=1}^{n} \partial_{\xi_i} \big[(\xi_i -e^{-t}) \omega_t (\xi)\big] 
.$$
Hence, using the integration by parts formula~\eqref{eq:ipp}  together with the fact that $\partial_{\xi_i} [f(x\xi)] = \partial_i f(x\xi)$, we get
$$\frac{\partial}{\partial t} P_{t} f (x) = - \frac{e^{-t}}{\sqrt{1-e^{-2t}}} \sum_{i=1}^n \sum_{\xi\in\ms{C}_n} \frac{\xi_i-e^{-t}}{\sqrt{1-e^{-2t}}} \omega_t(\xi) \partial_if(x\xi)= -\frac{1}{\sqrt{e^{2t}-1}} \mb{E}_{\xi(t)}\left[  \sum_{i=1}^n \delta_i(t) \partial_i f\big(x \xi(t)\big)\right]$$  
and this concludes the proof of \eqref{eq:crucial-ivv}. Alternatively, it suffices to readily check the validity of formula~\eqref{eq:crucial-ivv} in the case of the scalar-valued Walsh basis $w_J(x)=\prod_{j\in J} x_j $, where $J\subset \{1, \ldots, n\}$, for which $P_t w_J(x) = e^{-t|J|} w_J(x) $ and $\partial_i w_J(x) ={\bf 1}_{i\in J} \, w_J(x)$. 
 
 Therefore, using Jensen's inequality and \eqref{eq:crucial-ivv} we have
 \begin{eqnarray*}
\sqrt{e^{2t}-1} \Big\|\frac{\partial}{\partial t} P_{t} f\Big\|_{L_p(\sigma_n;E)}\!\!\!=  \Big(\E_\e \Big\|  \mb{E}_{\xi(t)}\,  \sum_{i=1}^n \delta_i(t) \partial_i f\big(\e \xi(t)\big)\Big\|_E^p\Big)^{1/p}  \!\!
 \le  \Big(\mb{E}\, \big\|  \sum_{i=1}^n \delta_i(t) \partial_i f\big(\e \xi(t)\big)\big\|_E^p\Big)^{1/p}.
 \end{eqnarray*}
We conclude by noting that the couple $(\e \xi(t), \xi(t))$ has the same law as the couple  $(\e, \xi(t))$.  This can be seen as a proxy of the rotational invariance of the Gaussian measure (compare with the proof of Proposition~\ref{prop:gaussian-bound} below).
\end{proof}

Theorems~\ref{thm:useivv} and~\ref{thm:useivvorlicz} are consequences  of the following lemma.
\begin{lemma} \label{lem:usetypeforivv}
Let $(E,\|\cdot\|_E)$ be a Banach space with Rademacher type 2. Then there exists a constant $K=K(E)\in(0,\infty)$ such that for every $\e\in(0,1)$ and $n\in\N$, every $f:\ms{C}_n\to E$ satisfies
\begin{equation}
\big\|f-\mb{E}_{\sigma_n}f\big\|^2_{L_2(\sigma_n;E)}  \leq \frac{K}{\e} \sum_{i=1}^n \int_0^\infty e^{-\e t} \|\partial_iP_t f\|_{L_2(\sigma_n;E)}^2 \ \frac{\diff t}{t^\e},
\end{equation}
\end{lemma}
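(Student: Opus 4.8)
The plan is to use the identity
\[
f-\mb{E}_{\sigma_n}f = -\int_0^\infty \frac{\partial}{\partial t}P_tf\diff t = \int_0^\infty \Delta P_t f \diff t,
\]
combined with a ``Littlewood--Paley type'' splitting that converts the $L_2$ norm of this integral into a weighted integral of the $L_2$ norms of $\Delta P_{2t}f$, so that Proposition~\ref{prop:extendivv} (in the form \eqref{eq:propwithPt}) can be applied. Concretely, since $f-\mb{E}_{\sigma_n}f$ has mean zero and $\|P_sg\|_{L_2(\sigma_n;E)}$ decays, I would write the variance using the semigroup as a square function. The cleanest route: for any $\e\in(0,1)$ one has, by Minkowski/Cauchy--Schwarz against the finite measure $e^{-\e t}t^{-\e}\diff t$ (whose total mass is $\Gamma(1-\e)/\e^{1-\e}\asymp 1/\e$ up to constants), an inequality of the shape
\[
\big\|f-\mb{E}_{\sigma_n}f\big\|_{L_2(\sigma_n;E)}^2 \lesssim \frac{1}{\e}\int_0^\infty e^{-\e t}\,\big\|\Delta P_{2t}f\big\|_{L_2(\sigma_n;E)}^2 \,\frac{\diff t}{t^\e}.
\]
To justify this one expands $f-\mb{E}_{\sigma_n}f$ in the Walsh basis: $\Delta P_{2t}w_J = |J|e^{-2t|J|}w_J$, and one checks that for every $k\ge 1$, $\frac1\e\int_0^\infty e^{-\e t}(ke^{-2tk})^2 t^{-\e}\diff t \gtrsim 1$ uniformly in $k$ (this is an elementary one-variable estimate, substituting $u=tk$). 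Squaring out and using orthogonality of the $w_J$ in $L_2(\sigma_n;E)$ only formally — so in the vector-valued case I would instead argue by duality, pairing with $g\in L_2(\sigma_n;E^\ast)$ and using the scalar Walsh orthogonality on the pairing $\langle g,\Delta P_{2t}f\rangle$, or simply invoke that $P_t$ acts diagonally and the needed inequality is really the scalar inequality applied coordinatewise to the expansion — this reduces cleanly to the scalar statement which holds by the $u=tk$ computation.

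Next, apply \eqref{eq:propwithPt}: for each $t\ge0$,
\[
\big\|\Delta P_{2t}f\big\|_{L_2(\sigma_n;E)}^2 \le \frac{1}{e^{2t}-1}\,\mb{E}\Big\|\sum_{i=1}^n \delta_i(t)\,\partial_i P_t f(\e)\Big\|_E^2.
\]
Now I would use the Rademacher type $2$ hypothesis. The vector $(\delta_1(t),\dots,\delta_n(t))$ has independent, mean-zero, variance-one coordinates; by a standard comparison (contraction/symmetrization, e.g.\ the fact that normalized independent mean-zero variables can be dominated by Rademachers up to a universal constant, or a direct Jensen/conditioning argument replacing $\delta_i(t)$ by $\e_i$ at a bounded cost) one gets
\[
\mb{E}\Big\|\sum_{i=1}^n \delta_i(t)\,\partial_i P_t f(\e)\Big\|_E^2 \lesssim \mb{E}_{\e,\delta}\Big\|\sum_{i=1}^n \delta_i\,\partial_i P_t f(\e)\Big\|_E^2 \le T^2 \sum_{i=1}^n \big\|\partial_i P_t f\big\|_{L_2(\sigma_n;E)}^2,
\]
where $T=T(E)$ is the type-$2$ constant and the last step is the definition of Rademacher type $2$ applied pointwise in $\e$ and then integrated. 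Combining the three displays and noting $\frac{1}{e^{2t}-1}\le \frac{1}{2t}$ for small $t$ and is exponentially small for large $t$ (so $\frac{e^{-\e t}}{(e^{2t}-1)t^\e}\lesssim e^{-\e t}t^{-1-\e}$, and after the rescaling absorbed in the previous paragraph the powers of $t$ match up to the stated $t^{-\e}$), and interchanging $\sum_i$ with $\int_0^\infty$, yields the claimed bound with $K=K(E)\asymp T(E)^2$ times an absolute constant.

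The main obstacle I expect is the first step — proving the reverse square-function inequality
\[
\big\|f-\mb{E}_{\sigma_n}f\big\|_{L_2(\sigma_n;E)}^2 \lesssim \tfrac1\e\int_0^\infty e^{-\e t}\|\Delta P_{2t}f\|_{L_2(\sigma_n;E)}^2 t^{-\e}\diff t
\]
in the \emph{vector-valued} setting, since the naive proof uses Parseval for the Walsh basis in $L_2$, which is only legitimate scalar-valuedly (Hilbert-space-valuedly). The fix is that this particular inequality does not actually need type or any geometric assumption: one can write $f-\mb{E}_{\sigma_n}f=\int_0^\infty \Delta P_{2t}f\,\mu(\diff t)\cdot(\text{const})$ is false as stated, but a correct decomposition is $f-\mb{E}_{\sigma_n}f = \int_0^\infty c\,e^{-\e t}t^{-\e}\,\Delta P_{2t}f\,\diff t / (\text{diagonal normalization})$ read on each Walsh level, so by Minkowski's integral inequality plus Cauchy--Schwarz with the measure $e^{-\e t}t^{-\e}\diff t$ of mass $\asymp 1/\e$, everything reduces to the scalar estimate $\frac1\e\int_0^\infty e^{-\e t}(ke^{-2tk})^2t^{-\e}\diff t\gtrsim 1$ uniformly in integers $k\ge1$, which is elementary. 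I would present this reduction carefully, as it is the only place where one must be attentive to the absence of orthogonality for $E$-valued functions; once past it, the remaining steps are routine applications of Proposition~\ref{prop:extendivv} and the definition of Rademacher type $2$.
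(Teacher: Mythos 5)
Your overall strategy is right --- integral representation of $f-\mb{E}_{\sigma_n}f$, Proposition~\ref{prop:extendivv} in the form~\eqref{eq:propwithPt}, Rademacher type $2$, and a Cauchy--Schwarz in $t$ to extract the $1/\e$ --- but you apply these steps in the wrong order, and this creates a genuine gap.

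The first issue is that your opening reduction
\[
\big\|f-\mb{E}_{\sigma_n}f\big\|_{L_2(\sigma_n;E)}^2 \lesssim \frac1\e\int_0^\infty e^{-\e t}\big\|\Delta P_{2t}f\big\|_{L_2(\sigma_n;E)}^2\,\frac{\diff t}{t^\e}
\]
cannot be obtained by the mechanism you describe. If you write $f-\mb{E}_{\sigma_n}f=2\int_0^\infty \Delta P_{2t}f\,\diff t$, take the norm inside, and apply Cauchy--Schwarz in $t$, you need a weight $w(t)$ with $\int_0^\infty w(t)\,\diff t<\infty$ and the squared integrand multiplied by $1/w(t)$; choosing $1/w(t)=e^{-\e t}t^{-\e}$ forces $\int w=\infty$. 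The estimate you want is genuinely a \emph{reverse} Littlewood--Paley inequality, and you correctly sense that its only elementary proof --- via Walsh orthogonality/Parseval --- is Hilbertian. Your suggested fixes do not repair this: pairing with $g\in L_2(\sigma_n;E^*)$ and writing $\langle f-\mb{E}_{\sigma_n}f,g\rangle=\mathrm{const}\cdot\int_0^\infty t\,\langle \Delta P_{2t}f,\Delta P_{2t}g\rangle\,\diff t$ trades one vector-valued square-function inequality for another (now for $g$ in $L_2(\sigma_n;E^*)$), and the phrase ``diagonal normalization read on each Walsh level'' is precisely an invocation of the orthogonality that fails. There is a second, independent problem: even if your first display held, plugging in~\eqref{eq:propwithPt} and type $2$ leaves an extra factor $\tfrac{1}{e^{2t}-1}\asymp\tfrac{1}{t}$ near $t=0$ inside the $t$-integral, turning the weight into $e^{-\e t}t^{-1-\e}$, which is non-integrable at the origin since $\|\partial_i P_t f\|_{L_2(\sigma_n;E)}^2$ does not vanish as $t\to 0$. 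Your parenthetical ``after the rescaling the powers of $t$ match'' is not a valid step.

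The correct ordering --- which is what the paper does --- avoids both problems entirely. First apply Minkowski's inequality, $\|f-\mb{E}_{\sigma_n}f\|_{L_2(\sigma_n;E)}\leq 2\int_0^\infty\|\Delta P_{2t}f\|_{L_2(\sigma_n;E)}\,\diff t$, \emph{then} apply~\eqref{eq:propwithPt} together with the Rademacher type $2$ inequality for centered independent random variables to bound the integrand by $T\,\big(\sum_i\|\partial_i P_t f\|_{L_2(\sigma_n;E)}^2\big)^{1/2}\,(e^{2t}-1)^{-1/2}$ (up to a universal constant). At this stage everything is a scalar function of $t$ and no vector-valued orthogonality is needed. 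Now apply Cauchy--Schwarz in $t$, splitting $(e^{2t}-1)^{-1/2}=(e^{2t}-1)^{-\e/2}\cdot(e^{2t}-1)^{-(1-\e)/2}$: the second factor gives $\int_0^\infty(e^{2t}-1)^{-(1-\e)}\diff t\asymp 1/\e$, while the first factor, squared, sits with $\sum_i\|\partial_i P_t f\|^2$ and is controlled by $(e^{2t}-1)^{-\e}\leq e^{-\e t}t^{-\e}$ (since $e^{2t}-1\geq te^t$). This produces exactly the stated weight $e^{-\e t}t^{-\e}$, not the too-singular $e^{-\e t}t^{-1-\e}$. The moral is that the $1/\e$ and the weight must both be extracted \emph{from} the kernel $(e^{2t}-1)^{-1/2}$ that Proposition~\ref{prop:extendivv} supplies; they are not available before that proposition has been invoked.
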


\begin{proof}
We will apply Proposition~\ref{prop:extendivv} to $P_tf$ instead of $f$. We have that
\begin{multline} \label{eq:usetypeforivv1}
\big\|f-\mb{E}_{\sigma_n}f\big\|_{L_2(\sigma_n;E)} = \Big\|  \int_0^\infty \Delta P_t f \diff t\Big\|_{L_2(\sigma_n;E)}  = 2\Big\|\int_0^\infty\Delta P_{2t}f\diff t\Big\|_{L_2(\sigma_n;E)} \\ 
\leq 2 \int_0^\infty \|\Delta P_{2t}f\|_{L_2(\sigma_n;E)} \diff t
  \stackrel{\eqref{eq:propwithPt}}{\leq} 2 \int_0^\infty \Big( \mb{E} \Big\| \sum_{i=1}^n \delta_i(t) \partial_i P_t f(\e)\Big\|_{E}^2\Big)^{1/2}\frac{\diff t}{\sqrt{e^{2t}-1}}.
\end{multline}
Suppose now that $E$ has Rademacher type 2 with constant $T$. Then for $\e\in(0,1)$, by~\eqref{eq:usetypeforivv1} and the Rademacher type condition for centered random variables~\cite[Proposition~9.11]{LT91}, we have
\begin{equation} \label{eq:65}
\begin{split}
\big\|f-\mb{E}_{\sigma_n}f\big\|_{L_2(\sigma_n;E)} & \leq 4 T \int_0^\infty \Big(\sum_{i=1}^n \big\|\partial_i P_tf\big\|_{L_2(\sigma_n;E)}^2\Big)^{1/2} \frac{\diff t}{\sqrt{e^{2t}-1}}
\\ & \leq 4T \Big( \int_0^\infty \sum_{i=1}^n \big\|\partial_i P_tf\big\|_{L_2(\sigma_n;E)}^2 \frac{\diff t}{(e^{2t}-1)^\e}\Big)^{1/2}\Big(\int_0^\infty \frac{\diff t}{(e^{2t}-1)^{1-\e}}\Big)^{1/2},
\end{split}
\end{equation}
where in the second line we used the Cauchy--Schwarz inequality. Therefore, since the integral $\int_0^\infty \frac{\diff t}{(e^{2t}-1)^{1-\e}} \asymp \tfrac{1}{\e}$ as $\e\to0^+$, we deduce that there exists a universal constant $C\in(0,\infty)$ with
\begin{equation*}
\big\|f-\mb{E}_{\sigma_n}f\big\|^2_{L_2(\sigma_n;E)}  \leq \frac{C\cdot T^2}{\e} \sum_{i=1}^n \int_0^\infty\|\partial_i P_tf\|_{L_2(\sigma_n;E)}^2 \ \frac{\diff t}{(e^{2t}-1)^\e},
\end{equation*}
and the conclusion follows readily since $e^{2t}-1\geq te^t$ for every $t\geq0$.
\end{proof}

\begin{proof} [Proof of Theorems~\ref{thm:useivv} and~\ref{thm:useivvorlicz}]
By Bonami's hypercontractive inequalities~\cite{Bon70}, since the semigroup commutes with partial derivatives, we get that for every $t\geq0$ and $i\in\{1,\ldots,n\}$,
\begin{equation} \label{eq:first-use-bonami}
 \|\partial_iP_{t} f\|_{L_2(\sigma_n;E)}= \|P_{t}\partial_i f\|_{L_2(\sigma_n;E)} \leq \|\partial_i f\|_{L_{1+e^{-2t}}(\sigma_n;E)}.
\end{equation}
Therefore, the conclusion of Theorem~\ref{thm:useivvorlicz} follows by combining Lemma~\ref{lem:usetypeforivv},~\eqref{eq:first-use-bonami} and Lemma~\ref{lem:orlicz-withpower}. Moreover, in view of Lemma~\ref{lem:orlicz-upper}, Theorem~\ref{thm:useivvorlicz} readily implies~\eqref{eq:thmuseivv}. In order to prove~\eqref{eq:thmuseivvloglog}, one can just apply~\eqref{eq:thmuseivv} for $\e\asymp\sigma(f)^{-1}$.
\end{proof}

\begin{remark} \label{rem:referee}
It was pointed out to us by an anonymous referee that plugging in the standard application \eqref{eq:log-nopower2} of Hölder's inequality along with hypercontractivity to bound the middle term of \eqref{eq:65} cannot remove the dependence on $\varepsilon$ in inequality \eqref{eq:thmuseivv}. Indeed, by hypercontractivity and Hölder's inequality, we have
$$\int_0^\infty \Big(\sum_{i=1}^n \big\|\partial_i P_tf\big\|_{L_2(\sigma_n;E)}^2\Big)^{1/2} \frac{\diff t}{\sqrt{e^{2t}-1}} \leq \int_0^1 \Big( \sum_{i=1}^n a_i^{\frac{1-u^2}{1+u^2}} b_i^{\frac{2u^2}{1+u^2}} \Big)^{1/2} \frac{\diff u}{\sqrt{1-u^2}},$$
where $a_i = \|\partial_i f\|_{L_1(\sigma_n;E)}^2$ and $b_i=\|\partial_i f\|_{L_2(\sigma_n;E)}^2$. Suppose, for contradiction, that for every $n\geq1$ and every $0\leq a_i\leq b_i$ where $i\in\{1,\ldots,n\}$, we have
$$\int_0^1 \Big( \sum_{i=1}^n a_i^{\frac{1-u^2}{1+u^2}} b_i^{\frac{2u^2}{1+u^2}} \Big)^{1/2} \frac{\diff u}{\sqrt{1-u^2}} \lesssim \Big(\sum_{i=1}^n \frac{b_i}{1+\log(b_i/a_i)} \Big)^{1/2}.$$
Equivalently, we have
\begin{equation} \label{eq:ref1}
\int_0^1 \Big( \sum_{i=1}^n p_i \exp\big(-\tfrac{1-u^2}{1+u^2} x_i\big)\cdot (1+x_i)\Big)^{1/2}\frac{\diff u}{\sqrt{1-u^2}} \lesssim 1
\end{equation}
where $x_i = \log(b_i/a_i)\geq0$ and $\big( \sum_{k=1}^n \tfrac{b_k}{1+\log(b_k/a_k)} \big) p_i = \tfrac{b_i}{1+\log(b_i/a_i)}$. The parameters $n\geq1$, $x_i\geq0$ and the weights $p_i$ are all arbitrary, thus we conclude from \eqref{eq:ref1} that for every positive random variable $X$, the inequality
\begin{equation} \label{eq:ref2}
\int_0^1 \sqrt{ \mb{E} \big[ \exp\big(-\tfrac{1-u^2}{1+u^2} X\big) \cdot (1+X)\big]} \, \frac{\diff u}{\sqrt{1-u^2}} \lesssim1\end{equation}
holds true. To reach a contradiction, consider a discrete random variable $X\geq0$ such that
\begin{equation} \label{eq:div-prob}
\sum_{k\geq0} \sqrt{\mb{P}\big\{ 1+X \in[2^k, 2^{k+1}) \big\}} = \infty
\end{equation}
and notice that
\begin{equation*}
\begin{split}
\int_0^1 \sqrt{ \mb{E} \big[ \exp\big(-\tfrac{1-u^2}{1+u^2} X\big) \cdot (1+X)\big]}& \, \frac{\diff u}{\sqrt{1-u^2}} \geq \frac{1}{\sqrt{2}} \int_0^1 \sqrt{\mb{E}\big[\exp\big(-v(1+X) \big)\cdot (1+X) \big]} \,\frac{\diff v}{\sqrt{v}}
\\ & >\frac{1}{\sqrt{2}}  \int_0^1 \sqrt{\sum_{k=0}^\infty \exp\big(-v2^{k+1}\big)\cdot 2^k \cdot \mb{P}\big\{ 1+X \in[2^k, 2^{k+1}) \big\}} \,\frac{\diff v}{\sqrt{v}} 
\\ &  \geq\frac{1}{2\sqrt{2}} \sum_{\ell=0}^\infty 2^{-\frac{\ell}{2}}\sqrt{\sum_{k=0}^\infty \exp\big(-2^{k-\ell}\big)\cdot 2^k \cdot \mb{P}\big\{ 1+X \in[2^k, 2^{k+1}) \big\}} 
\\ & \geq \frac{1}{2\sqrt{2e}} \sum_{\ell=0}^\infty \sqrt{\mb{P}\big\{ 1+X \in[2^\ell, 2^{\ell+1}) \big\}} = \infty,
\end{split}
\end{equation*}
where in the last inequality we bounded the inner sum by the $k=\ell$ term. This contradicts \eqref{eq:ref2}.
\end{remark}

\begin{remark}
A combination of Proposition~\ref{prop:extendivv} and Lemma~\ref{lem:log-nopower} implies a different Talagrand-type strengthening of the vector-valued discrete Poincar\'e inequality~\eqref{eq:vectorpoincare} for spaces of Rademacher type 2, which is weaker than~\eqref{eq:vectortalagrand} (see also~\cite[Theorem~5.4]{Cha14} for a similar scalar-valued inequality). For a function $f:\ms{C}_n\to E$, we will use the notation ${\bf D}f:\ms{C}_n\to E^n$ for the gradient vector
\begin{equation*}
{\bf D}f \eqdef \big(\partial_1f,\ldots,\partial_nf\big).
\end{equation*}
Then, the first inequality in~\eqref{eq:65} can be rewritten as
\begin{equation*}
\big\|f-\mb{E}_{\sigma_n}f\big\|_{L_2(\sigma_n;E)} \lesssim_E \int_0^\infty \Big(\sum_{i=1}^n \big\|\partial_i P_tf\big\|_{L_2(\sigma_n;E)}^2\Big)^{1/2} \frac{\diff t}{\sqrt{e^{2t}-1}} =\int_0^\infty \big\|P_t{\bf D}f\big\|_{L_2(\sigma_n;\ell_2^n(E))} \frac{\diff t}{\sqrt{e^{2t}-1}}.
\end{equation*}
Now, by the hypercontractivity of $\{P_t\}_{t\geq0}$, we have
\begin{equation*}
\big\|P_{t}{\bf D}f\big\|_{L_2(\sigma_n;\ell_2^n(E))}\leq \big\|{\bf D}f\big\|_{L_{1+e^{-2t}}(\sigma_n;\ell_2^n(E))}.
\end{equation*}
Therefore, combining the last two inequalities, we get
\begin{equation*}
\big\|f-\mb{E}_{\sigma_n}f\big\|_{L_2(\sigma_n;E)} \lesssim_E \int_0^\infty \big\|{\bf D}f\big\|_{L_{1+e^{-2t}}(\sigma_n;\ell_2^n(E))} \frac{\diff t}{\sqrt{e^{2t}-1}} \lesssim \int_0^\infty e^{-t/2} \big\|{\bf D}f\big\|_{L_{1+e^{-2t}}(\sigma_n;\ell_2^n(E))} \frac{\diff t}{\sqrt{t}}
\end{equation*}
and Lemma~\ref{lem:log-nopower} then implies that
\begin{equation} \label{eq:talagrand-including-type}
\big\|f-\mb{E}_{\sigma_n}f\big\|_{L_2(\sigma_n;E)} \lesssim_E \frac{\big\|{\bf D}f\big\|_{L_2(\sigma_n;\ell_2^n(E))}}{1+\sqrt{\log\big(\big\|{\bf D}f\big\|_{L_2(\sigma_n;\ell_2^n(E))}/\big\|{\bf D}f\big\|_{L_{1}(\sigma_n;\ell_2^n(E))}\big)}}.
\end{equation}
The argument above shows that spaces of Rademacher type 2 satisfy~\eqref{eq:talagrand-including-type} and the reverse implication is clear by choosing a function of the form $f(\e)=\sum_{i=1}^n\e_i x_i$. When $E=\C$, this coincides with~\eqref{eq:thml1lpscalar} where $p=2$ (see also Remark~\ref{rem:l1l2scalar} below for comparison with~\eqref{eq:talagrand}).
\end{remark}
\section{Influence inequalities under martingale type} \label{sec:4}

In this section, we shall present two proofs of Theorems~\ref{thm:useeldan} and~\ref{thm:useeldanorlicz}, one probabilistic and one Fourier analytic. As a warmup, we present a simple proof of Talagrand's inequality in Gauss space for functions with values in a space of martingale type 2 using a classical stochastic representation for the variance. The scalar-valued case of this inequality was shown in~\cite{CL12} via semigroup methods which do not seem to be adaptable to the case of vector-valued functions (see Section \ref{subsec:lps} for a harmonic analytic variant). We will denote by $\gamma_n$ the standard Gaussian measure on $\R^n$, i.e.~ the measure $\diff\gamma_n(x) = \tfrac{\exp(-\|x\|_2^2/2)}{(2\pi)^{n/2}}\diff x$, where $\|\cdot\|_2$ denotes the usual Euclidean norm on $\R^n$.


\subsection{A simple stochastic proof in Gauss space} We will denote by $\{U_t\}_{t\geq0}$ the Ornstein--Uhlenbeck semigroup on $\R^n$, whose action on an integrable function $f:\R^n\to E$, where $(E,\|\cdot\|_E)$ is a Banach space, is given by the Mehler formula
\begin{equation} \label{eq:mehler}
\forall \ t\geq0 \mbox{ and } x\in\R^n, \quad\ U_tf(x) = \int_{\R^n}f\big(e^{-t}x+\sqrt{1-e^{-2t}}y\big) \diff\gamma_n(y).
\end{equation}
Let $\{X_t\}_{t\geq0}$ be an Ornstein--Uhlenbeck process, i.e. a stochastic process of the form $X_t = e^{-t}X_0+e^{-t} B_{e^{2t}-1}$, where $\{B_t\}_{t\geq0}$ is a standard Brownian motion and $X_0$ is a standard Gaussian random vector, independent of $\{B_t\}_{t\geq0}$. We will use the following well-known consequence of the Clark--Ocone formula (see~\cite{CHL97} for a proof and further applications in functional inequalities).

\begin{lemma} \label{lem:stochastic}
Let $(E,\|\cdot\|_E)$ be a Banach space. For every smooth function $f:\R^n\to E$, we have
\begin{equation} \label{eq:chl}
\forall \ s>0, \quad\ f(X_s) - U_sf(X_0) = \int_0^s \nabla \big(U_{s-t}f\big)(X_t)  \cdot \diff B_t.
\end{equation}
\end{lemma}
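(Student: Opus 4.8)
The plan is to obtain \eqref{eq:chl} as the Itô--Clark--Ocone representation of the natural martingale associated with $f(X_s)$. Since $\{X_t\}_{t\geq0}$ is the Markov process whose transition semigroup is $\{U_t\}_{t\geq0}$ --- this is precisely the content of the Mehler formula \eqref{eq:mehler}, because conditionally on $X_t=x$ the vector $X_{t+r}$ is Gaussian with mean $e^{-r}x$ and covariance $(1-e^{-2r})\mathrm{Id}$ --- the process
\begin{equation*}
N_t\eqdef U_{s-t}f(X_t),\qquad t\in[0,s],
\end{equation*}
satisfies $N_t=\mb{E}\big[f(X_s)\mid\mathscr{F}_t\big]$ with respect to the filtration $\{\mathscr{F}_t\}$ generated by $X_0$ and $\{B_u\}$, hence is a martingale with $N_0=U_sf(X_0)$ and $N_s=f(X_s)$. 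Thus everything reduces to identifying the stochastic-integral part of $N$.

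To do so, write $X_t=e^{-t}X_0+e^{-t}B_{e^{2t}-1}$ in terms of a standard Brownian motion via the deterministic time change $r\mapsto e^{2r}-1$; one checks that $X_t$ solves $\diff X_t=-X_t\diff t+\sqrt{2}\,\diff B_t$, whose infinitesimal generator is the Ornstein--Uhlenbeck operator $\mathscr{L}=\Delta-\langle x,\nabla\rangle$, so that $\partial_r U_rg=\mathscr{L}U_rg=U_r\mathscr{L}g$. Applying Itô's formula to $N_t=\Phi(t,X_t)$ with $\Phi(t,x)=U_{s-t}f(x)$ gives
\begin{equation*}
\diff N_t=\Big(-\partial_r U_rf\big|_{r=s-t}(X_t)+\mathscr{L}(U_{s-t}f)(X_t)\Big)\diff t+\sqrt{2}\,\nabla(U_{s-t}f)(X_t)\cdot\diff B_t,
\end{equation*}
and the drift vanishes identically by the backward Kolmogorov equation for $\{U_r\}$; integrating over $[0,s]$ produces $f(X_s)-U_sf(X_0)=\sqrt{2}\int_0^s\nabla(U_{s-t}f)(X_t)\cdot\diff B_t$, which is \eqref{eq:chl} once the diffusion constant is absorbed into the normalization of $B$ as in \cite{CHL97}. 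Equivalently, one can argue directly along the lines suggested by the name ``Clark--Ocone'': compute the Malliavin derivative $D_t[f(X_s)]=\sqrt{2}\,e^{-(s-t)}\nabla f(X_s)\mathbf{1}_{\{t\leq s\}}$ from the explicit form of $X_s$, then use the Markov property together with the commutation identity $\nabla U_rg=e^{-r}U_r(\nabla g)$ (immediate from differentiating \eqref{eq:mehler}) to rewrite $e^{-(s-t)}\mb{E}[\nabla f(X_s)\mid\mathscr{F}_t]=\nabla(U_{s-t}f)(X_t)$, and invoke the Clark--Ocone formula $F=\mb{E}F+\int_0^s\mb{E}[D_tF\mid\mathscr{F}_t]\cdot\diff B_t$.

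The only mildly delicate points are the following. Since $f$, and hence $N_t$, is $E$-valued, one formally needs the Banach space-valued Itô formula for $C^{1,2}$ integrands; this causes no difficulty here because the martingale term is the finite sum $\sum_{k=1}^n\int_0^s\partial_k(U_{s-t}f)(X_t)\,\diff B_t^k$ of Bochner--Itô integrals of continuous adapted $E$-valued processes against scalar Brownian motions, and if one prefers, the drift cancellation can be verified after pairing with functionals $x^\ast\in E^\ast$ and then recombined. I expect the genuine bookkeeping obstacle to be tracking the diffusion coefficient $\sqrt{2}$ and the matching normalization of $\mathscr{L}$ so that the drift cancels exactly; this, together with the (routine) growth estimates on $f$ and its derivatives needed to justify the application of Itô's formula, is carried out in detail in \cite{CHL97}, which is why we are content with the sketch above.
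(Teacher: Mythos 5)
The paper does not prove Lemma~\ref{lem:stochastic} but rather cites~\cite{CHL97}, and your Itô-formula argument — identifying $N_t = U_{s-t}f(X_t)$ as the conditional-expectation martingale $\mb{E}\big[f(X_s)\mid \ms{F}_t\big]$ and killing the drift via the backward Kolmogorov equation for $\{U_r\}_{r\geq 0}$ — is precisely the standard derivation found there. The $\sqrt{2}$ you flag is a genuine normalization point worth noting: with the paper's convention $X_t = e^{-t}X_0 + e^{-t}B_{e^{2t}-1}$, the driving SDE is $\diff X_t = -X_t\diff t + \sqrt{2}\diff \tilde{B}_t$ for a time-changed Brownian motion $\tilde{B}$, so the martingale representation carries a factor $\sqrt{2}$ relative to $\tilde{B}$; this constant is harmless in the application, being absorbed into the untracked constant in the proof of Theorem~\ref{thm:usechl}.
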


We will also need the following one-sided version of the Itô isometry for 2-smooth spaces, which is essentially due to Dettweiler~\cite{Det91}. We include the crux of the (simple) proof for completeness.

\begin{proposition} \label{prop:stochastic2}
Let $(E,\|\cdot\|_E)$ be a Banach space of martingale type 2. Then, there exists $M\in(0,\infty)$ such that for every $n\in\N$, if $\{B_t\}_{t\geq0}$ is a standard Brownian motion on $\R^n$ and $\{Y_t\}_{t\geq0}$ is an $E^n$-valued square integrable stochastic process adapted to the filtration $\{\ms{F}_t\}_{t\geq0}$ of $\{B_t\}_{t\geq0}$, then
\begin{equation}
\mb{E}\Big\|\int_0^\infty Y_t \cdot \diff B_t \Big\|_E^2 \leq M^2 \int_0^\infty \mb{E}\Big\|\sum_{i=1}^n G(i) Y_t(i) \Big\|_E^2\diff t,
\end{equation}
where $G=(G(1),\ldots,G(n))$ is a standard Gaussian random vector on $\R^n$, independent of $\{\ms{F}_t\}_{t\geq0}$.
\end{proposition}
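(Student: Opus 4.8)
The plan is to reduce the claimed inequality to the discrete martingale type~$2$ condition~\eqref{eq:mtype} for $E$ via a standard approximation of the stochastic integral by its Riemann sums, and then to handle the vector-valued coefficients $Y_t(i)$ using the auxiliary Gaussian vector $G$. First I would reduce to the case of a finite time horizon and, by a density argument, to the case of a simple (piecewise constant) integrand: there is a partition $0=t_0<t_1<\cdots<t_N=T$ such that $Y_t = Y_{t_{k-1}}$ is $\ms{F}_{t_{k-1}}$-measurable on each interval $[t_{k-1},t_k)$, so that
\begin{equation*}
\int_0^\infty Y_t\cdot\diff B_t = \sum_{k=1}^N Y_{t_{k-1}}\cdot\big(B_{t_k}-B_{t_{k-1}}\big) = \sum_{k=1}^N \sum_{i=1}^n Y_{t_{k-1}}(i)\,\big(B_{t_k}(i)-B_{t_{k-1}}(i)\big).
\end{equation*}
The partial sums over $k$ form an $E$-valued martingale with respect to the filtration $\{\ms{F}_{t_k}\}_{k=0}^N$, so~\eqref{eq:mtype} with $s=2$ and constant $M$ gives
\begin{equation*}
\mb{E}\Big\|\int_0^\infty Y_t\cdot\diff B_t\Big\|_E^2 \leq M^2 \sum_{k=1}^N \mb{E}\Big\|\sum_{i=1}^n Y_{t_{k-1}}(i)\,\big(B_{t_k}(i)-B_{t_{k-1}}(i)\big)\Big\|_E^2.
\end{equation*}

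Now I would analyze each term on the right. Conditionally on $\ms{F}_{t_{k-1}}$, the increments $B_{t_k}(i)-B_{t_{k-1}}(i)$, $i=1,\dots,n$, are i.i.d.\ centered Gaussians of variance $t_k-t_{k-1}$, hence equal in distribution to $\sqrt{t_k-t_{k-1}}\,G(i)$ with $G$ independent of $\ms{F}_{t_{k-1}}$; since $Y_{t_{k-1}}$ is $\ms{F}_{t_{k-1}}$-measurable, this yields
\begin{equation*}
\mb{E}\Big\|\sum_{i=1}^n Y_{t_{k-1}}(i)\,\big(B_{t_k}(i)-B_{t_{k-1}}(i)\big)\Big\|_E^2 = (t_k-t_{k-1})\,\mb{E}\Big\|\sum_{i=1}^n G(i)\,Y_{t_{k-1}}(i)\Big\|_E^2,
\end{equation*}
and summing over $k$ produces exactly $\int_0^\infty\mb{E}\big\|\sum_i G(i)Y_t(i)\big\|_E^2\diff t$ for the simple integrand. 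Finally I would pass from simple integrands to general square-integrable adapted $Y$ by approximation in $L_2$: both sides are continuous in the natural norm $\big(\int_0^\infty \mb{E}\|\sum_i G(i)Y_t(i)\|_E^2\diff t\big)^{1/2}$ (for the right side by definition, for the left side by the isometric/isomorphic stochastic integral bounds this norm provides), and one extends to $[0,\infty)$ by monotone convergence.

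The only genuine subtlety is the last density step: one must know that the $E$-valued stochastic integral is well-defined and satisfies the asserted $L_2$ bound for general adapted integrands, i.e.\ that simple integrands are dense in the appropriate space and the inequality is stable under this limit. This is exactly Dettweiler's construction of the It\^o integral in martingale type~$2$ spaces~\cite{Det91}, so I would simply invoke it; modulo that, every step above is elementary. (For the reader's convenience one can also note that the reverse inequality fails in general, which is why only the one-sided bound is claimed — this matches the one-sided nature of the martingale type~$2$ hypothesis.)
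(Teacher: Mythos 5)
Your proposal is correct and follows essentially the same route as the paper's own proof: reduce to simple adapted integrands, apply the discrete martingale type~$2$ inequality to the Riemann-sum martingale, replace each increment $B_{t_k}-B_{t_{k-1}}$ by $\sqrt{t_k-t_{k-1}}\,G$ via the conditional Gaussian identity, and pass to the limit. The paper handles the final approximation step with the same appeal to ``standard arguments'' (and the surrounding text already credits Dettweiler~\cite{Det91}), so your slightly more detailed discussion of that density step is a reasonable elaboration rather than a deviation.
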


\begin{proof}
We shall assume that $\{Y_t\}_{t\geq0}$ is a simple process of the form
\begin{equation*}
\forall \ i\in\{1,\ldots,n\}, \qquad Y_t(i) = \sum_{k=1}^N \alpha_{t_k}(i)\cdot{\bf 1}_{(t_k, t_{k+1}]},
\end{equation*}
where $0=t_1<t_2<\ldots<t_{N+1}$ and each $\alpha_{t_k}(i)$ is an $\ms{F}_{t_k}-$measurable random variable. The general case will follow by standard approximation arguments. By definition,
\begin{equation*}
\int_0^\infty Y_t \cdot \diff B_t = \sum_{k=1}^N \sum_{i=1}^n \alpha_{t_k}(i) \cdot \big( B_{t_{k+1}}(i)-B_{t_k}(i)\big)
\end{equation*}
and $\big\{\sum_{i=1}^n\alpha_{t_k}(i) ( B_{t_{k+1}}(i)-B_{t_k}(i))\big\}_{k=1}^N$  is a martingale difference sequence, therefore if $M$ is the martingale type 2 constant of $E$,
\begin{equation} \label{eq:stochastic-use-mtype}
\mb{E}\Big\|\int_0^\infty Y_t \cdot \diff B_t \Big\|_E^2 \leq M^2 \sum_{k=1}^N\mb{E}\Big\| \sum_{i=1}^n \alpha_{t_k}(i) \cdot \big( B_{t_{k+1}}(i)-B_{t_k}(i)\big) \Big\|_E^2.
\end{equation}
Now, for a fixed $k$, $\big(B_{t_{k+1}}(i)-B_{t_k}(i)\big)_{i=1}^n$ conditioned of $\ms{F}_{t_k}$ is equidistributed to a Gaussian random vector with covariance matrix $(t_{k+1}-t_k)\cdot \msf{Id}_n$. Therefore,
\begin{equation} \label{eq:expect-bm}
\mb{E}\left[ \Big\| \sum_{i=1}^n \alpha_{t_k}(i) \cdot \big( B_{t_{k+1}}(i)-B_{t_k}(i)\big) \Big\|_E^2 \ \Big| \ \ms{F}_{t_k}\right]= \big(t_{k+1}-t_k\big) \mb{E}\left[ \Big\|\sum_{i=1}^n G(i) \alpha_{t_k}(i)\Big\|_E^2 \ \Big| \ \ms{F}_{t_k}\right],
\end{equation}
where $G=(G(1),\ldots,G(n))$ is a standard Gaussian random vector, independent of $\{\ms{F}_t\}_{t\geq0}$. Hence, after taking expectation in~\eqref{eq:expect-bm} and summing over $k$,~\eqref{eq:stochastic-use-mtype} becomes
\begin{equation*}
\begin{split}
\mb{E}\Big\|\int_0^\infty Y_t \cdot \diff B_t \Big\|_E^2 \leq M^2 \sum_{k=1}^N \big(t_{k+1}-t_k\big) \mb{E}\Big\|\sum_{i=1}^n G(i) \alpha_{t_k}(i) \Big\|_E^2 = M^2 \int_0^\infty \mb{E}\Big\|\sum_{i=1}^n G(i) Y_t(i) \Big\|_E^2\diff t,
\end{split}
\end{equation*}
thus completing the proof of this simple fact.
\end{proof}

We are now well-equipped to prove the following result.

\begin{theorem}  \label{thm:usechl}
Let $(E,\|\cdot\|_E)$ be a Banach space with martingale type 2. Then, there exists $C=C(E)\in(0,\infty)$ such that for every $n\in\N$, every smooth function $f:\R^n\to E$ satisfies
\begin{equation} \label{eq:thmusechl}
\big\| f-\mb{E}_{\gamma_n}f\big\|_{L_2(\gamma_n;E)}^2 \leq C \sum_{i=1}^n  \|\partial_if\|_{L_2(\log L)^{-1}(\gamma_n;E)}^2.
\end{equation}
\end{theorem}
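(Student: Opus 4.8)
The plan is to combine the Clark--Ocone stochastic representation (Lemma~\ref{lem:stochastic}) with the one-sided It\^o isometry for $2$-smooth spaces (Proposition~\ref{prop:stochastic2}), and then absorb the resulting time integral of hypercontractively smoothed $L_p$-norms into the Orlicz norm on the right-hand side of \eqref{eq:thmusechl} via Lemma~\ref{lem:orlicz-withpower}; this sidesteps the scalar semigroup identities of \cite{CL12}, which have no obvious vector-valued analogue. First I would replace $f$ by $f-\mb{E}_{\gamma_n}f$ (neither side changes) and assume $\mb{E}_{\gamma_n}f=0$. Fixing $s>0$, Lemma~\ref{lem:stochastic} gives $f(X_s)-U_sf(X_0)=\int_0^s\nabla(U_{s-t}f)(X_t)\cdot\diff B_t$. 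Since $\{X_t\}_{t\geq0}$ is stationary with marginals $\gamma_n$, we have $\mb{E}\|f(X_s)\|_E^2=\|f\|_{L_2(\gamma_n;E)}^2$, whereas $\mb{E}\|U_sf(X_0)\|_E^2=\|U_sf\|_{L_2(\gamma_n;E)}^2\to0$ as $s\to\infty$; this last fact is immediate for $E$-valued polynomials from the Hermite expansion and extends to general smooth $f$ by a routine density argument (using that $\{U_s\}$ is contractive on $L_2(\gamma_n;E)$), which is needed because no orthogonality is available in $E$. Hence, by the triangle inequality for the $L_2$-norm of $E$-valued random variables, it suffices to bound $\big(\mb{E}\|\int_0^s\nabla(U_{s-t}f)(X_t)\cdot\diff B_t\|_E^2\big)^{1/2}$ uniformly in $s>0$.

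Next I would apply Proposition~\ref{prop:stochastic2} to the process $Y_t=\nabla(U_{s-t}f)(X_t)\mathbf{1}_{[0,s]}(t)$, which is adapted to the augmented filtration generated by $X_0$ and $\{B_t\}_{t\geq0}$, to obtain
\[
\mb{E}\Big\|\int_0^s\nabla(U_{s-t}f)(X_t)\cdot\diff B_t\Big\|_E^2 \lesssim_E \int_0^s\mb{E}\Big\|\sum_{i=1}^n G(i)\,\partial_i(U_{s-t}f)(X_t)\Big\|_E^2\diff t,
\]
where $G=(G(1),\ldots,G(n))$ is a standard Gaussian vector independent of everything else. After the substitution $u=s-t$, using that $X_{s-u}$ has law $\gamma_n$ and is independent of $G$ together with the intertwining identity $\partial_i U_uf=e^{-u}U_u\partial_if$, the right-hand side equals $\int_0^s e^{-2u}\int_{\R^n}\mb{E}_G\|\sum_{i=1}^n G(i)(U_u\partial_if)(x)\|_E^2\diff\gamma_n(x)\diff u$. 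Since Rademacher type $2$ implies Gaussian type $2$ with the same constant (write each $G(i)=\vartheta_i|G(i)|$ with $\{\vartheta_i\}$ i.i.d.\ random signs independent of $\{|G(i)|\}$, apply the Rademacher type $2$ inequality conditionally to the vectors $|G(i)|(U_u\partial_if)(x)$, and use $\mb{E}|G(i)|^2=1$), the inner Gaussian average is $\lesssim_E\sum_i\|(U_u\partial_if)(x)\|_E^2$; integrating in $x$ and invoking the hypercontractivity of $\{U_u\}_{u\geq0}$ — which passes to $E$-valued functions because $\|U_ug(x)\|_E\leq U_u(\|g(\cdot)\|_E)(x)$ by Jensen — yields $\|U_u\partial_if\|_{L_2(\gamma_n;E)}\leq\|\partial_if\|_{L_{1+e^{-2u}}(\gamma_n;E)}$. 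Letting $s\to\infty$, I arrive at
\[
\big\|f-\mb{E}_{\gamma_n}f\big\|_{L_2(\gamma_n;E)}^2 \lesssim_E \sum_{i=1}^n\int_0^\infty e^{-2u}\,\big\|\partial_if\big\|_{L_{1+e^{-2u}}(\gamma_n;E)}^2\diff u.
\]

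Finally I would apply Lemma~\ref{lem:orlicz-withpower} with $r=2$, $\gamma=\eta=2$, $\e=0$ and $h=\partial_if$ to each summand, which bounds $\int_0^\infty e^{-2u}\|\partial_if\|_{L_{1+e^{-2u}}(\gamma_n;E)}^2\diff u$ by a constant multiple of $\|\partial_if\|_{L_2(\log L)^{-1}(\gamma_n;E)}^2$; summing over $i$ gives \eqref{eq:thmusechl}. The genuinely substantive ingredients — the one-sided It\^o isometry and the Orlicz calculus of Section~\ref{sec:2} — are already in place, so the proof is chiefly a matter of correct assembly. The two points demanding care are the $s\to\infty$ limit (handled by density, as indicated above) and the observation that it is precisely the factor $e^{-u}$ produced by differentiating the semigroup which makes the time integral converge at $u=\infty$, while hypercontractivity controls the small-time regime — so that the naive estimate, which merely reproduces the Poincar\'e inequality, gets upgraded to the sharp $L_2(\log L)^{-1}$ bound exactly through Lemma~\ref{lem:orlicz-withpower}.
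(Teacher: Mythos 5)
Your proof is correct and follows essentially the same route as the paper's: the Clark--Ocone representation (Lemma~\ref{lem:stochastic}), the one-sided It\^o isometry for martingale type~2 spaces (Proposition~\ref{prop:stochastic2}), the Rademacher/Gaussian type~2 estimate, hypercontractivity transferred to $E$-valued functions via Jensen, and finally Lemma~\ref{lem:orlicz-withpower} with $r=2$, $\e=0$. The only deviation is your slightly more careful handling of the $s\to\infty$ limit via the triangle inequality after centering $f$, whereas the paper passes to the limit directly with a terse appeal to the decay of the correlation $\mathbb{E}[X_0(i)X_s(i)]=e^{-s}$; both routes lead to the same bound.
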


\begin{proof}
If $E$ has martingale type 2 with constant $M$, then Lemma~\ref{lem:stochastic} and Proposition~\ref{prop:stochastic2} imply that
\begin{equation*}
\forall \ s>0, \ \ \mb{E}\big[\big\|f(X_s)-U_sf(X_0)\big\|_E^2 \ \big| \ X_0\big]\leq M^2 \int_0^s \mb{E} \left[\Big\|\sum_{i=1}^n G(i) \partial_iU_{s-t}f(X_t) \Big\|_E^2 \ \Big| \ X_0 \right]\diff t.
\end{equation*}
Thus, applying the Rademacher type 2 condition for Gaussian variables, we deduce that
\begin{equation} \label{eq:gaussian-applied-types}
\forall \ s>0, \quad\mb{E}\big[ \big\|f(X_s)-U_sf(X_0)\big\|_E^2 \ \big| \ X_0\big]\leq M^2T^2 \int_0^s  \sum_{i=1}^n \mb{E} \left[ \big\|\partial_iU_{s-t}f(X_t) \big\|_E^2 \ \Big| \ X_0 \right] \diff t,
\end{equation}
where $T$ is the Rademacher type 2 constant of $E$. Now, integrating~\eqref{eq:gaussian-applied-types} with respect to the standard Gaussian random vector $X_0$ and using the stationarity of the Ornstein--Uhlenbeck process $\{X_t\}_{t\geq0}$ along with Nelson's hypercontractive inequalities~\cite{Nel66, Nel73}, we derive
\begin{equation} \label{eq:gaussian-applied-types2}
\begin{split}
\forall & \ s>0, \qquad \mb{E}\big\|f(X_s)-U_sf(X_0)\big\|_E^2\leq M^2T^2 \sum_{i=1}^n \int_0^s  \big\|\partial_iU_{s-t}f\big\|_{L_2(\gamma_n;E)}^2 \diff t
\\ & = M^2T^2 \sum_{i=1}^n \int_0^s e^{-2(s-t)} \big\|U_{s-t}\partial_if\big\|_{L_2(\gamma_n;E)}^2 \diff t \leq M^2T^2 \sum_{i=1}^n \int_0^s e^{-2t} \|\partial_if\|^2_{L_{1+e^{-2t}}(\gamma_n;E)} \diff t,
\end{split},
\end{equation}
where the equality follows from the standard commutation relation $\partial_i U_{s-t} f = e^{-(s-t)}U_{s-t}\partial_if$. Since for every $i\in\{1,\ldots,n\}$ the correlation $\mb{E} X_0(i)X_s(i) = e^{-s}$, taking $s\to\infty$ in~\eqref{eq:gaussian-applied-types2} we get
\begin{equation*}
\big\|f-\mb{E}_{\gamma_n}f\big\|_{L_2(\gamma_n;E)}^2 \leq M^2T^2\int_0^\infty e^{-2t} \|f\|^2_{L_{1+e^{-2t}}(\gamma_n;E)} \diff t
\end{equation*}
and the conclusion follows by Lemma~\ref{lem:orlicz-withpower}.
\end{proof}

\subsection{A proof of Theorems~\ref{thm:useeldan} and~\ref{thm:useeldanorlicz} via the Eldan--Gross process} \label{subsec:eldan} In a recent paper, Eldan and Gross~\cite{EG19} constructed a clever stochastic process on the cube which resembles the behavior of Brownian motion on $\R^n$ and used it to prove several important inequalities relating the variance and influences of Boolean functions. We shall briefly describe their construction.

Let $\{B_t\}_{t\geq0}=\big\{\big(B_t(1),\ldots,B_t(n)\big)\big\}_{t\geq0}$ be a standard Brownian motion on $\R^n$. For every $i\in\{1,\ldots,n\}$ and $t\geq0$, consider the stopping time $\tau_t(i)$ given by
\begin{equation*}
\tau_t(i) \eqdef\inf\big\{s\geq0: \ |B_s(i)|>t\big\}.
\end{equation*}
and then, let $X_t(i) \eqdef B_{\tau_t(i)}(i)$. Then, the jump process $\{X_t\}_{t\geq0}\eqdef\big\{\big(X_t(1),\ldots,X_t(n)\big)\big\}_{t\geq0}$ satisfies the following properties (see~\cite[Section~3]{EG19} for detailed proofs):
\begin{enumerate} [\bf 1.]
\item For every $t\geq0$ and $i\in\{1,\ldots,n\}$, $\big|X_t(i)\big|=t$ almost surely and in fact $X_t\sim\mathrm{Unif}\{-t,t\}^n$.
\item The process $\{X_t\}_{t\geq0}$ is a martingale.
\item For every coordinate $i\in\{1,\ldots,n\}$, the jump probabilities of $\{X_t(i)\}_{t\geq0}$ are
\begin{equation} \label{eq:jump-prob}
\forall \ t,h>0, \qquad  \mb{P}\big\{ \mathrm{sign}X_{t+h}(i) \neq \mathrm{sign}X_t(i)\big\} = \frac{h}{2(t+h)}.
\end{equation}
\end{enumerate}

\begin{proof} [Proof of Theorems~\ref{thm:useeldan} and~\ref{thm:useeldanorlicz}]
Fix a function $f:\ms{C}_n\to E$ and recall (see, e.g.,~\cite{O'Do14}) that there exists a unique multilinear polynomial on $\R^n$, which coincides with $f$ on $\ms{C}_n$, i.e. we can write
\begin{equation} \label{eq:walsh-expansion}
\forall \ \e\in\ms{C}_n, \qquad f(\e) = \sum_{A\subseteq\{1,\ldots,n\}} \widehat{f}(A) \prod_{i\in A}\e_i,
\end{equation}
for some coefficients $\widehat{f}(A)\in E$. By abuse of notation, we will also denote by $f$ that unique multilinear extension on $\R^n$. Since $f$ is a multilinear polynomial and $\{X_t\}_{t\geq0}$ is a martingale with independent coordinates, it follows that the process $\{f(X_t)\}_{t\geq0}$ is itself a martingale.

Fix some large $N\in\N$ and for $k\in\{0,1,\ldots,N\}$, let $t_k=\tfrac{k}{N}$ and $M_k=f(X_{t_k})$. Since $E$ has martingale type 2, there exists $M=M(E)\in(0,\infty)$ such that
\begin{equation} \label{use-mart-type}
\big\|f-\mb{E}_{\sigma_n}f\big\|_{L_2(\sigma_n;E)}^2 = \mb{E}\|M_N-M_0\|_E^2 \leq M^2\sum_{k=1}^N \mb{E}\|M_{k}-M_{k-1}\|_E^2.
\end{equation}
Now, for a fixed $k\in\{1,\ldots,N\}$, since $M_k-M_{k-1}=f(X_{t_k})-f(X_{t_{k-1}})$, Taylor's formula gives
\begin{equation} \label{eq:taylor}
\begin{split}
M_k-M_{k-1} = & \sum_{i=1}^n\big( X_{t_k}(i)-X_{t_{k-1}}(i)\big) \cdot \frac{\partial f}{\partial x_i}(X_{t_{k-1}})+R_k(f),
\end{split}
\end{equation}
where $\frac{\partial f}{\partial x_i}$ are the usual partial derivatives of $f$ on $\R^n$ and the remainder $R_k(f)$ satisfies
\begin{equation} \label{eq:remainder}
\|R_k(f)\|_E\leq \frac{1}{2}\sum_{i,j=1}^n \left\| \frac{\partial^2 f}{\partial x_i\partial x_j} \right\|_{L_\infty([-1,1]^n;E)} \big| X_{t_k}(i)-X_{t_{k-1}}(i)\big| \cdot  \big| X_{t_k}(j)-X_{t_{k-1}}(j)\big|.
\end{equation}
However, since $f$ is a multilinear polynomial, all second derivatives of the form $\partial^2 f/\partial x_i^2$ vanish and~\eqref{eq:remainder} implies that
\begin{equation} \label{eq:remainder2}
\|R_k(f)\|_E\leq K(f) \cdot \sum_{\substack{i,j=1 \\ i\neq j}}^n \big| X_{t_k}(i)-X_{t_{k-1}}(i)\big| \cdot  \big| X_{t_k}(j)-X_{t_{k-1}}(j)\big|,
\end{equation}
for some $K(f)\in(0,\infty)$, so that
\begin{equation} \label{eq:remainder3}
\mb{E} \|R_k(f)\|_E^2 \leq n^2 K(f)^2 \cdot \sum_{\substack{i,j=1 \\ i\neq j}}^n \mb{E} \big| X_{t_k}(i)-X_{t_{k-1}}(i)\big|^2 \cdot  \mb{E}\big| X_{t_k}(j)-X_{t_{k-1}}(j)\big|^2.
\end{equation}
The fact that only $i\neq j$ enters the sum will be crucial below to ensure that the error tends to zero as $N\to+\infty$ after summing over $k$. Now, by~\eqref{eq:jump-prob}, we have
\begin{equation*}
\mathrm{sign}(X_{t_{k-1}}(i))\cdot\big(X_{t_k}(i)-X_{t_{k-1}}(i)\big)= \begin{cases} -\frac{2k-1}{N}, & \mbox{with probability } \frac{1}{2k} \\ \frac{1}{N}, & \mbox{with probability } \frac{2k-1}{2k} \end{cases},
\end{equation*}
so the conditional second moment of the increments is
\begin{equation} \label{eq:cond-L2}
\mb{E}\big[ \big|X_{t_k}(i)-X_{t_{k-1}}(i)\big|^2 \big| X_{t_{k-1}}(i)\big] = \frac{1}{2k} \Big( \frac{2k-1}{N}\big)^2 + \frac{2k-1}{2k} \frac{1}{N^2} = \frac{2k-1}{N^2}.
\end{equation}
By the tower property of conditional expectation, the estimate~\eqref{eq:remainder3} can finally be written as
\begin{equation} \label{eq:remainder4}
\mb{E} \|R_k(f)\|_E^2 \lesssim \frac{k^2 n^4 K(f)^2}{N^4}
\end{equation}
and thus~\eqref{eq:taylor} implies that
\begin{equation} \label{eq:taylor2}
\mb{E}\|M_k-M_{k-1}\|_E^2 \lesssim \mb{E}\left\|\sum_{i=1}^n \big( X_{t_k}(i)-X_{t_{k-1}}(i)\big) \cdot \frac{\partial f}{\partial x_i}(X_{t_{k-1}})\right\|_E^2 + \frac{k^2 n^4 K(f)^2}{N^4}.
\end{equation}
Since $\{X_t\}_{t\geq0}$ is a martingale, the sequence $(X_{t_k}(i)-X_{t_{k-1}}(i))_{i=1}^n$ is a sequence of independent centered random variables, when conditioned on $\{X_s\}_{s\leq t_{k-1}}$. Therefore, applying the Rademacher type condition for centered random variables~\cite[Proposition~9.11]{LT91} and~\eqref{eq:cond-L2}, we deduce that
\begin{equation} \label{eq:use-type-in-eldan}
\mb{E}\left[\left\|\sum_{i=1}^n \big( X_{t_k}(i)-X_{t_{k-1}}(i)\big) \cdot \frac{\partial f}{\partial x_i}(X_{t_{k-1}})\right\|_E^2 \  \bigg| \ \{X_s\}_{s\leq t_{k-1}}\right] \lesssim \frac{kT^2}{N^2} \sum_{i=1}^n \Big\|  \frac{\partial f}{\partial x_i}(X_{t_{k-1}})\Big\|_E^2,
\end{equation}
where $T$ is the type 2 constant of $E$. By the tower property of conditional expectation,~\eqref{eq:taylor2} combined with~\eqref{eq:use-type-in-eldan} gives
\begin{equation} \label{eq:taylor3}
\mb{E}\|M_k-M_{k-1}\|_E^2 \lesssim \frac{kT^2}{N^2} \sum_{i=1}^n \mb{E}\Big\|  \frac{\partial f}{\partial x_i}(X_{t_{k-1}})\Big\|_E^2 + \frac{k^2 n^4 K(f)^2}{N^4}.
\end{equation}
Now, summing over $k\in\{1,\ldots,N\}$ and using~\eqref{use-mart-type}, we get
\begin{equation} \label{eq:taylor4}
\big\|f-\mb{E}_{\sigma_n}f\big\|_{L_2(\sigma_n;E)}^2 \lesssim M^2T^2 \sum_{i=1}^n \frac{1}{N}\sum_{k=1}^N\frac{k}{N}\mb{E}\Big\|  \frac{\partial f}{\partial x_i}(X_{t_{k-1}})\Big\|_E^2 + \frac{n^4 K(f)^2M^2}{N},
\end{equation}
which as $N\to\infty$ becomes
\begin{equation} \label{eq:eldan-almost-done}
\big\|f-\mb{E}_{\sigma_n}f\big\|_{L_2(\sigma_n;E)}^2 \lesssim M^2T^2 \sum_{i=1}^n \int_0^1 t\mb{E}\Big\|  \frac{\partial f}{\partial x_i}(X_{t})\Big\|_E^2\diff t.
\end{equation}
Since $X_t$ is uniformly distributed on $\{-t,t\}^n$, the random variable $\frac{\partial f}{\partial x_i}(X_t)$ satisfies
\begin{equation} \label{eq:equal-dist}
\frac{\partial f}{\partial x_i}(X_t) =\sum_{\substack{A\subseteq\{1,\ldots,n\} \\ i\in A}} \widehat{f}(A) \prod_{j\in A\setminus\{i\}} X_t(j)\ \sim\!\!\! \sum_{\substack{A\subseteq\{1,\ldots,n\} \\ i\in A}} t^{|A|-1} \widehat{f}(A) \prod_{j\in A\setminus\{i\}} \e_j = P_{\log(1/t)} \frac{\partial f}{\partial x_i}(\e),
\end{equation}
where $\sim$ denotes equality in distribution, $\e$ is uniformly distributed on $\ms{C}_n$ and the last equality follows, e.g., by~\cite[Proposition~2.47]{O'Do14}. Therefore, by~\eqref{eq:equal-dist} and the change of variables $u=\log(1/t)$, we can rewrite ~\eqref{eq:eldan-almost-done} as
\begin{equation} \label{eq:eldan-almost-done2}
\begin{split}
\big\|f-\mb{E}_{\sigma_n}f\big\|_{L_2(\sigma_n;E)}^2 \lesssim M^2T^2 \sum_{i=1}^n \int_0^\infty &e^{-2u} \Big\|  P_u\frac{\partial f}{\partial x_i}\Big\|_{L_2(\sigma_n;E)}^2\diff u.
\end{split}
\end{equation}
In the scalar-valued case, formula~\eqref{eq:eldan-almost-done}  is then an equality with $M^2T^2=1$ and appears in~\cite{EG19}. However, in this case,  its equivalent form~\eqref{eq:eldan-almost-done2} can also be proved by elementary semigroup arguments as in~\cite{CL12} which we can follow to conclude the proof. 
Using hypercontractivity~\cite{Bon70} and~\eqref{eq:eldan-almost-done2}, we get
\begin{equation*}
\big\|f-\mb{E}_{\sigma_n}f\big\|_{L_2(\sigma_n;E)}^2 \lesssim M^2 T^2 \sum_{i=1}^n \int_0^\infty e^{-2u} \Big\| \frac{\partial f}{\partial x_i}\Big\|_{L_{1+e^{-2u}}(\sigma_n;E)}^2\diff u.
\end{equation*}
The conclusions of Theorems~\ref{thm:useeldan} and~\ref{thm:useeldanorlicz} now follow from~\eqref{eq:eldan-almost-done2} combined with Lemmas~\ref{lem:orlicz-withpower} and~\ref{lem:orlicz-upper} since for every $i\in\{1,\ldots,n\}$, we have $\frac{\partial f}{\partial x_i} (\e) = \e_i \partial_if(\e)$ for every $\e\in\ms{C}_n$.
\end{proof}

\subsection{A proof of Theorems~\ref{thm:useeldan} and~\ref{thm:useeldanorlicz} by Littlewood--Paley--Stein theory} \label{subsec:lps} We shall now present a second, more analytic proof of Theorems~\ref{thm:useeldan} and~\ref{thm:useeldanorlicz}. The main tool for this proof, is a deep vector-valued Littlewood--Paley--Stein inequality (see~\cite{Ste70}) due to Xu~\cite{Xu20}, which is the culmination of the series of works~\cite{Xu98, MTX06} (see also~\cite{Hyt07} for some similar inequalities for UMD targets). We will need the following statement which is a special case of~\cite[Theorem~2]{Xu20}.

\begin{theorem} [Xu]
Let $(E,\|\cdot\|_E)$ be a Banach space of martingale type 2. Then, there exists a constant $C=C(E)\in(0,\infty)$ such that for symmetric diffusion semigroup $\{T_t\}_{t>0}$ on a probability space $(\Omega,\mu)$, every function $f:\Omega\to E$ satisfies
\begin{equation} \label{eq:xu}
\big\|f-\mb{E}_{\mu}f\big\|_{L_2(\mu;E)}^2 \leq C^2 \int_0^\infty \|t\partial_t T_tf\|_{L_2(\mu;E)}^2 \frac{\diff t}{t}.
\end{equation}
\end{theorem}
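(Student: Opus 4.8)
The plan is to deduce \eqref{eq:xu} from its ``dual'' counterpart by a classical Stein-type duality argument. The crucial point is that, by Pisier's duality between uniform smoothness and uniform convexity (equivalently, between martingale type and cotype), $E$ has martingale type $2$ if and only if $E^*$ has martingale cotype $2$, and a space of martingale cotype $2$ satisfies the \emph{upper} Littlewood--Paley--Stein inequality. Precisely, by the vector-valued Littlewood--Paley--Stein theory of Mart\'inez, Torrea and Xu~\cite{MTX06}, if a Banach space $X$ has martingale cotype $2$ then for every symmetric diffusion semigroup $\{T_t\}_{t>0}$ on $(\Omega,\mu)$ and every $h\colon\Omega\to X$,
\begin{equation} \label{eq:proposal-upper}
\int_0^\infty \big\|t\partial_t T_t h\big\|_{L_2(\mu;X)}^2 \frac{\diff t}{t} = \int_\Omega \Big( \int_0^\infty \big\|t\partial_t T_t h(x)\big\|_X^2 \frac{\diff t}{t}\Big) \diff\mu(x) \leq c(X)^2 \|h\|_{L_2(\mu;X)}^2 ,
\end{equation}
the first equality being Tonelli's theorem --- this is exactly where the exponent $2$ is used, so that the ``mixed'' square function in \eqref{eq:xu} agrees with the pointwise $g$-function estimated in~\cite{MTX06}. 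The plan is to apply \eqref{eq:proposal-upper} with $X=E^*$.

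The second ingredient is the reproducing identity: for $f\colon\Omega\to E$ and $h\colon\Omega\to E^*$ in the respective $L_2$ spaces,
\begin{equation} \label{eq:proposal-repro}
\big\langle f - \mb{E}_\mu f,\ h - \mb{E}_\mu h \big\rangle = 4\int_0^\infty \big\langle t\partial_t T_t f,\ t\partial_t T_t h \big\rangle_{L_2(\mu;E),\,L_2(\mu;E^*)} \frac{\diff t}{t} .
\end{equation}
To prove \eqref{eq:proposal-repro} I would first reduce, by strong continuity of the semigroups $T_t\otimes\msf{Id}_E$ and $T_t\otimes\msf{Id}_{E^*}$, to the case $f=T_sf_0$, $h=T_sh_0$ and then let $s\to0^+$; for such $f,h$ all $t$-derivatives exist in norm and $T_tf,T_th$ lie in the domains of all powers of the corresponding generators $A$, whose $E$- and $E^*$-extensions are mutually adjoint because $T_t$ is self-adjoint on $L_2(\mu;\C)$. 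Writing $\partial_t T_t=-AT_t$, one checks the elementary identities $\langle\partial_tT_tf,\partial_tT_th\rangle=\tfrac12\tfrac{\diff}{\diff t}\langle\partial_tT_tf,T_th\rangle$ and $\langle\partial_tT_tf,T_th\rangle=\tfrac12\tfrac{\diff}{\diff t}\langle T_tf,T_th\rangle$, and two integrations by parts in $t$ yield \eqref{eq:proposal-repro}: the boundary term at $t=0$ vanishes because of the explicit factor $t$, and the boundary term at $t=\infty$ vanishes because $\|tAT_tf\|_{L_2(\mu;E)}\lesssim\|f\|_{L_2(\mu;E)}$ (analyticity of the vector-valued semigroup on $L_2(\mu;E)$) while $T_th\to\mb{E}_\mu h$ in $L_2(\mu;E^*)$.

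With \eqref{eq:proposal-upper} and \eqref{eq:proposal-repro} in hand, \eqref{eq:xu} follows at once. Fixing $f$ and an arbitrary $h\colon\Omega\to E^*$ with $\|h\|_{L_2(\mu;E^*)}\leq1$ (and, up to a harmless constant, $\mb{E}_\mu h=0$), I would bound the right-hand side of \eqref{eq:proposal-repro} using the pointwise inequality $|\langle u(x),v(x)\rangle|\le\|u(x)\|_E\|v(x)\|_{E^*}$, then Cauchy--Schwarz in $\diff\mu$, then Cauchy--Schwarz in $\tfrac{\diff t}{t}$, to obtain
\begin{equation} \label{eq:proposal-final}
\big| \big\langle f - \mb{E}_\mu f, h \big\rangle \big| \leq 4 \Big( \int_0^\infty \|t\partial_tT_tf\|_{L_2(\mu;E)}^2 \tfrac{\diff t}{t}\Big)^{1/2} \Big( \int_0^\infty \|t\partial_tT_th\|_{L_2(\mu;E^*)}^2 \tfrac{\diff t}{t}\Big)^{1/2} \leq 4\,c(E^*) \Big( \int_0^\infty \|t\partial_tT_tf\|_{L_2(\mu;E)}^2 \tfrac{\diff t}{t}\Big)^{1/2} ,
\end{equation}
the last step being \eqref{eq:proposal-upper} for $X=E^*$. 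Taking the supremum over such $h$ and using that $\|u\|_{L_2(\mu;E)}=\sup\{|\langle u,h\rangle|:h\in L_2(\mu;E^*),\ \|h\|\leq1\}$, which holds for every Banach space $E$ by a measurable selection argument, yields \eqref{eq:xu} with $C\asymp c(E^*)$.

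I expect the only genuinely substantial step to be the implication ``martingale cotype $2 \Rightarrow \eqref{eq:proposal-upper}$'', which I would quote as a black box from~\cite{MTX06}; everything else is bookkeeping, the one point requiring real care being the vector-valued functional calculus underlying \eqref{eq:proposal-repro} (analyticity of $\{T_t\otimes\msf{Id}_E\}_{t>0}$ on $L_2(\mu;E)$ and the vanishing of the boundary terms). A more self-contained alternative, bypassing~\cite{MTX06}, would be to use Rota's dilation theorem to represent $T_{2t}$ as a conditional expectation along a reversed filtration, thereby realizing $f-\mb{E}_\mu f$ as a limit of martingale differences whose $t$-increments are essentially $t\partial_tT_tf$, and then apply the martingale type $2$ inequality \eqref{eq:mtype} directly; in that route the main obstacle is setting up the continuous-time Rota dilation carefully and matching the resulting martingale square sum to $\int_0^\infty\|t\partial_tT_tf\|_{L_2(\mu;E)}^2\tfrac{\diff t}{t}$.
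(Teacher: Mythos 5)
Your overall architecture — Pisier duality (martingale type $2$ of $E$ $\Leftrightarrow$ martingale cotype $2$ of $E^*$), the reproducing identity with the factor $4$, Cauchy--Schwarz, and the fact that $L_2(\mu;E^*)$ norms $L_2(\mu;E)$ — is sound, and it is indeed the standard way the lower Littlewood--Paley--Stein estimate is deduced from the upper one. Note, however, that the paper does not prove this statement at all: it quotes it as a special case of \cite[Theorem~2]{Xu19}. The genuine problem with your proposal is the attribution of the key black box. The upper $g$-function estimate you invoke for $E^*$, with the square function built from $t\partial_t T_t$ of a \emph{general symmetric diffusion semigroup}, is not what \cite{MTX06} proves: there the characterization of martingale cotype is in terms of the \emph{subordinated Poisson semigroup} $P_t=e^{-t\sqrt{A}}$, and whether the estimate holds for $\partial_t T_t$ itself was precisely the problem left open in that work and solved (with optimal constants, following Hyt\"onen--Naor) in \cite{Xu19} — i.e.\ in the very reference whose dual statement you are trying to prove. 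As written, your proof therefore either cites a result that is not in \cite{MTX06} or silently assumes the main theorem of \cite{Xu19}, which makes the argument circular rather than an independent proof.

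The gap is repairable without the hard result, at the cost of constants. Run your duality argument verbatim with the subordinated Poisson semigroup (the reproducing identity holds with $\sqrt{A}$ in place of $A$, and the \cite{MTX06} upper estimate for $E^*$ does apply there); this yields $\|f-\mb{E}_\mu f\|_{L_2(\mu;E)}^2\lesssim \int_0^\infty\|t\partial_t P_t f\|_{L_2(\mu;E)}^2\,\tfrac{\diff t}{t}$. Then dominate the Poisson square function by the one in \eqref{eq:xu}: writing $P_tf=\pi^{-1/2}\int_0^\infty e^{-u}u^{-1/2}T_{t^2/4u}f\,\diff u$, differentiating in $t$, and applying Minkowski's integral inequality followed by the change of variables $s=t^2/4u$ shows that $\big(\int_0^\infty\|t\partial_tP_tf\|_{L_2(\mu;E)}^2\tfrac{\diff t}{t}\big)^{1/2}\lesssim\big(\int_0^\infty\|s\partial_sT_sf\|_{L_2(\mu;E)}^2\tfrac{\diff s}{s}\big)^{1/2}$ in \emph{any} Banach space. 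This gives \eqref{eq:xu}, though not the sharp constant $X_2(E)\lesssim\sup_t\|t\partial_tT_t\|\,M_2(E)$ that the paper's subsequent remark relies on. Two smaller points: for a general symmetric diffusion semigroup the limit of $T_tf$ is the projection onto the fixed-point subspace, not $\mb{E}_\mu f$ (harmless for the hypercube application); and in the reproducing formula the boundary term at $t=\infty$ is better handled by reducing to finite linear combinations of tensors $\phi\otimes x$ and using the scalar spectral theorem, since the vector-valued analyticity bound $\sup_t\|tAT_t\|_{L_2(\mu;E)\to L_2(\mu;E)}<\infty$ that you invoke is itself a nontrivial fact proved in \cite{Xu19} from uniform convexity of $E^*$.
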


\begin{proof} [Second proof of Theorems~\ref{thm:useeldan} and~\ref{thm:useeldanorlicz}]
Since $E$ has martingale type 2, there exists $T\in(0,\infty)$ such that $E$ also has Rademacher type 2 with constant $T$. Then, applying Proposition~\ref{prop:extendivv} to $P_tf$ and using the Rademacher type condition for centered random variables~\cite[Proposition~9.11]{LT91}, we deduce that
\begin{equation} \label{eq:pluginxu}
\forall \ t\geq0, \qquad\|\Delta P_{2t}f\|_{L_2(\sigma_n;E)}^2 \leq \frac{1}{e^{2t}-1} \mb{E} \Big\|\sum_{i=1}^n \delta_i(t) \partial_iP_tf(\e)\Big\|_E^2 \leq \frac{4T^2}{e^{2t}-1} \sum_{i=1}^n \|\partial_i P_tf\|_{L_2(\sigma_n;E)}^2.
\end{equation}
Plugging~\eqref{eq:pluginxu} in~\eqref{eq:xu} for $\{T_t\}_{t\geq0}=\{P_t\}_{t\geq0}$ \mbox{and doing a change of variables, we get}
\begin{equation}
\begin{split}
\big\|f-&\mb{E}_{\sigma_n}f\big\|_{L_2(\sigma_n;E)}^2 \leq 4C^2 \int_0^\infty \|t\Delta P_{2t}f\|_{L_2(\sigma_n;E)}^2 \frac{\diff t}{t} 
\\ & \leq 8C^2 T^2 \int_0^\infty \frac{2t}{e^{2t}-1} \sum_{i=1}^n \|\partial_i P_tf\|_{L_2(\sigma_n;E)}^2 \diff t \leq 8C^2T^2 \sum_{i=1}^n\int_0^\infty e^{-t} \|\partial_i P_tf\|_{L_2(\sigma_n;E)}^2 \diff t.
\end{split}
\end{equation}
As before, the conclusion now follows from hypercontractivity~\cite{Bon70} along with Lemmas~\ref{lem:orlicz-withpower} and~\ref{lem:orlicz-upper}. 
\end{proof}

\begin{remark}
A careful inspection of the proof of~\cite[Theorem~3.1]{Xu98} shows that if we denote by $X_2(E)$ the least constant $C$ in Xu's inequality~\eqref{eq:xu}, then $X_2(E)\gtrsim M_2(E)$, where $M_2(E)$ is the martingale type 2 constant of $E$. On the other hand, in~\cite{Xu20} it is shown that
\begin{equation} \label{eq:xu-precise}
X_2(E) \lesssim \sup_{t\geq0} \big\|t\partial_t T_t\big\|_{L_2(\mu;E)\to L_2(\mu;E)} M_2(E)
\end{equation}
and the fact that $\sup_{t\geq0} \big\|t\partial_t T_t\big\|_{L_2(\mu;E)\to L_2(\mu;E)}<\infty$ is proven as a consequence of the uniform convexity of $E^\ast$. Specifically for the case of the heat semigroup $\{P_t\}_{t\geq0}$ on $\ms{C}_n$, a different proof of this statement which only relies on Pisier's $K$-convexity theorem~\cite{Pis82} is presented in~\cite[Lemma~37]{EI19}. In the particular case of $E=\ell_p$, where $p\geq2$, an optimization of the argument of~\cite[Lemma~37]{EI19} using the recent proof of Weissler's conjecture on the domain of contractivity of the complex heat flow by Ivanisvili and Nazarov~\cite{IN19}, reveals that
\begin{equation} \label{eq:optimize-ei19}
\forall \ n\in\N, \qquad\sup_{t\geq0} \big\|t\Delta P_t\big\|_{L_2(\sigma_n;\ell_p)\to L_2(\sigma_n;\ell_p)} \lesssim \sqrt{p}.
\end{equation}
Therefore, since the Rademacher and martingale type 2 constants of $\ell_p$ are both of the order of $\sqrt{p}$, the probabilistic proof of Theorem~\ref{thm:useeldan} presented in Section~\ref{subsec:eldan} shows that for every $n\in\N$, every function $f:\ms{C}_n\to\ell_p$, where $p\geq2$, satisfies
\begin{equation*}
\big\| f-\mb{E}_{\sigma_n}f\big\|_{L_2(\sigma_n;\ell_p)}^2 \lesssim p^2 \sum_{i=1}^n \frac{\|\partial_if\|_{L_2(\sigma_n;\ell_p)}^2}{1+\log\big(\|\partial_if\|_{L_2(\sigma_n;\ell_p)}/\|\partial_i f\|_{L_1(\sigma_n;\ell_p)}\big)},
\end{equation*}
whereas the proof via Xu's inequality~\eqref{eq:xu} implies a weaker $O(p^3)$ bound because of the current best known bounds~\eqref{eq:xu-precise} and~\eqref{eq:optimize-ei19}. We refer to~\cite{Xu21a, Xu21b} for recent updates on the optimal order of the constant $X_2(E)$.
\end{remark}

\section{Vector-valued $L_1-L_p$ inequalities} \label{sec:5}

In this section, we will prove Theorems~\ref{thm:lptalagrand} and~\ref{thm:l1lpscalar}. We start by presenting a joint strengthening of the two for functions from the Gauss space instead of the discrete hypercube.


\subsection{A stronger theorem in Gauss space} For a smooth function $f:\R^n\to E$, where $(E,\|\cdot\|_E)$ is a Banach space, and $p\in[1,\infty)$ we will use the shorthand notation
\begin{equation*}
\big\|\nabla f\big\|_{L_p(\gamma_n;E)} \eqdef \Big( \int_{\R^n} \Big\|\sum_{i=1}^n y_i\partial_if\Big\|_{L_p(\gamma_n;E)}^p\diff\gamma_n(y)\Big)^{1/p}.
\end{equation*}
In~\cite[Corollary~2.4]{Pis86}, Pisier presented an argument of Maurey showing that for every Banach space $(E,\|\cdot\|_E)$, $p\in[1,\infty)$ and $n\in\N$, every smooth function $f:\R^n\to E$ satisfies
\begin{equation} \label{eq:maurey-pisier}
\big\|f-\mb{E}_{\gamma_n}f\big\|_{L_p(\gamma_n;E)} \leq \frac{\pi}{2} \ \big\|\nabla f\big\|_{L_p(\gamma_n;E)}.
\end{equation}
In this section, we will prove the following Talagrand-type strengthening of~\eqref{eq:maurey-pisier}.

\begin{theorem} \label{thm:usemp}
For every $p\in(1,\infty)$, there exists $C_p\in(0,\infty)$ such that the following holds. For every Banach space $(E,\|\cdot\|_E)$ and $n\in\N$, every smooth function $f:\R^n\to E$ satisfies
\begin{equation}
\big\|f-\mb{E}_{\gamma_n}f\big\|_{L_p(\gamma_n;E)} \leq C_p \frac{\|\nabla f\|_{L_p(\gamma_n;E)}}{1+\sqrt{\log\big(\|\nabla f\|_{L_p(\gamma_n;E)}/\|\nabla f\|_{L_1(\gamma_n;E)}\big)}}.
\end{equation}
\end{theorem}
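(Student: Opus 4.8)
The plan is to run the argument of Maurey behind~\eqref{eq:maurey-pisier} one semigroup step ``upstream'', so that the forward hypercontractivity of the Ornstein--Uhlenbeck semigroup $\{U_t\}_{t\ge0}$ can be inserted \emph{before} Maurey's gradient bound is applied, and to then feed the resulting integral into Lemma~\ref{lem:log-nopower}. Concretely, using the Mehler formula~\eqref{eq:mehler} and the change of variables $s\mapsto 2t$,
\begin{equation*}
f-\mathbb{E}_{\gamma_n}f=\int_0^\infty\Big(-\tfrac{\partial}{\partial s}U_sf\Big)\ud s=2\int_0^\infty\Big(-\tfrac{\partial}{\partial s}U_sf\Big)\Big|_{s=2t}\ud t,
\end{equation*}
and then differentiating the relation $U_uf=U_t(U_{u-t}f)$ in $u$ at $u=2t$ gives the identity $\big(\tfrac{\partial}{\partial s}U_sf\big)\big|_{s=2t}=U_t\big[\big(\tfrac{\partial}{\partial s}U_sf\big)\big|_{s=t}\big]$. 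Hence
\begin{equation*}
\big\|f-\mathbb{E}_{\gamma_n}f\big\|_{L_p(\gamma_n;E)}\le 2\int_0^\infty\Big\|U_t\Big[\big(\tfrac{\partial}{\partial s}U_sf\big)\big|_{s=t}\Big]\Big\|_{L_p(\gamma_n;E)}\ud t.
\end{equation*}

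Next, for each fixed $t>0$ I would set $q(t)=1+(p-1)e^{-2t}\in(1,p]$. By the forward hypercontractive estimate $\|U_th\|_{L_p(\gamma_n;E)}\le\|h\|_{L_{q(t)}(\gamma_n;E)}$ --- which is the ``easy'' direction of hypercontractivity (increasing exponent), holds with the scalar constant for functions valued in an arbitrary Banach space since it tensorizes and reduces at the level of one coordinate to Nelson's scalar inequality, and is already used in this form in the proof of Theorem~\ref{thm:usechl} --- together with the Gaussian counterpart of Proposition~\ref{prop:extendivv} (Proposition~\ref{prop:gaussian-bound}; equivalently, the mechanism of Maurey's inequality~\eqref{eq:maurey-pisier}, which works verbatim at any exponent $q\in[1,\infty)$) applied at exponent $q(t)$, one gets
\begin{equation*}
\Big\|U_t\Big[\big(\tfrac{\partial}{\partial s}U_sf\big)\big|_{s=t}\Big]\Big\|_{L_p(\gamma_n;E)}\le\Big\|\tfrac{\partial}{\partial s}U_sf\big|_{s=t}\Big\|_{L_{q(t)}(\gamma_n;E)}\le\frac{\|\nabla f\|_{L_{q(t)}(\gamma_n;E)}}{\sqrt{e^{2t}-1}}.
\end{equation*}
Using $e^{2t}-1\ge te^{t}$ to bound $\tfrac{1}{\sqrt{e^{2t}-1}}\le\tfrac{e^{-t/2}}{\sqrt t}$ and writing $\|\nabla f\|_{L_\nu(\gamma_n;E)}=\|h\|_{L_\nu(\gamma_{2n};E)}$ with $h(x,y)=\sum_{i=1}^n y_i\,\partial_if(x)$ for every $\nu\in[1,\infty)$, the above becomes
\begin{equation*}
\big\|f-\mathbb{E}_{\gamma_n}f\big\|_{L_p(\gamma_n;E)}\le 2\int_0^\infty e^{-t/2}\,\big\|h\big\|_{L_{1+(p-1)e^{-2t}}(\gamma_{2n};E)}\,\frac{\ud t}{\sqrt t},
\end{equation*}
and an application of Lemma~\ref{lem:log-nopower} with $r=p$, $\gamma=2$, $\eta=\tfrac12$ and $\e=\tfrac12$ converts the right-hand side into $C_p\,\|\nabla f\|_{L_p(\gamma_n;E)}\big/\big(1+\sqrt{\log(\|\nabla f\|_{L_p(\gamma_n;E)}/\|\nabla f\|_{L_1(\gamma_n;E)})}\big)$, which is the assertion.

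The point that requires care --- and the only real obstacle --- is the interface between Maurey's bound and hypercontractivity. Maurey's argument produces $\|\nabla f\|_{L_p}$, not a gradient norm at a lower exponent, so one cannot improve~\eqref{eq:maurey-pisier} by naively applying hypercontractivity on the right-hand side: that would require the \emph{reverse} direction of hypercontractivity, which is false for a general Banach space $E$. Moving one semigroup step upstream, i.e.\ writing $\big(\tfrac{\partial}{\partial s}U_sf\big)\big|_{s=2t}=U_t\big[\big(\tfrac{\partial}{\partial s}U_sf\big)\big|_{s=t}\big]$ and applying the forward hypercontractive estimate to the \emph{outer} $U_t$, is precisely what permits replacing $L_p$ by $L_{q(t)}$ while still invoking only a legitimate Banach-space-valued hypercontractive inequality; this is why no Rademacher type, $K$-convexity or UMD assumption on $E$ is needed. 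It is also where the hypothesis $p>1$ enters: it is exactly the strict inequality $q(t)=1+(p-1)e^{-2t}<p$ for $t>0$ that yields genuine interpolation between $L_1$ and $L_p$, i.e.\ the content of Lemma~\ref{lem:log-nopower}, whereas for $p=1$ the scheme collapses back to~\eqref{eq:maurey-pisier}.
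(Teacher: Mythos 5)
Your proof is correct, and it is a slightly different organization of the same circle of ideas as the paper's. Both arguments start from $f-\mathbb{E}_{\gamma_n}f=-2\int_0^\infty\big(\tfrac{\partial}{\partial s}U_sf\big)\big|_{s=2t}\ud t$, bound the semigroup derivative via the Gaussian gradient estimate of Proposition~\ref{prop:gaussian-bound}, insert forward hypercontractivity to lower the $L_p$ exponent to $q(t)=1+(p-1)e^{-2t}$, and close with Lemma~\ref{lem:log-nopower} applied to the function $h(x,y)=\sum_i y_i\partial_i f(x)$ on $\gamma_{2n}$. The difference is the order of the two middle steps. The paper applies Proposition~\ref{prop:gaussian-bound} to $U_t f$ at exponent $p$, obtaining $\|\nabla U_t f\|_{L_p(\gamma_n;E)}$, then uses the commutation $\partial_i U_t=e^{-t}U_t\partial_i$ to get $e^{-t}\|U_t\nabla f\|_{L_p(\gamma_n;E)}$, and only then invokes hypercontractivity in the $x$-variable for each fixed $y$; since that leaves the outer $y$-moment at exponent $p$ while the inner exponent drops to $q(t)$, the paper also needs Kahane's inequality for Gaussian sums to bring the two exponents back in line and land on $\|\nabla f\|_{L_{q(t)}(\gamma_n;E)}$. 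You instead write $\ms{L}U_{2t}f=U_t\big[\ms{L}U_t f\big]$, hit the outer $U_t$ with vector-valued hypercontractivity (valid for any $E$ by pointwise Jensen, as you note), and then apply Proposition~\ref{prop:gaussian-bound} directly at exponent $q(t)$ --- which is permissible, since that proposition holds for every $p\in[1,\infty)$. This reorganization sidesteps both the commutation relation and Kahane's inequality, which is a genuine (if modest) simplification; the two proofs reach the same intermediate bound $\int_0^\infty e^{-t/2}\|\nabla f\|_{L_{q(t)}(\gamma_n;E)}\,t^{-1/2}\ud t$ up to an absolute constant. Your closing remarks on why one cannot simply apply hypercontractivity to the right-hand side of \eqref{eq:maurey-pisier} (that would require reverse hypercontractivity), and on how $p>1$ is exactly what yields $q(t)<p$ and makes Lemma~\ref{lem:log-nopower} nontrivial, are also correct. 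One cosmetic note: the exponent $\log^{-1+\e}$ in the statement of Lemma~\ref{lem:log-nopower} is a sign typo in the paper for $\log^{1-\e}$, as its own proof shows; with $\e=\tfrac12$ this gives the desired $\sqrt{\log}$, consistent with your usage.
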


We will denote by $\ms{L}$ the (negative) generator of the Ornstein--Uhlenbeck semigroup $\{U_t\}_{t\geq0}$, whose action on a smooth function $f:\R^n\to E$ is given by
\begin{equation*}
\forall \ x\in\R^n, \qquad\ms{L} f(x) = \Delta f(x) - \sum_{i=1}^n x_i\partial_if(x).
\end{equation*}
We will need the following (classical) Gaussian analogue of Proposition~\ref{prop:extendivv}.

\begin{proposition} \label{prop:gaussian-bound}
Let $(E,\|\cdot\|_E)$ be a Banach space and $p\in[1,\infty)$. Then, for every $n\in\N$, every smooth function $f:\R^n\to E$ satisfies 
\begin{equation} \label{eq:gaussian-bound}
\forall \ t\geq0, \qquad\Big\| \frac{\partial}{\partial t} U_t f\Big\|_{L_p(\gamma_n;E)} \leq \frac{1}{\sqrt{e^{2t}-1}} \big\|\nabla f\big\|_{L_p(\gamma_n;E)}.
\end{equation}
\end{proposition}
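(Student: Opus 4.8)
\textbf{Proof proposal for Proposition~\ref{prop:gaussian-bound}.}

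The plan is to mimic the proof of Proposition~\ref{prop:extendivv}, replacing the combinatorial formula \eqref{eq:crucial-ivv} by the exact analogue for the Ornstein--Uhlenbeck semigroup, which is nothing but the Gaussian integration by parts formula applied to the Mehler kernel. Concretely, I would first differentiate the Mehler formula \eqref{eq:mehler} in $t$. Writing $U_tf(x)=\int_{\R^n}f\big(e^{-t}x+\sqrt{1-e^{-2t}}\,y\big)\diff\gamma_n(y)$ and differentiating under the integral sign gives
\begin{equation*}
\frac{\partial}{\partial t}U_tf(x) = \int_{\R^n} \Big(-e^{-t}x + \frac{e^{-2t}}{\sqrt{1-e^{-2t}}}\,y\Big)\cdot \nabla f\big(e^{-t}x+\sqrt{1-e^{-2t}}\,y\big)\diff\gamma_n(y).
\end{equation*}
The key step is then to eliminate the factor $y$ using the Gaussian integration by parts identity $\int_{\R^n} y_i\, G(y)\diff\gamma_n(y)=\int_{\R^n}\partial_{y_i}G(y)\diff\gamma_n(y)$ applied to $G(y)=\partial_if\big(e^{-t}x+\sqrt{1-e^{-2t}}\,y\big)$, whose $y_i$-derivative is $\sqrt{1-e^{-2t}}\,\partial_i\partial_if(\cdots)$ — wait, more carefully, one integrates by parts in the $y$-variable on the term $\tfrac{e^{-2t}}{\sqrt{1-e^{-2t}}}\sum_i y_i\,\partial_if(\cdots)$, producing $e^{-2t}\sum_i \partial_i^2 f(\cdots)$, i.e.\ a Laplacian term, which is not quite what we want. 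Instead the cleaner route, exactly parallel to \eqref{eq:crucial-ivv}, is to write the derivative of the Gaussian density: if $\varphi_t$ denotes the density of $e^{-t}x+\sqrt{1-e^{-2t}}\,Z$ (a Gaussian with mean $e^{-t}x$ and variance $(1-e^{-2t})I$), then $\partial_t\varphi_t$ can be expressed via $\nabla_w\cdot\big[(w-e^{-t}x)\varphi_t\big]$ up to the scalar factor $\tfrac{e^{-t}}{\sqrt{1-e^{-2t}}}$, so that
\begin{equation*}
\frac{\partial}{\partial t}U_tf(x) = -\frac{e^{-t}}{\sqrt{1-e^{-2t}}}\int_{\R^n} \frac{w-e^{-t}x}{\sqrt{1-e^{-2t}}}\cdot\nabla f(w)\,\varphi_t(w)\diff w = -\frac{1}{\sqrt{e^{2t}-1}}\,\mb{E}\Big[\sum_{i=1}^n G_i\,\partial_if\big(e^{-t}x+\sqrt{1-e^{-2t}}\,G\big)\Big],
\end{equation*}
where $G=(G_1,\dots,G_n)$ is a standard Gaussian vector; the normalization $\tfrac{w-e^{-t}x}{\sqrt{1-e^{-2t}}}$ is exactly a standard Gaussian, which is the continuous counterpart of $\delta_i(t)$.

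Granting this representation, the proof concludes exactly as in Proposition~\ref{prop:extendivv}: I take $L_p(\gamma_n;E)$ norms in $x$, apply Jensen's inequality to pull the expectation over $G$ outside the norm, and then invoke the rotational invariance of the Gaussian measure, which says that the pair $\big(e^{-t}x+\sqrt{1-e^{-2t}}\,G,\ G\big)$, with $x$ distributed according to $\gamma_n$ and $G$ an independent standard Gaussian, has the same law as $\big(x',\ G\big)$ with $x'\sim\gamma_n$ independent of $G$. Hence
\begin{equation*}
\sqrt{e^{2t}-1}\,\Big\|\frac{\partial}{\partial t}U_tf\Big\|_{L_p(\gamma_n;E)} \leq \Big(\mb{E}\Big\|\sum_{i=1}^n G_i\,\partial_if(x')\Big\|_E^p\Big)^{1/p} = \big\|\nabla f\big\|_{L_p(\gamma_n;E)},
\end{equation*}
which is precisely \eqref{eq:gaussian-bound}. (The paper's own comment after Proposition~\ref{prop:extendivv} flags this parallel explicitly, calling the discrete step ``a proxy of the rotational invariance of the Gaussian measure.'')

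The main obstacle, such as it is, is purely bookkeeping: getting the time-derivative of the Mehler kernel into the divergence form $\partial_t\varphi_t = -\tfrac{e^{-t}}{\sqrt{1-e^{-2t}}}\,\nabla_w\cdot\big[\tfrac{w-e^{-t}x}{\sqrt{1-e^{-2t}}}\varphi_t\big]$ with the correct scalar prefactor, and justifying the differentiation under the integral and the integration by parts (both routine since $f$ is smooth and one may assume $f$ — hence $\nabla f$ — has at most polynomial growth, or reduce to a dense class and pass to the limit). No type or smoothness hypothesis on $E$ is needed here: the inequality holds for every Banach space, exactly as the statement asserts, because the argument only uses Jensen and a change of variables. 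An entirely equivalent and perhaps slicker verification is to check the identity on the Hermite basis, where $U_t$ acts diagonally by $e^{-kt}$ on Hermite polynomials of degree $k$, reducing \eqref{eq:gaussian-bound} to the scalar computation $k\,e^{-kt}\le \tfrac{1}{\sqrt{e^{2t}-1}}$ composed with the Gaussian Poincaré-type identity for $\nabla$; I would include the Hermite check as a remark but present the kernel computation as the main argument since it generalizes transparently to the vector-valued setting.
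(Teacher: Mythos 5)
Your outline is close in spirit — you correctly want an analogue of identity~\eqref{eq:crucial-ivv}, use Jensen, and then use Gaussian rotational invariance, which is exactly the paper's plan — but the specific representation you propose for $\partial_t U_tf$ is wrong, and as a consequence the rotational-invariance step you invoke is also false.

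\textbf{The divergence form is not correct.} Writing $\varphi_t$ for the $N(e^{-t}x,\,(1-e^{-2t})I)$ density, one has $\partial_t\log\varphi_t = -\tfrac{n e^{-2t}}{\sigma^2}-\tfrac{(w-\mu)\cdot\mu}{\sigma^2}+\tfrac{e^{-2t}|w-\mu|^2}{\sigma^4}$ with $\mu=e^{-t}x$, $\sigma^2=1-e^{-2t}$. On the other hand, $-c\,\nabla_w\cdot\big[(w-\mu)\varphi_t\big]=-c\big(n-\tfrac{|w-\mu|^2}{\sigma^2}\big)\varphi_t$ for any scalar $c$, which is even in $w-\mu$ and can never produce the odd cross term $-\tfrac{(w-\mu)\cdot\mu}{\sigma^2}\varphi_t$. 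That term is precisely the ``drift'' part of the OU generator; the true Fokker–Planck equation for $\varphi_t$ contains a Laplacian, not a pure first-order divergence of $(w-\mu)\varphi_t$. Concretely, the resulting formula $\partial_t U_t f(x) = -\tfrac{1}{\sqrt{e^{2t}-1}}\,\E_G\big[\sum_i G_i\,\partial_if(e^{-t}x+\sqrt{1-e^{-2t}}\,G)\big]$ fails already for $n=1$ and $f(w)=w$: the left side is $-e^{-t}x$, but the right side is $0$ since $G$ is centered. (The algebra doesn't work because you integrated by parts on the $Y$ variable, which necessarily returns a second-derivative term, as you yourself noticed in your first attempt.)

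\textbf{The rotational invariance is applied to the wrong pair.} Even if the representation were correct, the claim that $(e^{-t}x+\sqrt{1-e^{-2t}}\,G,\ G)$, with $x\sim\gamma_n$ and $G$ independent standard, has the same law as two independent standard Gaussians is false: their cross-covariance is $\sqrt{1-e^{-2t}}\,I\neq 0$. The correct invariance requires rotating \emph{both} coordinates. This is exactly what the paper does: with independent standard Gaussians $X,Y$, set $X_t=e^{-t}X+\sqrt{1-e^{-2t}}\,Y$ and $Y_t=-\sqrt{1-e^{-2t}}\,X+e^{-t}Y$, observe that $Y_t=\sqrt{e^{2t}-1}\,\partial_t X_t$, so the chain rule directly gives $\partial_t U_tf(X)=\tfrac{1}{\sqrt{e^{2t}-1}}\,\E_Y\big[\sum_i\partial_if(X_t)\,Y_t(i)\big]$ — note that $Y_t$, not a fresh standard Gaussian, is the correct multiplier, and it is \emph{not} centered conditionally on $X=x$ — and then use that $(X_t,Y_t)$ is a genuine orthogonal rotation of $(X,Y)$, hence equidistributed with it. The lesson is that you should not try to purge the $x$-dependence from the velocity; the $x$-dependence of the velocity and the $x$-dependence of the argument of $\partial_if$ combine, \emph{after} integrating over $x\sim\gamma_n$, into an independent pair via the rotation. (Incidentally, the Hermite sanity check you mention also needs repair: the pointwise scalar bound that makes $p=2$ work is $k\,e^{-2kt}(e^{2t}-1)\le 1$, i.e.\ $\sqrt{k}\,e^{-kt}\le(e^{2t}-1)^{-1/2}$, not $k\,e^{-kt}\le(e^{2t}-1)^{-1/2}$, which is false for large $k$.)
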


\begin{proof}
Here we can follow Maurey's trick~\cite{Pis86}, setting
$$X_t= e^{-t} X + \sqrt{1-e^{-2t}} Y \quad\textrm{and}\quad Y_t = -\sqrt{1-e^{-2t}} X+ e^{-t} Y  =  \sqrt{e^{2t}-1}\cdot   \frac{\partial}{\partial t} X_t,$$
for given independent standard Gaussian vectors $X,Y\in \R^n$. 
Then, we have
$$
\frac{\partial}{\partial t} U_t f (X)= \frac{\partial}{\partial t} \E_Y  f(X_t) =  \frac{1}{\sqrt{e^{2t}-1}}  \E_Y \sum_{i=1}^n  \partial_i f(X_t) Y_t(i),
$$
and we conclude using Jensen's inequality together with the fact that $(X_t,Y_t)$ has the same distribution as $(X,Y)$ for every $t\geq0$.
\end{proof}

\begin{proof} [Proof of Theorem~\ref{thm:usemp}]
Arguing as in~\eqref{eq:usetypeforivv1} and using~\eqref{eq:gaussian-bound} for $U_{t}f$ instead of $f$, we can write
\begin{equation} \label{eq:forusemp}
\begin{split}
\big\|f-\mb{E}_{\gamma_n}&f\big\|_{L_p(\gamma_n;E)} \leq 2 \int_0^\infty \|\ms{L}U_{2t}f\|_{L_p(\gamma_n;E)} \diff t
\stackrel{\eqref{eq:gaussian-bound}}{\leq} 2\int_0^\infty \big\|\nabla U_{t}f\big\|_{L_p(\gamma_n;E)} \frac{\diff t}{\sqrt{e^{2t}-1}}
\\ & = 2\int_0^\infty e^{-t} \big\|U_{t}\nabla f\big\|_{L_p(\gamma_n;E)} \frac{\diff t}{\sqrt{e^{2t}-1}} \lesssim \int_0^\infty e^{-t} \big\|U_{t}\nabla f\big\|_{L_p(\gamma_n;E)} \frac{\diff t}{\sqrt{t}}.
\end{split}
\end{equation}
Now, by Nelson's hypercontractive inequalities~\cite{Nel66, Nel73} and Kahane's inequality~\cite{Kah64} for Gaussian variables, we have
\begin{equation} \label{eq:use-kwapien}
\begin{split}
 \big\|U_{t}&\nabla f\big\|_{L_p(\gamma_n;E)}  = \Big(\int_{\R^n} \Big\|\sum_{i=1}^n y_i U_{t}\partial_i f\Big\|_{L_p(\gamma_n;E)}^p\diff\gamma_n(y)\Big)^{1/p}
 \\ & \leq \Big(\int_{\R^n} \Big\|\sum_{i=1}^n y_i \partial_i f\Big\|_{L_{1+(p-1)e^{-2t}}(\gamma_n;E)}^p\diff\gamma_n(y)\Big)^{1/p} \lesssim_p \big\|\nabla f\big\|_{L_{1+(p-1)e^{-2t}}(\gamma_n;E)}
 \end{split}
\end{equation}
and the conclusion follows from~\eqref{eq:forusemp},~\eqref{eq:use-kwapien} and Lemma~\ref{lem:log-nopower}.
\end{proof}


\subsection{Proof of Theorem~\ref{thm:lptalagrand}} Recall that a Banach space $(E,\|\cdot\|_E)$ has cotype $q\in[2,\infty)$ with constant $C\in(0,\infty)$ if for every $n\in\N$ and $x_1,\ldots,x_n\in E$,
\begin{equation}\label{eq:cotype-q}
\sum_{i=1}^n\|x_i\|_E^q \leq C^q \int_{\ms{C}_n} \Big\|\sum_{i=1}^n \e_ix_i\Big\|_E^q\diff\sigma_n(\e).
\end{equation}
The discrete vector-valued $L_1-L_p$ inequality of Theorem~\ref{thm:lptalagrand} can be proven along the same lines as Theorem~\ref{thm:usemp} using Proposition~\ref{prop:extendivv} instead of Proposition~\ref{prop:gaussian-bound}.

\begin{proof} [Proof of Theorem~\ref{thm:lptalagrand}]
Suppose that $E$ has cotype $q\in[2,\infty)$. It has been observed in the proof of~\cite[Proposition~4.2]{IVV20} that~\cite[Proposition~3.2]{Pis86} implies the estimate
\begin{equation} \label{eq:rademachercomparison}
\forall \ t\geq0, \ \ \ \Big( \mb{E} \Big\|\sum_{i=1}^n \delta_i(t) \partial_if(\e)\Big\|_E^p\Big)^{1/p} \leq \frac{B_p}{(1-e^{-2t})^{\frac{1}{2}-\frac{1}{\max\{p,q\}}}}\Big( \mb{E} \Big\|\sum_{i=1}^n \delta_i \partial_if(\e)\Big\|_E^p\Big)^{1/p}
\end{equation}
for some $B_p=B_p(E)\in(0,\infty)$, where $\delta=(\delta_1,\ldots,\delta_n)$ is a random vector, uniformly distributed on $\ms{C}_n$, which is independent of $\e$. Therefore, combining~\eqref{eq:propextendivv},~\eqref{eq:rademachercomparison} and integrating, we deduce that
\begin{equation} \label{eq:lptalagrand00}
\begin{split}
\big\|f-&\mb{E}_{\sigma_n}f\big\|_{L_p(\sigma_n;E)} = 2\Big\| \int_0^\infty \Delta P_{2t}f\diff t\Big\|_{L_p(\sigma_n;E)} \leq 2\int_0^\infty \|\Delta P_{2t}f\|_{L_p(\sigma_n;E)} \diff t
\\ & \stackrel{\eqref{eq:propextendivv}\wedge\eqref{eq:rademachercomparison}}{\leq} 2B_p \int_0^\infty e^{-t} \Big( \mb{E}\Big\|\sum_{i=1}^n\delta_i \partial_i P_t f(\e)\Big\|_E^p\Big)^{1/p} \frac{\diff t}{(1-e^{-2t})^{1-\frac{1}{\max\{p,q\}}}}
\\ & \lesssim B_p \int_0^{\infty} e^{-t/2} \Big( \mb{E}\Big\|\sum_{i=1}^n\delta_i \partial_i P_t f(\e)\Big\|_E^p\Big)^{1/p} \frac{\diff t}{t^{1-\frac{1}{\max\{p,q\}}}}
\end{split}.
\end{equation}
Arguing as in~\eqref{eq:use-kwapien} by using the hypercontractivity of $\{P_t\}_{t\geq0}$ and Kahane's inequality, we get
\begin{equation} \label{eq:lptalagrand11}
\Big( \mb{E}\Big\|\sum_{i=1}^n\delta_i \partial_i P_t f(\e)\Big\|_E^p\Big)^{1/p} \lesssim_p \Big( \mb{E}\Big\|\sum_{i=1}^n\delta_i \partial_i f(\e)\Big\|_E^{p(t)}\Big)^{1/p(t)},
\end{equation}
where $p(t)=1+(p-1)e^{-2t}$ and~\eqref{eq:thmlptalagrand} follows from~\eqref{eq:lptalagrand00},~\eqref{eq:lptalagrand11} and ~\eqref{eq:log-nopower0}  with $\alpha_p(E)=\frac{1}{\max\{p,q\}}$.
\end{proof}

An inspection of the above proofs shows that one can also get the following Orlicz space strengthenings of Theorems~\ref{thm:lptalagrand} and~\ref{thm:usemp} using Lemma~\ref{lem:orlicz-nopower} instead of Lemma~\ref{lem:log-nopower}.

\begin{theorem} \label{thm:lptalagrandorlicz}
Let $(E,\|\cdot\|_E)$ be a Banach space of cotype $q$ and $p\in[1,\infty)$. Then, there exists $C_p=C_p(E)\in(0,\infty)$ such that for every $\theta\in(0,1)$ and $n\in\N$, every $f:\ms{C}_n\to E$ satisfies
\begin{equation}
\big\|f-\mb{E}_{\sigma_n}f\big\|_{L_p(\sigma_n;E)} \leq \frac{C_p}{\theta^{\frac{p-1}{p}}}\cdot \big\|\nabla f\big\|_{L_p(\log L)^{-\frac{p}{\max\{p,q\}}+\theta}(\sigma_n;E)}
\end{equation}
\end{theorem}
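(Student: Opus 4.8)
The plan is to follow the proof of Theorem~\ref{thm:lptalagrand} essentially verbatim, changing only the very last step: where that proof invokes Lemma~\ref{lem:log-nopower}, I would instead invoke Lemma~\ref{lem:orlicz-nopower}. Concretely, assume $p\in(1,\infty)$ (the case $p=1$ degenerates into Pisier's inequality~\eqref{eq:maurey-pisier}) and that $E$ has cotype $q\in[2,\infty)$. Applying Proposition~\ref{prop:extendivv} to $P_tf$ in the form~\eqref{eq:propwithPt}, then the comparison estimate~\eqref{eq:rademachercomparison} valid for cotype-$q$ spaces, and integrating over $t$ exactly as in~\eqref{eq:lptalagrand00}, I would reach
\[
\big\|f-\mb{E}_{\sigma_n}f\big\|_{L_p(\sigma_n;E)} \lesssim_p B_p \int_0^{\infty} e^{-t/2} \Big( \mb{E}\Big\|\sum_{i=1}^n\delta_i \partial_i P_t f(\e)\Big\|_E^p\Big)^{1/p} \frac{\diff t}{t^{1-\frac{1}{\max\{p,q\}}}},
\]
with $\delta$ uniform on $\ms{C}_n$ and independent of $\e$, where the kernel arises from $\frac{e^{-t}}{(1-e^{-2t})^{1-1/\max\{p,q\}}}\asymp\frac{e^{-t/2}}{t^{1-1/\max\{p,q\}}}$. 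Then, exactly as in the step leading to~\eqref{eq:lptalagrand11} (Bonami's hypercontractivity in the $\e$-variable followed by Kahane's inequality in the $\delta$-variable, cf.~\eqref{eq:use-kwapien}), I would bound the inner average by $\lesssim_p \|\nabla f\|_{L_{1+(p-1)e^{-2t}}(\sigma_n;E)}$, which turns the last display into
\[
\big\|f-\mb{E}_{\sigma_n}f\big\|_{L_p(\sigma_n;E)} \lesssim_{p,E} \int_0^{\infty} e^{-t/2} \big\|\nabla f\big\|_{L_{1+(p-1)e^{-2t}}(\sigma_n;E)} \frac{\diff t}{t^{1-\frac{1}{\max\{p,q\}}}}.
\]

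To conclude, I would first recall that, by the very definition of $\|\nabla f\|_{L_\nu(\sigma_n;E)}$, all the norms $\|\nabla f\|_{L_\nu(\sigma_n;E)}$ and $\|\nabla f\|_{L_p(\log L)^r(\sigma_n;E)}$ are, respectively, the $L_\nu(\sigma_{2n};E)$- and $L_p(\log L)^r(\sigma_{2n};E)$-norms of the single $E$-valued function $h(\e,\delta)=\sum_{i=1}^n\delta_i\partial_if(\e)$ on the doubled cube $\ms{C}_n\times\ms{C}_n$. Hence I could apply Lemma~\ref{lem:orlicz-nopower} with $\mu=\sigma_{2n}$, this $h$, and parameters $r=p$, $\gamma=2$, $\eta=\tfrac12$ and $\e=1-\tfrac{1}{\max\{p,q\}}$; here $\e\in[\tfrac12,1)$ because $q\geq2$, and $r=p>1$, so the lemma applies, and since $-r(1-\e)+\theta=-\tfrac{p}{\max\{p,q\}}+\theta$ it gives, for every $\theta\in(0,1)$,
\[
\int_0^{\infty} e^{-t/2} \big\|\nabla f\big\|_{L_{1+(p-1)e^{-2t}}(\sigma_n;E)} \frac{\diff t}{t^{1-\frac{1}{\max\{p,q\}}}} \leq \frac{B}{\theta^{\frac{p-1}{p}}}\,\big\|\nabla f\big\|_{L_p(\log L)^{-\frac{p}{\max\{p,q\}}+\theta}(\sigma_n;E)}
\]
for some $B=B(p,E)\in(0,\infty)$. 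Combining the three displays yields the asserted inequality with $C_p=C_p(E)\asymp B_pB$.

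I do not anticipate any genuine difficulty here beyond bookkeeping: the entire analytic substance is carried by Lemma~\ref{lem:orlicz-nopower} (which rests on Lemma~\ref{lem:orlicz-withpower}), by Bonami's hypercontractive inequality, and by Pisier's comparison inequality~\eqref{eq:rademachercomparison}, all of which are already available. The only points that require a modicum of care are the choice of the parameter $\e$ in Lemma~\ref{lem:orlicz-nopower} so that the output Orlicz exponent $-r(1-\e)+\theta$ becomes exactly $-\tfrac{p}{\max\{p,q\}}+\theta$ and the $\theta$-dependent constant becomes $\theta^{-(p-1)/p}$, and the reduction, recalled above, of the vector-valued gradient norms of $f$ to honest $E$-valued Lebesgue and Orlicz norms on $\ms{C}_n\times\ms{C}_n$, so that Lemma~\ref{lem:orlicz-nopower} is literally applicable.
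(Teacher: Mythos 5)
Your proof is correct and matches the paper's approach exactly: the paper states that Theorem~\ref{thm:lptalagrandorlicz} follows from the proof of Theorem~\ref{thm:lptalagrand} by replacing Lemma~\ref{lem:log-nopower} with Lemma~\ref{lem:orlicz-nopower}, which you carry out with the right parameter choices $(r,\gamma,\eta,\varepsilon)=(p,2,\tfrac12,1-\tfrac{1}{\max\{p,q\}})$ and the correct interpretation of the gradient norms as genuine $L_\nu$ and Orlicz norms of a single $E$-valued function on $\ms{C}_n\times\ms{C}_n$. The only small slip is that your parenthetical aside about $p=1$ should reference the dimension-free hypercube Pisier inequality from~\cite{IVV20} rather than the Gaussian inequality~\eqref{eq:maurey-pisier}.
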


\begin{theorem} \label{thm:lpmporlicz}
For every $p\in[1,\infty)$, there exists $C_p\in(0,\infty)$ such that the following holds. For every Banach space $(E,\|\cdot\|_E)$, $\theta\in(0,1)$ and $n\in\N$, every smooth function $f:\R^n\to E$ satisfies
\begin{equation}
\big\|f-\mb{E}_{\gamma_n}f\big\|_{L_p(\gamma_n;E)} \leq \frac{C_p}{\theta^{\frac{p-1}{p}}}\cdot \big\|\nabla f\big\|_{L_p(\log L)^{-\frac{p}{2}+\theta}(\gamma_n;E)}
\end{equation}
\end{theorem}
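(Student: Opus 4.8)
The plan is to run the proof of Theorem~\ref{thm:usemp} essentially verbatim, the single change being that at the last step I would invoke Lemma~\ref{lem:orlicz-nopower} in place of Lemma~\ref{lem:log-nopower}; the authors have in fact already announced this. The one preliminary point is to fix what $\|\nabla f\|_{L_p(\log L)^r(\gamma_n;E)}$ means: I regard the randomized gradient of a smooth $f:\R^n\to E$ as the single $E$-valued function $(x,y)\mapsto\sum_{i=1}^n y_i\,\partial_i f(x)$ on the probability space $(\R^{2n},\gamma_{2n})$, and define $\|\nabla f\|_{L_\psi(\gamma_n;E)}$, for any Young function $\psi$, to be its $L_\psi(\gamma_{2n};E)$ Orlicz norm; for $\psi(t)=t^q$ this recovers the quantity $\|\nabla f\|_{L_q(\gamma_n;E)}$ already used above, and it is the Gaussian analogue of the convention implicit in Theorem~\ref{thm:lptalagrandorlicz}. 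This makes Lemma~\ref{lem:orlicz-nopower}, which is stated for functions on an abstract probability space, directly applicable.

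For $p\in(1,\infty)$ I would then proceed in the order of~\eqref{eq:forusemp}. First, from the semigroup reconstruction $f-\mb{E}_{\gamma_n}f=2\int_0^\infty\ms{L}U_{2t}f\diff t$, the triangle inequality, Proposition~\ref{prop:gaussian-bound} applied to $U_tf$ in place of $f$, the commutation relation $\partial_iU_t=e^{-t}U_t\partial_i$, and the elementary bound $e^{2t}-1\geq 2t$, one obtains
\begin{equation*}
\big\|f-\mb{E}_{\gamma_n}f\big\|_{L_p(\gamma_n;E)}\lesssim\int_0^\infty e^{-t}\,\big\|U_t\nabla f\big\|_{L_p(\gamma_n;E)}\,\frac{\diff t}{\sqrt t}.
\end{equation*}
Second, as in~\eqref{eq:use-kwapien}, Nelson's hypercontractive inequalities acting in the $x$ variable together with Kahane's inequality for the degree-one Gaussian chaos in the $y$ variable give $\|U_t\nabla f\|_{L_p(\gamma_n;E)}\lesssim_p\|\nabla f\|_{L_{1+(p-1)e^{-2t}}(\gamma_n;E)}$ for every $t\geq0$. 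Third, substituting this into the previous display and applying Lemma~\ref{lem:orlicz-nopower} to the randomized gradient of $f$ with parameters $r=p$, $\gamma=2$, $\eta=1$ and $\e=\tfrac12$ — whence $-r(1-\e)+\theta=-\tfrac p2+\theta$ and $\tfrac{r-1}{r}=\tfrac{p-1}{p}$ — yields exactly
\begin{equation*}
\big\|f-\mb{E}_{\gamma_n}f\big\|_{L_p(\gamma_n;E)}\leq\frac{C_p}{\theta^{\frac{p-1}{p}}}\,\big\|\nabla f\big\|_{L_p(\log L)^{-\frac p2+\theta}(\gamma_n;E)},
\end{equation*}
with $C_p$ depending only on $p$, since every intermediate constant does (universal in the semigroup step, $\lesssim_p1$ in the hypercontractivity/Kahane step, and $B(p,2,1,\tfrac12)$ from Lemma~\ref{lem:orlicz-nopower}). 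The case $p=1$ I would dispatch separately: when $\theta\geq\tfrac12$ the log-power $-\tfrac12+\theta$ is nonnegative, so $\|\cdot\|_{L_1(\gamma_n;E)}\lesssim\|\cdot\|_{L_1(\log L)^{-1/2+\theta}(\gamma_n;E)}$ and the inequality reduces to Maurey's inequality~\eqref{eq:maurey-pisier}; for $\theta<\tfrac12$ one reads $L_1(\log L)^r=L_1$ for $r\leq0$ and again lands on~\eqref{eq:maurey-pisier}.

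I do not anticipate a genuine obstacle: the real analytic content — trading the decay of the hypercontractive exponent $1+(p-1)e^{-2t}$ towards $1$ for a logarithmic Orlicz loss and extracting the sharp $\theta^{-(p-1)/p}$ factor — is exactly what Lemma~\ref{lem:orlicz-nopower} (resting in turn on Lemma~\ref{lem:orlicz-withpower}) already supplies, and the rest is the apparatus of Theorem~\ref{thm:usemp}. The only spot demanding attention is the parameter bookkeeping when passing through Lemma~\ref{lem:orlicz-nopower}: the weight $t^{-1/2}$ appearing in the integral above is a vestige of the factor $(e^{2t}-1)^{-1/2}$ produced by Proposition~\ref{prop:gaussian-bound}, and it must be matched with the value $\e=\tfrac12$ of the lemma — this is precisely the matching that turns that weight, together with the hypercontractive exponent, into the gauge $L_p(\log L)^{-p/2+\theta}$ rather than some other $L_p(\log L)^{\mathrm{const}}$.
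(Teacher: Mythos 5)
Your proposal is correct and is essentially the paper's own (implicit) proof: the paper explicitly announces that Theorem~\ref{thm:lpmporlicz} is obtained by rerunning the argument for Theorem~\ref{thm:usemp} with Lemma~\ref{lem:orlicz-nopower} in place of Lemma~\ref{lem:log-nopower}, and your parameter bookkeeping ($r=p$, $\eta=1$, $\e=\tfrac12$, giving $-r(1-\e)+\theta=-\tfrac{p}{2}+\theta$ and the $\theta^{-(p-1)/p}$ factor) matches exactly. The only delicate point you rightly flag — fixing the Orlicz-norm convention for $\nabla f$ as the $L_\psi(\gamma_{2n};E)$ norm of $(x,y)\mapsto\sum_i y_i\partial_if(x)$ — is the natural choice consistent with the discrete case and is what makes Lemma~\ref{lem:orlicz-nopower} directly applicable.
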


\subsection{Proof of Theorem~\ref{thm:l1lpscalar}} Since $E=\C$ has cotype 2, the proof of Theorem~\ref{thm:lptalagrand} implies that in the scalar-valued case,~\eqref{eq:thmlptalagrand} holds with an exponent $\alpha_p(\C)=\tfrac{1}{\max\{p,2\}}$ for every $p\in(1,\infty)$. In order to boost this exponent to $\tfrac{1}{2}$ we shall use the following deep result of Lust-Piquard~\cite{LP98} (see also~\cite{BELP08} for a slightly neater argument with better dependence on $p$ and further applications).

\begin{theorem} [Lust-Piquard] \label{thm:lust-piquard}
For every $p\in(1,\infty)$, there exists $\beta_p\in(0,\infty)$ such that for every $n\in\N$, every function $f:\ms{C}_n\to\C$ satisfies
\begin{equation} \label{eq:lust-piquard}
\beta_p \big\|\Delta^{1/2}f\big\|_{L_p(\sigma_n)} \leq \big\|\nabla f\big\|_{L_p(\sigma_n)}.
\end{equation}
\end{theorem}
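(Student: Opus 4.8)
\emph{Plan.} I would begin with two standard reductions. First, by Khintchine's inequality \cite{Khi23}, for scalar-valued $f$ the quantity $\|\nabla f\|_{L_p(\sigma_n)}$ is comparable, with constants depending only on $p$, to the Rademacher average $\big(\mathbb{E}_\delta\|\sum_i\delta_i\partial_if\|_{L_p(\sigma_n)}^p\big)^{1/p}$, so the asserted inequality is insensitive to which of the two we use. Second, recall from \eqref{eq:ipp} that each $\partial_i$ is self-adjoint on $L_2(\sigma_n)$ and satisfies $\partial_i^2=\partial_i$. Hence the operator $\nabla=(\partial_1,\dots,\partial_n)\colon L_2(\sigma_n)\to L_2(\sigma_n;\ell_2^n)$ has adjoint $\nabla^\ast G=\sum_i\partial_iG_i$ and $\nabla^\ast\nabla=\sum_i\partial_i^2=\sum_i\partial_i=\Delta$. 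Consequently, on functions with $\mathbb{E}_{\sigma_n}f=0$ the \emph{Riesz transform} $R\eqdef\nabla\Delta^{-1/2}$ is an isometric embedding of $L_2(\sigma_n)$ into $L_2(\sigma_n;\ell_2^n)$ with $R^\ast R=\mathsf{Id}$. Writing $h=\Delta^{1/2}f$, the target inequality $\|\Delta^{1/2}f\|_{L_p(\sigma_n)}\le\beta_p^{-1}\|\nabla f\|_{L_p(\sigma_n)}$ becomes $\|h\|_{L_p(\sigma_n)}=\|R^\ast Rh\|_{L_p(\sigma_n)}\le\|R^\ast\|\cdot\|Rh\|_{L_p(\sigma_n;\ell_2^n)}$, so by duality it is enough to prove that $R\colon L_q(\sigma_n)\to L_q(\sigma_n;\ell_2^n)$ is bounded with a constant depending only on $q=p'$, and not on $n$.

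To bound $R$, I would run the classical Littlewood--Paley--Stein scheme. Since $\{P_t\}_{t\ge0}$ is a symmetric Markov semigroup, Stein's theorem \cite{Ste70} yields $\|g\|_{L_q(\sigma_n)}\asymp_q\big\|\big(\int_0^\infty t\,|\Delta P_tg|^2\,\diff t\big)^{1/2}\big\|_{L_q(\sigma_n)}$ for mean-zero $g$, together with the companion (easy-direction) bound $\big\|\big(\int_0^\infty\sum_i|\partial_iP_tg|^2\,\diff t\big)^{1/2}\big\|_{L_q(\sigma_n)}\lesssim_q\|g\|_{L_q(\sigma_n)}$ for the gradient $g$-function. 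Using a subordination identity to express $\partial_i\Delta^{-1/2}$ through $\int_0^\infty\partial_iP_t(\cdot)\,\tfrac{\diff t}{\sqrt t}$ (equivalently through the Poisson semigroup $e^{-t\sqrt\Delta}$), together with Proposition~\ref{prop:extendivv} to control $\Delta P_{2t}$ by $\sum_i\delta_i(t)\partial_iP_tf$, one is reduced to comparing the gradient square function of $\Delta^{-1/2}h$ with the ordinary square function of $h$. The Euclidean template to keep in mind is that the Cauchy--Riemann equations force a \emph{pointwise} domination of $|\nabla Q_t(R_jf)|$ by $|\nabla Q_tf|$, and an analogous formal Cauchy--Riemann system does hold on the cube (with $u=Q_tf$ and $w_i=Q_t\Delta^{-1/2}\partial_if$ one gets $\partial_tw_i=-\partial_iu$, $\partial_iw_j=\partial_jw_i$, and $\sum_i\partial_iw_i=-\partial_tu$).

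The main obstacle is that this formal system cannot be exploited as in $\R^n$: the discrete derivatives $\partial_i$ obey no Leibniz or chain rule, so neither the subharmonicity of $|w|^q$ nor the Bakry--\'Emery $\Gamma_2\ge0$ calculus that underlies the diffusion Riesz transform bounds is available. This is exactly where Lust-Piquard's argument leaves the semigroup heuristics: one lifts the problem to the fermionic (CAR) algebra, where the operators playing the role of $\partial_i$ become genuine derivations, and then invokes the noncommutative Riesz transform and Burkholder--Gundy inequalities of Lust-Piquard and Pisier; transferring the resulting estimate back to $\ms{C}_n$ gives \eqref{eq:lust-piquard} with a constant independent of $n$. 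This is the route of \cite{LP98}, streamlined in \cite{BELP08}. A purely real-variable alternative I would also attempt is to decompose $h=\sum_j\Delta_jh$ into dyadic blocks of Walsh levels (with the heat semigroup furnishing the projections), use the Littlewood--Paley inequality $\|h\|_{L_q(\sigma_n)}\asymp_q\big\|\big(\sum_j|\Delta_jh|^2\big)^{1/2}\big\|_{L_q(\sigma_n)}$ on the cube, estimate $\nabla\Delta^{-1/2}$ on each block, and add an almost-orthogonality bound between distinct blocks; quantifying the off-diagonal decay, i.e.\ a Calder\'on--Zygmund-type estimate for the relevant kernels on $\ms{C}_n$, is the delicate point of that approach.
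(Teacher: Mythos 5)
The paper does not actually prove Theorem~\ref{thm:lust-piquard}: it is used as a black box and quoted from \cite{LP98} (with \cite{BELP08} cited for a streamlined argument), so your ultimate deferral to the fermionic (CAR-algebra) route of Lust-Piquard is consistent with the paper's treatment, and your diagnosis of why the classical Littlewood--Paley--Stein/Cauchy--Riemann scheme cannot be executed on $\ms{C}_n$ (no Leibniz or chain rule, hence no subharmonicity or $\Gamma_2$-type calculus) correctly locates where the real work lies.

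However, the reduction you place in front of the citation has a genuine flaw. Writing $h=R^\ast Rh$ with $R=\nabla\Delta^{-1/2}$ and estimating $\|h\|_{L_p(\sigma_n)}\le\|R^\ast\|_{L_p(\sigma_n;\ell_2^n)\to L_p(\sigma_n)}\,\|Rh\|_{L_p(\sigma_n;\ell_2^n)}$ requires the dimension-free boundedness of $R$ from $L_q(\sigma_n)$ to $L_q(\sigma_n;\ell_2^n)$ with $q=p'$. On the Hamming cube this is known only for $q\ge 2$; for $1<q<2$ the bound $\big\|\nabla\Delta^{-1/2}g\big\|_{L_q(\sigma_n)}\lesssim_q\|g\|_{L_q(\sigma_n)}$ with constants independent of $n$ is not known (the fermionic proofs yield only decomposition-type substitutes below $L_2$, and this case is left open in \cite{LP98, BELP08}). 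Consequently your duality step establishes \eqref{eq:lust-piquard} only for $1<p\le 2$; for $p>2$ the inequality is proved in \cite{LP98, BELP08} directly, via transference to the CAR algebra and the noncommutative Khintchine/Riesz estimates, and not by dualizing an $L_q$ Riesz bound with $q<2$. A smaller inaccuracy: the ``easy-direction'' spatial square-function bound $\big\|\big(\int_0^\infty\sum_i|\partial_iP_tg|^2\,\diff t\big)^{1/2}\big\|_{L_q(\sigma_n)}\lesssim_q\|g\|_{L_q(\sigma_n)}$ does not follow from Stein's general theory, which only controls the time-derivative $g$-function; on the cube such spatial bounds are part of the same circle of nontrivial dimension-free questions. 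None of this affects the paper, which only quotes the stated one-sided inequality, but as a standalone outline your argument does not cover the range $p>2$.
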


\begin{proof} [Proof of Theorem~\ref{thm:l1lpscalar}]
By Khintchine's inequality~\cite{Khi23}, for every function $f:\ms{C}_n\to\C$, we have
\begin{equation} \label{eq:khi00}
\forall \ \e\in\ms{C}_n, \qquad\Big(\sum_{i=1}^n \big(\partial_i f(\e)\big)^2\Big)^{1/2} \asymp_p \Big(\mb{E}\Big|\sum_{i=1}^n \delta_i \partial_if(\e)\Big|^p\Big)^{1/p},
\end{equation}
where the expectation is with respect to $\delta=(\delta_1,\ldots,\delta_n)$ uniformly distributed on $\ms{C}_n$. Therefore, if $F:\ms{C}_n\to L_p(\sigma_n)$ is given by $\big[F(\e)\big](\delta) = \sum_{i=1}^n \delta_i \partial_if(\e)$,~\eqref{eq:lust-piquard},~\eqref{eq:khi00} and Theorem~\ref{thm:naorschechtman} imply that
\begin{equation*}
\begin{split}
\big\|f-\mb{E}_{\sigma_n}f\big\|_{L_p(\sigma_n)} & \stackrel{\eqref{eq:lust-piquard}}{\lesssim_p} \big\| \nabla \Delta^{-1/2} f\big\|_{L_p(\sigma_n)} \stackrel{\eqref{eq:khi00}}{\asymp_p} \big\|\Delta^{-1/2} F\big\|_{L_p(\sigma_n;L_p(\sigma_n))}
\\ & \stackrel{\eqref{eq:thmnaorschechtman}}{\lesssim_p} \frac{\|F\|_{L_p(\sigma_n;L_p(\sigma_n))}}{1+\sqrt{\log\big(\|F\|_{L_p(\sigma_n;L_p(\sigma_n))}/ \|F\|_{L_1(\sigma_n;L_p(\sigma_n))}\big)}}.
\end{split}
\end{equation*}
The conclusion now follows since, again by Khintchine's inequality~\eqref{eq:khi00}, the function $F$ satisfies $\|F\|_{L_p(\sigma_n;L_p(\sigma_n))}\asymp_p \|\nabla f\|_{L_p(\sigma_n)}$ and $\|F\|_{L_1(\sigma_n;L_p(\sigma_n))}\asymp_p \|\nabla f\|_{L_1(\sigma_n)}$.
\end{proof}

\begin{remark} \label{rem:l1l2scalar}
We note in passing that for $p=2$,~\eqref{eq:thml1lpscalar} is a consequence of Talagrand's influence inequality~\eqref{eq:talagrand}. To see this, note that it has been observed in~\cite[Theorem~5.4]{Cha14} that Talagrand's inequality~\eqref{eq:talagrand} along with an application of Jensen's inequality imply that for every $n\in\N$, every $f:\ms{C}_n\to\C$ satisfies
\begin{equation*}
\mathrm{Var}_{\sigma_n}(f) \leq C\frac{\|\nabla f\|_{L_2(\sigma_n)}^2}{1+\log(u(f))},
\end{equation*}
where $u(f) = (\sum_{i=1}^n \|\partial_if\|_{L_2(\sigma_n)}^2)/(\sum_{i=1}^n \|\partial_if\|_{L_1(\sigma_n)}^2)$ and $C\in(0,\infty)$ is a universal constant. Then,~\eqref{eq:thml1lpscalar} for $p=2$ follows by Minkowski's integral inequality, since
\begin{equation*}
u(f) = \frac{\sum_{i=1}^n \|\partial_if\|_{L_2(\sigma_n)}^2}{\sum_{i=1}^n \|\partial_if\|_{L_1(\sigma_n)}^2} \geq \frac{\|\nabla f\|_{L_2(\sigma_n)}^2}{\|\nabla f\|_{L_1(\sigma_n)}^2}.
\end{equation*}
\end{remark}

Using the vector-valued Bakry--Meyer inequality of Theorem~\ref{thm:vectorbakrymeyer} instead of Theorem~\ref{thm:naorschechtman}, one obtains the following Orlicz space strengthening of Theorem~\ref{thm:l1lpscalar}.

\begin{theorem} \label{thm:l1lporlicz}
For every $p\in(1,\infty)$, there exists $C_p\in(0,\infty)$ such that for every $n\in\N$, every $f:\ms{C}_n\to\C$ satisfies
\begin{equation} \label{eq:thml1lporlicz}
\big\| f-\mb{E}_{\sigma_n}f\big\|_{L_p(\sigma_n)} \leq C_p \big\|\nabla f\big\|_{L_p(\log L)^{-p/2}(\sigma_n)}.
\end{equation}
\end{theorem}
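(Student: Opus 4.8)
The plan is to follow the scheme of the proof of Theorem~\ref{thm:l1lpscalar} \emph{verbatim}, the only change being that the appeal to Theorem~\ref{thm:naorschechtman} is replaced by the vector-valued Bakry--Meyer inequality of Theorem~\ref{thm:vectorbakrymeyer}, applied with exponent $\alpha=\tfrac12$. Fix $p\in(1,\infty)$, let $f:\ms{C}_n\to\C$, and introduce the $L_p(\sigma_n)$-valued function $F:\ms{C}_n\to L_p(\sigma_n)$ defined by $[F(\e)](\delta)=\sum_{i=1}^n\delta_i\partial_if(\e)$, where $\delta$ is the variable of the target copy of $L_p(\sigma_n)$. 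Since $L_p(\sigma_n)$ has Rademacher type $\min\{p,2\}>1$, it has nontrivial Rademacher type, so Theorem~\ref{thm:vectorbakrymeyer} is available with $E=L_p(\sigma_n)$.

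First I would invoke Lust-Piquard's Riesz transform inequality (Theorem~\ref{thm:lust-piquard}) which, together with the fact that $\nabla$ annihilates constants, gives $\|f-\mb{E}_{\sigma_n}f\|_{L_p(\sigma_n)}\lesssim_p\|\nabla\Delta^{-1/2}f\|_{L_p(\sigma_n)}$. As $\Delta^{-1/2}$ is a Fourier multiplier in the $\e$-variables that commutes with each $\partial_i$, one has $(\Delta^{-1/2}F)(\e)(\delta)=\sum_i\delta_i\partial_i(\Delta^{-1/2}f)(\e)$, and Khintchine's inequality applied pointwise in $\e$ yields $\|\nabla\Delta^{-1/2}f\|_{L_p(\sigma_n)}\asymp_p\|\Delta^{-1/2}F\|_{L_p(\sigma_n;L_p(\sigma_n))}$. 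Theorem~\ref{thm:vectorbakrymeyer} with $\alpha=\tfrac12$ then bounds the latter by $K_p(\tfrac12)\,\|F\|_{L_p(\log L)^{-p/2}(\sigma_n;L_p(\sigma_n))}$.

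It remains to identify this last Orlicz norm with $\|\nabla f\|_{L_p(\log L)^{-p/2}(\sigma_n)}$. By Khintchine's inequality, pointwise in $\e\in\ms{C}_n$ we have $\|F(\e)\|_{L_p(\sigma_n)}=\big(\mb{E}_\delta|\sum_i\delta_i\partial_if(\e)|^p\big)^{1/p}\asymp_p\big(\sum_i(\partial_if(\e))^2\big)^{1/2}$. Since the $E$-valued Luxemburg norm of $F$ depends only on the scalar function $\e\mapsto\|F(\e)\|_{L_p(\sigma_n)}$, and is positively homogeneous and monotone in it, the comparison constant transfers to the Orlicz level, so $\|F\|_{L_p(\log L)^{-p/2}(\sigma_n;L_p(\sigma_n))}\asymp_p\|\nabla f\|_{L_p(\log L)^{-p/2}(\sigma_n)}$, where the right-hand side is understood as in~\eqref{eq:defnormgrad} with the $L_p$ norm replaced by the $L_p(\log L)^{-p/2}$ norm. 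Chaining the displayed estimates yields~\eqref{eq:thml1lporlicz} with $C_p=C_p(\C)$ depending only on $p$.

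I do not expect any real obstacle here: all of the analytic depth is absorbed into Theorems~\ref{thm:lust-piquard} and~\ref{thm:vectorbakrymeyer}. The only point demanding a modicum of care is the final transfer of Khintchine's pointwise comparison to Orlicz norms, which must be justified from the definition of the Luxemburg norm (using that $\psi_{p,-p/2}$ is a genuine Young function, hence nondecreasing) rather than from homogeneity of a power function, but this is entirely routine.
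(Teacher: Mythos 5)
Your proposal is correct and is precisely the argument the paper has in mind: the text preceding Theorem~\ref{thm:l1lporlicz} states that it follows by running the proof of Theorem~\ref{thm:l1lpscalar} with Theorem~\ref{thm:naorschechtman} replaced by Theorem~\ref{thm:vectorbakrymeyer} (with $\alpha=\tfrac12$ and $E=L_p(\sigma_n)$). The small point you flag at the end—transferring Khintchine's pointwise comparison $\|F(\e)\|_{L_p(\sigma_n)}\asymp_p\big(\sum_i(\partial_if(\e))^2\big)^{1/2}$ to the level of Luxemburg norms via monotonicity of the Young function—is exactly the right thing to check, and is routine as you say.
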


\section{Holomorphic multipliers and the vector-valued Bakry--Meyer theorem} \label{sec:6}

In this section, we will present the proofs of Theorems~\ref{thm:naorschechtman},~\ref{thm:vectorbakrymeyer} and~\ref{thm:multiplier}. In the proof of Theorem~\ref{thm:multiplier}, we will need some preliminary terminology from discrete Fourier analysis. Recall that for every Banach space $(E,\|\cdot\|_E)$ and every $n\in\N$, all functions $f:\ms{C}_n\to E$ admit a unique expansion of the form
\begin{equation*}
\forall \ \e\in\ms{C}_n, \qquad f(\e) = \sum_{A\subseteq\{1,\ldots,n\}} \widehat{f}(A) w_A(\e),
\end{equation*}
where the Walsh function $w_A:\ms{C}_n\to\{-1,1\}$ is given by $w_A(\e) = \prod_{i\in A}\e_i$ for $\e\in\ms{C}_n$. In this basis, the action of the hypercube Laplacian on $f$ can be written as
\begin{equation*}
\Delta f = \sum_{A\subseteq\{1,\ldots,n\}} |A|\widehat{f}(A) w_A.
\end{equation*}
Suppose now that $r\in(0,\infty)$ and that $h:(0,r)\to\C$ is a function. Then, for every $\alpha\in(0,\infty)$, the operator $h(\Delta^{-\alpha})$ is defined spectrally by
\begin{equation}\label{eq:defoperator}
h\big(\Delta^{-\alpha}\big) \eqdef \sum_{\substack{A\subseteq\{1,\ldots,n\} \\ |A|> r^{-1/\alpha}}} h\big(|A|^{-\alpha}\big) \widehat{f}(A) w_A.
\end{equation}
Finally, for a function $f:\ms{C}_n\to E$ and $k\in\{0,1,\ldots,n\}$ we will define the $k$-th level Rademacher projection of $f$ to be the function with Walsh expansion
\begin{equation*}
\msf{Rad}_k f \eqdef \sum_{\substack{A\subseteq\{1,\ldots,n\} \\ |A|=k}} \widehat{f}(A) w_A.
\end{equation*}
Pisier's deep $K$-convexity theorem~\cite{Pis82} asserts that a Banach space $(E,\|\cdot\|_E)$ has nontrivial Rademacher type if and only if for every $p\in(1,\infty)$, there exist $M_p=M_p(E)\in(0,\infty)$ such that for every $n\in\N$ and $k\in\{1,\ldots,n\}$, every $f:\ms{C}_n\to E$ satisfies $\|\msf{Rad}_kf\|_{L_p(\sigma_n;E)} \leq M_p^k \|f\|_{L_p(\sigma_n;E)}$.


\subsection{Proof of Theorem~\ref{thm:naorschechtman}} Although Theorem~\ref{thm:naorschechtman} is a formal consequence of Theorem~\ref{thm:vectorbakrymeyer} and Lemma~\ref{lem:orlicz-upper}, we present a short self-contained proof.

\begin{proof}[Proof of Theorem~\ref{thm:naorschechtman}]
Since $P_t = e^{-t\Delta}$, we can express the action of $\Delta^{-\alpha}$ on functions with expectation equal to 0 as
\begin{equation} \label{eq:Delta-a}
\Delta^{-\alpha} = \frac{1}{\Gamma(\alpha)} \int_0^\infty P_t\ \frac{\diff t}{t^{1-\alpha}}.
\end{equation}
Hence, every function $f:\ms{C}_n\to E$ with $\mb{E}_{\sigma_n}f=0$ satisfies
\begin{equation} \label{eq:nstrick}
\big\|\Delta^{-\alpha}f\big\|_{L_p(\sigma_n;E)} \leq \frac{1}{\Gamma(\alpha)} \int_0^\infty \|P_tf\|_{L_p(\sigma_n;E)} \frac{\diff t}{t^{1-\alpha}}.
\end{equation}
If $E$ has nontrivial type, it is a standard consequence of Pisier's $K$-convexity theorem~\cite{Pis82} that there exist $K_p=K_p(E)\in(0,\infty)$ and $\eta_p=\eta_p\in\big(0,\tfrac{1}{2}\big]$, independent of $n$ and $f$, such that
\begin{equation} \label{eq:decayKconv}
\mb{E}_{\sigma_n}f=0 \quad\Longrightarrow \quad \forall \ t\geq0, \quad\|P_{t}f\|_{L_p(\sigma_n;E)} \leq K_pe^{-2\eta_pt}\|f\|_{L_p(\sigma_n;E)}.
\end{equation}
Combining~\eqref{eq:nstrick} and~\eqref{eq:decayKconv}, we deduce that
\begin{equation*}
\big\|\Delta^{-\alpha}f\big\|_{L_p(\sigma_n;E)} \leq \frac{K_p}{\Gamma(\alpha)} \int_0^\infty e^{-\eta_p t} \|P_{t/2}f\|_{L_p(\sigma_n;E)} \frac{\diff t}{t^{1-\alpha}}
\end{equation*}
and the conclusion follows by hypercontractivity~\cite{Bon70} and Lemma~\ref{lem:log-nopower}.
\end{proof}

\subsection{Proof of Theorem~\ref{thm:multiplier}} The proof of Theorem~\ref{thm:multiplier} relies on the following result of Mendel and Naor from~\cite{MN14} (see also~\cite{EI19} for a different proof and further results in this direction).

\begin{theorem} [Mendel--Naor]
Let $(E,\|\cdot\|_E)$ be a Banach space of nontrivial type and $p\in(1,\infty)$. Then, there exist $c_p=c_p(E), C_p=C_p(E)\in(0,\infty)$ and $A_p=A_p(E)\in[1,\infty)$ such that for every $n\in\N$ and $d\in\{1,\ldots,n\}$, the following holds. Every function $f:\ms{C}_n\to E$ whose Fourier coefficients $\widehat{f}(A)$ vanish for all subsets $A\subseteq\{1,\ldots,n\}$ with $|A|<d$ satisfies
\begin{equation} \label{eq:mendel-naor}
\|P_tf\|_{L_p(\sigma_n;E)} \leq C_p e^{-c_pd\min\{t,t^{A_p}\}} \|f\|_{L_p(\sigma_n;E)}.
\end{equation}
\end{theorem}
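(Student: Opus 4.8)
This is the theorem of Mendel and Naor from~\cite{MN14} (a streamlined proof is in~\cite{EI19}), so the plan is really to invoke it; for completeness I outline how one proves it. Since $E$ has nontrivial Rademacher type, Pisier's $K$-convexity theorem~\cite{Pis82} provides a constant $M=M_p(E)\in[2,\infty)$ (enlarging it if necessary) such that $\|\msf{Rad}_kg\|_{L_p(\sigma_n;E)}\leq M^k\|g\|_{L_p(\sigma_n;E)}$ for all $n,k\in\N$ and all $g:\ms{C}_n\to E$. Denote by $V_d$ the tail space of functions whose Fourier coefficients vanish on every $A$ with $|A|<d$. Since $P_tw_A=e^{-t|A|}w_A$, every $f\in V_d$ satisfies $P_tf=\sum_{k\geq d}e^{-tk}\msf{Rad}_kf$, and $P_t$ is a contraction of $L_p(\sigma_n;E)$.

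\emph{Large time.} For $t>\log M$, the triangle inequality together with $K$-convexity gives
\[
\|P_tf\|_{L_p(\sigma_n;E)}\leq\sum_{k\geq d}e^{-tk}M^k\,\|f\|_{L_p(\sigma_n;E)}=\frac{(Me^{-t})^d}{1-Me^{-t}}\,\|f\|_{L_p(\sigma_n;E)}.
\]
Choosing $\tau_0:=2\log(2M)$ (so that $Me^{-\tau_0}\leq\frac14$ and $t-\log M\geq\frac t2$ once $t\geq\tau_0$) one gets $\|P_tf\|_{L_p(\sigma_n;E)}\leq 2e^{-td/2}\|f\|_{L_p(\sigma_n;E)}$ for all $t\geq\tau_0$; this is already~\eqref{eq:mendel-naor} on $[\tau_0,\infty)$, where $\min\{t,t^{A_p}\}=t$ for any $A_p\geq1$.

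\emph{Small and intermediate time} $t\in(0,\tau_0)$ is the crux. Here the geometric series above diverges as soon as $e^{-t}\geq M^{-1}$, so a genuinely different tool is needed: the Bernstein--Markov inequality for $E$-valued polynomials on the cube, i.e.\ the existence of $A_p=A_p(E)\in[1,\infty)$ and $K_p=K_p(E)\in(0,\infty)$ with $\|\Delta g\|_{L_p(\sigma_n;E)}\leq K_pD^{A_p}\|g\|_{L_p(\sigma_n;E)}$ for every $g:\ms{C}_n\to E$ of degree at most $D$, established in~\cite{MN14} and revisited in~\cite{EI19}. Combining this inequality with the semigroup structure and the large-time estimate of the previous step — roughly, using it at intermediate times $s\asymp t$ to control $P_sf$ on the degree window $D\asymp d/t$ where its mass concentrates, and absorbing the remaining high-frequency contribution by contractivity and the large-time bound — is the technical heart of~\cite{MN14} and yields $\|P_tf\|_{L_p(\sigma_n;E)}\leq C_pe^{-c_pdt^{A_p}}\|f\|_{L_p(\sigma_n;E)}$ for $t\in(0,\tau_0)$. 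Together with the large-time bound, and noting that whenever $dt^{A_p}\lesssim1$ the asserted bound is $\gtrsim e^{-c_p}$ and holds trivially by contractivity once $C_p$ is large, this gives~\eqref{eq:mendel-naor} for all $t>0$ after adjusting $c_p$ and $C_p$.

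The main obstacle is precisely this small-$t$ regime: the soft $K$-convexity estimate is powerless once $t\leq\log M$, and one must appeal to the vector-valued Bernstein--Markov inequality, whose dependence on the degree is in general non-optimal and is exactly what forces $A_p>1$ for non-Hilbertian $K$-convex targets; when $E$ is a Hilbert space one has $A_p=1$ and $\|P_tf\|_{L_2(\sigma_n;E)}\leq e^{-td}\|f\|_{L_2(\sigma_n;E)}$ is immediate from orthogonality.
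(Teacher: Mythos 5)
The paper does not prove this statement: it is quoted as an external theorem of Mendel and Naor, with \cite{MN14} (and the alternative treatment in \cite{EI19}) cited for the proof, and your proposal does exactly the same, so it matches the paper's approach. Your accompanying sketch is fine as a heuristic --- the large-time bound via Pisier's $K$-convexity theorem and a geometric series is accurate, and the small-time regime is, as you acknowledge, precisely the part delegated to the cited works.
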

Using identity~\eqref{eq:Delta-a} and~\eqref{eq:mendel-naor}, we see that every such function $f:\ms{C}_n\to E$ satisfies
\begin{equation} \label{eq:laplace-tail-space}
\frac{\|\Delta^{-\alpha}f\|_{L_p(\sigma_n;E)}}{\|f\|_{L_p(\sigma_n;E)}} \leq \frac{C_p}{\Gamma(\alpha)} \int_0^1 e^{-c_p d t^{A_p}} \frac{\diff t}{t^{1-\alpha}} +  \frac{C_p}{\Gamma(\alpha)} \int_1^\infty e^{-c_p d t} \frac{\diff t}{t^{1-\alpha}} \leq \frac{K_p(\alpha)}{d^{\alpha/A_p}},
\end{equation}
for some $K_p(\alpha)=K_p(\alpha,E)\in(0,\infty)$.

\begin{proof} [Proof of Theorem~\ref{thm:multiplier}]
Let $d_p(\alpha) = \left\lceil (2K_p(\alpha)/r)^{A_p/\alpha}\right\rceil$, where $K_p(\alpha)$ is the same as in~\eqref{eq:laplace-tail-space}, so that every function $f:\ms{C}_n\to E$ whose Fourier coefficients $\widehat{f}(A)$ vanish for all subsets $A\subseteq\{1,\ldots,n\}$ with $|A|<d_p(\alpha)$ satisfies $\|\Delta^{-\alpha}f\|_{L_p(\sigma_n;E)} \leq \tfrac{r}{2} \|f\|_{L_p(\sigma_n;E)}$. Iterating this inequality, we get
\begin{equation} \label{eq:tensorize-mendel-naor}
\forall \ \ell\geq1, \qquad\|\Delta^{-\alpha \ell}f\|_{L_p(\sigma_n;E)} \leq \Big(\frac{r}{2}\Big)^\ell \|f\|_{L_p(\sigma_n;E)}
\end{equation}
for every such function $f$.

Now, let $f:\ms{C}_n\to E$ be an arbitrary function and write
\begin{equation*}
\forall \ \e\in\ms{C}_n, \qquad  f(\e) = \underbrace{\sum_{k=0}^{d_p(\alpha)-1} \msf{Rad}_kf(\e)}_{f_1(\e)} + \underbrace{\sum_{k=d_p(\alpha)}^{n} \msf{Rad}_kf(\e)}_{f_2(\e)}.
\end{equation*}
By Pisier's $K$-convexity theorem~\cite{Pis82}, we have
\begin{equation} \label{eq:meyer11}
\begin{split}
\big\|h\big(\Delta^{-\alpha}\big)f_1\big\|_{L_p(\sigma_n;E)} \leq  \sum_{k=\lfloor r^{-1/\alpha}\rfloor+1}^{d_p(\alpha)-1} &\big|h(k^{-\alpha})\big| \|\msf{Rad}_kf\|_{L_p(\sigma_n;E)} 
\\ & \leq \bigg( \sum_{k=\lfloor r^{-1/\alpha}\rfloor+1}^{d_p(\alpha)-1}\big|h(k^{-\alpha})\big| M_p^k\bigg) \|f\|_{L_p(\sigma_n;E)}
\end{split},
\end{equation}
for some $M_p=M_p(E)\in(0,\infty)$. To bound the action of $h(\Delta^{-\alpha})$ on $f_2$, consider the power series expansion $h(z)=\sum_{\ell\geq0} c_\ell z^\ell$ of $h$ around 0, which converges absolutely and uniformly on $\overline{\mb{D}}_{r/2}$. Then, the triangle inequality implies that
\begin{equation} \label{eq:meyer22}
\big\|h\big(\Delta^{-\alpha}\big)f_2\big\|_{L_p(\sigma_n;E)} \leq \sum_{\ell\geq0} |c_\ell| \big\| \Delta^{-\alpha\ell} f_2\big\|_{L_p(\sigma_n;E)} \stackrel{\eqref{eq:tensorize-mendel-naor}}{\leq}  \bigg( \sum_{\ell\geq0} |c_\ell|\Big(\frac{r}{2}\Big)^\ell \bigg) \|f_2\|_{L_p(\sigma_n;E)}.
\end{equation}
Finally, observe that, again by Pisier's $K$-convexity theorem,
\begin{equation} \label{eq:meyer33}
\begin{split}
\|f_2\|_{L_p(\sigma_;E)} = \|f-f_1\|_{L_p(\sigma_n;E)} \leq \|f\|_{L_p(\sigma_n;E)} & + \sum_{k=0}^{d_p(\alpha)-1} \|\msf{Rad}_kf\|_{L_p(\sigma_n;E)} 
\\ & \leq \bigg( 1+\sum_{k=0}^{d_p(\alpha)-1} M_p^k\bigg) \|f\|_{L_p(\sigma_n;E)},
\end{split}
\end{equation}
for some $M_p=M_p(E)\in(0,\infty)$. The conclusion follows readily from~\eqref{eq:meyer11},~\eqref{eq:meyer22} and~\eqref{eq:meyer33}. 
\end{proof}

\subsection{Proof of Theorem~\ref{thm:vectorbakrymeyer}} Equipped with Theorem~\ref{thm:multiplier}, we can now deduce Theorem~\ref{thm:vectorbakrymeyer} from~\eqref{eq:bakrymeyer}. We will also need the following simple lemma.

\begin{lemma} \label{lem:Delta+1}
For every Banach space $(E,\|\cdot\|_E)$, every function $f:\ms{C}_n\to E$ and every $\alpha\in(0,\infty)$,
\begin{equation} \label{eq:lemDelta+1}
\forall \ \e\in\ms{C}_n, \qquad\big\| (\Delta+1)^{-\alpha}f(\e)\big\|_E \leq \big[(\Delta+1)^{-\alpha}\|f\|_E\big](\e).
\end{equation}
\end{lemma}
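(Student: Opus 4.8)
The plan is to reduce the statement to the analogous (and elementary) pointwise domination property of the heat semigroup $\{P_t\}_{t\geq0}$ via the subordination formula for negative powers. Since $\Delta+1$ acts on the Walsh function $w_A$ as multiplication by $|A|+1\in\{1,\ldots,n+1\}$, its spectrum is contained in $(0,\infty)$, and the classical identity $\lambda^{-\alpha}=\frac{1}{\Gamma(\alpha)}\int_0^\infty e^{-\lambda t}t^{\alpha-1}\diff t$ for $\lambda>0$ yields, spectrally,
\begin{equation*}
(\Delta+1)^{-\alpha} = \frac{1}{\Gamma(\alpha)}\int_0^\infty e^{-t}P_t\ \frac{\diff t}{t^{1-\alpha}},
\end{equation*}
in complete analogy with~\eqref{eq:Delta-a}; the integral converges since $\|P_t\|_{L_\infty\to L_\infty}\leq1$ and $\int_0^\infty e^{-t}t^{\alpha-1}\diff t<\infty$.

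The next step is to record the pointwise triangle inequality for $P_t$. Recall from the proof of Proposition~\ref{prop:extendivv} that $P_tf(x)=\sum_{\xi\in\ms{C}_n}\omega_t(\xi)f(x\xi)$ with $\omega_t(\xi)=2^{-n}\prod_{i=1}^n(1+e^{-t}\xi_i)\geq0$ and $\sum_{\xi\in\ms{C}_n}\omega_t(\xi)=1$, i.e.\ $P_tf(x)$ is a convex combination of the values of $f$. Hence for every $x\in\ms{C}_n$,
\begin{equation*}
\big\|P_tf(x)\big\|_E \leq \sum_{\xi\in\ms{C}_n}\omega_t(\xi)\big\|f(x\xi)\big\|_E = \big[P_t\|f\|_E\big](x).
\end{equation*}

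Finally, combining the two displays and using the triangle inequality for the Bochner integral,
\begin{equation*}
\big\|(\Delta+1)^{-\alpha}f(\e)\big\|_E \leq \frac{1}{\Gamma(\alpha)}\int_0^\infty e^{-t}\big\|P_tf(\e)\big\|_E\ \frac{\diff t}{t^{1-\alpha}} \leq \frac{1}{\Gamma(\alpha)}\int_0^\infty e^{-t}\big[P_t\|f\|_E\big](\e)\ \frac{\diff t}{t^{1-\alpha}} = \big[(\Delta+1)^{-\alpha}\|f\|_E\big](\e),
\end{equation*}
which is precisely~\eqref{eq:lemDelta+1}. There is no serious obstacle here: the only points to be slightly careful about are justifying the scalar subordination identity at the level of the operator (immediate since everything is finite-dimensional and diagonal in the Walsh basis) and the interchange of norm and integral (valid because the integrand is Bochner-integrable, again by the uniform bound on $P_t$). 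One could alternatively avoid the integral representation entirely and argue directly from the power-series expansion of $(\Delta+1)^{-\alpha}=\sum_{\ell\geq0}\binom{-\alpha}{\ell}\Delta^\ell$ near a dominant eigenvalue, but the subordination argument above is cleaner and mirrors the other proofs in this section.
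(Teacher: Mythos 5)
Your proof is correct and essentially identical to the paper's: both use the subordination formula $(\Delta+1)^{-\alpha}=\frac{1}{\Gamma(\alpha)}\int_0^\infty e^{-t}P_t\,t^{\alpha-1}\diff t$, then the triangle inequality for the Bochner integral, then the fact that $P_t$ is an averaging operator (the paper cites Jensen's inequality; you spell out the convex-combination form, which is the same thing). No meaningful difference.
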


\begin{proof}
A change of variables shows that
\begin{equation} \label{eq:Delta+1}
\big(\Delta+1\big)^{-\alpha} = \frac{1}{\Gamma(\alpha)} \int_0^\infty e^{-t} P_t \ \frac{\diff t}{t^{1-\alpha}},
\end{equation}
so that for every $\e\in\ms{C}_n$, we have
\begin{equation*}
\begin{split}
\big\| (\Delta+1)^{-\alpha}f(\e)\big\|_E & \leq \frac{1}{\Gamma(\alpha)} \int_0^\infty e^{-t} \big\|P_tf(\e)\big\|_E \frac{\diff t}{t^{1-\alpha}}
 \\ & \leq \frac{1}{\Gamma(\alpha)}\int_0^\infty e^{-t} \big[P_t\|f\|_E\big](\e) \frac{\diff t}{t^{1-\alpha}} = \big[(\Delta+1)^{-\alpha}\|f\|_E\big](\e),
\end{split}
\end{equation*}
where the second inequality follows from Jensen's inequality because $P_t$ is an averaging operator.
\end{proof}

\begin{proof} [Proof of Theorems~\ref{thm:naorschechtman} and~\ref{thm:vectorbakrymeyer}]
Let $\phi,\psi:\mb{D}_1\to \C$ be two holomorphic branches of
\begin{equation*}
\forall \ z\in\mb{D}_1, \qquad\phi(z) = (1+z)^\alpha \ \ \ \mbox{and} \ \ \ \psi(z)=(1+z)^{-\alpha}
\end{equation*}
on $\mb{D}_1$. Then, by Theorem~\ref{thm:multiplier}, the operators $\phi(\Delta^{-1})$ and $\psi(\Delta^{-1})$ are bounded on $L_p(\sigma_n;E)$, where $p\in(1,\infty)$, with operator norms independent of $n$. In other words, there exist constants $\lambda_p(\alpha,E), \Lambda_p(\alpha,E)\in(0,\infty)$ such that for every $n\in\N$, every function $f:\ms{C}_n\to E$ satisfies
\begin{equation} \label{eq:lambdas}
\lambda_p(\alpha,E) \|\Delta^{-\alpha}f\|_{L_p(\sigma_n;E)} \leq \big\|(\Delta+1)^{-\alpha}f\big\|_{L_p(\sigma_n;E)} \leq \Lambda_p(\alpha,E) \|\Delta^{-\alpha}f\|_{L_p(\sigma_n;E)}.
\end{equation}
Combining~\eqref{eq:lambdas} with Lemma~\ref{lem:Delta+1} and the\mbox{ inequality~\eqref{eq:bakrymeyer} of Bakry and Meyer~\cite{BM82}, we get}
\begin{equation*}
\begin{split}
\|\Delta^{-\alpha}f\|_{L_p(\sigma_n;E)}& \stackrel{\eqref{eq:lambdas}}{\leq} \lambda_p(\alpha,E)^{-1} \big\|(\Delta+1)^{-\alpha}f\big\|_{L_p(\sigma_n;E)} \stackrel{\eqref{eq:lemDelta+1}}{\leq} \lambda_p(\alpha,E)^{-1} \big\|(\Delta+1)^{-\alpha}\|f\|_E\big\|_{L_p(\sigma_n)}
\\ & \stackrel{\eqref{eq:lambdas}}{\leq} \frac{\Lambda_p(\alpha,\C)}{\lambda_p(\alpha,E)}  \big\|\Delta^{-\alpha}\|f\|_E\big\|_{L_p(\sigma_n)} \stackrel{\eqref{eq:bakrymeyer}}{\leq}  \frac{\Lambda_p(\alpha,\C)K_p(\alpha)}{\lambda_p(\alpha,E)} \|f\|_{L_p(\log L)^{-p\alpha}(\sigma_n;E)},
\end{split}
\end{equation*}
for some $K_p(\alpha)\in(0,\infty)$ and the conclusion of Theorem~\ref{thm:vectorbakrymeyer} follows.
\end{proof}

\section{Influence inequalities in nonpositive curvature} \label{sec:gromov}

Theorems~\ref{thm:gromov} and~\ref{thm:pinched} will be proven  by combining Theorem~\ref{thm:useivvorlicz} with results from geometry and Banach space theory. We first prove Theorem~\ref{thm:gromov}.

\begin{proof} [Proof of Theorem~\ref{thm:gromov}]
It immediately follows from definition~\eqref{eq:deftalagrand} that if a metric space $\MM$ has Talagrand type $(p,\psi)$ with constant $\tau\in(0,\infty)$ and another metric space $\NN$ embeds bi-Lipscitzly in $\MM$ with distortion $D\in[1,\infty)$, then $\NN$ has Talagrand type $(p,\psi)$ with constant $\tau D$. Let $\msf{G}$ be a Gromov hyperbolic group equipped with the shortest path metric $d_\msf{G}$ associated to the Cayley graph of any (finite) generating set $\msf{S}$. Then, by a theorem of Ostrovskii~\cite{Ost14}, $(\msf{G},d_\msf{G})$ admits a bi-Lipschitz embedding of bounded distortion into any nonsuperreflexive Banach space. In particular, $(\msf{G},d_\msf{G})$ embeds bi-Lipschitzly in the classical exotic Banach space $(\mb{J},\|\cdot\|_\mb{J})$ of James~\cite{Jam78}, which has Rademacher type 2 yet is not superreflexive. By Theorem~\ref{thm:useivvorlicz}, there exists a universal constant $C\in(0,\infty)$ such that for every $\e\in(0,1)$, $(\mb{J},\|\cdot\|_\mb{J})$ has Talagrand type $(2,\psi_{2,1-\e})$ with constant $C/\sqrt{\e}$ and thus the same holds true for the group $(\msf{G},d_\msf{G})$.
\end{proof}

The binary $\mathbb{R}$-tree of depth $d$ is the geodesic metric space which is obtained by replacing every edge of the combinatorial binary tree of depth $d$ by the interval $[0,1]$. In order to prove Theorem~\ref{thm:pinched}, we will need the following structural result for Riemannian manifolds of pinched negative curvature which is essentially due to Naor, Peres, Schramm and Sheffield~\cite{NPSS06}.

\begin{theorem} \label{thm:npss}
Fix $n\in\N$ and $r,R\in(0,\infty)$ with $r<R$. Then, there exists $N\in\N$ and $D\in(0,\infty)$ such that any $n$-dimensional complete simply connected Riemannian manifold $(\msf{M},g)$ with sectional curvature in $[-R,-r]$ embeds bi-Lipschitzly with distortion at most $D$ in a product \mbox{of $N$ binary $\R$-trees of infinite depth.}
\end{theorem}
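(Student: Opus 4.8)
The plan is to reduce Theorem~\ref{thm:npss} to a product‑of‑trees embedding result of the type established by Naor, Peres, Schramm and Sheffield~\cite{NPSS06}, by verifying its two hypotheses for a pinched‑curvature manifold. \emph{First}, I would record that $(\msf{M},d_\msf{M})$ is Gromov $\delta$‑hyperbolic with $\delta$ depending only on $r$: since $\msf{M}$ is complete, simply connected and has sectional curvature $\le -r<0$, the Cartan--Hadamard theorem makes $\exp_x\colon\R^n\to\msf{M}$ a diffeomorphism for every $x$ and $(\msf{M},d_\msf{M})$ a $\mr{CAT}(-r)$ space; rescaling the metric by $\sqrt{r}$ reduces to the $\mr{CAT}(-1)$ case, which is $\delta$‑hyperbolic with a universal $\delta$. \emph{Second}, I would establish bounded geometry at a definite scale: there is $\rho_0=\rho_0(n,r,R)\in(0,1]$ such that for every $x\in\msf{M}$ the exponential map restricts to a $2$‑bi‑Lipschitz bijection $B_{\R^n}(0,\rho_0)\to B_\msf{M}(x,\rho_0)$. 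This follows from the Rauch comparison theorem, which, using both bounds $-R\le\mr{sec}\le-r$, controls the singular values of $d\exp_x$ from above and below on $B_{\R^n}(0,\rho_0)$ by constants depending only on $n,r,R$; shrinking $\rho_0$ makes those constants as close to $1$ as needed and makes the metric that the ball inherits from $\msf{M}$ agree with its intrinsic length metric up to a factor $2$.

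With these two inputs, I would invoke (and, to the extent it is not literally stated there, reconstruct from the method of~\cite{NPSS06}) the assertion that a geodesic Gromov hyperbolic space of bounded geometry embeds bi‑Lipschitzly, with distortion $D=D(n,r,R)$, into a product of $N=N(n,r,R)$ binary $\R$‑trees of infinite depth; applied to $\msf{M}$ this is exactly Theorem~\ref{thm:npss}. The construction concatenates two maps. A \emph{large‑scale} embedding exploits hyperbolicity: from a $\rho_0$‑net of $\msf{M}$ and its hyperbolic approximation, using that $\partial_\infty\msf{M}$ is doubling with a constant controlled by the pinching $-R\le\mr{sec}\le-r$, one embeds $\msf{M}$ into a product of finitely many $\R$‑trees up to a multiplicative factor and an additive error $O(\rho_0)$, which is harmless for pairs at distance $\gtrsim\rho_0$. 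A \emph{small‑scale} embedding exploits bounded geometry: a bounded‑multiplicity cover of $\msf{M}$ by $\rho_0$‑balls and a subordinate Lipschitz partition of unity give a Lipschitz map which on each such ball is bi‑Lipschitz to a subset of $(\R^n,\|\cdot\|_2)$; since $(\R^n,\|\cdot\|_2)$ is bi‑Lipschitz to a product of $n$ copies of the real line, and the real line embeds isometrically as a bi‑infinite geodesic through the root of a binary $\R$‑tree of infinite depth, this contributes $n$ more trees and recovers all distances $\lesssim\rho_0$. Because $d_\msf{M}(x,y)$ is comparable, up to constants depending only on $n,r,R$, to the maximum of its large‑scale and small‑scale counterparts, the product map into the $N$‑fold product is bi‑Lipschitz with the claimed distortion.

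The two comparison‑geometry facts in the first paragraph are routine. The essential content — and the step I expect to be the main obstacle — is the product‑of‑trees embedding of a bounded‑geometry Gromov hyperbolic space; this is the deep part, which I would import essentially verbatim from~\cite{NPSS06} together with the circle of ideas on embedding hyperbolic spaces into finite products of trees. The one point that genuinely needs attention beyond citing a black box is the gluing of scales: the net, the cover and the transition parameter must all be set at the single scale $\rho_0$ so that the large‑scale (tree‑like, accurate for $d\gtrsim\rho_0$ but only up to an additive error) and small‑scale (Euclidean‑like, accurate for $d\lesssim\rho_0$) metrics combine into a genuinely bi‑Lipschitz copy of $d_\msf{M}$ with no surviving additive error. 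This is the manifold‑specific instance of the corresponding lemma of~\cite{NPSS06}, with the pinched manifold and its scale $\rho_0$ playing the role of the abstract hyperbolic space, which is why ``essentially due to'' rather than ``due to'' is the appropriate attribution.
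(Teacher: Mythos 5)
Your overall route is the same as the paper's: the paper proves Theorem~\ref{thm:npss} by exactly your two comparison-geometry reductions (Gromov hyperbolicity and uniformly bi-Lipschitz Euclidean charts at a definite scale, both from the pinching via Cartan--Hadamard and Rauch) followed by the large-scale/small-scale scheme of \cite{NPSS06}. The genuine gap is in the step you propose to import as a black box. What \cite[Corollary~6.5]{NPSS06} actually provides is a bi-Lipschitz embedding into a finite product of $\R$-trees of \emph{infinite degree}; the binary statement is strictly stronger, and it is precisely the content of Theorem~\ref{thm:npss}. It also does not fall out of ``the method'' as you reconstructed it: the large-scale factors produced by the hyperbolic-approximation/\cite{BS05}-type construction have unbounded branching, and unbounded branching is a real obstruction rather than a cosmetic one, since an $\R$-tree with an infinite-degree branch point admits no bi-Lipschitz embedding into any finite product of binary $\R$-trees (in such a product every ball of radius $R$ contains only finitely many $\e$-separated points, whereas an infinite star has infinitely many $2$-separated points inside a ball of radius $1$). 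Your small-scale factors (geodesic lines inside binary trees) are unproblematic; the unresolved issue is the binarity of the large-scale trees, which your sketch never addresses.

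The paper closes this gap by a specific substitution: after the Bonk--Schramm step, which places $(\msf{M},d_\msf{M})$ quasi-isometrically inside some $\mb{H}^m$ (this is where hyperbolicity together with the bounded local geometry is used), it replaces the use of \cite{BS05} by the theorem of Dranishnikov and Schroeder \cite{DS05} that $\mb{H}^m$ admits a quasi-isometric embedding into a finite product of binary $\R$-trees of infinite depth, and then repeats the argument of \cite{NPSS06} verbatim. If you prefer to bypass the detour through $\mb{H}^m$ and run the hyperbolic approximation directly on $\msf{M}$, you must add an argument that the trees can be taken binary --- for instance, that the doubling of $\partial_\infty\msf{M}$ (controlled by $n,r,R$) forces uniformly bounded branching, followed by a degree-reduction embedding of bounded-degree trees into binary ones with bounded distortion. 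As written, neither your citation nor your reconstruction supplies the binarity, and it matters downstream: Proposition~\ref{prop:bourgain} and the superreflexivity applications in the paper are specifically about binary trees.
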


In~\cite[Corollary~6.5]{NPSS06}, the authors proved an analogue of Theorem~\ref{thm:npss}, in which binary $\R$-trees are replaced by $\R$-trees of infinite degree. In order to prove the (stronger) theorem presented here, one needs to repeat the argument of \cite{NPSS06} verbatim, replacing the use of \cite{BS05} with a more recent result of Dranishnikov and Schroeder \cite{DS05}, who showed that the hyperbolic space $\mb{H}^m$ admits a quasi-isometric in a finite product of binary $\R$-trees of infinite depth.

We shall also need the following slight refinement of a result of Bourgain~\cite{Bou86}.

\begin{proposition} \label{prop:bourgain}
Let $(E,\|\cdot\|_E)$ be a nonsuperreflexive Banach space. For every $d\in\N$, the binary $\R$-tree of depth $d$ embeds in $E$ with distortion at most 4.
\end{proposition}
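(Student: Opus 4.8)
The plan is to run Bourgain's binary-tree embedding argument with the distortion constant tracked explicitly, and to build the embedding of the $\R$-tree directly (no limiting procedure), so that it lands in $E$ itself. The one external input is James' quantitative characterization of superreflexivity: since $(E,\|\cdot\|_E)$ is not superreflexive, for every $\theta\in(0,1)$ and every $N\in\N$ there are vectors $x_1,\dots,x_N$ in the unit ball of $E$ and functionals $x_1^\ast,\dots,x_N^\ast$ in the unit ball of $E^\ast$ with $x_i^\ast(x_j)=\theta$ for $i\le j$ and $x_i^\ast(x_j)=0$ for $i>j$. I would fix $\theta$ close to $1$, say $\theta\in(3/4,1)$, and use these as building blocks, exploiting the trivial bound $\big\|\sum_i a_ix_i\big\|_E\le\sum_i|a_i|$ together with the interval estimates obtained by testing $\sum_i a_ix_i$ against differences $x_i^\ast-x_j^\ast$.

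Next I would describe the embedding. Let $N=2^d$ and order the leaves of the combinatorial complete binary tree $B_d$ of depth $d$ from left to right, so that every vertex $v$ of depth $k$ governs a block $I_v$ of $2^{d-k}$ consecutive leaf indices; for $v$ on the root path $\emptyset=v_0,v_1,\dots,v_k=v$ put $\phi(v):=\sum_{j=1}^{k}b_{v_j}$, where $b_u:=|I_u|^{-1}\sum_{i\in I_u}x_i$ is the block average. Extend $\phi$ affinely along each edge to a map $\bar\phi$ on the $\R$-tree $T_d$. Since $\theta\le\|b_u\|_E\le1$ (the lower bound from testing $b_u$ against $x_{\min I_u}^\ast$), each $b_\cdot$ has norm at most $1$ and $\bar\phi$ is $1$-Lipschitz on every edge, hence $1$-Lipschitz on the tree $T_d$; moreover one checks that $\big\|\bar\phi(p)-\bar\phi(q)\big\|_E\le d_{T_d}(p,q)$ for all $p,q$ by triangle inequality along the (unique) geodesic. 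For the lower bound, suppose the geodesic meeting point of $p$ and $q$ is a branch vertex $\rho$ of depth $l$; after cancelling the common part of the two root paths, $\bar\phi(p)-\bar\phi(q)$ equals the sum of the $b$'s descending from $\rho$ towards $p$ minus those descending towards $q$, and since $p$ and $q$ lie in the two distinct child-subtrees of $\rho$, these two groups of vectors have non-negative coefficients supported respectively on the left half and the right half of $I_\rho$, with total coefficient mass $d_{T_d}(\rho,p)$ and $d_{T_d}(\rho,q)$. Writing $M$ for the first index of the right half of $I_\rho$ and testing against $h:=x_{\min I_\rho}^\ast-2x_M^\ast$ — which satisfies $\|h\|_{E^\ast}\le3$ and acts as multiplication by $+\theta$ on the left half and by $-\theta$ on the right half — gives $h\big(\bar\phi(p)-\bar\phi(q)\big)=\theta\big(d_{T_d}(\rho,p)+d_{T_d}(\rho,q)\big)=\theta\,d_{T_d}(p,q)$, so $\big\|\bar\phi(p)-\bar\phi(q)\big\|_E\ge\tfrac{\theta}{3}\,d_{T_d}(p,q)$. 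The remaining configurations (one of $p,q$ being a vertex, or $p,q$ lying on a common root-to-leaf ray) are handled by testing against a single functional $x_{\min I}^\ast$ and give the even better bound $\tfrac{\theta}{1}\,d_{T_d}(p,q)$. Altogether $\bar\phi$ has distortion at most $3/\theta$, which is $<4$ for $\theta$ close enough to $1$.

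I expect the one genuinely delicate step to be the lower bound for a ``folded'' pair $p,q$ — the point where non-superreflexivity, rather than a mere separated-dyadic-tree condition, is indispensable. The $T_d$-geodesic runs up to the meeting vertex $\rho$ and back down, and one must recover the full length $d_{T_d}(p,\rho)+d_{T_d}(\rho,q)$, not merely the larger of the two summands; this is exactly what the linear order underlying the James system buys us, since the descents towards $p$ and towards $q$ occupy disjoint, comparably placed intervals of leaf indices, so a single functional of the shape $a\,x_i^\ast-b\,x_j^\ast$ can detect both descents with the same sign and add their contributions. The remaining work — choosing $\theta$ near $1$, checking that the affine extension to $T_d$ costs nothing (which works precisely because of the explicit non-negative $\ell_1$-coefficient structure of $\phi$), and the routine case analysis — should present no difficulty, and the numerical constant lands comfortably below $4$.
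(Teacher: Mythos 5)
Your proof is correct and takes a genuinely different route from the paper's. The paper enumerates the \emph{vertices} of $\mb{B}_d$ via a depth-first traversal (so that the two descent paths below any branch vertex receive disjoint, linearly separated intervals of labels), invokes Pt\'ak's theorem to produce $2^{d+1}-1$ vectors with $\tfrac14\sup_j\big(|\sum_{i<j}a_i|+|\sum_{i\ge j}a_i|\big)\le\|\sum a_ix_i\|_E\le\sum|a_i|$, and defines the embedding as the sum of distinct vertex-indexed vectors along the root path; the lower bound then falls out of the Pt\'ak expression evaluated at the gap between the two label intervals. You instead index by \emph{leaves}, use James' finite tree characterization in its dual $(\theta,N)$-form with explicit vectors and functionals, replace the distinct vertex vectors by block averages $b_u$ over leaf intervals, and recover the lower bound by testing $\bar\phi(p)-\bar\phi(q)$ against the single functional $h=x_{\min I_\rho}^\ast-2x_M^\ast$, whose two-sided sign pattern adds the two descent lengths. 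The two arguments are close cousins --- both exploit that the James structure imposes a linear order on the index set compatible with a DFS of the tree --- but the mechanics (vertex- vs.\ leaf-indexing, a single scalar Pt\'ak inequality vs.\ a vector/functional pair) are distinct. Yours has two small advantages: the affine extension to $\overline{\mb{B}}_d$ is handled cleanly because all coefficients in $\bar\phi(p)-\bar\phi(q)$ are nonnegative with the right total mass, and since $\theta$ can be taken arbitrarily close to $1$, the distortion $3/\theta$ lands strictly below $4$ (in fact approaches $3$). One inessential remark: the lower estimate $\|b_u\|_E\ge\theta$ is not used anywhere; only the bound $\|b_u\|_E\le1$ enters the Lipschitz side.
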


\begin{proof}
Fix $d\in\N$, let $\mb{B}_d$ be the combinatorial binary tree of depth $d$ and denote its root by $r$. There exists a natural enumeration $\sigma:\mb{B}_d\to\{1,\ldots,2^{d+1}-1\}$ of the vertices of $\mb{B}_d$ with the following property: if $x,y$ are two leaves of the tree whose least common ancestor is $z$, then $\sigma\big((z,x]\big)$ and $\sigma\big((z,y]\big)$ are two disjoint subsets of $\{1,\ldots,2^{d+1}-1\}$ such that one of the inequalities
\begin{equation} \label{eq:labeling-for-james}
\max\sigma\big((z,x]\big) < \min\sigma\big((z,y]\big) \ \ \  \mbox{or} \ \ \ \max\sigma\big((z,y]\big) < \min\sigma\big((z,x]\big)
\end{equation}
holds true. To see this, one can ``draw'' the binary tree and label the vertices from top to bottom along an arbitrary path. After reaching a leaf, one should return to the nearest ancestor with an unlabeled child and continue labeling along an arbitrary downwards path starting at this child. This process should continue until the whole tree has been labeled. 

Since $E$ is nonsuperreflexive, by a classical theorem of Pt\'ak~\cite[Theorem~11.10]{Pis16} (which is often attributed to James), there exists vectors $\{x_k\}_{k=1}^{2^{d+1}-1}$ such that for every scalars $a_1,\ldots,a_{2^{d+1}-1}$,
\begin{equation} \label{eq:james-condition}
\frac{1}{4}\sup_{j\in\{1,\ldots,2^{d+1}-1\}} \left\{ \Big| \sum_{i<j} a_i\Big| + \Big| \sum_{i\geq j} a_i\Big|\right\} \leq \Big\| \sum_{i=1}^{2^{d+1}-1} a_i x_i\Big\|_E \leq \sum_{i=1}^{2^{d+1}-1} |a_i|.
\end{equation}
Let $\overline{\mb{B}}_d$ be the binary $\mb{R}$-tree of depth $d$. For a point $a\in\overline{\mb{B}}_d$ suppose that $a$ belongs in the edge $\{v,w\}$ of $\mb{B}_d$ and that $v$ is closer to the root than $w$. Consider the embedding $\psi:\overline{\mb{B}}_d\to E$ given by
\begin{equation*}
\psi(a) \eqdef \sum_{u\in[r,a]\cap\mb{B}_d} x_{\sigma(u)} + d_{\overline{\mb{B}}_d}(v,a) \cdot x_{\sigma(w)}.
\end{equation*}
Let $a,b\in\overline{\mb{B}}_d$ and suppose that $c$ is their least common ancestor. Then, there are downwards paths $\{s_1,\ldots,s_{j+1}\}$, $\{t_1,\ldots,t_{k+1}\}$ in $\mb{B}_d$ such that $a\in[s_j,s_{j+1})$, $b\in[t_k,t_{k+1})$ and $s_1, t_1$ are the two distinct children of $c$. In this notation, the embedding $\psi$ satisfies
\begin{equation} \label{eq:embedding-difference}
\psi(a) -\psi(b) = \sum_{i=1}^j x_{\sigma(s_i)} + \delta x_{\sigma(s_{j+1})} - \sum_{i=1}^k x_{\sigma(t_i)} - \e x_{\sigma_(t_{k+1})},
\end{equation}
where $\delta = d_{\overline{\mb{B}}_d}(s_j,a)$ and $\e= d_{\overline{\mb{B}}_d}(t_k,b)$. Since $\|x_i\|_X\leq 1$, it is clear that
\begin{equation*}
\big\|\psi(a)-\psi(b)\big\|_E \leq j+\delta+k+\e =  d_{\overline{\mb{B}}_d}(a,b).
\end{equation*}
On the other hand, by the property~\eqref{eq:labeling-for-james} of $\sigma$, we can assume without loss of generality that
\begin{equation*}
\max\big\{\sigma(s_1),\ldots,\sigma(s_{j+1})\big\} < \min\big\{\sigma(t_1),\ldots,\sigma(t_{k+1})\big\}.
\end{equation*}
Then,~\eqref{eq:embedding-difference} and~\eqref{eq:james-condition} imply that
\begin{equation*}
\big\|\psi(a)-\psi(b)\big\|_E  \geq \frac{1}{4}\big( j +\delta+k+\e\big) = \frac{d_{\overline{\mb{B}}_d}(a,b)}{4}.
\end{equation*}
Therefore, $\psi$ is the desired bi-Lipschitz embedding.
\end{proof}

\begin{proof} [Proof of Theorem~\ref{thm:pinched}]
It follows from definition~\eqref{eq:deftalagrand} that if a metric space $\MM$ has Talagrand type $(p,\psi)$ with constant $\tau\in(0,\infty)$ and another metric space $\NN$ is such that every finite subset of $\NN$ embeds bi-Lipscitzly in $\MM$ with distortion at most $K\in[1,\infty)$, then $\NN$ has Talagrand type $(p,\psi)$ with constant $\tau K$. Let $(\msf{M},g)$ be a Riemannian manifold of pinched negative curvature equipped with its Riemannian distance $d_\msf{M}$. Then, by Theorem~\ref{thm:npss}, there exists $N\in\N$ and $D\in(0,\infty)$ such that $(\msf{M},d_\msf{M})$ embeds with distortion at most $D$ in a product of $N$ binary $\R$-trees of infinite depth. In particular, every finite subset $\msf{X}$ of $\msf{M}$ embeds with distortion at most $D$ in a product of $N$ binary $\R$-trees of depth $d$, for some $d$ depending on the cardinality of $\msf{X}$. Therefore, by Proposition~\ref{prop:bourgain} (see also the discussion following Theorem~2.1 in~\cite{Ost14}), $\msf{X}$ embeds with distortion at most $K=K(N,D)\in(0,\infty)$ in every nonsuperreflexive Banach space. In particular, $\msf{X}$ embeds with distortion at most $K$ in the classical exotic Banach space $(\mb{J},\|\cdot\|_{\mb{J}})$ of James~\cite{Jam78}, which has Rademacher type 2 yet is not superreflexive. By Theorem~\ref{thm:useivvorlicz}, there exists a universal constant $C\in(0,\infty)$ such that for every $\e\in(0,1)$, $(\mb{J},\|\cdot\|_{\mb{J}})$ has Talagrand type $(2,\psi_{2,1-\e})$ with constant $C/\sqrt{\e}$ and thus the same holds for the Riemannian manifold $(\msf{M},d_\msf{M})$.
\end{proof}

\section{Embeddings of nonlinear quotients of the cube and Talagrand type} \label{sec:embed}

We will now now prove that Talagrand type is an obstruction to embeddings of quotients of $\ms{C}_n$.

\begin{proof} [Proof of Theorem \ref{thm:embed}]
Suppose that $(\MM,d_\MM)$ has Talagrand type $(p,\psi)$ with constant $\tau$ and let $\ms{R}\subseteq\ms{C}_n\times\ms{C}_n$ be an equivalence relation. Let $f:\ms{C}_n/\ms{R}\to\MM$ be a map satisfying
\begin{equation} \label{bilip}
\forall \ [\zeta], [\eta]\in\ms{C}_n/\ms{R}, \qquad s \rho_{\ms{C}_n/\ms{R}}\big([\zeta],[\eta]\big) \leq d_\MM\big( f([\zeta]), f([\eta])\big) \leq s D \rho_{\ms{C}_n/\ms{R}}\big([\zeta],[\eta]\big),
\end{equation}
where $s\in(0,\infty)$ and $D\geq1$. Consider the lifting $F:\ms{C}_n\to\MM$ given by $F(\e) = f([\e])$, where $\e\in\ms{C}_n$. Then, since $\MM$ has Talagrand type $(p,\psi)$ with constant $\tau$, we have
\begin{equation} \label{eq:talrepeat}
\int_{\ms{C}_n\times\ms{C}_n} d_\MM\big(F(\e),F(\delta)\big)^p\diff\sigma_{2n}(\e,\delta) \leq \tau^p \sum_{i=1}^n \|\mathfrak{d}_iF\|^p_{L_\psi(\sigma_n)}.
\end{equation}
The bi-Lipschitz condition \eqref{bilip} and the definition of $F$ imply that
\begin{equation} \label{lowerb}
\forall \ \e,\delta\in\ms{C}_n, \qquad d_\MM\big(F(\e),F(\delta)\big) = d_\MM\big(f\big([\e]\big), f\big([\delta]\big)\big) \stackrel{\eqref{bilip}}{\geq} s \rho_{\ms{C}_n/\ms{R}}\big([\e],[\delta]\big).
\end{equation}
On the other hand, for every $\e\in\ms{C}_n$,
\begin{equation*}
\begin{split}
\mathfrak{d}_iF(\e) =  \frac{1}{2} d_\MM&\big(F(\e),F(\e_1,\ldots,\e_{i-1},-\e_i,\e_{i+1},\ldots,\e_n)\big) \\ & \stackrel{\eqref{bilip}}{\leq} \frac{sD}{2} \rho_{\ms{C}_n/\ms{R}} \big([\e],[(\e_1,\ldots,\e_{i-1},-\e_i,\e_{i+1},\ldots,\e_n)]\big) = \frac{sD}{2} {\bf 1}_{\partial_i \ms{R}}(\e),
\end{split}
\end{equation*}
since $\rho_{\ms{C}_n/\ms{R}} ([\e],[(\e_1,\ldots,\e_{i-1},-\e_i,\e_{i+1},\ldots,\e_n)])\in\{0,1\}$ for every $\e\in\ms{C}_n$ and it vanishes if and only if $ (\e,(\e_1,\ldots,\e_{i-1},-\e_i,\e_{i+1},\ldots,\e_n))\in\ms{R}$. Therefore,
\begin{equation} \label{upperb}
\|\mathfrak{d}_iF\|_{L_\psi(\sigma_n)} \leq \frac{sD}{2} \| {\bf 1}_{\partial_i \ms{R}}\|_{L_\psi(\sigma_n)} = \frac{sD}{2\psi^{-1}\big(\sigma_n(\partial_i\ms{R})^{-1}\big)}.
\end{equation}
Combining \eqref{eq:talrepeat}, \eqref{lowerb} and \eqref{upperb}, we deduce that
\begin{equation*}
\frac{s^pD^p\tau^p}{2^p} \sum_{i=1}^n \psi^{-1}\big(\sigma_n(\partial_i\ms{R})^{-1}\big)^{-p} \geq s^p \msf{a}_p(\ms{R})^p
\end{equation*}
and the conclusion follows.
\end{proof}

\begin{remark} \label{rem:enflo-def}
A metric space $(\MM, d_\MM)$ is said to have Enflo type $p\in(0,\infty)$ with constant $T\in(0,\infty)$ if for every $n\in\N$, every function $f:\ms{C}_n\to\MM$ satisfies
\begin{equation}
\int_{\ms{C}_n} d_\MM\big(f(\e),f(-\e)\big)^p \diff\sigma_n(\e) \leq T^p \sum_{i=1}^n  \|\mathfrak{d}_iF\|^p_{L_p(\sigma_n)}.
\end{equation}
While Talagrand type is meant to be a refinement of Enflo type (where the Young function is $\psi(t)=t^p$), the attentive reader will notice that the left-hand-sides of the two inequalities are different. This difference is mainly superficial (and originates from Enflo's original definition of ``roundedness'' of a metric space, see \cite{Enf69}) and all interesting geometric applications of Enflo type could be recovered with either definition. Since we discuss the bi-Lipschitz geometry of quotients of $(\ms{C}_n,\rho)$, it is more natural to define Talagrand type by \eqref{eq:deftalagrand} in order to be able to get distortion lower bounds for quotients $\ms{C}_n/\ms{R}$ satisfying $(\e,-\e)\in\ms{R}$ for every $\e\in\ms{C}_n$.
\end{remark}

\begin{remark} \label{rem:khot-naor}
Theorem \ref{thm:embed} provides distortion lower bounds for the embedding of quotients of $(\ms{C}_n,\rho)$ by an arbitrary equivalence relation $\ms{R}$ into spaces with prescribed Talagrand type. While we are not aware of any such bounds in the literature (except perhaps the bound \eqref{eq:weak-embed} which one can deduce from Enflo type $p$), it is worth mentioning that there exist $L_p$-nonembeddability results for more structured quotients of $\ms{C}_n$. In particular, we refer the reader to the paper \cite{KN06}, where Khot and Naor provide lower bounds for the $L_1$-distortion of quotients of $\ms{C}_n$ by linear codes and by the action of transitive subgroups of the symmetric group $S_n$. As the proofs of \cite{KN06} rely on delicate properties of both these structured quotients and $L_p$ spaces, it seems improbable that they can be easily modified to give nonembeddability results into spaces with given Talagrand type.
\end{remark}


\section{Concluding remarks and open problems} \label{sec:8}

In this final section, we shall present a few remarks regarding the preceeding results and indicate some potentially interesting directions of future research.


\subsection{Talagrand type and linear type}  \label{sec:9.1}

In order to highlight the relation of our results with Talagrand's original inequality~\eqref{eq:talagrand}, we decided to state Theorem~\ref{thm:useivv},~\ref{thm:useeldan},~\ref{thm:useivvorlicz} and~\ref{thm:useeldanorlicz} only for spaces of Rademacher or martingale type 2. In the terminology of Definition~\ref{def:talagrand}, one has the following more general results for spaces of Rademacher or martingale type $s$. Here and throughout, we will denote by $\psi_{s,\delta}:[0,\infty)\to[0,\infty)$ a Young function with $\psi_{s,\delta}(t)=t^s\log^{-\delta}(e+t)$ for large enough $t>0$.

\begin{theorem} [Rademacher type and Talagrand type] \label{thm:gen-tal1}
Fix $s\in(1,2]$. If a Banach space $(E,\|\cdot\|_E)$ has Rademacher type $s$, then for every $\e\in(0,s/2)$, $E$ has Talagrand type $(s,\psi_{s,s/2-\e})$.
\end{theorem}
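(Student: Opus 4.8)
The plan is to follow the same strategy as in the proof of Theorem~\ref{thm:useivv}, replacing the Rademacher type $2$ input with Rademacher type $s$ throughout, and then translating the resulting functional inequality into a Talagrand-type statement via the elementary implication recorded after Definition~\ref{def:talagrand} (the one showing that~\eqref{eq:orlicztalagrandtype} implies Talagrand type $(p,\psi)$). So the first task is to prove the $s$-analogue of Lemma~\ref{lem:usetypeforivv}: if $E$ has Rademacher type $s$ with constant $T$, then for every $\e\in(0,1)$ and every $f:\ms{C}_n\to E$,
\begin{equation} \label{eq:gen-tal1-key}
\big\|f-\mb{E}_{\sigma_n}f\big\|^s_{L_s(\sigma_n;E)} \lesssim_{E,\e} \sum_{i=1}^n \int_0^\infty e^{-\e t}\, \|\partial_i P_t f\|_{L_s(\sigma_n;E)}^s\ \frac{\diff t}{t^\e}.
\end{equation}
The argument is verbatim that of Lemma~\ref{lem:usetypeforivv}: write $f-\mb{E}_{\sigma_n}f = 2\int_0^\infty \Delta P_{2t}f\diff t$, apply~\eqref{eq:propwithPt} to bound $\|\Delta P_{2t}f\|_{L_s(\sigma_n;E)}$ by $(e^{2t}-1)^{-1/2}$ times the $L_s$-norm of $\sum_i \delta_i(t)\partial_i P_t f(\e)$, invoke the Rademacher type $s$ condition for centered variables (\cite[Proposition~9.11]{LT91}) to replace this by $T\big(\sum_i \|\partial_i P_t f\|_{L_s(\sigma_n;E)}^s\big)^{1/s}$, and then apply H\"older's inequality in $t$ with exponents chosen so that the $t$-integral of the ``dual'' factor converges like $1/\e$. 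The only arithmetic difference from the $s=2$ case is bookkeeping: one uses the splitting $\frac{1}{\sqrt{e^{2t}-1}} = \frac{1}{(e^{2t}-1)^{(1-\e')/s'}}\cdot\frac{1}{(e^{2t}-1)^{\beta}}$ with $s'$ the conjugate exponent of $s$ and $\beta$ determined by $\beta + (1-\e')/s' = 1/2$, so that $\int_0^\infty (e^{2t}-1)^{-\beta s'}\diff t \asymp 1/\e'$, and then takes $s$-th powers; the resulting gradient integrand carries a power $(e^{2t}-1)^{-(1-\e')}\asymp e^{-(1-\e')t}$ times lower-order $t$-factors which one absorbs into $e^{2t}-1\geq te^t$. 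After relabelling $\e$ this yields~\eqref{eq:gen-tal1-key}.

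Next, I would feed~\eqref{eq:gen-tal1-key} through hypercontractivity exactly as in the proof of Theorems~\ref{thm:useivv} and~\ref{thm:useivvorlicz}: since $\{P_t\}$ commutes with $\partial_i$, Bonami's inequality~\cite{Bon70} gives $\|\partial_i P_t f\|_{L_s(\sigma_n;E)} = \|P_t \partial_i f\|_{L_s(\sigma_n;E)} \leq \|\partial_i f\|_{L_{1+(s-1)e^{-2t}}(\sigma_n;E)}$ (the exponent is $1+(s-1)e^{-2t}$ because the $L_s\to L_q$ contractivity threshold is $e^{-2t}\leq (q-1)/(s-1)$, i.e.\ $q = 1+(s-1)e^{-2t}$). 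Plugging this into~\eqref{eq:gen-tal1-key} and applying Lemma~\ref{lem:orlicz-withpower} with $r=s$, $\gamma = 2$, $\eta = \e$ (and the lemma's $\e$ equal to our $\e$) bounds the right-hand side by $\lesssim_{E,\e} \sum_{i=1}^n \|\partial_i f\|_{L_s(\log L)^{-1+\e}(\sigma_n;E)}^s$. This is precisely inequality~\eqref{eq:orlicztalagrandtype} with $p=s$, $\psi = \psi_{s,1-\e}$ (note $L_s(\log L)^{-1+\e} = L_{\psi_{s,1-\e}}$ by definition of the Orlicz norms), and $\tau_\ast \lesssim_{E}\e^{-1/s}$. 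Finally, the implication recorded just after Definition~\ref{def:talagrand} — apply~\eqref{eq:orlicztalagrandtype} to $F(\e,\delta) = f(\e)-f(\delta)$ on $\ms{C}_{2n}$, which has mean zero, and use $\partial_{\e_i}F = \partial_i f$ in the first block of variables and $\partial_{\delta_i}F$ likewise — shows $E$ has Talagrand type $(s,\psi_{s,1-\e})$ with constant $\tau \leq 2^{1/s}\tau_\ast\lesssim_E \e^{-1/s}$, completing the proof.

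The step I expect to be the main (minor) obstacle is the H\"older bookkeeping in~\eqref{eq:gen-tal1-key}: for general $s\in(1,2]$ one must verify that the exponents $\beta, s', \e'$ can indeed be chosen so that both the $t$-integral of the dual factor is $O(1/\e)$ and the surviving power of $(e^{2t}-1)$ on the gradient side is exactly of the form $e^{-c\e' t}$ with an extra integrable $t^{-\e'}$-type singularity at the origin, so that Lemma~\ref{lem:orlicz-withpower} applies cleanly; this is routine but requires care since the Cauchy--Schwarz step in Lemma~\ref{lem:usetypeforivv} is special to $s=2$ and must be replaced by genuine H\"older. Everything else is a direct transcription of the $s=2$ arguments already present in Section~\ref{sec:3}, together with the two elementary calculus lemmas of Section~\ref{sec:2}.
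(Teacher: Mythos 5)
Your overall strategy is the intended one (lift Section~\ref{sec:3} from $s=2$ to $s\in(1,2]$, pass to Orlicz norms via Lemma~\ref{lem:orlicz-withpower}, and convert~\eqref{eq:orlicztalagrandtype} into Talagrand type), but there is a genuine gap: your application of the Rademacher type-$s$ inequality to the variables $\delta_i(t)\partial_iP_tf(\e)$ drops a factor which equals $1$ when $s=2$ but is essential for $s<2$, and without it the H\"older bookkeeping cannot reach arbitrarily small $\e$.

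Proposition~9.11 of~\cite{LT91}, applied to the independent centered $Y_i=\delta_i(t)\partial_iP_tf(\e)$, gives $\mb{E}\|\sum_iY_i\|_E^s\leq(2T)^s\sum_i\mb{E}\|Y_i\|_E^s$, that is
$\mb{E}\|\sum_i\delta_i(t)\partial_iP_tf(\e)\|_E^s\leq(2T)^s\,\mb{E}|\delta_1(t)|^s\,\sum_i\|\partial_iP_tf(\e)\|_E^s$, not what you wrote. A direct computation from the law of $\xi_1(t)$ gives
\begin{equation*}
\mb{E}\big|\delta_1(t)\big|^s=\frac{(1-e^{-2t})^{1-\frac{s}{2}}}{2}\Big[\big(1-e^{-t}\big)^{s-1}+\big(1+e^{-t}\big)^{s-1}\Big],
\end{equation*}
which is $\equiv1$ for $s=2$ (the normalization $\mathrm{Var}\,\delta_i(t)=1$, which is why Lemma~\ref{lem:usetypeforivv} does not display it) but vanishes like $t^{1-s/2}$ as $t\to0^+$ for $s\in(1,2)$. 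Without this vanishing, the weight multiplying $\big(\sum_i\|\partial_iP_tf\|^s_{L_s(\sigma_n;E)}\big)^{1/s}$ is $(e^{2t}-1)^{-1/2}\sim t^{-1/2}$ near $0$, and after H\"older with exponents $(s,s')$ one finds that the dual integrand is integrable at $0$ only if the primary weight carries a singularity $t^{-a}$ with $a>1-\tfrac{s}{2}$; since $a$ is exactly the $\e$-parameter fed into Lemma~\ref{lem:orlicz-withpower}, this route only yields Talagrand type $(s,\psi_{s,1-\e})$ for $\e>1-\tfrac{s}{2}$, not for all $\e\in(0,1)$. (As a side remark, your displayed bookkeeping only closes at $s=2$: with $\beta+(1-\e')/s'=\tfrac12$, the primary power after raising to the $s$-th power is $\beta s=1-\tfrac{s}{2}+(s-1)\e'$, not $1-\e'$; and $\int_0^\infty(e^{2t}-1)^{-\beta s'}\diff t$ with $\beta s'=\tfrac{s'}{2}-1+\e'$ diverges at $t=0$ when $s\leq4/3$ and is $O(1)$, not $\asymp1/\e'$, when $4/3<s<2$.)

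Retaining $\mb{E}|\delta_1(t)|^s$ repairs the argument. Writing $1-e^{-2t}=e^{-2t}(e^{2t}-1)$, using $e^{2t}-1\geq te^t$ and $(1-e^{-t})^{s-1}+(1+e^{-t})^{s-1}\leq2^s$, one gets
\begin{equation*}
\phi(t)^s:=\frac{\mb{E}|\delta_1(t)|^s}{(e^{2t}-1)^{s/2}}\leq\frac{2^{s-1}e^{-(2-s)t}}{(e^{2t}-1)^{s-1}}\leq2^{s-1}\,t^{-(s-1)}e^{-t},\qquad \phi(t)^{s'}\leq2\,t^{-1}e^{-t/(s-1)}.
\end{equation*}
Since $\|\Delta P_{2t}f\|_{L_s(\sigma_n;E)}\lesssim_T\phi(t)\big(\sum_i\|\partial_iP_tf\|^s_{L_s(\sigma_n;E)}\big)^{1/s}$, split $\phi=\phi^\theta\cdot\phi^{1-\theta}$ and apply H\"older with exponents $(s,s')$: the dual integral satisfies $\int_0^\infty\phi^{(1-\theta)s'}\diff t\lesssim\int_0^\infty t^{-(1-\theta)}e^{-(1-\theta)t/(s-1)}\diff t\asymp_s1/\theta$, while the primary weight $\phi^{\theta s}\lesssim t^{-\theta(s-1)}e^{-\theta t}$ lets you invoke Lemma~\ref{lem:orlicz-withpower} with parameter $\theta(s-1)$, which can be taken arbitrarily small. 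Setting $\e=\theta(s-1)$ and finishing exactly as you propose (hypercontractivity with exponent $1+(s-1)e^{-2t}$ and the reduction from~\eqref{eq:orlicztalagrandtype}) gives Talagrand type $(s,\psi_{s,1-\e})$ for every $\e\in(0,1)$.
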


\begin{theorem} [Martingale type and Talagrand type] \label{thm:gen-tal2}
Fix $s\in(1,2]$. If a Banach space $(E,\|\cdot\|_E)$ has martingale type $s$, then $E$ also has Talagrand type $(s,\psi_{s,s/2})$.
\end{theorem}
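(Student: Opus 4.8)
\textbf{Proof proposal for Theorem~\ref{thm:gen-tal2}.} The plan is to reduce the Talagrand type inequality to an Orlicz-norm influence inequality of the form \eqref{eq:orlicztalagrandtype} with $p=s$ and $\psi = \psi_{s,1}$, using the general mechanism already spelled out right after Definition~\ref{def:talagrand}: once we know that every $f:\ms{C}_n\to E$ satisfies
\begin{equation*}
\big\|f-\mb{E}_{\sigma_n}f\big\|_{L_s(\sigma_n;E)}^s \leq \tau_\ast^s \sum_{i=1}^n \|\partial_if\|^s_{L_{\psi_{s,1}}(\sigma_n;E)},
\end{equation*}
the space $E$ automatically has Talagrand type $(s,\psi_{s,1})$ with constant at most $2^{1/s}\tau_\ast$ by applying the inequality to $F(\e,\delta)=f(\e)-f(\delta)$ on $\ms{C}_{2n}$. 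So the whole task is to prove this Orlicz influence inequality for a space of martingale type $s$; this is exactly the $L_s$ analogue of Theorem~\ref{thm:useeldanorlicz}, and the natural route is to run the Eldan--Gross argument of Section~\ref{subsec:eldan} with the exponent $2$ replaced by $s$ throughout.

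Concretely, first I would repeat the martingale decomposition $\|f-\mb{E}_{\sigma_n}f\|_{L_s(\sigma_n;E)}^s = \mb{E}\|M_N-M_0\|_E^s \leq M^s\sum_{k=1}^N \mb{E}\|M_k-M_{k-1}\|_E^s$ along the Eldan--Gross jump process $\{X_t\}_{t\ge0}$, using the martingale type $s$ inequality \eqref{eq:mtype} in place of martingale type 2. Then I would Taylor expand each increment $M_k-M_{k-1}$ exactly as in \eqref{eq:taylor}, split off the multilinear remainder $R_k(f)$ which, as in \eqref{eq:remainder2}--\eqref{eq:remainder4}, only involves products of distinct-coordinate increments and hence contributes an error of order $N^{-1}$ (up to $f$- and $n$-dependent constants) after summing over $k$ — the precise power of $N$ changes but it still vanishes. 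For the main term $\sum_i (X_{t_k}(i)-X_{t_{k-1}}(i))\partial f/\partial x_i(X_{t_{k-1}})$, conditioned on $\{X_s\}_{s\le t_{k-1}}$ the coordinate increments are independent centered variables, so I apply the Rademacher type $s$ condition for centered variables (the $L_s$ version of \cite[Proposition~9.11]{LT91}, available since martingale type $s$ implies Rademacher type $s$), together with the conditional moment bound \eqref{eq:cond-L2}, to get $\mb{E}\|M_k-M_{k-1}\|_E^s \lesssim (k/N^2)^{s/2} \cdot N^{?}\sum_i\mb{E}\|\partial f/\partial x_i(X_{t_{k-1}})\|_E^s + (\text{error})$; taking $N\to\infty$ a Riemann-sum argument (now with the weight $t^{s/2-1}$ instead of $t$) yields
\begin{equation*}
\big\|f-\mb{E}_{\sigma_n}f\big\|_{L_s(\sigma_n;E)}^s \lesssim_E \sum_{i=1}^n \int_0^\infty e^{-st} \Big\|P_t\frac{\partial f}{\partial x_i}\Big\|_{L_s(\sigma_n;E)}^s\, \frac{\diff t}{t^{1-s/2}},
\end{equation*}
using the identification $\partial f/\partial x_i(X_t)\sim P_{\log(1/t)}(\partial f/\partial x_i)(\e)$ from \eqref{eq:equal-dist} and the change of variables $u=\log(1/t)$. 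Finally, Bonami's hypercontractivity \eqref{eq:first-use-bonami} bounds $\|P_t\partial_if\|_{L_s(\sigma_n;E)}$ by $\|\partial_if\|_{L_{1+(s-1)e^{-2t}}(\sigma_n;E)}$, and Lemma~\ref{lem:orlicz-withpower} (applied with $r=s$, $\e=0$, $\gamma$ and $\eta$ chosen to match, noting that $\partial f/\partial x_i(\e)=\e_i\partial_if(\e)$ so the $L_p$ norms agree) converts the $t$-integral into $\|\partial_if\|_{L_s(\log L)^{-1}(\sigma_n;E)}^s$, which is precisely $\|\partial_if\|_{L_{\psi_{s,1}}(\sigma_n;E)}^s$ up to constants.

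The main obstacle I anticipate is the bookkeeping in the Riemann-sum step: with the weight $t^{s/2-1}$ one must make sure the approximating sums $\frac1N\sum_k (k/N)^{s/2-1}(\cdots)$ actually converge to $\int_0^\infty t^{s/2-1}(\cdots)\diff t$ uniformly enough to pass to the limit, and that the remainder terms — whose magnitude now comes with exponents depending on $s$ — still go to zero; since $s\in(1,2]$ the weight $t^{s/2-1}$ is (nonstrictly) singular at $0$ but still integrable against the exponentially decaying integrand, so this should go through, but it requires care. A cleaner alternative, which I would mention as a remark, is to bypass the jump process entirely and instead invoke Xu's vector-valued Littlewood--Paley--Stein inequality as in Section~\ref{subsec:lps}: the $L_s$ version of \eqref{eq:xu} holds for targets of martingale type $s$ (again by \cite{Xu19}), and combining it with the Gaussian-increment bound of Proposition~\ref{prop:extendivv}, Rademacher type $s$, and hypercontractivity reproduces the same conclusion with possibly worse constants. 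Either way, the only genuinely new input beyond Section~\ref{sec:4} is the systematic replacement of the exponent $2$ by $s$, and Lemma~\ref{lem:orlicz-withpower} was already proved in the generality ($r\in(1,\infty)$) needed for this.
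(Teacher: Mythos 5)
Your overall strategy is exactly the one the paper intends: Theorem~\ref{thm:gen-tal2} is stated without a separate proof precisely because it follows by replacing the exponent $2$ by $s$ throughout the Eldan--Gross argument of Section~\ref{subsec:eldan} (or, alternatively, its Littlewood--Paley--Stein variant), then invoking Lemma~\ref{lem:orlicz-withpower} with $r=s$ and the reduction from the Orlicz influence inequality \eqref{eq:orlicztalagrandtype} to Talagrand type. Your reduction step, your use of martingale type $s$ for the telescoping sum, your treatment of the Taylor remainder, and your final appeal to hypercontractivity plus Lemma~\ref{lem:orlicz-withpower} with $\e=0$ are all correct.

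The one concrete error is in the moment estimate for the jump increments. You write the conditional bound as $(k/N^2)^{s/2}\cdot N^{?}$, which amounts to approximating $\mb{E}\bigl|X_{t_k}(i)-X_{t_{k-1}}(i)\bigr|^s$ by $\bigl(\mb{E}|X_{t_k}(i)-X_{t_{k-1}}(i)|^2\bigr)^{s/2}$. This fails here because the increment is a two-point random variable with a tiny probability $\tfrac{1}{2k}$ of a large jump of size $\asymp k/N$ and otherwise a jump of size $1/N$; directly from \eqref{eq:jump-prob},
\begin{equation*}
\mb{E}\bigl|X_{t_k}(i)-X_{t_{k-1}}(i)\bigr|^s = \frac{1}{2k}\Bigl(\frac{2k+1}{N}\Bigr)^s + \frac{2k-1}{2k}\cdot\frac{1}{N^s} \ \asymp_s\ \frac{k^{s-1}}{N^s},
\end{equation*}
which for $s<2$ is strictly smaller than $(k/N^2)^{s/2}$. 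Plugging the correct moment into the Rademacher type $s$ estimate and summing over $k$ produces the Riemann sum $\frac{1}{N}\sum_{k}(k/N)^{s-1}\,\sum_i\mb{E}\|\partial f/\partial x_i(X_{t_{k-1}})\|_E^s$, whose limit carries the weight $t^{s-1}\,\diff t$ (consistent with the weight $t\,\diff t$ at $s=2$ in \eqref{eq:eldan-almost-done}), not $t^{s/2-1}\,\diff t$. After the change of variables $u=\log(1/t)$, the factor $t^{s-1}\,\diff t$ becomes $e^{-su}\,\diff u$ with \emph{no} polynomial singularity, so the correct displayed inequality is
\begin{equation*}
\big\|f-\mb{E}_{\sigma_n}f\big\|_{L_s(\sigma_n;E)}^s \lesssim_E \sum_{i=1}^n\int_0^\infty e^{-su}\,\|P_u\partial_if\|_{L_s(\sigma_n;E)}^s\,\diff u,
\end{equation*}
and Lemma~\ref{lem:orlicz-withpower} with $r=s$, $\gamma=2$, $\eta=s$ and $\e=0$ applies directly, which is exactly what you assert at the end. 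So the ``care'' you correctly flagged in the Riemann-sum step resolves in your favor: once the $s$-th moment is computed from the two-point law rather than interpolated from the variance, the singular weight you feared simply does not appear, and the remainder $\sum_k\mb{E}\|R_k(f)\|_E^s\lesssim_f N^{1-s}\to 0$ for $s>1$. With that single correction the argument is complete.
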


Since for every $s\in(1,2]$ there exist spaces of Rademacher type $s$ which do not have martingale type $s$ (see~\cite{Jam78, PX87}), the following natural question poses itself.

\begin{question}
Does every Banach space of Rademacher type $s$ also have Talagrand type $(s,\psi_{s,s/2})$?
\end{question}


\subsection{Talagrand type of $L_1(\mu)$} 

It is worth emphasizing that the proofs of both Theorems~\ref{thm:gen-tal1} and~\ref{thm:gen-tal2} crucially rely on the fact that $s>1$ due to the use of Bonami's hypercontractive inequalities~\cite{Bon70}. In the following theorem, we establish the Talagrand type of $L_1$. It is worth emphasizing the somewhat surprising fact that Theorem~\ref{thm:L1} below shows that a stronger property than the trivial ``Enflo type 1'' inequality holds true in $L_1$.

\begin{theorem} \label{thm:L1}
For every measure $\mu$, the Banach space $L_1(\mu)$ has Talagrand type $(1,\psi_{1,1})$.
\end{theorem}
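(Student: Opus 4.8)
The target inequality for $L_1(\mu)$ with $p=1$ and $\psi=\psi_{1,1}$ (so $\psi_{1,1}(t)=t\log^{-1}(e+t)$ for large $t$) reads
\begin{equation} \label{eq:L1goal}
\int_{\ms{C}_n\times\ms{C}_n} \big\|f(\e)-f(\delta)\big\|_{L_1(\mu)}\diff\sigma_{2n}(\e,\delta) \lesssim \sum_{i=1}^n \|\mathfrak{d}_if\|_{L_{\psi_{1,1}}(\sigma_n)}
\end{equation}
for every $f:\ms{C}_n\to L_1(\mu)$. By the reduction recorded after Definition~\ref{def:talagrand} (apply the $\mb{E}_{\sigma_{2n}}$-centered function $F(\e,\delta)=f(\e)-f(\delta)$ on $\ms{C}_{2n}$), it suffices to prove the one-sided estimate
\begin{equation} \label{eq:L1goal2}
\big\|f-\mb{E}_{\sigma_n}f\big\|_{L_1(\sigma_n;L_1(\mu))} \lesssim \sum_{i=1}^n \|\partial_if\|_{L_{\psi_{1,1}}(\sigma_n;L_1(\mu))}.
\end{equation}
The first move is to use the isometry $L_1(\sigma_n;L_1(\mu))\cong L_1(\sigma_n\otimes\mu)$ and to reduce, by Fubini and the pointwise-in-$\mu$ nature of both sides, to scalar-valued functions $f:\ms{C}_n\to\R$; here one must be slightly careful because the Orlicz norm $\|\cdot\|_{L_{\psi_{1,1}}}$ does not pass trivially through the $\mu$-integral, but convexity of $\psi_{1,1}$ and Jensen give $\int_\mu \|\partial_i f(\cdot,\omega)\|_{L_{\psi_{1,1}}(\sigma_n)}\diff\mu(\omega) \ge \|\partial_i f\|_{L_{\psi_{1,1}}(\sigma_n;L_1(\mu))}$ in the direction we need. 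So the whole statement follows from the scalar inequality $\|f-\mb{E}_{\sigma_n}f\|_{L_1(\sigma_n)}\lesssim \sum_i\|\partial_if\|_{L_{\psi_{1,1}}(\sigma_n)}$.

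\textbf{Proving the scalar $L_1$ inequality.} For the scalar inequality I would run the IVV/Maurey semigroup scheme that underlies Lemma~\ref{lem:usetypeforivv} and the proof of Theorem~\ref{thm:useivvorlicz}, but at the endpoint $p=1$, where $L_1(\R)$ has (Rademacher, even martingale) type $1$ with constant $1$. Write $f-\mb{E}_{\sigma_n}f = \int_0^\infty \Delta P_tf\diff t = 2\int_0^\infty \Delta P_{2t}f\diff t$, take $L_1(\sigma_n)$ norms, pull the integral out by the triangle inequality, and apply inequality~\eqref{eq:propwithPt} of Proposition~\ref{prop:extendivv} with $p=1$:
\begin{equation*}
\big\|f-\mb{E}_{\sigma_n}f\big\|_{L_1(\sigma_n)} \le 2\int_0^\infty \frac{1}{\sqrt{e^{2t}-1}}\ \mb{E}\Big|\sum_{i=1}^n \delta_i(t)\,\partial_iP_tf(\e)\Big|\diff t.
\end{equation*}
Now the type-$1$ bound is just the triangle inequality in $\R$: $\mb{E}|\sum_i\delta_i(t)\partial_iP_tf(\e)| \le \sum_i \mb{E}|\delta_i(t)|\,|\partial_iP_tf(\e)| = \sum_i \mb{E}|\partial_iP_tf(\e)|$, since $\mb{E}|\delta_i(t)|\le 1$ (indeed, the centered variable $\delta_i(t)$ has $\mb{E}|\delta_i(t)|\le(\mb{E}\delta_i(t)^2)^{1/2}=1$). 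Integrating in $\e$ and using that $P_t$ commutes with $\partial_i$ together with Bonami hypercontractivity~\eqref{eq:first-use-bonami} in the form $\|P_t\partial_if\|_{L_1(\sigma_n)}\le\|\partial_if\|_{L_{1+e^{-2t}}(\sigma_n)}$ gives
\begin{equation*}
\big\|f-\mb{E}_{\sigma_n}f\big\|_{L_1(\sigma_n)} \lesssim \sum_{i=1}^n \int_0^\infty e^{-t}\,\|\partial_if\|_{L_{1+e^{-2t}}(\sigma_n)}\ \frac{\diff t}{\sqrt{t}},
\end{equation*}
using $e^{2t}-1\ge te^t$. Finally I invoke Lemma~\ref{lem:orlicz-nopower} with $r=1$ — or rather its easy direct analogue at $r=1$ — to bound $\int_0^\infty e^{-t}\|\partial_if\|_{L_{1+e^{-t}}(\sigma_n)}\,t^{-1/2}\diff t$ by $\|\partial_if\|_{L_{\psi_{1,1}}(\sigma_n)}$ up to a universal constant; note that when $r=1$ the ``$L_\nu$ for $\nu\in(1,r)$'' interpolation step collapses and the gain is exactly one logarithm, which is $\psi_{1,1}$, matching the Orlicz strengthening of Theorem~\ref{thm:useivvorlicz} at $\e\to 0$. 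This proves~\eqref{eq:L1goal2} for scalars, hence for $L_1(\mu)$-valued $f$, hence Theorem~\ref{thm:L1}.

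\textbf{Main obstacle.} The delicate point is precisely the endpoint behaviour of the Orlicz lemma at $r=1$: Lemmas~\ref{lem:orlicz-withpower} and~\ref{lem:orlicz-nopower} are stated for $r\in(1,\infty)$, and one must check that the estimate $\int_0^\infty e^{-\eta t}\|h\|_{L_{1+(r-1)e^{-\gamma t}}}\,t^{-\e}\diff t\lesssim \|h\|_{L_r(\log L)^{-?}}$ degenerates correctly as $r\downarrow 1$ to yield $\|h\|_{L_1(\log L)^{-1}}$ on the right — here $1+(r-1)e^{-\gamma t}\to 1$, so $L_{1+(r-1)e^{-\gamma t}}$ is replaced by $L_1$ and the logarithmic gain must come entirely from hypercontractivity $\|P_t\partial_if\|_{L_1}\le\|\partial_if\|_{L_{1+e^{-2t}}}$ and the weight $e^{-t}t^{-1/2}$, via a layer-cake/peeling argument on the level sets $\{2^{k-1}<|\partial_if|\le 2^k\}$ identical in spirit to the proof of Lemma~\ref{lem:orlicz-withpower}. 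The only subtlety is that, unlike the type-$2$ case, here there is no Cauchy--Schwarz step and the $t^{-1/2}$ weight is inessential (one could even use $e^{-t/2}$ with no power of $t$); the single logarithm $\psi_{1,1}$ is what the $\int_0^1 e^{-\theta\log(1/b)}\diff\theta$-type estimate from the proof of Lemma~\ref{lem:log-nopower} produces with $\e=0$. I expect this endpoint Orlicz computation, which should be recorded as a small standalone lemma (or as the $r=1$ case of the lemmas in Section~\ref{sec:2}), to be the only place requiring real care; everything else is a direct transcription of the $p>1$, type-$2$ arguments to the trivial type-$1$ endpoint.
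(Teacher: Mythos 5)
There are two genuine gaps, and they are not of the ``fill in a routine lemma'' sort.

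\textbf{The hypercontractivity step collapses at $p=1$.} The entire IVV/Lemma~\ref{lem:usetypeforivv} machine relies on smoothing of the form $\|P_t g\|_{L_q}\le \|g\|_{L_{1+(q-1)e^{-2t}}}$ with $q>1$: the left-hand norm is strictly stronger than the right-hand one, and that gap is what ultimately yields the logarithmic gain via Lemma~\ref{lem:orlicz-withpower}. You invoke ``hypercontractivity'' in the form $\|P_t\partial_i f\|_{L_1}\le\|\partial_if\|_{L_{1+e^{-2t}}}$, but with the target exponent equal to $1$ this is just $\|P_t g\|_{L_1}\le\|g\|_{L_1}\le\|g\|_{L_{1+e^{-2t}}}$, i.e.\ contractivity on $L_1$ plus monotonicity of $L_p$ norms; there is no smoothing whatsoever. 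Consequently the bound you reach, $\|f-\mb{E}f\|_{L_1}\lesssim\sum_i\int_0^\infty e^{-t}t^{-1/2}\|\partial_if\|_{L_{1+e^{-2t}}}\,\diff t$, does not improve on the trivial $\|f-\mb{E}f\|_{L_1}\le\sum_i\|\partial_if\|_{L_1}$. Worse, the final ``endpoint Orlicz lemma'' you hope to prove is false: for $h=m^{-1}{\bf 1}_A$ with $\sigma_n(A)=m$, the left side $\int_0^\infty e^{-t}t^{-1/2}\|h\|_{L_{1+e^{-2t}}}\,\diff t$ picks up a contribution $\gtrsim m^{-1/2}$ from $t$ near $0$ (where the exponent is close to $2$), while $\|h\|_{L_1(\log L)^{-1}(\sigma_n)}\asymp 1/\log(1/m)\to 0$. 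So the estimate $\int_0^\infty e^{-t}t^{-1/2}\|h\|_{L_{1+e^{-2t}}}\,\diff t\lesssim\|h\|_{L_1(\log L)^{-1}(\sigma_n)}$ cannot hold. (Lemma~\ref{lem:log-nopower} is indeed stated for $r\in[1,\infty)$, but at $r=1$ the exponent $1+(r-1)e^{-\gamma t}$ is identically $1$ and the lemma is a tautology; it does not cover the integrand you produced.)

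\textbf{The Fubini reduction goes the wrong way.} Even granting a scalar inequality $\|g-\mb{E}g\|_{L_1(\sigma_n)}\lesssim\sum_i\|\partial_i g\|_{L_{\psi_{1,1}}(\sigma_n)}$, applying it fibrewise in $\omega$ and integrating against $\mu$ gives $\|f-\mb{E}f\|_{L_1(\sigma_n;L_1(\mu))}\lesssim\sum_i\int_\mu\|\partial_i f(\cdot,\omega)\|_{L_{\psi_{1,1}}(\sigma_n)}\,\diff\mu(\omega)$. You then need to bound this by $\sum_i\|\,\|\partial_i f\|_{L_1(\mu)}\,\|_{L_{\psi_{1,1}}(\sigma_n)}$, which requires $\int_\mu\|\cdot\|_{L_\psi}\,\diff\mu\lesssim\|\int_\mu\cdot\,\diff\mu\|_{L_\psi}$. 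The Minkowski/Jensen inequality you quote, $\int_\mu\|\cdot\|_{L_\psi}\,\diff\mu\ge\|\int_\mu\cdot\,\diff\mu\|_{L_\psi}$, is correct but is the opposite of what the reduction needs, so the reduction does not close.

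The paper avoids both obstructions at once by never working at the $L_1$ endpoint of the semigroup argument. It composes $f$ with a Schoenberg map $\mathfrak{s}:\R\to\ell_2$ satisfying $\|\mathfrak{s}(a)-\mathfrak{s}(b)\|_{\ell_2}^2=|a-b|$, coordinatewise, to get $g=\overline{\mathfrak{s}}\circ f:\ms{C}_n\to\ell_2(\ell_2)$, then applies Theorem~\ref{thm:useeldanorlicz} (which uses hypercontractivity at $p=2$, where it has teeth) to $g$, and translates back using the pointwise identity $\|\partial_i g(\e)\|_{\ell_2(\ell_2)}=\tfrac{1}{\sqrt 2}\|\partial_i f(\e)\|_{\ell_1}^{1/2}$ together with $\|\sqrt h\|_{L_2(\log L)^{-1}}^2\asymp\|h\|_{L_1(\log L)^{-1}}$. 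This nonlinear change of variables is the key idea you are missing; it also explains why the statement is specific to $L_1(\mu)$ rather than a generic type-$1$ space, and why the result is stated directly as a Talagrand-type inequality (which is invariant under the map $\overline{\mathfrak{s}}$) rather than via the linear Poincar\'e-style estimate~\eqref{eq:orlicztalagrandtype}.
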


\begin{proof}
Since Talagrand type is a local invariant, it clearly suffices to consider the case that $\mu$ is the counting measure on $\N$ and thus $L_1(\mu)$ is isometric to $\ell_1$. We will employ a classical result of Schoenberg~\cite{Sch38}, according to which there exists a function $\mathfrak{s}:\R\to\ell_2$ such that $\mathfrak{s}(0)=0$ and
\begin{equation*}
\forall \ a,b\in\R, \qquad \big\|\mathfrak{s}(a)-\mathfrak{s}(b)\big\|_{\ell_2}^2 = |a-b|.
\end{equation*}
Consider the mapping $\overline{\mathfrak{s}}:\ell_1\to\ell_2(\ell_2)$, given by
\begin{equation*}
\overline{\mathfrak{s}}(a_1,a_2,\ldots) = \big(\mathfrak{s}(a_1),\mathfrak{s}(a_2),\ldots\big)
\end{equation*}
and observe that for $a=(a_1,a_2,\ldots), b=(b_1,b_2,\ldots)\in\ell_1$,
\begin{equation*}
\big\|\overline{\mathfrak{s}}(a)-\overline{\mathfrak{s}}(b)\big\|_{\ell_2(\ell_2)}^2 = \sum_{i=1}^\infty \big\|\mathfrak{s}(a_i)-\mathfrak{s}(b_i)\big\|_{\ell_2}^2 = \sum_{i=1}^\infty |a_i-b_i| = \|a-b\|_{\ell_1}.
\end{equation*}
 Fix $n\in\N$ and a function $f:\ms{C}_n\to \ell_1$. Consider the composition $g:\ms{C}_n\to\ell_2(\ell_2)$ given by $g=\overline{\mathfrak{s}}\circ f$. Then, we have
\begin{equation*}
\begin{split}
\mb{E}_{\sigma_n\times\sigma_n} \big\|f(\e)-f(\delta)\big\|_{\ell_1} 
 = \mb{E}_{\sigma_n\times\sigma_n} \big\|g(\e)-g(\delta)\big\|_{\ell_2(\ell_2)}^2 & =  \mb{E}_{\sigma_n}\big\| g(\e)-\mb{E}_{\sigma_n} g\big\|_{\ell_2(\ell_2)}^2 \\ & \lesssim \sum_{i=1}^n \big\|\partial_i g\big\|^2_{L_2(\log L)^{-1}(\sigma_n;\ell_2(\ell_2))},
\end{split}
\end{equation*}
where the last inequality follows from Theorem~\ref{thm:useeldanorlicz}. Combining this with the pointwise identity
\begin{equation*}
\big\|\partial_i g(\e)\big\|_{\ell_2(\ell_2)} = \frac{1}{2}\big\|g(\e)-g(\e_1,\ldots,\e_{i-1},-\e_i,\e_{i+1},\ldots,\e_n)\big\|_{\ell_2(\ell_2)} = \frac{1}{\sqrt{2}} \big\|\partial_if(\e)\big\|^{1/2}_{\ell_1}
\end{equation*}
and the fact that for every $h:\{-1,1\}^n\to\R_+$,
\begin{equation*}
\big\|\sqrt{h}\big\|_{L_2(\log L)^{-1}(\sigma_n)}^2 \asymp \big\|h\big\|_{L_1(\log L)^{-1}(\sigma_n)},
\end{equation*}
we deduce that
\begin{equation*}
\mb{E}_{\sigma_n\times\sigma_n}\big\|f(\e)-f(\delta)\big\|_{\ell_1} \lesssim \sum_{i=1}^n \big\|\partial_if\big\|_{L_1(\log L)^{-1}(\sigma_n;\ell_1)}.
\end{equation*}
This concludes the proof of the theorem.
\end{proof}

The argument used in the proof of Theorem~\ref{thm:L1} to derive the Talagrand type of $\ell_1$ from the Talagrand type of $\ell_2$ is very specifically tailored to $L_1(\mu)$ spaces. It remains an interesting open problem to investigate the Talagrand type of noncommutative $L_1$-spaces.

\begin{question}
Does the Schatten trace class $(\msf{S}_1,\|\cdot\|_{\msf{S}_1})$ have Talagrand type $(1,\psi_{1,1})$?
\end{question}


\subsection{Vector-valued Riesz transforms} 

The optimal $L_1-L_p$ inequality for scalar-valued functions (see Theorem~\ref{thm:l1lporlicz}) was derived by combining the vector-valued Bakry--Meyer inequality of Theorem~\ref{thm:vectorbakrymeyer} and Lust-Piquard's Theorem~\ref{thm:lust-piquard}. In fact, the same argument gives the following implication.

\begin{theorem}
Let $(E,\|\cdot\|_E)$ be a $K$-convex Banach space such that for some $\alpha\in\big(0,\tfrac{1}{2}\big]$, $p\in(1,\infty)$ and $K\in(0,\infty)$, the following property holds. For every $n\in\N$, every $f:\ms{C}_n\to E$ satisfies
\begin{equation} \label{eq:left-riesz}
\big\|\Delta^{\alpha}f\big\|_{L_p(\sigma_n;E)} \leq K \big\| \nabla f\big\|_{L_p(\sigma_n;E)}.
\end{equation}
Then, there exists $C=C(\alpha,p,K)\in(0,\infty)$ such that for every $f:\ms{C}_n\to E$,
\begin{equation*}
\big\|f-\mb{E}_{\sigma_n}f\big\|_{L_p(\sigma_n;E)} \leq C \big\|\nabla f\big\|_{L_p(\log L)^{-\alpha p}(\sigma_n;E)}.
\end{equation*}
\end{theorem}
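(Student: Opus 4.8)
The plan is to combine the hypothesized one-sided Riesz transform inequality~\eqref{eq:left-riesz} with the vector-valued Bakry--Meyer inequality (Theorem~\ref{thm:vectorbakrymeyer}), exactly mimicking the proof of Theorem~\ref{thm:l1lpscalar} (and its Orlicz refinement Theorem~\ref{thm:l1lporlicz}). The main point is that $K$-convexity of $E$ guarantees that $\Delta^{-\alpha}$ acts boundedly on the appropriate tail spaces and, more importantly, that $E$ has nontrivial Rademacher type (indeed $K$-convexity is equivalent to nontrivial type by Pisier's theorem~\cite{Pis82}), so that Theorem~\ref{thm:vectorbakrymeyer} is applicable to $E$.

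First I would apply~\eqref{eq:left-riesz} to the function $\Delta^{-\alpha}(f-\mb{E}_{\sigma_n}f)$ in place of $f$. Since $\Delta^{\alpha}\Delta^{-\alpha}(f-\mb{E}_{\sigma_n}f) = f-\mb{E}_{\sigma_n}f$ (the operators commute on the orthogonal complement of the constants, via the Walsh expansion), this gives
\begin{equation*}
\big\|f-\mb{E}_{\sigma_n}f\big\|_{L_p(\sigma_n;E)} \leq K\, \big\|\nabla \Delta^{-\alpha}f\big\|_{L_p(\sigma_n;E)}.
\end{equation*}
Next I would need a commutation between the gradient and $\Delta^{-\alpha}$: writing ${\bf D}g = (\partial_1 g,\ldots,\partial_n g)$, one has $\nabla\Delta^{-\alpha}f$ corresponding to $\Delta^{-\alpha}{\bf D}f$ acting coordinatewise, since each $\partial_i$ commutes with $\Delta$ (all are Fourier multipliers in the Walsh basis). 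Thus, with $G\eqdef {\bf D}f:\ms{C}_n\to\ell_2^n(E)$, the quantity $\|\nabla\Delta^{-\alpha}f\|_{L_p(\sigma_n;E)}$ equals (up to the definition of $\|\nabla\cdot\|$, which is a randomized $\ell_2$-norm, so one should pass to $\ell_2^n(E)$ via Kahane's inequality and the fact that $E$ has finite cotype — which follows from nontrivial type) a constant multiple of $\|\Delta^{-\alpha}G\|_{L_p(\sigma_n;\ell_2^n(E))}$. Here $\ell_2^n(E)$ is again a $K$-convex space of nontrivial Rademacher type (with constants independent of $n$), so Theorem~\ref{thm:vectorbakrymeyer} applies to it and yields
\begin{equation*}
\big\|\Delta^{-\alpha}G\big\|_{L_p(\sigma_n;\ell_2^n(E))} \lesssim_{p,\alpha,E} \big\|G\big\|_{L_p(\log L)^{-p\alpha}(\sigma_n;\ell_2^n(E))}.
\end{equation*}
Finally, unwinding the definitions $G = {\bf D}f$ and $\|G\|_{L_p(\log L)^{-p\alpha}(\sigma_n;\ell_2^n(E))}\asymp_p \|\nabla f\|_{L_p(\log L)^{-p\alpha}(\sigma_n;E)}$ — again via Khintchine--Kahane type comparison, valid since $E$ has nontrivial type hence finite cotype — completes the proof, with $C$ depending on $\alpha$, $p$ and the constants $K$, the $K$-convexity constant of $E$, and the Bakry--Meyer constant $K_p(\alpha,\ell_2^n(E))$ (uniform in $n$).

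The step I expect to be the main obstacle is the careful handling of the randomized gradient norm $\|\nabla\cdot\|$ versus the honest $\ell_2^n(E)$-valued norm: one must verify that Kahane's inequality and the $K$-convexity of $E$ allow passing between $(\int\|\sum\delta_i\partial_i f\|_E^p\diff\sigma_n(\delta))^{1/p}$ and $\|({\textstyle\sum}(\partial_i f)^2)^{1/2}$-type expressions with constants independent of $n$, and that the Orlicz norms behave well under this comparison (this uses that both $t\mapsto t^p$-type and $\psi_{p,-p\alpha}$-type Young functions are submultiplicative in the relevant range, so that the comparison of $L_p(\log L)^{-p\alpha}$-norms of the two gradients survives). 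The other subtlety is ensuring that $\ell_2^n(E)$ has Rademacher type and $K$-convexity constants bounded uniformly in $n$, which is standard but should be noted explicitly. Everything else is a routine concatenation of the cited results.
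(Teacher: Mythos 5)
Your overall strategy — apply the hypothesized one-sided Riesz inequality to $\Delta^{-\alpha}(f-\mb{E}_{\sigma_n}f)$, commute $\partial_i$ past $\Delta^{-\alpha}$, and then invoke the vector-valued Bakry--Meyer theorem — is exactly the paper's, and your observations that $K$-convexity is equivalent to nontrivial type (Pisier) and that this property passes to $L_p$-spaces of $E$-valued functions with constants uniform in $n$ are both correct and necessary. However, there is a genuine gap in the middle step: you replace the randomized gradient norm by the $\ell_2^n(E)$-valued norm via ``Kahane's inequality and finite cotype.'' This cannot work. The pointwise comparison
\begin{equation*}
\Big(\int_{\ms{C}_n}\Big\|\sum_{i=1}^n \delta_i\, g_i\Big\|_E^p\diff\sigma_n(\delta)\Big)^{1/p} \asymp \Big(\sum_{i=1}^n \|g_i\|_E^2\Big)^{1/2}
\end{equation*}
for arbitrary $g_1,\ldots,g_n\in E$ is the statement that $E$ has both Rademacher type $2$ and cotype $2$, which by Kwapień's theorem forces $E$ to be isomorphic to a Hilbert space. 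Kahane's inequality only changes the exponent on the randomized side, and finite cotype gives a one-sided bound at the cotype exponent; neither lets you pass to an $\ell_2^n(E)$-sum for a general $K$-convex $E$. You flagged this as the ``main obstacle,'' and indeed it is not merely an obstacle but an uncrossable one in the generality of the theorem.

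The fix is to avoid $\ell_2^n(E)$ altogether, which also removes the need for any Khintchine--Kahane comparison. Since the vector-valued gradient norm is already randomized, set $F:\ms{C}_n\to L_p(\sigma_n;E)$ by $[F(\e)](\delta)=\sum_{i=1}^n\delta_i\,\partial_i f(\e)$, exactly as in the paper's proof of Theorem~\ref{thm:l1lpscalar}. Then $\|\nabla f\|_{L_p(\sigma_n;E)}=\|F\|_{L_p(\sigma_n;L_p(\sigma_n;E))}$ as an identity, and because $\partial_i$ commutes with $\Delta^{-\alpha}$ one has $\|\nabla\Delta^{-\alpha}(f-\mb{E}_{\sigma_n}f)\|_{L_p(\sigma_n;E)}=\|\Delta^{-\alpha}F\|_{L_p(\sigma_n;L_p(\sigma_n;E))}$, again exactly. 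The target $L_p(\sigma_n;E)$ inherits nontrivial Rademacher type (equivalently $K$-convexity) from $E$ with constants depending only on $p$ and $E$ but not on $n$, so Theorem~\ref{thm:vectorbakrymeyer} applies and gives $\|\Delta^{-\alpha}F\|_{L_p(\sigma_n;L_p(\sigma_n;E))}\lesssim_{p,\alpha,E}\|F\|_{L_p(\log L)^{-p\alpha}(\sigma_n;L_p(\sigma_n;E))}$, which is precisely $\|\nabla f\|_{L_p(\log L)^{-p\alpha}(\sigma_n;E)}$. Note that Khintchine's inequality is used in the scalar Theorem~\ref{thm:l1lpscalar} only because there the gradient is the honest $(\sum(\partial_if)^2)^{1/2}$; in the vector-valued setting the comparison is exact, so your worries about submultiplicativity of the Young function are moot.
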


Therefore, the following question seems natural.

\begin{question}
Fix $\alpha\in\big(0,\tfrac{1}{2}\big]$ and $p\in[1,\infty)$. Which target spaces $(E,\|\cdot\|_E)$ satisfy~\eqref{eq:left-riesz} with a constant $K$ independent of $n$?
\end{question}

In the case of Gauss space, it has been shown by Pisier (see~\cite{Pis88}) that dimension-free Riesz transform inequalities hold true provided that the target space $E$ has the UMD property. In particular, this means that in the case of UMD spaces, Theorem~\ref{thm:lpmporlicz} can be improved as follows.

\begin{theorem} \label{thm:umd}
Let $(E,\|\cdot\|_E)$ be a UMD Banach space. Then, for every $p\in(1,\infty)$, there exists $C_p=C_p(E)\in(0,\infty)$ such that for $n\in\N$, every smooth function $f:\R^n\to E$ satisfies
\begin{equation*}
\big\|f-\mb{E}_{\gamma_n}f\big\|_{L_p(\gamma_n;E)} \leq C_p\cdot \big\|\nabla f\big\|_{L_p(\log L)^{-\frac{p}{2}}(\gamma_n;E)}
\end{equation*}
\end{theorem}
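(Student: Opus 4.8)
The statement is the analogue of Theorem~\ref{thm:lpmporlicz} in which the loss $\theta$ is removed when $E$ is UMD, and the plan is to repeat the argument that derived Theorem~\ref{thm:l1lporlicz} from Theorem~\ref{thm:vectorbakrymeyer} and Lust-Piquard's inequality, but now on Gauss space and using Pisier's dimension-free Riesz transform bounds in place of Lust-Piquard's theorem. Concretely, I would invoke the preceding abstract theorem (the one displaying~\eqref{eq:left-riesz}): it suffices to check that a UMD space $E$ satisfies~\eqref{eq:left-riesz} with $\alpha=\tfrac12$ and a constant independent of $n$, \emph{and} that the Bakry--Meyer-type input $\|\ms{L}^{-\alpha}g\|_{L_p(\gamma_n;E)}\lesssim\|g\|_{L_p(\log L)^{-p\alpha}(\gamma_n;E)}$ holds in the Gaussian setting. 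The first point is exactly Pisier's result from~\cite{Pis88}: for a UMD space $E$ and $p\in(1,\infty)$ there is $K_p=K_p(E)$, independent of $n$, with $\|\ms{L}^{1/2}f\|_{L_p(\gamma_n;E)}\le K_p\|\nabla f\|_{L_p(\gamma_n;E)}$ (here $\ms{L}$ is the Ornstein--Uhlenbeck generator defined above and $\nabla f$ is understood in the vector-valued sense used in Section~\ref{sec:5}). The second point is the Gaussian version of the vector-valued Bakry--Meyer inequality; since $E$ UMD implies $E$ has nontrivial Rademacher type, and since the proof of Theorem~\ref{thm:vectorbakrymeyer} (via Theorem~\ref{thm:multiplier}, Lemma~\ref{lem:Delta+1} and the scalar inequality~\eqref{eq:bakrymeyer}) only used hypercontractivity and the $K$-convexity/tail-space machinery of Mendel--Naor, all of which are equally available for the Ornstein--Uhlenbeck semigroup $\{U_t\}_{t\geq0}$, the same argument yields $\|\ms{L}^{-\alpha}g\|_{L_p(\gamma_n;E)}\lesssim_{\alpha,p,E}\|g\|_{L_p(\log L)^{-p\alpha}(\gamma_n;E)}$.

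\textbf{Assembling the two ingredients.} With these in hand the proof is short. Given a smooth $f:\R^n\to E$, write $g=\ms{L}^{-1/2}(f-\mb{E}_{\gamma_n}f)$, so that $\ms{L}^{1/2}g=f-\mb{E}_{\gamma_n}f$ and $g$ has Gaussian mean zero. By the Gaussian Bakry--Meyer inequality applied with $\alpha=\tfrac12$ to the function $f-\mb{E}_{\gamma_n}f$ (replacing $g$ above), together with the reverse Riesz transform inequality $\|f-\mb{E}_{\gamma_n}f\|_{L_p(\gamma_n;E)}=\|\ms{L}^{1/2}g\|_{L_p(\gamma_n;E)}\le K_p\|\nabla g\|_{L_p(\gamma_n;E)}$, one reduces the problem to bounding $\|\nabla g\|_{L_p(\gamma_n;E)}$. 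Now one uses the identity $\nabla\ms{L}^{-1/2}=\ms{L}^{-1/2}\nabla$ (valid on each Hermite level, hence on mean-zero functions, since $\ms{L}$ and the gradient commute up to the standard shift) to write $\nabla g=\ms{L}^{-1/2}(\nabla f)$ componentwise, and then applies the Gaussian Bakry--Meyer estimate coordinatewise in $\ell_2^n(E)$ — exactly as in the scalar proof of Theorem~\ref{thm:l1lpscalar}, where the auxiliary function $F$ was used to package the gradient — to get
\begin{equation*}
\|\nabla g\|_{L_p(\gamma_n;E)}=\|\ms{L}^{-1/2}\nabla f\|_{L_p(\gamma_n;\ell_2^n(E))}\lesssim_{p,E}\|\nabla f\|_{L_p(\log L)^{-p/2}(\gamma_n;\ell_2^n(E))}.
\end{equation*}
Since the paper's convention for $\|\nabla f\|_{L_q(\gamma_n;E)}$ is the Gaussian-average norm $(\int_{\R^n}\|\sum_i y_i\partial_if\|_{L_q(\gamma_n;E)}^q\diff\gamma_n(y))^{1/q}$, one finally replaces the $\ell_2^n(E)$-norm by this Gaussian-randomized norm using Kahane's inequality for Gaussian variables (as in~\eqref{eq:use-kwapien}), which is where $K$-convexity of $E$ enters again; this yields the asserted bound with the Young function $L_p(\log L)^{-p/2}$ and no $\theta$-loss.

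\textbf{Main obstacle.} The routine part is the bookkeeping of commutation relations and the passage between the $\ell_2^n(E)$-norm of the gradient and the Gaussian-randomized norm in~\eqref{eq:defnormgrad}'s Gaussian analogue; this is exactly parallel to Section~\ref{sec:5} and poses no real difficulty. The genuine content lies in two places. First, one must be careful that Pisier's Riesz transform theorem from~\cite{Pis88} is quoted in the correct dimension-free form and in the direction $\|\ms{L}^{1/2}f\|\lesssim\|\nabla f\|$ rather than its converse; the UMD hypothesis is essential precisely here, and it is the reason this improvement over Theorem~\ref{thm:lpmporlicz} is available only for UMD targets. Second, one must verify that the proof of the vector-valued Bakry--Meyer inequality (Theorem~\ref{thm:vectorbakrymeyer}) does indeed transfer from the discrete cube to Gauss space: the holomorphic multiplier theorem (Theorem~\ref{thm:multiplier}) and the Mendel--Naor tail-space decay estimate were stated on $\ms{C}_n$, so one needs to point to their Ornstein--Uhlenbeck counterparts — the $K$-convexity theorem and decay on Hermite tail spaces — which are standard but should be cited (e.g.\ via the remark after~\eqref{eq:bakrymeyer} that the scalar inequality~\eqref{eq:bakrymeyer} itself is stated by Bakry--Meyer for the Ornstein--Uhlenbeck semigroup, and the vector-valued upgrade only needs $K$-convexity plus hypercontractivity, both available in the Gaussian setting). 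Apart from making these citations precise, there is no new difficulty, and the theorem follows.
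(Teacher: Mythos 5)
Your proposal correctly identifies the two ingredients the paper intends to combine (Pisier's dimension‑free Riesz transform bound for UMD targets and a Gaussian version of the vector‑valued Bakry--Meyer inequality), and the overall architecture follows the one the paper lays out for the abstract cube result preceding Theorem~\ref{thm:umd}. The paper itself gives no more than this sketch, so your filling‑in of the details is in the right spirit.

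There is one genuine error, though it turns out to be an error in your favor. The commutation you invoke, $\nabla\ms{L}^{-1/2}=\ms{L}^{-1/2}\nabla$, is false on Gauss space: since $\partial_iU_t=e^{-t}U_t\partial_i$, one has $\partial_i\ms{L}=(\ms{L}+1)\partial_i$, hence $\partial_i\ms{L}^{-1/2}=(\ms{L}+1)^{-1/2}\partial_i$ on mean‑zero functions. (You do murmur \enquote{up to the standard shift} but then proceed with the unshifted identity, which is what you would have on $\ms{C}_n$, where $\partial_i^2=\partial_i$.) Once the identity is corrected, the operator you need to control on the gradient is $(\ms{L}+1)^{-1/2}$, not $\ms{L}^{-1/2}$, and this is strictly easier. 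Indeed, $(\ms{L}+1)^{-\alpha}=\Gamma(\alpha)^{-1}\int_0^\infty e^{-t}U_t\,t^{\alpha-1}\diff t$ is a pointwise average, so the Gaussian analogue of Lemma~\ref{lem:Delta+1} gives $\|(\ms{L}+1)^{-\alpha}g(x)\|_E\leq\big[(\ms{L}+1)^{-\alpha}\|g\|_E\big](x)$ for \emph{every} Banach space $E$. This collapses the vector‑valued estimate to the scalar one, and there one only needs the scalar Bakry--Meyer inequality~\eqref{eq:bakrymeyer} (proved by Bakry--Meyer precisely for the Ornstein--Uhlenbeck semigroup) together with the scalar Meyer multiplier theorem to compare $(\ms{L}+1)^{-\alpha}$ and $\ms{L}^{-\alpha}$. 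In particular, the concern you raise about transporting Theorem~\ref{thm:multiplier} and the Mendel--Naor tail‑space decay from $\ms{C}_n$ to Gauss space is moot: the shifted resolvent makes those vector‑valued multiplier tools unnecessary here (UMD already gives $K$‑convexity if it were needed, but it is not). Apart from this, the only remaining imprecision is bookkeeping: the gradient should be carried as the Gaussian‑randomized object $(x,y)\mapsto\sum_i y_i\partial_if(x)$ on $\gamma_{2n}$ throughout (as in~\eqref{eq:use-kwapien}) rather than as an $\ell_2^n(E)$‑valued function, with Kahane's inequality used to pass between $L_p$‑ and $L_q$‑moments in the $y$‑variable. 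With these corrections the argument closes and matches the paper's intent.
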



\subsection{Talagrand type of nonpositively curved spaces} 

A geodesic metric space $(\MM,d_\MM)$ is an Alexandrov space of (global) nonpositive curvature (or simply a CAT(0) space) if for every quadruple of points $x,y,z,m\in\MM$ such that $m$ is a metric midpoint of $x$ and $y$, i.e., a point for which $d_\MM(x,m)=d_\MM(y,m) = \tfrac{1}{2}d_\MM(x,y)$, we have
\begin{equation*}
d_\MM (z,m)^2 \leq \frac{1}{2}d_\MM(z,x)^2+\frac{1}{2}d_\MM(z,y)^2 - \frac{1}{4}d_\MM(x,y)^2.
\end{equation*}
Complete Riemannian manifolds of nonpositive sectional curvature are examples of CAT(0) spaces. Let \mbox{$\psi_{2,\delta}:[0,\infty)\to[0,\infty)$} be a Young function with $\psi_{2,\delta}(t) = t^2 \log^{-\delta}(e+t)$ for large enough $t>0$. In Theorems~\ref{thm:gromov} and~\ref{thm:pinched}, we showed that Gromov hyperbolic groups and complete Riemannian manifolds of pinched negative curvature have Talagrand type $(2,\psi_{1-\e})$ for every $\e\in(0,1)$. On the other hand, a classical inductive argument essentially going back to Enflo~\cite{Enf69} shows that all Alexandrov spaces of nonpositive curvature have Enflo type 2, which is closely related to Talagrand type $(2,\psi_{2,0})$. We believe that the following question deserves further investigation.

\begin{question}
Does there exist some $\delta\in(0,1]$ such that every Alexandrov space of nonpositive curvature has Talagrand type $(2,\psi_{2,\delta})$? More ambitiously, does every Alexandrov space of nonpositive curvature have Talagrand type $(2,\psi_{2,1})$?
\end{question}




\subsection{CAT(0) spaces as test spaces for superreflexivity}

 In Proposition~\ref{prop:bourgain}, we showed that all binary $\R$-trees of finite depth embed with uniformly bounded distortion into any nonsuperreflexive Banach space. It was communicated to us by Florent Baudier that using this proposition and the barycentric gluing technique (see~\cite{Bau07} and the survey~\cite{Bau14}), one can in fact prove that the binary $\R$-tree of {\it infinite} depth admits a bi-Lipschitz embedding into any nonsuperreflexive Banach space. Then, an inductive argument (see, e.g.,~\cite[Remark~2.2]{Ost14}) shows that any finite product of binary $\R$-trees also embeds bi-Lipschitzly into any nonsuperreflexive Banach space. Therefore, one deduces from Theorem~\ref{thm:npss} that every finite-dimensional complete simply connected Riemannian manifold of pinched negative curvature embeds bi-Lipschitzly into any nonsuperreflexive Banach space. Conversely, since all binary trees embed in the hyperbolic plane $\mb{H}^2$, if a Banach space $E$ bi-Lipschitzly contains $\mb{H}^2$, then $E$ cannot be superreflexive by Bourgain's theorem~\cite{Bou86}. In conclusion, we deduce the following characterization. 

\begin{theorem}
A Banach space $(E,\|\cdot\|_E)$ is nonsuperreflexive if and only if for every $n\in\N$, every $n$-dimensional complete, simply connected Riemannian manifold $(\msf{M},g)$ of pinched negative curvature equipped with the Riemannian distance $d_\msf{M}$ admits a bi-Lipschitz embedding in $E$.
\end{theorem}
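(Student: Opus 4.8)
The plan is to prove the two implications separately, using the tree‑embedding results of Section~\ref{sec:gromov} — namely Proposition~\ref{prop:bourgain} and Theorem~\ref{thm:npss} — as the main inputs.

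For the easy direction, assume that every finite‑dimensional complete, simply connected Riemannian manifold of pinched negative curvature admits a bi‑Lipschitz embedding into $E$. Since the hyperbolic plane $\mb{H}^2$ has constant sectional curvature $-1$ (so that its curvature lies in, say, the interval $[-2,-1/2]$), it is such a manifold and hence embeds bi‑Lipschitzly into $E$. As the complete binary tree of infinite depth embeds bi‑Lipschitzly into $\mb{H}^2$ (a classical fact), the infinite binary tree embeds into $E$ with bounded distortion; by Bourgain's metric characterization of superreflexivity~\cite{Bou86}, this forces $E$ to be nonsuperreflexive.

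For the main direction, suppose $E$ is nonsuperreflexive. Proposition~\ref{prop:bourgain} already gives that every binary $\R$‑tree of finite depth embeds into $E$ with distortion at most $4$. First I would upgrade this to the binary $\R$‑tree of \emph{infinite} depth, by gluing the finite‑depth embeddings at their leaves via the barycentric gluing technique of Baudier~\cite{Bau07,Bau14}; this yields a bi‑Lipschitz embedding of the infinite‑depth binary $\R$‑tree into $E$ with distortion controlled by a universal constant. Next I would pass to products: an inductive gluing argument in the spirit of~\cite[Remark~2.2]{Ost14} shows that, for every $N\in\N$, a product of $N$ binary $\R$‑trees of infinite depth embeds bi‑Lipschitzly into $E$ with distortion depending only on $N$ (this step is genuinely needed, since the naive coordinatewise map would only land in $E^N$ and not in a single copy of $E$). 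Finally I would combine this with Theorem~\ref{thm:npss}: given an $n$‑dimensional complete, simply connected Riemannian manifold $(\msf{M},g)$ with sectional curvature in $[-R,-r]$, that theorem supplies $N\in\N$, $D\in(0,\infty)$ and a bi‑Lipschitz embedding of $(\msf{M},d_\msf{M})$ into a product of $N$ binary $\R$‑trees of infinite depth with distortion at most $D$; composing with the embedding of that product into $E$ produces the desired bi‑Lipschitz embedding of $\msf{M}$ into $E$, whose distortion depends only on $n$, $r$ and $R$.

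The hard part will be the two gluing steps — the passage from finite‑depth to infinite‑depth binary $\R$‑trees, and the construction of embeddings of finite products of such trees — where one must glue bi‑Lipschitz maps at infinitely many leaves without any blow‑up of distortion; this is exactly what the barycentric gluing of~\cite{Bau07,Bau14} and the inductive scheme of~\cite{Ost14} accomplish in a nonsuperreflexive target. By contrast, the specialization to $\mb{H}^2$, the classical embedding of the binary tree into $\mb{H}^2$, and the composition of bi‑Lipschitz maps (whose distortions simply multiply) are routine.
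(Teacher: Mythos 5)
Your proof is correct and follows essentially the same route as the paper's, which in the forward direction also chains Proposition~\ref{prop:bourgain}, the barycentric gluing upgrade to infinite-depth binary $\R$-trees, the inductive product step from~\cite[Remark~2.2]{Ost14}, and Theorem~\ref{thm:npss}, and in the converse direction observes that $\mb{H}^2$ bi-Lipschitzly contains binary trees and invokes Bourgain's theorem~\cite{Bou86}.
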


In recent years, there have been plenty of such characterizations in the literature, although one can argue that this is not a particularly novel one due to its close relation to Bourgain's characterization in terms of trees. We believe the following stronger \mbox{question deserves further investigation.}

\begin{question} \label{q:nonsuper}
Which Alexandrov spaces of nonpositive curvature admit a bi-Lipschitz embedding into every nonsuperreflexive Banach space?
\end{question}

There are plenty of CAT(0) spaces which do not embed into finite products of binary $\R$-trees and in order to prove that they embed into all nonsuperreflexive Banach spaces, one may need to employ interesting structural properties of such spaces. On the other hand, there exist CAT(0) spaces which do not embed into $L_1$, which is of course nonsuperreflexive. Indeed, if every CAT(0) space admitted a bi-Lipschitz embedding into $L_1$, then every classical expander (which is also an expander with respect to $L_1$ by Matou\v{s}ek's extrapolation lemma for Poincar\'e inequalities, see~\cite{Mat97}), would be an expander with respect to all CAT(0) spaces and this is known to be false by important work of Mendel and Naor~\cite{MN15}.


\subsection{General hypercontractive semigroups}

 In~\cite{CL12}, Cordero-Erausquin and Ledoux established versions of Talagrand's (scalar-valued) inequality~\eqref{eq:talagrand} in the setting of hypercontractive Markov semigroups satisfying some minimal assumptions. At first glance, the arguments which we use in the present paper to obtain vector-valued extensions of~\eqref{eq:talagrand} seem to rely more heavily in specific properties of the Hamming cube, such as identity~\eqref{eq:crucial-ivv} from~\cite{IVV20} or the Eldan--Gross process~\cite{EG19}. Nevertheless, we strongly believe that there are versions of our results for other hypercontractive Markov semigroups satisfying some fairly general assumptions.


\bibliographystyle{alpha}
\bibliography{L1L2}

\end{document}